\documentclass[1 [leqno,11pt]{amsart}    
\usepackage{amssymb, amsmath,amsmath,latexsym,amssymb,amsfonts,amsbsy,     amsthm,mathtools,graphicx,CJKutf8,CJKnumb,CJKulem,color}    
\usepackage{hyperref}
\usepackage{scalerel,stackengine}
\stackMath
\newcommand\reallywidehat[1]{%
\savestack{\tmpbox}{\stretchto{%
  \scaleto{%
    \scalerel*[\widthof{\ensuremath{#1}}]{\kern-.6pt\bigwedge\kern-.6pt}%
    {\rule[-\textheight/2]{1ex}{\textheight}}%WIDTH-LIMITED BIG WEDGE
  }{\textheight}% 
}{0.5ex}}%
\stackon[1pt]{#1}{\tmpbox}%
}
\mathtoolsset{showonlyrefs=true}
%%%%%%%%%%%%%%%%%%%%%%%%%%%%%%%%%%
\usepackage{cancel}
\usepackage[dvipsnames]{xcolor}
\setlength{\oddsidemargin}{0mm}
\setlength{\evensidemargin}{0mm} \setlength{\topmargin}{0mm}
\setlength{\textheight}{220mm} \setlength{\textwidth}{155mm}

\numberwithin{equation}{section}

\allowdisplaybreaks
%%%%%%%%%%%%%%%%%%%%%%%%%%%%%%%%%%
%%%%%%
%Notes in the margin

%%%My setting%%%%%%%%%%%%%%%%%%%%%%%%%%%%%%%%%%%%%%%%%%%%%%%%%%%

%%%ABREVIATIONS%%%%%%
\let\rhat=\reallywidehat
\let\al=\alpha
\let\b=\beta
\let\g=\gamma
\let\d=\delta

\let\la=\lambda

\let\s=\sigma
\let\f=\frac

\let\om=\omega

\let\na=\nabla
\let\th=\theta
\let\pa=\partial
\let\ep=\epsilon

%%LETTRES RONDES%%

\def\rmA{\mathrm{A}}
\def\rmB{\mathrm{B}}
\def\rmC{\mathrm{C}}
\def\rmCK{\mathrm{CK}}
\def\rmD{\mathrm{D}}
\def\rmE{\mathrm{E}}
\def\rmF{\mathrm{F}}

\def\rmH{\mathrm{H}}
\def\rmJ{\mathrm{J}}
\def\rmL{\mathrm{L}}
\def\rmM{\mathrm{M}}
\def\rmN{\mathrm{N}}
\def\rmNR{\mathrm{NR}}
\def\rmNL{\mathrm{NL}}
\def\rmR{\mathrm{R}}
\def\rmS{\mathrm{S}}
\def\rmT{\mathrm{T}}
\def\rmK{\mathrm{K}}

\def\rmI{\mathrm{I}}
\def\rmg{\mathrm{g}}

\def\cG{{\mathcal G}}

\def\bfK{\mathbf{K}}

\def\cM{{\mathcal M}}

\def\cR{{\mathcal R}}

\def\bbD{\mathbb{D}}

%%MACROS SANS ARGUMENTS%%%%%%%%%%%%%%%%%

\def\R{\mathbf R}
\def\Z{\mathbf Z}

\def\no{\noindent}

\def\eqdef{\buildrel\hbox{\footnotesize def}\over =}

\def\bbT{\mathbb{T}}

\usepackage{accents}
\newcommand*{\dt}[1]{%
  \accentset{\mbox{\large\bfseries .}}{#1}}

\newcommand{\beq}{\begin{equation}}
\newcommand{\eeq}{\end{equation}}
\newcommand{\ben}{\begin{eqnarray}}
\newcommand{\een}{\end{eqnarray}}
\newcommand{\beno}{\begin{eqnarray*}}
\newcommand{\eeno}{\end{eqnarray*}}
\definecolor{schrift}{RGB}{0,73,114}

%%theorem%%%%%%%%%%%%%%%%%%%%%%%%%%%%%%%%
\newtheorem{theorem}{Theorem}[section]

\newtheorem{lemma}[theorem]{Lemma}
\newtheorem{proposition}[theorem]{Proposition}

\newtheorem{remark}[theorem]{Remark}
%%

%%%%%%%%%%%%%%end of my setting%%%%%%%%%%%%%%%%%%%%%%%%%%%%%%%%%%%%%%
\begin{document}
\title[2D Boussinesq system without thermal diffusivity]{Stability of Couette flow for 2D Boussinesq system without thermal diffusivity}
\author[N. Masmoudi]{Nader Masmoudi}
\address{Department of Mathematics, New York University in Abu Dhabi, Saadiyat Island, P.O. Box 129188, Abu Dhabi, United Arab Emirates. Courant Institute of Mathematical Sciences, New York University, 251 Mercer Street, New York, NY 10012, USA,}
\email{masmoudi@cims.nyu.edu}
\author[B. Said-Houari]{Belkacem Said-Houari}
\address{Department of Mathematics, College of Sciences, University of
Sharjah, P. O. Box: 27272, Sharjah, United Arab Emirates.}
\email{bhouari@sharjah.ac.ae}
\author[W. Zhao]{Weiren Zhao}
\address{Department of Mathematics, New York University in Abu Dhabi, Saadiyat Island, P.O. Box 129188, Abu Dhabi, United Arab Emirates.}
\email{zjzjzwr@126.com,  wz19@nyu.edu}

\date{\today}  

\maketitle

\begin{abstract}
In this paper, we prove the stability of Couette flow for 2D Navier-Stokes Boussinesq system without thermal diffusivity for the initial perturbation in Gevrey-$\f 1s$, ($1/3<s\leq 1$). The synergism of density mixing, vorticity mixing and velocity diffusion leads to the stability. 
\end{abstract}

\section{Introduction}

The stability of shear flow in a stratified medium is of interests  in many fields of research such as: fluid dynamics, geophysics, astrophysics, mathematics,...
Density stratification can strongly affect the dynamic of fluids like air in
the atmosphere or water in the ocean and the stability question  of stratified flows
dates back to Taylor 1914 \cite{Taylor_1931} and Goldstein 1931 \cite%
{Goldstein_1931} and since then there has been an active search towards the
understanding of the stability of density-stratified flows. 
 The question
that many researchers want to answer is the following: for a given steady state is
it (asymptotically)  stable relative to small disturbances? 

This is the
problem of the hydrodynamic stability which is one of the most classical problems in the study of fluid dynamics and its investigation dates  back to Rayleigh, Orr, Summerfeld, B\'enard among others. See for instance the book of Drazin and Reid \cite{Drazin_2004} and reference therein.

In this paper, 
we consider the 2D Navier-Stokes Boussinesq system without thermal diffusivity in $\bbT\times \mathbb{R}$:
\beq\label{eq:NSB1}
\left\{
\begin{aligned}  
&\pa_tv+v\cdot \na v+\na P=-\bar{\rho} g e_2+\nu\Delta v,\\
&\pa_t\bar{\rho} +v\cdot\na \bar{\rho} =0,\\
&\na\cdot v=0,
\end{aligned}
\right.
\eeq
where $(x,y)\in \mathbb{T}\times \mathbb{R}$, $v=(v^x,v^y)$ is the velocity field, $P$ is the pressure and  $\bar{\rho}$ is the density and $g=1$ being the normalized gravitational constant 
and $e_2=(0,1)$ is the unit vector in the vertical direction and $\nu$ is the kinematic viscosity. The first equation is the Navier-Stokes equation with the buoyancy forcing {term }$-\bar{\rho} g e_2$ in the vertical direction. The second equation is the transport equation of the density and the  third equation is the incompressibility condition which represents the mass conservation. 

The Boussinesq  system \eqref{eq:NSB1} attracted the attention of many mathematicians: first,  due to its wide range of applications, 
 see for example  \cite{Majda_2003, Pedlosky_1987,Vallis_2006,Constantin_Doering_1999} and second, due to the fact that  the 2D Boussinesq model retain some key features of the 3D Euler and Navier-Stokes equations. For instance it has been known that the inviscid 2D Boussinesq equations are identical to the incompressible axi-symmetric swirling 3D Euler equations, as pointed out in \cite{Majda_Bertozzi_2002}.
 During the last decades  many interesting results were obtained in different directions. 
 
 One of the important research direction is to find the minimal dissipation in the Boussinesq system that yields a global existence  under the  lowest possible regularity.  When both the viscosity and diffusivity are present in the    Boussinesq system, then the system is known to be globally well-posed for smooth and arbitrary large initial data. See for instance \cite{Cannon_Dibenedetto_1980, Foias_Manley_Temam_1987} and also  \cite{Temam_1997}. In the absence of the  diffusivity, the global existence was proved in \cite{Hou_Li_2005} (see also \cite{Chae_2006}). An extension of the results in \cite{Hou_Li_2005,Chae_2006} to a rough initial data in some Besov type spaces has been obtained  in \cite{Hmidi_Keraani_2007}. More importantly,  it has been proved in \cite{Danchin_Paicu_2008}, that the $L^2$ regularity of the initial data is enough to prove the global existence  of the solution. See also  \cite{Danchin_Paicu_2009} where a similar result, but under some extra assumptions, has been proved in the presence of the diffusivity  only.  If the diffusion or the viscosity acts on the horizontal direction on one of the equations only, the authors in \cite{Abidi_Hmidi_2007,Adhikari_al_2016,Danchin_Paicu_2011,Doering_Wu_Zhao_Zheng_2018} showed a global existence result for initial data with different regularities. Under the same regularity assumption as in \cite{Danchin_Paicu_2011}, the uniqueness of the solution was shown in \cite{Larios_Lunasin_Titi_2013}. 
Recently, and by considering  only partial dissipation on the vertical direction in both equations, a  global existence result was obtained in \cite{Li_Titi_2016} under very low regularity assumptions. 
  We recall that in the absence of viscosity and diffusion, the   
 global well-posedness of the inviscid  Boussinesq system is still largely open. See \cite{Chae_2006} and \cite{Chae_Nam_1997} for investigation in this direction.   

Despite the large literature on the Boussinesq system, the  asymptotic stability of solutions has not been well studied. For the non-flowing steady states $v_s=0,\, \rho_s=y$, in \cite{Castro_Cordoba_Lear_2019,Tao_al_2020}, the authors studied the stability problem of 2D Boussinesq with different settings. 

For the flowing steady states as in the case of Couette flow:
\begin{equation}\label{Steady_States}
v_s=(y,0), \quad \rho_s=-r_0 y+1, \quad
p_s=\int_0^y \rho_s(y_1)dy_1=y-\f{r_0}{2}y^2 
\end{equation}
the asymptotic stability problem is very challenging.

The goal of this  paper, is to study the stability of the Couette flow described by the steady state \eqref{Steady_States}. Before stating our main results, let us first recall previous works about the stability problem of flowing steady states. 
%
%with a constant $C_1$. 
%It is clear that $p_s$ satisfies  
%\begin{equation}
%\pa_yp_s(y)=\rho_s(y)g.\quad \text{{\color{blue}$'$ instead of $\pa_y$ seems better}}
%\label{Hydro_Balance_0}
%\end{equation}%  
%Equation \eqref{Hydro_Balance_0} is known as the hydrostatic balance. 

The linear inviscid 2D Boussinesq system
with shear flows has been extensively studied starting from the work of Taylor \cite{Taylor_1931},  Goldstein  \cite{Goldstein_1931} and Synge \cite{Synge}. We also refer to the book of Lin \cite{Lin_Book}. 
It has been proved  that the stability of the solutions of the linearized inviscid 2D Boussinesq system with a shear flow  
  is determined by the competition between the stabilizing forces and the  vertical shear flow $U(y)$. 

In general if $\rho_s$ is the steady state for the density, we define the local Richardson number $\g(y)$ to be such that
\begin{equation}\label{Richa}
\g(y)^2=-\f{g\pa_y\rho_s(y)}{(\pa_yU(y))^2}. 
\end{equation}
This number measures the ratio of the stabilizing effect of the gravity to the destabilizing effect of the shear. We assume that $\pa_y\rho_s\leq 0 $ (stable stratification) so that $\gamma^2\geq 0$.  

The Richardson number  is one of the control parameters of the stability of stratified shear follows.  The Miles--Howard theorem \cite{Howard_1961,Miles_1961} guarantees that any flow in the inviscid non-diffusive limit
is linearly stable if the local Richardson number everywhere
exceeds the value $1/4$, however unstable modes can arise for Richardson number smaller than $1/4$ \cite{Drazin_1958}.   

The introduction of viscosity or diffusivity in  stratified shear flows may seem to lead to stability, however this is not always correct. As shown in Miller and Lindzen \cite{Miller_Lindzen_1988} for some particular  geometry, the addition of viscosity may allow over-reflection and subsequent instability even if the Richardson number is everywhere greater than $1/4$. In fact they  proved  a normal mode instability for a Richardson number  as large as $0.349$. This is in contrast to the Miles--Howard theorem  in the inviscid case where it shows that unstable modes cannot exist for  any flow with a Richardson number greater than $1/4$.  Hence, it seems an interesting problem to investigate the stability of the stratified shear flows when viscosity is added.  

In the physics literature, there have been a lot of work devoted to the stability of the Couette flow  in the linearized stratified inviscid flow. See for example \cite{Hoiland,Case_1,Dikii_1,Kuo,Hartman_1975,Chimonas,Brown_Stewartson,Farrell_Ioannou}. But they are less mathematically rigorous results.  

In \cite{Yang_2016_1} Yang and Lin studied the linear asymptotic stability of the steady state \eqref{Steady_States} for the 2D Euler Boussinesq system ($\nu=0$). 
\beq\label{eq:LinEB}
\left\{
\begin{aligned}
&\pa_t\om+y\pa_x\om=-\g^2\pa_x\th,\\
&\pa_t\th+y\pa_x\th=u^y,\\
&u=(u_1,u_2)=(-\pa_y\psi,\pa_x\psi),\quad \Delta\psi=\om.
\end{aligned}
\right.
\eeq
See also \cite{CotiZelati} for more linear results of general shear flows. 
They showed that the decay rates depend crucially on the Richardson number. More precisely, they obtained the following decay rates for the velocity components (which confirms the decay rate stated in \cite{Farrell_Ioannou}) 
\beq\label{eq:be-sol-1}
\begin{aligned}
&(u_{1\neq},u_{2\neq},\th_{\neq},\om_{\neq})\lesssim \left(\langle t\rangle ^{-\f12}, \langle t\rangle^{-\f32},\langle t\rangle ^{-\f12},\langle t\rangle ^{\f12}\right)\quad \text{ if}\  \g^2>\f14,\\
&(u_{1\neq},u_{2\neq},\th_{\neq},\om_{\neq})\lesssim \ln (e+|t|)\times\left(\langle t\rangle ^{-\f12}, \langle t\rangle^{-\f32},\langle t\rangle ^{-\f12},\langle t\rangle ^{\f12}\right)\quad \text{ if}\  \g^2=\f14,\\
&(u_{1\neq},u_{2\neq},\th_{\neq},\om_{\neq})\lesssim \langle t\rangle ^{\sqrt{\f14-\g^2}}\times\left(\langle t\rangle ^{-\f12}, \langle t\rangle^{-\f32},\langle t\rangle ^{-\f12},\langle t\rangle ^{\f12}\right)\quad \text{ if}\  0<\g^2<\f14,
 \end{aligned}
 \eeq    
where $f_{\neq}=f-\f{1}{2\pi}\int_{\bbT} f(x,y)dx$ denotes the non-zero mode. 
Let us also point out that the linearized Euler Boussinesq ($\nu=0$) system around \eqref{Steady_States} with $r_0=0$
\beq\label{eq:LinEB-2}
\left\{
\begin{aligned}
&\pa_t\om+y\pa_x\om=-\pa_x\th,\\
&\pa_t\th+y\pa_x\th=0,\\
&u=(u_1,u_2)=(-\pa_y\psi,\pa_x\psi),\quad \Delta\psi=\om,
\end{aligned}
\right.
\eeq
is a couple system with two transport equations (transport diffusion equation if $\nu\neq 0$). The behavior of the solutions are easy to obtain. Indeed we have
\begin{align}\label{eq:be-sol-2}
(u_{1\neq},u_{2\neq},\th_{\neq},\om_{\neq})\lesssim \left(1, \langle t\rangle^{-1},1,t\right),
\end{align} 
which is same as the limit behavior of the solutions of \eqref{eq:LinEB} as $\g^2=r_0\to 0$. However, since the limit process $\g^2=r_0\to 0$ is a singular limit, the system \eqref{eq:LinEB} does not converge to \eqref{eq:LinEB-2}. 

Let us also point out here that if $\th=0$ then\eqref{eq:LinEB-2}  is the linearized Euler (Navier-Stokes) equation whose solutions behave as follows:
\begin{align}\label{eq:be-sol-3}
(u_{1\neq},u_{2\neq},\om_{\neq})\lesssim \left(\langle t\rangle^{-1}, \langle t\rangle^{-2},1\right). 
\end{align}
The vorticity does not grow and behaves better. {\bf The buoyancy forcing term $\pa_x\th$ leads to a growth of the vorticity even at the linear level, which destabilizes the system.} 

In the recent paper \cite{Deng_all_2020}, the authors investigated the stability of the Couette flow for the 2D Navier-Stokes Boussinesq system with both dissipation and thermal diffusion.  They also considered the problem with a weaker  stabilization mechanism and studied the partial dissipation case. The mechanism leading to stability is  the so-called inviscid damping and enhanced dissipation which we will introduce later.

In this paper, we study the system without thermal diffusivity which is the natural and the physical setting. Also mathematically it is much more interesting and challenging. The stability problem of systems with various diffusion terms is always a difficult problem. We refer to \cite{ren2014global,wei2017global} for  similar challenges appearing in MHD. 

In order to state our main result, we introduce the perturbation: $v=u+(y,0)$, $P=p+p_s$ and $\bar{\rho} =\varrho+\rho_s$, then $(u,p,\varrho)$ satisfies
\beq\label{eq:NSB2}
\left\{
\begin{aligned}
&\pa_tu+y\pa_xu+\left(\begin{matrix}u^y\\0\end{matrix}\right)+u\cdot \na u+\na p=-\varrho e_2+\Delta u,\\
&\pa_t\varrho+y\pa_x\varrho-r_0u^y+u\cdot\na \varrho=0,\\
&\na\cdot u=0.           
\end{aligned}
\right.
\eeq
For $r_0>0$, we introduce $\th=\f{1}{r_0}\varrho$, the Richardson number $\g=\sqrt{r_0}$ and the vorticity $$\om=\na\times u=\pa_x u^y-\pa_y u^x$$ which satisfies
\beq\label{eq:NSB3}
\left\{
\begin{aligned}
&\pa_t\om+y\pa_x\om+u\cdot \na \om=-\g^2\pa_x\th+\Delta \om,\\
&\pa_t\th+y\pa_x\th+u\cdot\na \th=u^y,\\
&u=\na^{\bot}\psi=(-\pa_y\psi,\pa_x\psi),\quad \Delta\psi=\om.
\end{aligned}
\right.
\eeq

We also study the case $r_0=0$, and let $\th=\varrho$, then the perturbation $(\th, \om)$ satisfies
\begin{equation}\label{eq:NSB4}
\left\{
\begin{aligned}
&\pa_t\om+y\pa_x\om+u\cdot \na \om=-\pa_x\th+\Delta \om,\\
&\pa_t\th+y\pa_x\th+u\cdot\na \th=0,\\
&u=\na^{\bot}\psi=(-\pa_y\psi,\pa_x\psi),\quad \Delta\psi=\om.
\end{aligned}
\right.
\end{equation}

The dissipation term $\Delta\om$ can hopefully stabilize the system. However comparing to the full diffusion case, in this system, the perturbed density $\th$ does not decay, which leads to a linear growth of the vorticity due to the presence of the buoyancy force term $\pa_x\th$. 

Let us point out that due to the dissipation term, the behavior of the solutions changes a lot. 
Our main result reads  as follows: 
\begin{theorem}\label{Thm:main}
Let $(\om,\th)$ solves \eqref{eq:NSB3} with $\g\neq 0$. Let $(u,\psi)$ be the corresponding velocity field and stream function. 
For all $\f13<s\leq 1$ and $\la_0>\la'>0$, there exists an $ \ep_0=\ep_0(\la_0,\la',s,\g)\leq \f12$ such that for all $\ep\leq \ep_0$ if $(\om_{in},\th_{in})$ and $(u_{in},\psi_{in})$ satisfy 
\beno
\int u_{in}dxdy=\int \om_{in}dxdy=\int \th_{in}dxdy=0,
\eeno
$\int |y\om_{in}(x,y)|dxdy+\int|y\th_{in}(x,y)|dxdy<\ep$ and 
\begin{equation}\label{L_1_Assumption}
\left\|\f{1}{2\pi}\int_{\bbT}\psi_{in}(x,\cdot)dx\right\|_{L^1}\leq \ep
\end{equation}
and
\beno
\|\om_{in}\|_{\mathcal{G}^{\la_0}}^2
+\|\th_{in}\|_{\mathcal{G}^{\la_0}}^2=\sum_{k}\int (|\hat{\om}_{in}(k,\eta)|^2+|\hat{\th}_{in}(k,\eta)|^2)e^{2\la_0|k,\eta|^s}d\eta\leq \ep^2,
\eeno
then there exists $\th_{\infty}$ with $\int \th_{\infty}dxdy=0$ and $\|\th_{\infty}\|_{\mathcal{G}^{\la'}}\lesssim \ep$ such that 
\beq\label{eq: scatting}
\|\th(t,x+ty+\Phi(t,y),y)-\th_{\infty}(x,y)\|_{\mathcal{G}^{\la'}}\lesssim \f{\ep^2}{\langle t\rangle}\ln (e+t)+\f{\ep}{\langle t\rangle^{3}}
\eeq
where $\Phi(t,y)$ is given explicitly by 
\beq\label{eq:Phi(t,y)}
\Phi(t,y)=\f{1}{2\pi}\int_0^t\int_{\mathbb{T}}U^x(\tau,x,y)dxd\tau  
\eeq
Moreover, it holds that
\begin{align*}
\left\|\om(t,x,y)-\f{1}{2\pi}\int\om(t,x,\cdot)dx\right\|_{L^2}&\lesssim \f{\ep}{\langle t\rangle^2},\\
\left\|u^x(t,x,y)-\f{1}{2\pi}\int u^x(t,x,\cdot)dx\right\|_{L^2}&\lesssim \f{\ep}{\langle t\rangle^3},\\
\left\|u^y(t,x,y)\right\|_{L^2}&\lesssim \f{\ep}{\langle t\rangle^4}.
\end{align*}
\end{theorem}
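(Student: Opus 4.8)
The plan is to run a Gevrey‑regularity bootstrap in coordinates adapted to the nonlinear flow, in the spirit of the Bedrossian--Masmoudi analysis of Couette flow, the new feature being the coupled density which carries no dissipation. First I pass to the moving frame built from the true transport velocity $v+\langle u^x\rangle$ (with $\langle f\rangle=\f{1}{2\pi}\int_{\bbT}f\,dx$): set $z=x-tv-\Phi(t,v)$ with $\pa_t\Phi=\langle u^x\rangle$, which is consistent with \eqref{eq:Phi(t,y)}, and define the profiles $\Om(t,z,v)=\om(t,z+tv+\Phi,v)$, $\Theta(t,z,v)=\th(t,z+tv+\Phi,v)$, $U(t,z,v)=u(t,z+tv+\Phi,v)$. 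In these variables the transport term disappears, $\Delta\om$ becomes the sheared Laplacian $\Delta_L$ with Fourier symbol $-(k^2+(\eta-kt-k\pa_v\Phi)^2)$, the Biot--Savart law becomes a $\Phi$‑perturbed elliptic equation, and \eqref{eq:NSB3} reads $\pa_t\Om=-\g^2\pa_z\Theta+\Delta_L\Om-U\cdot\nabla_L\Om$, $\pa_t\Theta=U^y-U\cdot\nabla_L\Theta$. I immediately peel off the $z$‑average (the ``$0$‑mode'': $\langle\Om\rangle=-\pa_v\langle U^x\rangle$, $\langle\Theta\rangle$, and $\Phi$ itself): these satisfy heat/transport‑type equations and are controlled by a separate weighted estimate that uses the hypotheses $\int|y\om_{in}|\,dxdy+\int|y\th_{in}|\,dxdy<\ep$ and $\|\langle\psi_{in}\rangle\|_{L^1}\le\ep$. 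The bulk of the work is on the nonzero modes.

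For the nonzero modes I build an energy
\[
E(t)=\|A\Om_{\neq}\|_{L^2}^2+\g^2\|A\,\mathfrak{m}(t,\nabla)\,\Theta_{\neq}\|_{L^2}^2+(\text{$0$-mode and lower-order pieces}),
\]
with a Fourier multiplier $A(t,\nabla)=e^{\la(t)|\nabla|^s}\langle\nabla\rangle^{\sigma}m(t,\nabla)^{-1}$ where $\la(t)$ decreases from $\la_0$ to $\la'$ and $m$ is the Bedrossian--Masmoudi ``ghost''/commutator multiplier built from the critical times $t\approx\eta/k$; here $\mathfrak{m}$ is an auxiliary relative weight dictated by the linearization. (On the $k$‑decoupled linearized system $\pa_t\hat\Om=-i\g^2k\hat\Theta-D\hat\Om$, $\pa_t\hat\Theta=-ikD^{-1}\hat\Om$ with $D=k^2+(\eta-kt)^2$, a short ODE computation shows the natural pairing is between $\Om$ and $\g(-\Delta_L)^{1/2}\Theta$, renormalized near the critical time.) Differentiating $E$ produces the favourable terms: the viscous damping $-\|(-\Delta_L)^{1/2}A\Om_{\neq}\|^2$ from $\Delta\om$ (with $\nu=1$ fixed this is a strong, order‑$t^2$‑in‑frequency dissipation), the Cauchy--Kovalevskaya terms $-|\dot\la|\,\||\nabla|^{s/2}A\Om_{\neq}\|^2$, $-\|\sqrt{-\pa_t m/m}\,A\Om_{\neq}\|^2$ (and their analogues for $\Theta_{\neq}$), and the antisymmetric linear coupling, which cancels by the choice of $\mathfrak{m}$ up to commutators absorbed into the CK terms. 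Everything else — the nonlinear transports $U\cdot\nabla_L\Om$ and $U\cdot\nabla_L\Theta$, the buoyancy reaction $\g^2\pa_z\Theta$ tested against $A\Om$, and the $\Phi$‑generated errors in $\Delta_L$ and in the velocity law — is handled by Gevrey bilinear and commutator estimates (paraproduct splitting, the loss of Gevrey radius charged to $|\dot\la|$ and the resonant frequency transfer charged to $-\pa_t m/m$), together with the $0$‑mode bounds. This yields $E(t)\le E(0)+C\ep\sup_{[0,t]}E$, and since $E(0)\lesssim\ep^2$ the bootstrap closes for $\ep\le\ep_0$, giving $E(t)\lesssim\ep^2$ for all $t\ge0$.

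From the uniform bound the asymptotics follow. Since $\pa_t\Theta_{\neq}=U^y_{\neq}$ and, by Biot--Savart in the moving frame, $\widehat{U^y_{\neq}}=-ik\bigl(k^2+(\eta-kt-k\pa_v\Phi)^2\bigr)^{-1}\hat\Om$, the bound $\|A\Om_{\neq}\|\lesssim\ep$ together with the elliptic gain yields enough decay of $\|U^y(t)\|_{\mathcal{G}^{\la'}}$ that $t\mapsto\Theta(t)$ is Cauchy in $\mathcal{G}^{\la'}$; its limit is $\th_\infty$, with $\int\th_\infty\,dxdy=0$ by conservation of the mean, and the tail is bounded as in \eqref{eq: scatting} (the linear part of the tail coming from the inviscid‑damping decay of $U^y$, the $\ep^2$ part from $U\cdot\nabla_L\Theta$, with the logarithm from the sum over critical times). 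For the vorticity, the $\Om$‑equation and its strong dissipation force $\Om_{\neq}$ into quasi‑static balance with its source, $\hat\Om\approx-i\g^2kD^{-1}\hat\Theta$ up to a term exponentially small in the dissipation integral, so $\|\om_{\neq}(t)\|_{L^2}\lesssim\ep\langle t\rangle^{-2}$; feeding this into Biot--Savart, $u^y$ gains a further elliptic factor $\sim D^{-1}\sim\langle t\rangle^{-2}$ and $u^x_{\neq}$ a factor $(\eta-kt)D^{-1}\sim\langle t\rangle^{-1}$, giving the stated $\langle t\rangle^{-4}$ and $\langle t\rangle^{-3}$ rates; the $z$‑averages of $\om$ and of $u^x$ subtracted off in those statements are covered by the $0$‑mode heat estimate.

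The main obstacle is the nonlinear density transport $U\cdot\nabla_L\Theta$ in Gevrey regularity. Because $\Theta$ enjoys no smoothing, its Gevrey radius can only be protected by the $\dot\la$ budget and the ghost weight $m$; the dangerous interactions are the resonant (echo) ones in which a high‑frequency mode of $\Om$ near its own critical time — where $\Om$ genuinely undergoes a transient enlargement, of size $\sim\ep/k$, because $D$ drops to $k^2$ and the viscous damping is momentarily weak — transfers energy into $\Theta$, and these must be shown summable with exactly the $\sim|\eta|^{1/3}$ loss of Gevrey radius that Gevrey‑$3$, i.e. $s>1/3$, can afford; this is the source of the restriction on $s$. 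Designing $\mathfrak{m}$ so that it simultaneously symmetrizes the linear coupling, tolerates the transient growth of $\Om$ near critical times, and still leaves a coercive $-\pa_t m/m$ term large enough to absorb the echo cascade — all three at once, and compatibly with the Richardson number $\g$ — is the crux of the argument.
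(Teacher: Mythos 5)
Your overall framework---inviscid nonlinear coordinates, Gevrey time-dependent multiplier with shrinking radius and a Bedrossian--Masmoudi-type resonance weight, separate treatment of the zero mode using the extra $L^1$ hypothesis, and a bootstrap---is the same skeleton as the paper. But the heart of the argument is different, and as proposed it does not close.

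The gap is in the choice of energy for the nonzero modes. You propose to control $\|A\Om_{\neq}\|_2^2+\g^2\|A\mathfrak{m}\Theta_{\neq}\|_2^2$ with $\mathfrak{m}\approx(-\Delta_L)^{1/2}$ chosen to symmetrize the linear coupling. That choice does make the linear cross-terms cancel (the $k$-decoupled identity is $\f{d}{dt}\big(|\hat\Om|^2+\g^2(k^2+(\eta-kt)^2)|\hat\Theta|^2\big)=-2(k^2+(\eta-kt)^2)|\hat\Om|^2+\g^2\pa_t\big(k^2+(\eta-kt)^2\big)|\hat\Theta|^2$), but the second term is \emph{positive} for $t>\eta/k$ and grows linearly, while $\hat\Theta$ does not decay. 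Consequently $\g^2\|(-\Delta_L)^{1/2}A\Theta_{\neq}\|_2^2\sim t^2\|A\Theta_{\neq}\|_2^2$ grows quadratically in time even on the linearized flow, and the bootstrap bound $E(t)\lesssim\ep^2$ you claim to close is false. If you instead ``renormalize'' $\mathfrak{m}$ to stay bounded, the exact cancellation is destroyed and you are left with a residual linear cross-term of size $\g^2|k|\,\f{(\eta-kt)^2}{k^2+(\eta-kt)^2}\,|A\hat\Om||A\hat\Theta|$; with only $\|A\Om_{\neq}\|_2\lesssim\ep$ from the bootstrap (no a priori decay of $\|A\Om_{\neq}\|_2$), this term is $O(\ep^2)$ per unit time and not integrable. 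You flag this yourself as ``the crux of the argument'' but do not resolve it; no single weight $\mathfrak{m}$ can simultaneously be bounded and symmetrize the coupling.

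The paper's resolution is not a symmetrizing weight but a change of unknown: $K=-\g^2\pa_z\rho+\Delta_t f$, so that $\Delta_tf$ is recovered from the bounded quantities $K$ and $\rho$ rather than estimated directly. The energy is taken on $(K,\rho)$, both of which are uniformly bounded: $K$ solves a transport-diffusion equation with fast-decaying forcing (one of the forcing terms is $-2(\pa_v-t\pa_z)\pa_z f$, absorbed by the dissipation $\|\na_L AK\|_2^2$), and $\rho$ solves a transport equation with an $O(t^{-4})$ source. The remaining linear nonlocal terms (your $-\g^2\langle A\Om,A\pa_z\Theta\rangle$ reappears as $\Pi_\rho$, $\Pi_K^1$, $\Pi_K^2$) are then \emph{not} cancelled but absorbed into Cauchy--Kovalevskaya-type budgets coming from \emph{two additional} components of the multiplier, $\cM_k$ (built from $\rmg$) and $\rmB_k$, which are new relative to the Bedrossian--Masmoudi $J_k$. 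Your proposal mentions only the echo-cascade weight $m$; the extra $\cM_k$ and $\rmB_k$ weights are essential to close the estimates on the linear terms and are absent from your outline. In short, you have the correct skeleton and the correct heuristic (quasi-static balance $\hat\Om\approx-i\g^2kD^{-1}\hat\Theta$), but the argument you actually write---cancellation of linear cross-terms by a weight $\mathfrak{m}$---does not work; what is needed is to promote that heuristic to the dependent variable $K$ itself and to introduce the additional ghost multipliers that absorb the residual linear coupling.
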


Let us remark that the Gevrey regularity of the initial perturbations does not change for different Richardson numbers. The size of perturbations $\ep$ may vary for different Richardson numbers. 
We prove Theorem \ref{Thm:main} in this paper. One can easily follow the same proof and obtain the following stability result for \eqref{eq:NSB4}: 

\begin{theorem}\label{Thm:main-2}
Let $(\om,\th)$ solves \eqref{eq:NSB4}. 
Let $(u,\psi)$ be the corresponding velocity field and stream function. 
For all $\f13<s\leq 1$ and $\la_0>\la'>0$, there exists an $\ep_0=\ep_0(\la_0,\la',s)\leq \f12$ such that for all $\ep\leq \ep_0$ if $(\om_{in},\th_{in})$ and $(u_{in},\psi_{in})$ satisfy 
\beno
\int u_{in}dxdy=\int \om_{in}dxdy=\int \th_{in}dxdy=0,
\eeno
$\int |y\om_{in}(x,y)|dxdy+\int|y\th_{in}(x,y)|dxdy<\ep$ and 
\beno
\left\|\f{1}{2\pi}\int_{\bbT}\psi_{in}(x,\cdot)dx\right\|_{L^1}\leq \ep
\eeno
and
\beno
\|\om_{in}\|_{\mathcal{G}^{\la_0}}^2
+\|\th_{in}\|_{\mathcal{G}^{\la_0}}^2=\sum_{k}\int (|\hat{\om}_{in}(k,\eta)|^2+|\hat{\th}_{in}(k,\eta)|^2)e^{2\la_0|k,\eta|^s}d\eta\leq \ep^2,
\eeno  
then there exists $\th_{\infty}$ with $\int \th_{\infty}dxdy=0$ and $\|\th_{\infty}\|_{\mathcal{G}^{\la'}}\lesssim \ep$ such that 
\beq\label{eq: scatting-2}
\|\th(t,x+ty+\Phi(t,y),y)-\th_{\infty}(x,y)\|_{\mathcal{G}^{\la'}}\lesssim \f{\ep^2}{\langle t\rangle}\ln(e+t)
\eeq
where $\Phi(t,y)$ is given explicitly by 
\beq\label{eq:Phi(t,y)-2}
\Phi(t,y)=\f{1}{2\pi}\int_0^t\int_{\mathbb{T}}U^x(\tau,x,y)dxd\tau
\eeq
Moreover, it holds that
\begin{align*}
\left\|\om(t,x,y)-\f{1}{2\pi}\int\om(t,x,\cdot)dx\right\|_{L^2}&\lesssim \f{\ep}{\langle t\rangle^2},\\
\left\|u^x(t,x,y)-\f{1}{2\pi}\int u^x(t,x,\cdot)dx\right\|_{L^2}&\lesssim \f{\ep}{\langle t\rangle^3},\\
\left\|u^y(t,x,y)\right\|_{L^2}&\lesssim \f{\ep}{\langle t\rangle^4}.
\end{align*}
\end{theorem}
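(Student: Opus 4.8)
The plan is to run the proof of Theorem~\ref{Thm:main} after two harmless modifications: the buoyancy coefficient $\g^2$ in the vorticity equation is replaced by the constant $1$ (a relabelling that affects none of the structure), and the source term $u^y$ on the right-hand side of the $\th$-equation is deleted, so that in \eqref{eq:NSB4} the density $\th$ is transported by the full perturbed flow with no feedback from the vorticity. Deleting the $u^y$ source only removes terms from every estimate in the proof of Theorem~\ref{Thm:main}, so the argument becomes strictly easier; at the level of the final statement it is exactly the reason why the $\ep/\langle t\rangle^{3}$ contribution present in \eqref{eq: scatting}, which is produced by that source, is absent from \eqref{eq: scatting-2}. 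I now sketch the common skeleton. First I would split $u^x=\langle u^x\rangle+u^x_{\neq}$ with $\langle f\rangle=\f1{2\pi}\int_{\bbT}f\,dx$, define $\Phi$ by \eqref{eq:Phi(t,y)-2} so that $\pa_t\Phi=\langle u^x\rangle$, and pass to the coordinates $z=x-ty-\Phi(t,y)$, $v=y$. Writing $f(t,z,v)$, $g(t,z,v)$ for the profiles of $\om$ and $\th$ in these variables, the mean transport $y\pa_x$ and the $x$-average of $u\cdot\na$ are absorbed into the change of coordinates ($\pa_x=\pa_z$, $\pa_y=\pa_v-(t+\pa_v\Phi)\pa_z$), the dissipation becomes $\pa_z^2+(\pa_v-(t+\pa_v\Phi)\pa_z)^2$, the relation $\Delta\psi=\om$ becomes the $\Phi$-distorted elliptic problem of the same form, and \eqref{eq:NSB4} turns into a system for $(f,g)$ whose right-hand sides are $-\pa_z g$ plus the distorted dissipation in the $f$-equation, and the fluctuation transport $u_{\neq}\cdot\na$ in both equations.

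Next I would set up the Gevrey energy estimate in the Bedrossian--Masmoudi framework. Introduce a Fourier multiplier $A(t,k,\eta)=e^{\la(t)|k,\eta|^s}\langle k,\eta\rangle^{\s}\,w_k(t,\eta)^{-1}\,m(t,k,\eta)$, where $\la(t)$ decreases from a value close to $\la_0$ down to $\la'$, $w_k(t,\eta)$ grows across the critical times $t\approx\eta/k$ so as to compensate the inviscid echo cascade, and $m$ records the gains coming from the velocity diffusion and from the phase mixing of $\om$ and $\th$ (the ``synergism'' of the abstract); in contrast to the inviscid behaviour \eqref{eq:be-sol-2}, the strong velocity diffusion prevents the vorticity from growing, so no growth weight is needed and the same multiplier structure controls both $f$ and $g$. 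Take $E(t)$ to be the sum of $\|Af\|_{L^2}^2$, $\|Ag\|_{L^2}^2$, auxiliary weighted norms of $u_{\neq}$ and of the coordinate system, and $\|\Phi\|^2+\|\pa_v\Phi\|^2$ in the appropriate Gevrey norm (controlled using the mean-zero and $L^1$ hypotheses on the data), and run a bootstrap: assuming $E\le C\ep^2$ on $[0,T]$, prove $E\le\f{C}{2}\ep^2$ there, which by continuity propagates the control globally.

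The bulk of the work is the differential inequality for $E$. Differentiation produces the good Cauchy--Kovalevskaya terms $-\dt\la\,\||\na|^{s/2}Af\|^2$ and the like coming from the decay of $\la$, the good commutator terms $\mathrm{CK}_w$ coming from the growth of $w$, the dissipation term $-\|(\pa_z,\pa_v-(t+\pa_v\Phi)\pa_z)Af\|^2$, the linear buoyancy interaction of $-\pa_z g$ with $f$ (controlled using the structure of $A$ and the fact that $g$ does not grow), and the nonlinear contributions of $u_{\neq}\cdot\na f$ and $u_{\neq}\cdot\na g$ together with the commutators created by the $\Phi$-dependence of the coordinates and of the elliptic solve. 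The decisive point is the paraproduct analysis of the transport terms: the dangerous ``reaction'' piece, where a factor localised at low $k$ and high $\eta$ near a critical time multiplies the other factor, is absorbed by $\mathrm{CK}_w$, while the residual loss is paid by $-\dt\la\,\||\na|^{s/2}\cdot\|^2$; the hypothesis $s>\f13$ is precisely what makes the sum of these losses along the echo chains converge. This is where I expect the main obstacle to lie, now aggravated by the fact that $\th$ does not decay, so that the forcing $-\pa_z g$ in the $f$-equation is persistent and its interaction with $f$ must be kept under control uniformly in time through the diffusive and mixing gains encoded in $m$; for \eqref{eq:NSB4} this is the only way the $f$--$g$ coupling enters, and it is handled exactly as in Theorem~\ref{Thm:main} with $\g^2$ replaced by $1$.

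Once the bootstrap closes for $\ep\le\ep_0(\la_0,\la',s)$, the scattering statement follows: in the new frame $\pa_t g=-u_{\neq}\cdot\na g$, and the bootstrap bounds give $\big\|\int_t^\infty u_{\neq}\cdot\na g\,d\tau\big\|_{\mathcal{G}^{\la'}}\lesssim\ep^2\langle t\rangle^{-1}\ln(e+t)$, so $g(t)$ is Cauchy in $\mathcal{G}^{\la'}$, converges to some $g_\infty$, and undoing $z=x-ty-\Phi(t,y)$ produces $\th_\infty$ together with \eqref{eq: scatting-2}. Finally, the stated decay of $\om_{\neq}$, $u^x_{\neq}$ and $u^y$ is read off from the bootstrap bounds: $\om_{\neq}$ decays like $\langle t\rangle^{-2}$ as the balance between the persistent forcing $-\pa_z g$ and the gain from the velocity diffusion, and inverting Biot--Savart in the distorted coordinates, using also the first-moment hypotheses $\int|y\om_{in}|+\int|y\th_{in}|<\ep$ and $\|\f1{2\pi}\int_{\bbT}\psi_{in}dx\|_{L^1}\le\ep$, gains one further power of $\langle t\rangle^{-1}$ for $u^x_{\neq}$ and two further powers for $u^y$, i.e. $\langle t\rangle^{-3}$ and $\langle t\rangle^{-4}$.
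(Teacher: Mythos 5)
Your high-level reduction is exactly what the paper does: Theorem~\ref{Thm:main-2} is Theorem~\ref{Thm:main} specialised to $\gamma^2=1$, $\gamma_1=0$, and the disappearance of the $\ep/\langle t\rangle^3$ tail in \eqref{eq: scatting-2} is precisely because $\gamma_1=0$ removes the $\int_t^\infty\pa_z\phi\,ds$ contribution from the scattering estimate. However, your sketch of the underlying energy argument for Theorem~\ref{Thm:main} omits its central structural idea, and as written it would not close.

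You propose to run the bootstrap on $\|Af\|_{L^2}^2+\|A\rho\|_{L^2}^2$ (profiles of $\om$ and $\th$) and to control the persistent linear buoyancy term $-\pa_z\rho$ in the $f$-equation ``through the diffusive and mixing gains encoded in $m$'', i.e.\ through the Fourier multiplier alone. This cannot work: since $\rho$ does not decay, the pairing $\int Af\,A\pa_z\rho\,dz\,dv$ is of size $\ep^2$ uniformly near each resonant time $t\approx\eta/k$ (where the multiplier $k^2/(k^2+(\eta-kt)^2)$ gives no gain), its time integral diverges, and it cannot be absorbed by the dissipation $\|\na_L Af\|_2^2$ or by any ghost weight. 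The paper's resolution is not a multiplier but a change of unknown: the good unknown $K=-\gamma^2\pa_z\rho+\Delta_t f$ of \eqref{K_Equation}. The main energy is $\rmE(t)=\tfrac12\|\rmA K\|_2^2+\tfrac12\|\rmA\rho\|_2^2$; $K$ satisfies a (nonlinear) drift--diffusion equation whose forcing decays, so $\|\rmA K\|$ is uniformly bounded, and the $\langle t\rangle^{-2}$ decay of $f_{\neq}$ is recovered by inverting $\Delta_t$ on $K+\gamma^2\pa_z\rho$. Without this change of unknown your energy inequality for $f$ fails at the very first linear term.

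Secondarily, your coordinate change $z=x-ty-\Phi(t,y)$, $v=y$ is not the paper's. The paper uses the inviscid nonlinear change $z=x-tv$, $v=y+\tfrac1t\int_0^t u^x_0\,d\tau$, in which the vertical coordinate itself is perturbed and $h=v'-1$ decays in time; the paper explicitly argues that this choice (rather than a BMV-type viscous one) is forced by the lack of thermal diffusion. In your frame the perturbation $\pa_v\Phi=-\int_0^t\om_0\,d\tau$ of the distorted Laplacian does not decay (it converges to a nonzero profile), so the elliptic and commutator estimates would need a separate treatment and it is no longer ``exactly as in Theorem~\ref{Thm:main}''.
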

The results are surprising at first glance. Normally the dissipation term may have smoothing effect on the system. Then the infinite regularity assumptions on initial perturbations are not necessary. However, the buoyancy force term $\pa_x\th$ brings the trouble. One may find some evidences about the necessity of Gevrey-$3_-$ regularity assumptions on the initial perturbation in Section \ref{Section_3}. 
As mentioned, the buoyancy force term $\pa_x\th$ leads to a time growth of the vorticity in the inviscid model. The presence of the viscosity term $\Delta \om$ is a physical setting which also stabilizes the equation of vorticity but destabilizes the equation of density. It causes a significant change of the behavior even at the linear level comparing to the inviscid case. Although the vorticity $\om$ decays as $\f{1}{t^2}$, the density $\th$ becomes worse and does not decay any more. To characterize the mixing effects of the buoyancy force term $\pa_x\th$ and dissipation term $\Delta\om$, we introduce an important good unknown $K$ in this paper, see Section \ref{Section_Main_Result} for more details.  

The mechanism leading to stability is the synergism of density mixing, vorticity mixing and velocity diffusion. It is similar to the inviscid damping caused by vorticity mixing. 
In \cite{Orr_1907}, Orr observed an important phenomenon that the velocity will tend to 0 as $t\to \infty$. This phenomenon is so-called inviscid damping. In \cite{Bedrossian_2015}, Bedrossian and Masmoudi proved nonlinear inviscid damping around the Couette flow in Gevrey class $2_-$ (see also \cite{ionescu_jia_2019_couette}). Nonlinear asymptotic stability and inviscid damping are sensitive to the topology of the perturbation. There are also some negative results. In \cite{Lin_2011}, Lin and Zeng constructed periodic solutions near Couette flow. Recently, Deng and Masmoudi \cite{Deng_Masmoudi_2018} proved some instability for initial perturbations in Gevrey class $2_+$. For general shear flow, due to the presence of the nonlocal operator the inviscid damping for general shear flows is a challenge problem even at the  linear level.
For the linear inviscid damping we refer to \cite{Case_1,zillinger_2017_linear,wei_zhang_zhao2018linear,grenier_Nguyen_Rousset2020linear,jia2020linear,jia2020linear_Gevrey} for the results for general monotone flows. 
For non-monotone flows such as the Poiseuille flow and the Kolmogorov flow, another dynamic phenomena should be taken into consideration, which is so-called the vorticity depletion phenomena, predicted by Bouchet and Morita   \cite{bouchet2010large} and later proved by Wei, Zhang and Zhao \cite{wei_Zhang_Zhao2019linear,wei_zhang_zhao2020linear}. Very recently, Ionescu and Jia \cite{ionescu2020nonlinear}, Masmoudi and Zhao \cite{masmoudi_zhao2020nonlinear} proved the nonlinear inviscid damping for stable monotone shear flow independently. The inviscid damping is the analogue in hydrodynamics of the Landau damping found by Landau \cite{Landau_1946} and later proved by Mouhot and Villani \cite{Mouhot_Villani_2011} (see also \cite{bedrossian_Masmoudi_Mouhot_2016, Bedrossian_2021}), which shows the rapid decay of the electric field of the Vlasov equation around homogeneous equilibrium. See \cite{zillinger2017circular,ren_zhao_2017,ionescu_jia_2019_axi,bedrossian_Coti_Vicol2019vortex,wei_Zhang_Zhu2020linear} for similar phenomena in various system. 

It remains a very interesting problem to study the nonlinear asymptotic stability/instability of shear flow for the Euler Boussinesq system. 

We also remark that when $\th=0$, the system \eqref{eq:NSB4} reduces to the 2D Navier Stokes. The stability problem of 2D Couette flow has previously been investigated. One may refer to \cite{Bedrossian_2016, bedrossian_Vicol_Wang2018sobolev,masmoudi_zhao2020enhanced,Masmoudi_Zhao_2019} for infinite channel case, and to \cite{chen_Li_Wei_Zhang2018transition,bedrossian_he2019inviscid} for finite channel case and to \cite{Bedrossian_Germain_Masmoudi_I,Bedrossian_Germain_Masmoudi_II,BGM_2017_1,Wei_Zhang_2020,Chen_wei_zhang_2020_3d} for stability results of 3D Couette flow. We also refer to references \cite{cotizelati_elgindi2020enhanced_poiseuille,li_wei_Zhang2020pseudospectral,ding_lin2020enhanced,lin_Xu2019metastability} for the stability results of other shear flows. 

In a forthcoming paper, the small viscosity case will be studied, where the Richardson number will play an important role. Of course, under the assumption that the initial perturbations are sufficiently small (depending on the viscosity), and by following the proof in this paper, one can prove the stability results. The main problem in the small viscosity case should be the optimality of the size. 
\subsection{Notation and conventions}

See \cite[Appendix A]{Bedrossian_2015}  for the Fourier analysis conventions we are taking. A convention we generally use is to denote the discrete $x$ (or $z$) frequencies as subscripts. By convention we always use Greek letters such as $\eta$ and $\xi$ to denote frequencies in the $y$ or $v$ direction and lowercase Latin characters commonly used as indices such as $k$ and $l$ to denote frequencies in the $x$ or $z$ direction (which are discrete). Another convention we use is to denote $\rmM,\rmN,\rmK$ as dyadic integers. That is $\rmM,\rmN,\rmK\in \mathbb{D}$ where 
\beno
\mathbb{D}=\left\{\f12,1,2,4,8,...,2^j,...\right\}.
\eeno
When a sum is written with indices $\rmK,\rmM,\rmM',\rmN$ or $\rmN'$ it will always be over a subset of $\mathbb{D}$. 
We will mix use same $\rmA$ for $\rmA f=(\rmA(\eta)\hat{f}(\eta))^{\vee}$ or $\rmA\hat{f}=\rmA(\eta)\hat{f}(\eta)$, where $\rmA$ is a Fourier multiplier.

We use the notation $f\lesssim g$ when there exists a constant $C>0$ independent of the parameters of interest such that $f\leq Cg$ (we analogously define $g\gtrsim f$). Similarly, we use the notation $f\approx g$ when there exists $C>0$ such that $C^{-1}g\leq f\leq Cg$. 

We will denote the $l^1$ vector norm $|k,\eta|=|k|+|\eta|$, which by convention is the norm taken in our work. Similarly, given a scalar or vector in $\R^n$ we denote 
\beno
\langle v\rangle = (1+|v|^2)^{\f12}. 
\eeno
We use a similar notation to denote the $x$ or $z$ average of a function: $<f>=\f{1}{2\pi}\int f(x,y)dx=f_0$. We also frequently use the notation $f_{\neq}=P_{\neq}f=f-f_0$. We denote the standard $L^p$ norms by $\|\cdot\|_p$ for $1\leq p\leq \infty$. 

For any $f$ defined on $\R$, we make common use of the Gevery-$\f1s$ norm with Sobolev correction defined by
\beno
\|f\|_{\mathcal{G}^{\la,\s;s}}=\sum_k\int\left|\hat{f}_k(\eta)\right|^2e^{2\la|k,\eta|^s}\langle k,\eta\rangle^{2\s}d\eta.
\eeno

For $\eta\geq 0$, we define $\rmE(\eta)\in \mathbf{Z}$ to be the integer part. We define for $\eta\in \R$ and $1\leq |k|\leq \rmE(|\eta|^{\f13})$ with $\eta k\geq 0$, $t_{k,\eta}^{-}=\big|\f{\eta}{k}\big|-\f{|\eta|}{2|k|^3}$, $t_{k,\eta}^{+}=\big|\f{\eta}{k}\big|+\f{|\eta|}{2|k|^3}$ and the critical intervals
\beno
\mathrm{I}_{k,\eta}=\left\{
\begin{split}
&[t_{k,\eta}^-,t_{k,\eta}^+]\quad &\text{if}\ \eta k\geq 0 \ \text{and } 1\leq |k|\leq \rmE(|\eta|^{\f13}),\\
&\emptyset \quad &\text{otherwise}.
\end{split}\right.
\eeno
We also introduce $\rmI_{k,\eta}\eqdef [t_{k,\eta}^-,t_{k,\eta}^+]\subset [\f{2\eta}{2k+1},\f{2\eta}{2k-1}]\eqdef\bar{\rmI}_{k,\eta}$.

\section{Main difficulties, ideas and sketch of the proof}\label{Section_Main_Result}
We next give the proof of Theorem \ref{Thm:main}, starting the primary steps as propositions which are proved in subsequent sections. 
The stability or instability of the steady state \eqref{Steady_States} for the  2D nonlinear  Euler Boussinesq system (i.e., \eqref{eq:NSB1} with $\nu=0$)   is unknown due to the growth of vorticity, and to authors knowledge, even no partial result is available. The complexity of the problem becomes  clear form the linear behavior of the vorticity of the linearized problem. In fact as shown in \cite{Yang_2016_1}, the vorticity $w(t)$ grows roughly like $\sqrt{t}$. This seems far away from the situation of the Euler equation discussed in \cite{Bedrossian_2015} where the vorticity stays bounded, and even without any time growth,  a Gevrey-$2$ regularity in \cite{Bedrossian_2015} was necessary to close the estimates and prove stability. See also \cite{Deng_Masmoudi_2018} for a negative result if  the  regularity is  below Gevery-$2 $. Hence, it seems that a stability result for the 2D nonlinear  Euler Boussinesq system may not be possible even for analytic regularity, since a  small perturbation of the steady state \eqref{Steady_States}  may amplify  by  a very large factor. Therefore, from  the  stability point of view, the presence of the viscosity term $\Delta v$ in \eqref{eq:NSB1} is completely  justified both physically and mathematically. However the   presence of viscosity will  lead to other complications,  since it acts as a stabilizing factor only for the vorticity equation, but due to the presence of the buoyancy forcing term $-\bar{\rho} g e_2$, the viscosity  has a  destabilizing effect on the equation of density, since in the absence of viscosity, $\rho$ decays roughly like $\f{1}{\sqrt{t}}$, but in the presence of viscosity,  $\rho$ does not decay at all. This leads to a major difficulty in the analysis and  due to this fact and from the growth of the toy model in Section \ref{Section_3}, it seems that a Gevrey-$3 $ regularity is needed. In a forthcoming paper, we will study the optimality of this regularity. 

Hence, in order to use the damping of the vorticity equation for the density equation, we introduce a new unknown $K$ that connect the vorticity and the density (see the definition of $K$ in \eqref{K_Eq_Linear} for the linear problem and the adapted one \eqref{K_Equation}  for the nonlinear problem). The unknown $K$ creates somehow a balance between the buoyancy term and the viscosity term. However, in terms of analysis it leads to some extra terms that we should control carefully. (See the definition of $\mathbf{H}$ in \eqref{eq: H_formula}). 

Another issue in the proof is that even in the presence of viscosity, it seems not possible to use the nonlinear coordinate systems introduced in \cite{Bedrossian_2016}, since this leads to a shear flow term in the equation of density, which cannot be controlled. So, due to this  we rely on an inviscid change of coordinates as in \cite{Bedrossian_2015}. However, due to  the presence of the viscosity, the control of the coordinate systems is   different from  the one in \cite{Bedrossian_2015} and the extra $L^1$-control  \eqref{L_1_Assumption} is needed to get enough decay of the coordinates.

Also, compared to \cite{Bedrossian_2015} the norm introduced here (see \eqref{Multiplier}) contains the two extra components  $\cM_k(t,\eta)$ and $\rmB_k(t,\eta)$. The multiplier $\cM_k(t,\eta)$   is used to control the growth  in appropriate  time regime and together with $\rmB_k(t,\eta)$ they  have  been also used as  ``ghost" weight in phase place to control the growth coming from some linear terms.               

One of the key ideas in the proof of Theorem \eqref{Thm:main} is the construction of time-dependent norm which contains several components, each component is  introduced  to control the growth predicted by the toy model in different time regimes.  (See  Section \ref{Section_3} for more details).  
Armed with such a norm, and by applying energy estimates, we were able to allow the loss of regularity at specific frequency and time and it enables us also to pay regularity for time decay in order to close the energy estimates.  
Another complicated issue in the proof is the absence of any damping in the density equation, for this reason we need to find a nice combination that connects the density  to the velocity (see the definition of $K$ in \eqref{K_Equation}). This combination allows us to transfer damping from the velocity equation to the density equation.
Another important remark, which is well known in this Gevrey-type estimates is that by allowing  $\lambda$ to shrink (see \eqref{lambda_Decay}) we were able to introduce the $\rmC\rmK_\la$  terms that will play a role of an extra damping  term that will help to control many terms in some specific time regime.   

\subsection{Linearized behavior and an important good unknown}
Before beginning the proof of Theorem \ref{Thm:main}, we discuss the linearized behavior in more detail and mention some of the main challenges that must be overcome for a nonlinear result. The linearized equation of \eqref{eq:NSB3} or \eqref{eq:NSB4} can be written as:
\beq\label{eq:LNSB3-4}
\left\{  
\begin{aligned}
&\pa_t\om+y\pa_x\om=-\g^2\pa_x\th+\Delta \om,\\
&\pa_t\th+y\pa_x\th=\g_1u^y,\\
&u=\na^{\bot}\psi=(-\pa_y\psi,\pa_x\psi),\quad \Delta\psi=\om,
\end{aligned}
\right.
\eeq
where the parameters $\g, \g_1$ varies in different cases. Let us consider the simple case: $\g=1$ and $\g_1=0$, which are the parameters of the linearized equation of  \eqref{eq:NSB4}. Now we introduce the linear change of coordinates: 
\beq
z=x-ty,\quad f(t,z,y)=\om(t,x,y),\quad \rho(t,z,y)=\th(t,x,y).
\eeq

From \eqref{eq:LNSB3-4} with $\g=1$ and $\g_1=0$, we have
\beq\label{eq:LNSB5}
\left\{
\begin{aligned}
&\pa_tf=-\pa_z\rho+(\pa_{v}-t\pa_z)^2f+\pa_{zz}f,\\
&\pa_t\rho=0,
\end{aligned}
\right.
\eeq
which gives us that $\hat{\rho}(t,k,\eta)=\hat{\rho}_{in}(k,\eta)$ and for $k\neq 0$
\beno
\hat{f}(t,k,\eta)=e^{-\int_0^t(ks-\eta)^2+k^2ds}\left(\hat{f}_{in}(k,\eta)-ik\hat{\rho}_{in}(k,\eta)\int_0^te^{\int_0^{\tau}(ks-\eta)^2+k^2ds}d\tau\right). 
\eeno
By the fact that  
\begin{align*}
e^{-\int_0^t(ks-\eta)^2+k^2ds}\int_0^te^{\int_0^{\tau}(ks-\eta)^2+k^2ds}d\tau\lesssim \f{1}{(kt-\eta)^2+k^2},
\end{align*}
we get that 
\ben\label{eq:est-lin-1}
|\hat{f}(t,k,\eta)|\lesssim e^{-\int_0^t(ks-\eta)^2+k^2ds}|\hat{f}_{in}(k,\eta)|+\f{k}{(kt-\eta)^2+k^2}|\hat{\rho}_{in}(k,\eta)|. 
\een
However for the nonlinear system and the case $\g_1\neq 0$ even at the linear level, we can not write the precise formula for the solutions. It is not just a technical difficulty. Indeed, to obtain the behavior of the solution, we should balance the effect of the buoyancy force term $\pa_x\th$ and dissipation term $\Delta\om$. 
We define the following ``good" unknown: 
\begin{equation}\label{K_Eq_Linear}
K=-\g^2\pa_z\rho+\Delta_{L}f
\end{equation}
which has good properties. Indeed we have
\begin{align*}
\pa_t\hat{K}+(k^2+(\eta-kt)^2)\hat{K}=-\g^2\f{k^2}{k^2+(\eta-kt)^2}\hat{f}+2k(\eta-kt)\hat{f}
\end{align*}
which gives us that
\begin{align*}
\f12\pa_t|\hat{K}|^2+(k^2+(\eta-kt)^2)|\hat{K}|^2
&\leq \g^2\f{k^2}{k^2+(\eta-kt)^2}|f||\hat{K}|+2k|\eta-kt||f||\hat{K}|\\
&\leq \g^2\f{k^2}{k^2+(\eta-kt)^2}|\hat{f}||\hat{K}|+10k^2|\hat{f}|^2+\f{1}{10}(\eta-kt)^2|\hat{K}|^2. 
\end{align*}
Note that $K$ satisfies a diffusion equation (or transport diffusion equation in the $(t,x,y)$ coordinate) with forcing terms that decay fast enough. 

Therefore we get $|\hat{K}(t,k,\eta)|\lesssim |\hat{K}_{in}(k,\eta)|$ and
\beno
|\hat{f}(t,k,\eta)|\lesssim \f{1}{(kt-\eta)^2+k^2}(|k||\hat{\rho}_{in}|+|\hat{K}_{in}|). 
\eeno

The good unknown $K$ characterizes the mixing effects of the buoyancy force term $\pa_x\th$ and dissipation term $\Delta\om$, which is one of the key structures we found in this system. For the nonlinear system, $K$ will change slightly due to the nonlinear change of coordinates, but we will use, without ambiguity, the same notation, see \eqref{K_Equation} below.

\subsection{Coordinate transform}
In order to tackle the nonlinear problem \eqref{eq:NSB3}, 
we make suitable nonlinear change of variables. The basic idea of this change of variables is to get a rid of the zero mode, since the zero mode does not decay. There are two different types of change of coordinates: one is the inviscid one for the Euler equation, see \cite{Bedrossian_2015}, the other one is viscous one for the Navier-Stokes equation, see \cite{Bedrossian_2016}. The coordinates system were chosen in a very natural way in both cases. However, in this paper, the diffusion term only appears in the vorticity equation, the density equation is a transport equation. It is always a challenge problem when the diffusion terms are different in one system, see \cite{wei2017global}. Here we use the inviscid change of coordinates. 
 
Let 
$$u_0^x(t,y)=-\frac{1}{2\pi }\int_{\mathbb{T}} \partial _{y}\psi(t,x,y)dx=\frac{1}{2\pi }\int_{\mathbb{T}} u^{x}(t,x,y)dx,$$ then 
system \eqref{eq:LNSB3-4} can be rewritten
as 
\begin{equation}
\left\{ 
\begin{array}{ll}
\partial _{t}\omega +(y+u_0^x(t,y))\partial _{x}\omega +(\nabla
^{\perp }\psi_{\neq }\cdot \nabla )\omega =-\gamma^2\partial
_{x}\theta-\Delta \om ,\vspace{0.2cm} &  \\ 
\partial _{t}\theta +(y+u_0^x(t,y))\partial _{x}\theta +(\nabla
^{\perp }\psi_{\neq }\cdot \nabla )\theta =\gamma_1\partial_x\psi.  & 
\end{array}%
\right.  \label{Main_System_Bousq_Vorticity_Change}
\end{equation}
To remove the non-decaying zero mode from the above system, we introduce the change of variables 
$(t,x,y)\to (t,z,v)$ with 
\begin{equation}
\left\{ 
\begin{aligned}
z&=x-tv, \\ 
v&=y+\f{1}{t}\int_0^t\f{1}{2\pi}\int_{\bbT}u^x(\tau,x,y)dxd\tau. 
\end{aligned}
\right.  \label{Change_Variables_2}
\end{equation}
We then have
\begin{equation}
\frac{d}{dt}\big( t(\pa_{y}v(t,y)-1)\big) =-\omega_0(t,y).
\end{equation}
Let 
\begin{align*}
f (t,z,v )&=\omega (t,x,y),   \quad 
 \rho (t,z,v )=\theta (t,x,y),  \quad 
\phi (t,z,v )=\psi (t,x,y),\\
v'(t,v)&=\pa_yv(t,y),\quad 
v''(t,v)=\pa_{yy}v(t,y),\quad
g(t,v)=\pa_tv(t,y),\\
\tilde{u}_0(t,v)&=u_0^x(t,y),\quad
h(t,v)=v'(t,v)-1.
\end{align*}
Then $v''(t,v)=\f{1}{2}\pa_v\big((v')^2-1\big)$
and $\phi $ satisfies the equation 
\begin{equation}
\Delta _{t}\phi \eqdef\left[ \partial _{zz}+(v')^{2}(\partial
_{\upsilon }-t\partial _{z})^{2}+v''(\partial _{\upsilon
}-t\partial _{z })\right] \phi =f,
\end{equation}
where $\Delta_t$ can be regarded as a perturbation of $\Delta_L$. 

Hence, we obtain the following equations  for $f$:
\begin{equation}
\partial _{t}f+\mathbf{u}\cdot
\nabla _{z,\upsilon }f  
=-\gamma ^{2}\partial _{z} \rho + \Delta_{t}f,\qquad \Delta _{t}\phi =f,
\end{equation}
with 
\begin{equation}
\mathbf{u}(t,z,\upsilon)=\Big( 
\begin{matrix}
0 \\ 
g  
\end{matrix}%
\Big)+v'\nabla _{z,\upsilon }^{\perp }P_{\neq }\phi.
\end{equation}
We also obtain the following equation for $\rho$: 
\begin{subequations}\label{Main_System_I}
\begin{equation} \label{Main_New_system_3}
\begin{aligned}
&\partial _{t} \rho +\mathbf{u}\cdot
\nabla _{z,\upsilon } \rho =\gamma_1\partial _{z}\phi ,\vspace{0.05cm} \\ 
&\Delta _{t}\phi =f,  
\end{aligned}
\end{equation}
Following the argument of the linearized system, let us introduce
\begin{equation}\label{K_Equation}
K=-\gamma^2\partial_z \rho+\Delta_t f. 
\end{equation}

Hence, $K$ satisfies the equation: 
 \begin{equation}\label{K_Equation}
\begin{aligned}
&\partial_t K+\mathbf{u}\cdot \nabla_{z,v}K-\Delta_t K
=\mathbf{H}-\gamma^2\gamma_1\partial _{zz}\phi-2(\partial_{v}-t\partial_z) \partial_zf
\end{aligned}  
\end{equation}
\end{subequations}
where
\begin{equation}\label{eq: H_formula}
\begin{aligned}
\mathbf{H}
=&\gamma^2 v^{\prime}\nabla^{\perp}_{z,v}\partial_z P_{\neq }\phi\cdot \nabla_{z,v} \rho
-2h(\partial_{v}-t\partial_z) \partial_zf+2f_0v'\pa_z(\pa_v-t\pa_z)f\\
&-2(v^\prime)^3(\partial_{v}-t\partial_z)\nabla^\perp_L  \phi_{\neq}\cdot  \nabla_L (\partial_{v}-t\partial_z)f
-2v^\prime \nabla^\perp_L \partial_z \phi_{\neq}\cdot \nabla_L \partial_zf
\\
&-\pa_z\big(v^{\prime\prime}v^\prime (\partial_{v}-t\partial_z)  \phi_{\neq}(\pa_v-t\pa_z) f \big).
\end{aligned}
\end{equation}

\begin{subequations}\label{Main_System_2}
We also have
\begin{equation}\label{Equ_Av_f}
\partial_tf_{0}+ g\partial_v f_0-(v^\prime)^2 \partial_{vv}f_0 -
 v^{\prime\prime} \partial_{v}f_{0}+ v^\prime  < \nabla^\perp \phi_{\neq}\cdot
\nabla _{z,\upsilon }f_{\neq}>=0. 
\end{equation}
Therefore
\begin{equation}
\partial _{t}h +g\partial _{v }h =-\frac{1}{t}\left( f_0+h \right) \label{eq: h-equ}
\end{equation}
and
\begin{equation*}
\partial _{t}g+\frac{2}{t}g+g\partial _{\upsilon }g=-\frac{1}{t}v^{\prime
}< \nabla ^{\perp }\phi _{\neq }\cdot \nabla \tilde{u}_{\neq}>+\f1 t v'\pa_vf_0
\end{equation*}
where $\tilde{u}(t,z,v)=u(t,x,y)$.
Let 
\begin{equation}
\bar{h}=-\f{1}{t}(f_0+h)=v'\pa_vg, \label{eq: bar h}
\end{equation}
\end{subequations}
then $\bar{h}$ satisfies 
\beno
\partial_t\overline{h}+\frac{2}{t}\overline{h}+g\partial_v\overline{h}+\frac{1}{t}K_0
=\frac{1}{t}v' < \nabla^\perp_{z,v} \phi_{\neq}\cdot \nabla _{z,v }f>.  
\eeno
In the sequel, we will perform the estimates using system \eqref{Main_System_I} together with \eqref{Main_System_2}. 

\subsubsection{Discussion of the nonlinear change of coordinates}
In this paper, we use the inviscid nonlinear change of coordinates (see \cite{Bedrossian_2016} for the viscous nonlinear change of coordinates). 

By the definition of $g$, an easy calculation shows that
\beno
g(t,v)=\pa_tv(t,y)=\f{1}{t^2}\int_{0}^t s\pa_tu_0^x(s,y)ds,
\eeno
where the zero mode of velocity satisfies 
\begin{equation}
\partial _{t}u^x_0-\pa_{yy}u^x_0=-\pa_y<\pa_x\psi u^x>.  \label{eq:u_1_Equation}
\end{equation}

In order to get enough decay of $g$ in lower regularity, we introduce the equation of the stream function. By the fact that $\int u_{in}(x,y)dxdy=\int \th_{in}(x,y)dxdy {=0}$, we have for any $t\geq 0$, $\int u(t,x,y)dxdy=0$. Thus the average of the stream function $\psi_0(t,y)=\f{1}{2\pi}\int\psi(t,x,y)dx$ satisfies the 1D nonlinear heat equation 
\begin{equation}
\partial _{t}\psi_0-\pa_{yy}\psi_0=-<\pa_x\psi u^x> \label{eq:psi_0}
\end{equation}
and $u_0^x(t,y)=-\pa_y\psi_0(t,y)$.

\subsection{Main Energy estimate}
We will use a carefully designed time-dependent norms written as
\begin{equation}\label{Norm_K}
\|\rmA(t,\na)K\|_2^2=\sum_k\int_{\eta}|\rmA_{k}(t,\eta)\widehat{K}_k(t,\eta)|^2d\eta,
\end{equation}
and
\begin{equation}\label{Norm_rho}
\|\rmA(t,\na)\rho\|_2^2=\sum_k\int_{\eta}|\rmA_{k}(t,\eta)\widehat{\rho}_k(t,\eta)|^2d\eta.
\end{equation}

The multiplier $\rmA$ has several components
\begin{equation}\label{Multiplier}
\rmA_{k}(t,\eta)=e^{\la(t)|k,\eta|^s}\langle k,\eta\rangle^{\s}\rmJ_k(t,\eta)\cM_k(t,\eta)\rmB_k(t,\eta). 
\end{equation}
The index $\la(t)$ is the bulk Gevrey-$\f{1}{s}$ regularity and will be chosen 
to satisfy (see \cite{Bedrossian_2015})
\begin{equation}
\lambda(t)=\frac{3}{4}\lambda_0+\frac{1}{4}\lambda^\prime,\qquad t\leq 1
\end{equation}
and 
\begin{equation}\label{lambda_Decay}
\frac{d}{dt}\lambda(t)=-\frac{\delta_\lambda}{\langle t\rangle^{2\tilde{q}}}(1+\lambda(t)),\quad t>1,
\end{equation}
where $\delta_\lambda\approx \lambda_0-\lambda^\prime$ is a small parameter that  ensures $\lambda(t)>\lambda_0/2+\lambda^\prime/2$ and $\tilde{q}$ is a parameter that will be determined by the proof. Let us also remark here that to study analytic data, $s=1$, we would need to add an additional Gevrey-$\f{1}{s'}$ correction to $\rmA$ with $s'\in (\f13,1)$ as an intermediate regularity so that we may take advantage of certain beneficial properties of Gevrey spaces.

The main multipliers for dealing with the nonlinear interaction are
\ben
\rmJ_{k}(t,\eta)=\f{e^{\mu|\eta|^{\f13}}}{\Theta_k(t,\eta)}+e^{\mu|k|^{\f13}},
\een
and
\ben
\cM_k(t,\eta)=\f{e^{4\pi\d_{\rmL}^{-1}|\eta|^{\f13}}}{\rmg(t,\eta)}+e^{4\pi \d_{\rmL}^{-1} |k|^{\f13}},
\een
with $\d_{\rmL}>0$ being a small enough constant that will be determined by the linear nonlocal term. The weights
 $\Theta_k(t,\eta)$ and $\rmg(t,\eta)$ are constructed in Section \ref{Section_3}. 

The multiplier $\rmB$ is defined as follows:
\beno
\rmB_k(t,\eta)=\exp\Big(\d_{\rmB}^{-1}\int_0^t\f {b\big(s,{k},\eta\big)}{1+(s-\f{\eta}{k})^2}ds\Big).
\eeno   
Here 
\beno
b\big(t,{k},\eta\big)=\chi\big(\f{100}{t}\big)\chi\big(\f{\eta}{t^3}\big)\chi\big(\f{\eta}{k^3}\big)\chi_1\big(\f{\eta}{kt}\big)
\eeno
where $0\leq \chi(x)\leq 1$ is a smooth cut-off function satisfying $\chi(x)\equiv 1$ for $|x|\leq 8$ and $\mathrm{supp}\, \chi\subset [-10,10]$ and $0\leq \chi_1(x)\leq 1$  is a smooth cut-off function satisfying $\chi_1(x)\equiv 1$ when $\f12\leq |x|\leq \f32$ and $\mathrm{supp}\, \chi\subset [\f13,\f52]$. 
 
Thus we get that $\rmB_0(t,\eta)\equiv 1$ and 
\ben
\rmB_k(t,\eta)\approx_{\d_{\rmB}} 1. 
\een

Note that 
\ben
\sup_{t,k}|\pa_{\eta}b(t,k,\eta)| \lesssim \f{1}{\langle \eta\rangle}. 
\een

With this special norm, we can define our main energy:
\begin{align}
\rmE(t)=\f12\|\rmA(t,\na)K\|_2^2+\f12\|\rmA(t,\na)\rho\|_2^2,\quad
\rmE_{d}(t)=\f12\langle t\rangle\|\rmA \langle\pa_{v}\rangle^2h\|_{2}^2
\end{align}
and
\begin{align}
\rmE_{lo,f_0}(t)&=\sum\limits_{k=0}^{3}\f{t^{k}}{4^k}\|\pa_v^{k}f_0\|_{\cG^{\la(t),\b;s}}^2,\\
\rmE_{lo,g}(t)&=\langle t\rangle^{4}\|\pa_v^{3}g\|_{\cG^{\la(t),\b;s}}^2, \quad
\rmE_{lo,h}(t)=\langle t\rangle^{2}\|\pa_v^{2}h\|_{\cG^{\la(t),\b;s}}^2,\quad 
\end{align}
and the assistant energy
\begin{align}
\rmE_{as,\psi_0}(t)&=\max\Big(
\langle t\rangle \|\psi_0\|_{L^{\infty}}^2,\ 
\langle t\rangle^4\|\pa_{yyy}\psi_0\|_{L^{\infty}}^2,\ 
\langle t\rangle^{\f52}\|\psi_0\|_{\dot{H}^2}^2,\ 
\langle t\rangle^{\f92}\|\psi_0\|_{\dot{H}^4}^2\Big),\\
\rmE_{as,g}(t)&=\max\Big(\f{\langle t\rangle^4\|g\|_{L^{\infty}}^2}{(1+\ln \langle t\rangle)^2},\ \langle t\rangle^{\f{7}{2}}\|g\|_{L^2}^2,\ 
\ep_1^2\langle t\rangle^{4-\ep_1}\|g\|_{\dot{H}^{\f12-\ep_1}}^2,\ \ep_2^2\langle t\rangle^{4}\|g\|_{\dot{H}^{\f12+\ep_2}}^2\Big),
\end{align}
where $0<\sqrt{\ep}\leq \ep_1^2$ with $0<\ep_1\leq 3s+1-4\tilde{q}$ and $0<\ep_2<\f12$. 

Let us make a technical remark about the assistant energy here: We use the inviscid change of coordinate which brings us new challenge since the equation are not both inviscid. It leads to the appearance of the term $\f1tv'\pa_vf_0$ in the equation of $g$. Then it holds that $\|g\|_{L^2}\lesssim \ep\langle t\rangle^{-\f74}$, which decays not fast enough to control the main energy. However, the assistant energy together with Proposition \ref{prop: g,f_0,h} shows that the homogeneous norms $\|g\|_{\dot{H}^{\f12-\ep_1}}, \|g\|_{\dot{H}^{\f12+\ep_2}}, \|\pa_vg\|_{\cG^{\la,\b;s}}$, the Fourier $L^1$ norms $\|\hat{g}\|_{L^1}, \|g\|_{\cG_1^{\la,\b;s}}$ and $\|g\|_{L^{\infty}}$ have better decay rate. Luckily we can use these norms of $g$ to close our energy estimate. 

It is natural to compute the time evolution of $\|\rmA(t,\na)K\|_2^2$ and $\|\rmA(t,\na)\rho\|_2^2$.
To lighten  notations, we define for $\varphi\in\{K,\rho\}$, the following:
\begin{equation}\label{Ck_Terms_rho}
\begin{aligned}
\rmCK_{\la,\varphi}=&\,-
\dt{\lambda}(t)\Vert |\nabla|^{\f{s}{2}} \rmA\varphi\Vert_{L^2}^2,\\
\rmCK_{\Theta,\varphi}=&\, \sum_k\int_\eta\frac{\partial_t \Theta_{k}(t,\eta)}{\Theta_{k}(t,\eta)}  e^{\la(t)|k,\eta|^s}\langle k,\eta\rangle^{\s} \f{e^{\mu|\eta|^{\f13}}}{\Theta_k(t,\eta)}\cM_k(t,\eta)\rmB_k(t,\eta)\rmA_{k}(t,\eta)|\widehat{\varphi}_k(t,\eta)|^2d\eta,\\
\rmCK_{M,\varphi}=&\,\sum_k\int_\eta\frac{\partial_t \rmg(t,\eta)}{\rmg(t,\eta)}e^{\la(t)|k,\eta|^s}\langle k,\eta\rangle^{\s} \f{e^{4\pi\d_{\rmL}^{-1}|\eta|^{\f13}}}{\rmg(t,\eta)}\rmJ_{k}(t,\eta)\rmB_k(t,\eta)\rmA_{k}(t,\eta)|\widehat{\varphi}_k(t,\eta)|^2d\eta, \\
\rmCK_{B,\varphi}=& \d_{\rmB}^{-1}\sum_k\int_\eta\frac{b\big(t,{k},\eta\big)}{1+(t-\f{\eta}{k})^2}|\rmA_k(t,\eta)\widehat{\varphi}_k(t,\eta)|^2d\eta, 
\end{aligned}    
\end{equation}
where $\rmCK$ stands for `Cauchy-Kovalevskaya'. 
In what follows, we define 
\begin{equation}\label{A_tilde}
\tilde{\rmA}_{k}(t,\eta)=e^{\la(t)|k,\eta|^s}\langle k,\eta\rangle^{\s} \f{e^{\mu|\eta|^{\f13}}}{\Theta_k(t,\eta)}\cM_k(t,\eta)\rmB_k(t,\eta)
\end{equation}
and
\begin{equation}\label{A_tildetilde}
\tilde{\tilde{\rmA}}_{k}(t,\eta)=e^{\la(t)|k,\eta|^s}\langle k,\eta\rangle^{\s} \f{e^{4\pi\d_{\rmL}^{-1}|\eta|^{\f13}}}{\rmg(t,\eta)}\rmJ_k(t,\eta)\rmB_k(t,\eta)
\end{equation}
which satisfy $\tilde{\rmA}\leq \rmA$, $\tilde{\tilde{\rmA}}\leq \rmA$. In particular if $|k|\leq |\eta|$ then $\rmA \lesssim  \tilde{\rmA}$ and $\rmA \lesssim \tilde{\tilde{\rmA}}$. 

First, we have 
\begin{equation}\label{rho_Energy_1}
\begin{aligned}
\f12\f{d}{dt}\int|\rmA\rho|^2dzdv
&=-\rmCK_{\la,\rho}-\rmCK_{\Theta,\rho}-\rmCK_{M,\rho}-\rmCK_{B,\rho}-\int \rmA\rho \rmA\big({\bf{u}}\cdot \na \rho\big) dzdv\\
&\quad+\int\rmA\rho\rmA\big(\pa_z\phi)dzdv\\
&=-\rmCK_{\la,\rho}-\rmCK_{\Theta,\rho}-\rmCK_{M,\rho}-\rmCK_{B,\rho}-\rmNL_{\rho}+\Pi_{\rho}.
\end{aligned}
\end{equation}
Similarly, we have 
\begin{align}
\f12\f{d}{dt}\int|\rmA K|^2dzdv
\nonumber=&\,-\rmCK_{\la,K}-\rmCK_{\Theta,K}-\rmCK_{M,K}-\rmCK_{B,K}+\int \rmA K \rmA(\Delta_t K)  dzdv
\\
\nonumber&-\int \rmA K \rmA\big({\bf{u}}\cdot \na K \big) dzdv+\int \rmA K \rmA (\mathbf{H}) dzdv\\
\nonumber&+\gamma^2\gamma_1\int \rmA K \rmA( \partial _{zz}\phi)dzdv-2\int \rmA K\rmA( (\partial_{v}-t\partial_z) \partial_zf) dzdv\\
\label{dE_K_dt}
=&\, -\rmCK_{\la,K}-\rmCK_{\Theta,K}-\rmCK_{M,K}-\rmCK_{B,K}\\
&+\rmE-\rmNL_K^1-\rmNL_K^2+\Pi_K^1+\Pi_K^2. 
\end{align}

\subsection{Bootstrap argument and main propositions}
We prove the theorem by a bootstrap argument. In order to avoid discussing the fake singularity in the equations, let us first give the following local-wellposedness theory. 
\begin{lemma}
Under the same assumptions of Theorem \ref{Thm:main} or \ref{Thm:main-2}, we have
\beno
\sup_{t\in [0,1]}\rmE(t)+\rmE_d(t)\leq \ep^2.
\eeno
\end{lemma}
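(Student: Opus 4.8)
The statement to prove is the local-wellposedness lemma: under the assumptions of Theorem~\ref{Thm:main} (or \ref{Thm:main-2}), one has $\sup_{t\in[0,1]}\rmE(t)+\rmE_d(t)\leq \ep^2$. The plan is to treat $[0,1]$ as a short-time, ``no-mixing-yet'' regime where all the elaborate Fourier multipliers are essentially harmless and the problem reduces to a standard Gevrey-class local existence and propagation-of-regularity argument for a transport-diffusion system.

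\textbf{Step 1: Reduction of the multiplier.} First I would observe that on $t\in[0,1]$ the weight $\lambda(t)$ is the constant $\frac34\lambda_0+\frac14\lambda'$, and each of the auxiliary factors is comparable to $1$ uniformly: $\rmB_k(t,\eta)\approx_{\d_\rmB}1$ as already recorded in the excerpt; $\Theta_k(t,\eta)$ and $\rmg(t,\eta)$ are bounded above and below by absolute constants on the compact time interval $[0,1]$ (the critical intervals $\mathrm{I}_{k,\eta}$ that drive their growth only matter for $t\sim|\eta/k|$, which is $\gtrsim 1$ for the relevant frequencies), so $\rmJ_k,\cM_k\approx e^{\mu|\eta|^{1/3}}+e^{\mu|k|^{1/3}}$, etc. Hence $\rmA_k(t,\eta)\approx e^{\lambda(1)|k,\eta|^s}\langle k,\eta\rangle^\s(e^{\mu|\eta|^{1/3}}+e^{\cdots})$ is, up to constants, a fixed Gevrey-$\frac1s$ norm with Sobolev correction (slightly stronger because of the $|\eta|^{1/3}$, $|k|^{1/3}$ pieces, but still a fixed Gevrey norm of the same index $s<1$). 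Since $\lambda(1)<\lambda_0$ and the data are controlled in $\cG^{\lambda_0}$ with the gap $\lambda_0-\lambda(1)>0$, the initial energy satisfies $\rmE(0)\lesssim \ep^2$ with a constant $<1$ after we fix $\ep_0$ small; and $\rmE_d(0)$ is controlled by the data bounds on $h$, $g$, which at $t=0$ reduce (via $h|_{t=0}=0$, $v|_{t=0}=y$, and the definitions) to quantities bounded by the $L^1$-assumption \eqref{L_1_Assumption} and the $\cG^{\lambda_0}$ bound.

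\textbf{Step 2: Local existence and a priori bound.} Next I would set up a standard fixed-point / continuity argument in the Banach space of functions with finite $\sup_{[0,1]}\rmE+\rmE_d$ (or an even cruder Gevrey space, since on $[0,1]$ we have room to spare in $\lambda$). The system \eqref{Main_System_I}--\eqref{Main_System_2} is, for $t\in[0,1]$, a transport-diffusion system with smooth ($\lesssim 1$) coefficients $v',v'',g,\tilde u_0$ driven by the data; the coordinate change \eqref{Change_Variables_2} is a near-identity diffeomorphism on $[0,1]$ because $h|_{t=0}=0$ and $\pa_t h$ is bounded (from \eqref{eq: h-equ}, noting the apparent $\frac1t$ singularity is cancelled by $f_0+h\to 0$ as $t\to0$, or by simply working with $t(\pa_y v-1)$ which starts at $0$). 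Gevrey algebra and product estimates (of the type used throughout the paper, e.g.\ the bilinear bounds underlying $\rmNL_\rho,\rmNL_K,\mathbf H$) give that on $[0,1]$ all nonlinear terms map the energy ball into itself with a small constant once $\ep_0$ is small, while the linear forcing terms $\Pi_\rho,\Pi_K$ and $\pa_{zz}\phi$, $(\pa_v-t\pa_z)\pa_z f$ are bounded by $C\rmE$ (no decay needed on a compact interval). Integrating the energy identities \eqref{rho_Energy_1}, \eqref{dE_K_dt} and the analogous ones for $\rmE_d$ and the $\rmE_{lo}$, $\rmE_{as}$ pieces over $[0,t]$, $t\leq1$, yields $\rmE(t)+\rmE_d(t)\leq \rmE(0)+\rmE_d(0)+C\int_0^t(\rmE+\rmE_d)ds + C\ep\int_0^t(\rmE+\rmE_d)ds$, and Grönwall gives $\sup_{[0,1]}(\rmE+\rmE_d)\leq e^{C}(\rmE(0)+\rmE_d(0))\leq \ep^2$ after choosing $\ep_0=\ep_0(\lambda_0,\lambda',s,\g)$ small enough that $e^{C}\cdot(\text{data constant})\cdot\ep_0\leq 1$.

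\textbf{Main obstacle.} The genuine work is not the time integration (compactness of $[0,1]$ kills all decay issues) but checking that the $\rmCK$ terms are either sign-definite or controllable on $[0,1]$, and more importantly that the fractional-regularity assistant norms in $\rmE_{as,g}$ and the $\dot H^{1/2\pm\ep}$ pieces of $g$ are actually finite and small at $t=0^+$ --- this is delicate because $g(t,v)=\frac1{t^2}\int_0^t s\,\pa_t u_0^x(s,y)\,ds$ has a $\frac0 0$ form as $t\to0$, so one must extract the right limiting behavior of $\pa_t u_0^x$ from the heat equation \eqref{eq:u_1_Equation}/\eqref{eq:psi_0} and show $g$ starts smoothly with the claimed smallness. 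I would handle this by expressing $g$ and $h$ through the regularized quantities $t(\pa_y v-1)$ and $\psi_0$, using parabolic smoothing in \eqref{eq:psi_0} together with the $L^1$ bound \eqref{L_1_Assumption} on $\psi_0|_{t=0}$, and then all the constituent norms of the various energies are continuous up to $t=0$ with value $\lesssim\ep^2$, closing the estimate.
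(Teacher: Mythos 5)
Your overall plan (treat $[0,1]$ as a local-wellposedness window, argue the multiplier reduces to a fixed Gevrey norm, run a Gr\"onwall argument) is the natural route, and the paper does not spell out any alternative -- it simply asserts the lemma follows from the "classical energy method." So the comparison can only be against the standard argument that phrase evokes, and there your sketch has two issues, one cosmetic and one substantive.

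\emph{Cosmetic.} The assertion in Step~1 that $\Theta_k(t,\eta)$ and $\rmg(t,\eta)$ are "bounded above and below by absolute constants on $[0,1]$" is false: for $|\eta|\gtrsim 1$ they are \emph{constant in $t$} on $[0,1]$ (because $t_{\rmE(|\eta|^{1/3}),\eta}\approx|\eta|^{2/3}\gg 1$ and the weights are frozen there), but the frozen value $\Theta_k(0,\eta)\approx|\eta|^{\mu/12}e^{-\mu|\eta|^{1/3}/2}$ decays exponentially in $|\eta|^{1/3}$, by Lemma~\ref{lem:3.1}, and similarly for $\rmg(0,\eta)$ by \eqref{Estimate_g}. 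Your intended conclusion -- that $\rmA(t,\cdot)$ is comparable for all $t\in[0,1]$ to a \emph{fixed} Gevrey-$\frac1s$ norm (with a subcritical $e^{C|\cdot|^{1/3}}$ correction, harmless since $s>\frac13$) dominated with a margin by $\cG^{\lambda_0}$ -- is still correct, but the route you give to it is not. Similarly, $h|_{t=0^+}=-\omega_0(0,\cdot)\neq 0$ (one has $\pa_y v-1=-\frac1t\int_0^t\omega_0\,ds\to-\omega_0(0,\cdot)$), so $\rmE_d(0)$ is not zero, merely $O(\ep^2)$ from the data bound.

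\emph{Substantive.} The Gr\"onwall inequality you write,
$\rmE(t)+\rmE_d(t)\leq \rmE(0)+\rmE_d(0)+C\int_0^t(\rmE+\rmE_d)\,ds+\dots$, does not close, because the transport commutator $\rmT_{1;\rmN}$ in $\rmNL_\rho$ (and its analogue in $\rmNL_K^1$) loses a fractional derivative: the commutator $[\rmA,\mathbf u\cdot\nabla]$ produces terms of size
$\|\,|\nabla|^{s/2}\rmA\rho\|_2^2\cdot\|\mathbf u\|_{\cG^{\dots}}$
(see the estimates in Section~\ref{Section_Transport}), and $\|\,|\nabla|^{s/2}\rmA\rho\|_2^2$ is not bounded by $\rmE$. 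On $t\geq 1$ the bootstrap absorbs this by $\rmCK_\lambda=-\dot\lambda\|\,|\nabla|^{s/2}\rmA\rho\|_2^2$, but on $[0,1]$ the paper's $\lambda(t)$ is constant, so $\rmCK_\lambda\equiv 0$, and the weights $\Theta,\rmg$ are $t$-constant there, so $\rmCK_\Theta,\rmCK_M$ contribute nothing either. Your Main Obstacle paragraph gestures at "checking the $\rmCK$ terms are sign-definite or controllable on $[0,1]$," but with the stated weights there are no helpful $\rmCK$ terms to speak of, and the iteration does not terminate (controlling $\|\,|\nabla|^{s/2}\rmA\rho\|_2$ requires $\|\,|\nabla|^s\rmA\rho\|_2$, etc.). The standard fix -- which is presumably what the paper means by the "classical energy method" -- is to run the estimate not with $\lambda(t)$ but with an auxiliary strictly decreasing $\tilde\lambda(t)$ on $[0,1]$ satisfying $\lambda(1)<\tilde\lambda(0)<\lambda_0$ and $\tilde\lambda(1)=\lambda(1)$, chosen so that $-\dot{\tilde\lambda}\gtrsim\sqrt{\ep}$ (the size of the commutator coefficient on $[0,1]$); the buffer $\lambda_0-\tilde\lambda(0)>0$ still controls $\tilde\rmE(0)\lesssim\ep^2$, the $\rmCK_{\tilde\lambda}$ term absorbs the commutator, and at $t=1$ one recovers the asserted bound since $\tilde\lambda(1)=\lambda(1)$. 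Without some such Cauchy--Kovalevskaya mechanism on $[0,1]$, Step~2 as written is incomplete.
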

The lemma is easy to obtain by using the classical energy method to the transport equation and we omit the proof here. 

The goal is next to prove by a continuity argument that this energy $\rmE(t)+\rmE_d(t)$ together with $\rmE_{lo,g}(t), \rmE_{lo,h}(t), \rmE_{as,\psi_0}(t)$ and $\rmE_{as,f_0}(t)$ are uniformly bounded for all time if $\ep$ is small enough. We define the following controls referred to in the sequel as the bootstrap hypotheses for $t\geq 1$ and some constants $C_0,\ \rmK_d\geq 1$  independent of $\ep$ and determined in the proof, 
\begin{align}
\label{eq:B1}&\rmE(t)+\rmK_d^{-1}\rmE_d(t)\leq 10C_0\ep^2,\\
\label{eq:B2}&\int_1^t\Big(\rmCK_{\la,K}+\rmCK_{\Theta,K}+\rmCK_{M,K}+\rmCK_{B,K}
+\|\na_{L}\rmA K\|_2^{2}\\
\nonumber&\qquad\qquad\qquad\qquad+\rmCK_{\la,\rho}+\rmCK_{\Theta,\rho}+\rmCK_{M,\rho}+\rmCK_{B,\rho}\Big)(s)ds\leq 10C_0\ep^2,\\
\label{eq:B3}&\int_1^t\Big(
\rmCK_{\la,h}+\rmCK_{\Theta,h}+\rmCK_{M,h}+
\left\|\rmA\langle\pa_v\rangle^2h\right\|_2^2\Big)(s)ds\leq 10\rmK_dC_0\ep^2
\end{align}
where
\begin{align}
\rmCK_{\la,h}(t)&=-\dt{\la}(t)\langle t\rangle\left\||\pa_v|^{\f s2}\rmA \langle\pa_v\rangle^2h\right\|_{2}^2,\\
\rmCK_{\Theta,h}(t)&=\langle t\rangle\left\|\rmA \langle\pa_v\rangle^2\sqrt{\f{\pa_t\Theta}{\Theta}}h\right\|_{2}^2,\quad
\rmCK_{M,h}(t)=\langle t\rangle\left\|\rmA \langle\pa_v\rangle^2\sqrt{\f{\pa_t\rmg}{\rmg}}h\right\|_{2}^2. 
\end{align}

The main proposition of this paper is as follows:
\begin{proposition}[Bootstrap]\label{prop:bostp} There exists an $\ep_0\in (0,\f12)$ depending only on $\la,\, \la', \, s$ and $\s$ such that if $\ep<\ep_0$, and on $[1,\rmT^*]$ the bootstrap hypotheses \eqref{eq:B1}-\eqref{eq:B3} hold, then for $\forall t\in [1,T^*]$, 
\begin{align*}
&\rmE(t)+\rmK_{d}^{-1}\rmE_d(t)\leq 8C_0\ep^2\\
&\int_1^t\Big(\rmCK_{\la,K}+\rmCK_{\Theta,K}+\rmCK_{M,K}++\rmCK_{B,K}
+\|\na_{L}\rmA K\|_2^{2}\\
&\qquad\qquad\qquad\qquad+\rmCK_{\la,\rho}+\rmCK_{\Theta,\rho}+\rmCK_{M,\rho}+\rmCK_{B,\rho}\Big)(s)ds\leq 8C_0\ep^2,\\
&\int_1^t\Big(
\rmCK_{\la,h}+\rmCK_{\Theta,h}+\rmCK_{M,h}+
\left\|\rmA\langle\pa_v\rangle^2h\right\|_2^2\Big)(s)ds\leq 8\rmK_dC_0\ep^2,
\end{align*}
from which it follows that $\rmT^*=+\infty$.
\end{proposition}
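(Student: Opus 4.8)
The plan is to run a standard continuity/bootstrap argument on the closed system consisting of the main energy $\rmE(t)+\rmK_d^{-1}\rmE_d(t)$ together with the low-mode energies $\rmE_{lo,f_0},\rmE_{lo,g},\rmE_{lo,h}$ and the assistant energies $\rmE_{as,\psi_0},\rmE_{as,g}$. First I would differentiate each energy in time using the identities \eqref{rho_Energy_1} and \eqref{dE_K_dt} (and analogous ODEs for the low modes and assistant quantities), which exhibit on the right-hand side the good negative $\rmCK$ terms (the Cauchy–Kovalevskaya / ghost-weight dissipation coming from $\dt\lambda<0$, from $\pa_t\Theta/\Theta$, from $\pa_t\rmg/\rmg$, and from the $\rmB$ multiplier), the genuine viscous dissipation $\|\na_L\rmA K\|_2^2$, the linear terms $\rmE,\Pi_\rho,\Pi_K^1,\Pi_K^2$, and the nonlinear transport terms $\rmNL_\rho,\rmNL_K^1,\rmNL_K^2,\mathbf{H}$-terms. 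The goal is to show that, under the bootstrap hypotheses \eqref{eq:B1}–\eqref{eq:B3}, each of these forcing terms, after integration in time on $[1,t]$, is bounded either by $C\ep^3$ (so it is absorbed into the $8C_0\ep^2$ budget once $\ep$ is small) or by a small constant multiple of the good $\rmCK$ and dissipation integrals (so it is absorbed on the left), thereby improving the constant $10C_0$ to $8C_0$ and $10\rmK_d C_0$ to $8\rmK_d C_0$.

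Concretely, I would organize the estimates by the subsequent propositions referenced in the text: a proposition controlling the coordinate-system quantities $g,f_0,h$ and $\psi_0$ (giving the decay rates $\|g\|_{L^2}\lesssim\ep\langle t\rangle^{-7/4}$, $\|g\|_{L^\infty}\lesssim\ep\langle t\rangle^{-2}\ln\langle t\rangle$, $\|\pa_v g\|_{\cG^{\la,\b;s}}$ small, etc., and the crucial $L^1$-type controls that the extra assumption \eqref{L_1_Assumption} buys us), a proposition bounding the linear terms $\Pi_K^1,\Pi_K^2,\rmE,\Pi_\rho$ using the special multipliers $\cM$ and $\rmB$ designed precisely so that the linear nonlocal term $-2(\pa_v-t\pa_z)\pa_z f$ and the buoyancy-induced growth are paid for by $\rmCK_{M}$ and $\rmCK_B$, and a proposition for the nonlinear transport and reaction terms $\rmNL$ and $\mathbf{H}$ via the usual paraproduct decomposition (transport, reaction, remainder) adapted to the $\rmA$-norm, commutator estimates for $\rmA$, and the frequency partition into the critical intervals $\rmI_{k,\eta}$ governing the $\Theta$ and $\rmg$ weights. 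Each nonlinear term produces a factor $\sqrt{\rmE}\lesssim\ep$ times a product of a good $\rmCK$ integral and a dissipation or low-mode integral, and Cauchy–Schwarz in time then yields the $\ep\cdot(C_0\ep^2)=C_0\ep^3$ bound.

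I would then close the loop: integrating the differentiated energy inequalities from $1$ to $t\le T^*$, using the local well-posedness lemma for the initial data at $t=1$ (which gives $\rmE(1)+\rmE_d(1)\le\ep^2$), and collecting all the bounds above, one obtains $\rmE(t)+\rmK_d^{-1}\rmE_d(t)\le C_0\ep^2+C\ep^3\le 8C_0\ep^2$ and likewise the improved integral bounds, provided $\ep_0$ is chosen small enough depending on $\la_0,\la',s,\s$ (and the absolute constants $C,C_0,\rmK_d$ fixed along the way). Since the improved bounds are strictly better than the bootstrap hypotheses, a standard continuity argument forces the maximal time $T^*$ of validity to be $+\infty$.

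The main obstacle, and the place where almost all the work lives, is the interplay between the \emph{inviscid} change of coordinates and the \emph{mismatched dissipation}: because $\Delta$ acts only on $\om$ and not on $\th$, the density $\rho$ does not decay, so the good unknown $K=-\g^2\pa_z\rho+\Delta_t f$ must be used to transfer the vorticity's damping to the density, but $K$ carries the extra commutator terms collected in $\mathbf{H}$ in \eqref{eq: H_formula}, and the coordinate shift $g$ decays only like $\langle t\rangle^{-7/4}$ in $L^2$ — not fast enough by itself. Making $\mathbf{H}$, the linear term $\f1t v'\pa_v f_0$ in the $g$-equation, and the growth predicted by the toy model of Section~\ref{Section_3} all fit inside the budget is exactly why the norm \eqref{Multiplier} needs the four separate components $\la(t)$, $\rmJ$, $\cM$, $\rmB$ and why the assistant energy $\rmE_{as,g}$ with its several homogeneous and $L^\infty$ pieces is introduced; verifying that these weights are consistent (finite, comparable to $1$ where needed, with the claimed derivative lower bounds) and that every error term is genuinely paid for is the crux of the proof and occupies the subsequent sections.
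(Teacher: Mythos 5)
Your proposal mirrors the paper's proof: Proposition~\ref{prop:bostp} is established by assembling the time-evolution identities \eqref{rho_Energy_1} and \eqref{dE_K_dt} with the bounds from Propositions~\ref{prop: g,f_0,h}--\ref{prop: elliptic-2}, integrating on $[1,t]$, using the local well-posedness lemma at $t=1$, and closing by smallness. The one detail worth flagging is that the absorption requires not only $\ep$ small but also $\d_{\rmL},\d_{\rmB}$ small and $M_0$ large (as the paper states immediately after Proposition~\ref{prop:bostp}), since the $\rmCK_M$ and $\rmCK_B$ forcing contributions arising in Propositions~\ref{prop:Linear_Terms}, \ref{prop:Linear_Terms_K} and \ref{prop: elliptic-4} come with constants $C_1$ independent of those parameters and are absorbed by $\d_{\rmL},\d_{\rmB},M_0$ rather than by $\ep$.
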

The main purpose of this paper is to prove Proposition \ref{prop:bostp}, which follows from the following propositions by taking $\ep,\, \d_{\rmL},\, \d_{\rmB}$ sufficiently small and $M_0$ sufficiently large. (See Section \ref{section: pi_k^2} for the determination of $M_0$.)

First we control the lower energy and the assistant energy and obtain the following proposition, where its proof is given in Section \ref{Zero_mod_Section}.  

\begin{proposition}\label{prop: g,f_0,h}
Under the bootstrap hypotheses, for any $0<\sqrt{\ep}\leq \ep_1^2$ with $0<\ep_1\leq 3s+1-4\tilde{q}$ and $0<\ep_2<\f12$, it holds that
\beno
\langle t\rangle^2 \left\|\rmA\pa_{v}^3g\right\|_2^2+\int_1^t\langle s\rangle^2 \left\|\rmA\pa_{v}^3g\right\|_2^2(s)ds \lesssim \ep^2.
\eeno
and
\beno
\rmE_{as,\psi_0}(t)\lesssim \ep^2,\quad \rmE_{as,g}(t)\lesssim \ep^2,
\eeno
As a corollary, it holds that 
\beno
\|h\|_{L^2}\lesssim \f{\ep}{\langle t\rangle},
\quad \|f_0\|_{L^2}\lesssim {\ep}{\langle t\rangle^{-\f54}},
\quad \|\pa_vf_0\|_{L^2}\lesssim {\ep}{\langle t\rangle^{-\f74}}.
\eeno
We also get that
\beno
\rmE_{lo,f_0}(t)+\rmE_{lo,g}(t)+\rmE_{lo,h}(t)\lesssim \ep^2.
\eeno
As a corollary, it holds that
\begin{align*}
\|g\|_{\mathcal{G}_1^{\la,\b;s}}\eqdef \int_{\R}\langle \xi\rangle^{\b}e^{\la(t)|\xi|^{s}}|\hat{g}(t,\xi)|d\xi
&\lesssim \f{\ep}{\ep_1^{3/2}}\langle t\rangle^{-2+\f{\ep_1}{2}}\lesssim \sqrt{\ep}\langle t\rangle^{-2+\f{\ep_1}{2}}.
\end{align*}
\end{proposition}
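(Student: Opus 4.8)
The plan is to run a coupled energy/bootstrap argument on the zero-mode quantities $f_0$, $g$, $h$, $\psi_0$, exploiting the fact that these satisfy (up to nonlinear transport and the coupling term $\frac1t v'\pa_v f_0$) one-dimensional heat equations. First I would treat $\psi_0$, which solves the forced 1D heat equation \eqref{eq:psi_0} with forcing $-\langle \pa_x\psi\, u^x\rangle$; using the bootstrap control on the nonzero modes (\eqref{eq:B1}--\eqref{eq:B3}) and Gevrey product estimates, I would bound the forcing in the relevant $L^2$, $\dot H^2$, $\dot H^4$ and $L^\infty$ norms, then apply the heat semigroup decay $\|e^{t\pa_{yy}}\|_{\dot H^a\to\dot H^b}\lesssim \langle t\rangle^{-(b-a)/2}$ together with the $L^1$ hypothesis \eqref{L_1_Assumption} on $\psi_0(0,\cdot)$ to upgrade to the rates recorded in $\rmE_{as,\psi_0}$; note $u_0^x=-\pa_y\psi_0$, which transfers these bounds to $u_0^x$. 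Next, from the formula $g(t,v)=\frac1{t^2}\int_0^t s\,\pa_t u_0^x(s,y)\,ds$ and \eqref{eq:u_1_Equation}, I would integrate by parts in $s$ to write $g$ in terms of $u_0^x$ and its forcing, and deduce the $\rmE_{as,g}$ bounds; the key gain of the homogeneous-norm formulation of $\rmE_{as,g}$ is that the borderline slow-decaying norm $\|g\|_{L^2}\lesssim \ep\langle t\rangle^{-7/4}$ is compensated by interpolation with the faster-decaying $\dot H^{1/2\pm\ep_i}$ pieces, which is why those exponents $\ep_1\le 3s+1-4\tilde q$, $\ep_2<\tfrac12$ appear.

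With $g$ controlled in low norms, I would then close the Gevrey estimate for $\pa_v^3 g$ via an energy estimate on $\langle t\rangle^2\|\rmA\pa_v^3 g\|_2^2$: differentiate \eqref{eq: h-equ}/\eqref{eq: bar h} or the $g$-equation three times in $v$, commute $\rmA(t,\na)$ and $\pa_v^3$ through, and estimate the transport term $g\pa_v g$ and the coupling $\frac1t v'\pa_v f_0$ by Gevrey paraproduct/commutator estimates; the weight $\langle t\rangle^2$ and the CK-type dissipation terms absorb the growth, and the forcing from $f_0$ is handled using the $\langle t\rangle^{-7/4}$ decay of $\pa_v f_0$ (a net integrable-in-time gain). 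The Fourier-$L^1$ corollary $\|g\|_{\mathcal G_1^{\la,\b;s}}$ then follows by Cauchy–Schwarz in $\xi$, splitting low and high frequencies and using $\|\rmA g\|_2$ at high frequency with $\|\pa_v^3 g\|$ to control the $\xi$-integral.

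Finally, for $f_0$ and $h$ I would run the parallel energy estimates on $\rmE_{lo,f_0}$ and $\rmE_{lo,h}$: \eqref{Equ_Av_f} is a 1D heat equation for $f_0$ with transport drift $g$ and forcing $v'\langle \nabla^\perp\phi_{\neq}\cdot\nabla f_{\neq}\rangle$ — a purely nonzero-mode nonlinearity bounded by the main energy and its CK dissipation — so the weighted norms $\sum_{k=0}^3 \frac{t^k}{4^k}\|\pa_v^k f_0\|_{\cG^{\la,\b;s}}^2$ close with the heat dissipation supplying the extra powers of $t$; \eqref{eq: h-equ} is $\pa_t h+g\pa_v h=-\frac1t(f_0+h)$, and the factor $\frac1t$ on the right, combined with $\|f_0\|_{L^2}\lesssim \ep\langle t\rangle^{-5/4}$ and a Gronwall/integrating-factor argument, yields $\|h\|_{L^2}\lesssim \ep\langle t\rangle^{-1}$ and the weighted bound in $\rmE_{lo,h}$. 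I expect the main obstacle to be the interlocking nature of these estimates — $\psi_0$, $g$, $f_0$, $h$ all feed into one another, with the $\frac1t v'\pa_v f_0$ coupling in the $g$-equation being the delicate point since it is exactly what degrades $\|g\|_{L^2}$ below the naive heat rate — so the bootstrap must be set up so that the homogeneous-norm gains for $g$ and the $\langle t\rangle^{-7/4}$ decay of $\pa_v f_0$ are propagated in the right order without circularity, and so that all Gevrey product estimates lose regularity only against the shrinking $\lambda(t)$ budget in \eqref{lambda_Decay}.
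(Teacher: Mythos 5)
Your overall architecture — bound $\psi_0$ via Duhamel on the forced heat equation \eqref{eq:psi_0} plus the $L^1$ hypothesis \eqref{L_1_Assumption}, propagate to $\rmE_{as,g}$ through $g=\frac{1}{t^2}\int_0^t s\,\partial_t u_0^x\,ds$, and close $\rmE_{lo,f_0}$, $\rmE_{lo,h}$ by weighted $\cG^{\la,\b;s}$ energy estimates on \eqref{Equ_Av_f} and \eqref{eq: h-equ}, then get the Fourier--$L^1$ corollary by Cauchy--Schwarz in $\xi$ — matches the paper, and you have correctly located the delicate coupling $\frac1t v'\pa_v f_0$ in the $g$-equation as the reason $\|g\|_{L^2}$ does not decay at the naive heat rate. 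But there is one genuine gap, and it is exactly at the first (and hardest) estimate of the proposition.

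For $\langle t\rangle^2\|\rmA\pa_v^3 g\|_2^2$ you propose a time-energy estimate on the $g$-equation, differentiated three times in $v$, with the CK terms and the $-\frac2t g$ damping absorbing the growth. This cannot close at the full $\rmA$-regularity level: the forcing $\frac1t v'\pa_v f_0$, hit with $\pa_v^3$, produces $\frac1t v'\pa_v^4 f_0$, and after pairing with $\rmA\pa_v^3 g$ and weighting by $\langle t\rangle^2$ one needs control on $\|\rmA\pa_v^4 f_0\|_2$. The bootstrap hypotheses \eqref{eq:B1}--\eqref{eq:B2} only give $\|\rmA K_0\|_2\lesssim\ep$ and $\int_1^t\|\pa_v\rmA K_0\|_2^2\,ds\lesssim\ep^2$, which (via the elliptic identity $K_0=\pa_{vv}f_0+\text{small}\cdot\pa_{vv}f_0+\text{small}\cdot\pa_v f_0$) control at most $\rmA\pa_{vv}f_0$ pointwise in time and $\rmA\pa_v^3 f_0$ in the time-integrated sense. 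An integration by parts to dump the extra derivative onto $g$ produces $\rmA\pa_v^4 g$, which is likewise uncontrolled. The decay $\|\pa_v f_0\|_{L^2}\lesssim\ep\langle t\rangle^{-7/4}$ you cite is an $L^2$ (not $\rmA$-Gevrey) bound and only at one derivative, so it cannot rescue the estimate. There is also no genuine parabolic dissipation in the $g$-equation to pay for a lost derivative: the CK term only gains $|\nabla|^{s/2}$, and the $-\frac2t g$ damping gains no regularity at all.

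The paper's proof avoids this entirely by exploiting a first-order (in $v$) \emph{algebraic} identity rather than an evolution equation: since $g=\pa_t v$, $h=\pa_y v-1$, the compatibility relation $\pa_t h+g\pa_v h=v'\pa_v g$ combined with the dynamical equation \eqref{eq: h-equ} gives $v'\pa_v g=\bar h=-\frac1t(f_0+h)$, i.e.\ \eqref{eq: bar h}. Two more $v$-derivatives of this identity express $\pa_v^3 g$ in terms of $\frac1t\pa_v^2(f_0+h)$ plus lower-order commutators involving $h$, so only $\pa_{vv}f_0$ is needed. That quantity is precisely $K_0$ up to small perturbation, and the bootstrap on $\|\rmA K_0\|_2$ (with the dissipation $\int\|\pa_v\rmA K_0\|_2^2\,ds$ from $\|\nabla_L\rmA K\|_2^2$ at $k=0$) and on $\rmE_d$ for $\|\rmA\langle\pa_v\rangle^2 h\|_2$ then closes the estimate with no loss. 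The gain of exactly one derivative from the algebraic structure is the whole point; your energy-estimate plan gives that derivative back. You do mention \eqref{eq: bar h} as a candidate equation, but your surrounding narrative (transport term $g\pa_v g$, CK absorption) makes clear you intend an energy estimate on the dynamics rather than a static elliptic inversion through $K_0$, and it is the latter the proof requires.
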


\begin{proposition}\label{prop: NL}
Under the bootstrap hypotheses, it holds that
\begin{align*}
|\rmN\rmL_{\rho}|+|\rmN\rmL_{K}^1|
&\lesssim \f{\ep^3}{\langle t\rangle^2}+\ep^2\langle t\rangle^2 \|\rmA\pa_v^3g\|_2^2+
\sqrt{\ep}\rmCK_{\la,\rho}+\ep\rmCK_{\Theta,\rho}+\ep\rmCK_{M,\rho}\\
&\quad+\sqrt{\ep}\rmCK_{\la,K}+\ep\rmCK_{\Theta,K}+\ep\rmCK_{M,K}\\
&\quad+\ep\left\|\left\langle\f{\pa_v}{t\pa_z}\right\rangle^{-1}\left(\f{|\na|^{\f{s}{2}}}{\langle t\rangle^{\f{3s}{2}}}\rmA+\sqrt{\f{\pa_t\rmg}{\rmg}}\tilde{\tilde{\rmA}}+\sqrt{\f{\pa_t\Theta}{\Theta}}\tilde{\rmA}\right)\pa_z^{-1}\Delta_{L}^2 P_{\neq}\phi\right\|_2^2.
\end{align*}
\end{proposition}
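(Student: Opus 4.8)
The plan is to estimate the two nonlinear transport terms $\rmNL_\rho = \int \rmA\rho\,\rmA(\mathbf{u}\cdot\nabla\rho)\,dzdv$ and $\rmNL_K^1 = \int \rmA K\,\rmA(\mathbf{u}\cdot\nabla K)\,dzdv$ by the standard paraproduct decomposition adapted to the Gevrey-type multiplier $\rmA$. First I would split $\mathbf{u}\cdot\nabla\varphi$ (for $\varphi\in\{K,\rho\}$) into the three Littlewood--Paley regions: transport (low-frequency velocity $\times$ high-frequency $\varphi$), reaction (high-frequency velocity $\times$ low-frequency $\varphi$), and remainder (comparable frequencies). Since $\rmA$ is log-convex up to the commutator losses catalogued in the toy-model construction of Section~\ref{Section_3}, in each region one writes $\rmA_k(t,\eta)\lesssim \rmA_{k-l}(t,\eta-\xi)\rmA_l(t,\xi)\times(\text{loss})$, where the loss is absorbed either into the $\rmCK$ dissipation terms ($\rmCK_\lambda$ from $\dot\lambda<0$, $\rmCK_\Theta$, $\rmCK_M$) or into the smallness of $\ep$. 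The velocity is recovered from $\phi$ via $\mathbf{u} = (0,g)^T + v'\nabla^\perp_{z,v}P_{\neq}\phi$, and one uses $\Delta_t\phi = f$ together with $K = -\gamma^2\partial_z\rho + \Delta_t f$ to trade $\phi$ derivatives for $K$ (modulo $\rho$), and then the elliptic estimates for $\Delta_t$ (a perturbation of $\Delta_L$) to control $\phi$ in terms of $K,\rho$.

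The transport term is the most favorable: the zero-mode part $g\partial_v\varphi$ is handled using the decay $\|g\|_{\mathcal{G}_1^{\lambda,\beta;s}}\lesssim\sqrt\ep\langle t\rangle^{-2+\ep_1/2}$ from Proposition~\ref{prop: g,f_0,h}, while the $v'\nabla^\perp\phi_{\neq}\cdot\nabla\varphi$ part uses that the transported frequency dominates so the commutator $\rmA_k(t,\eta)-\rmA_{k-l}(t,\eta-\xi)$ is controlled by $\sqrt{\partial_t\Theta/\Theta}+\sqrt{\partial_t\rmg/\rmg}+|\nabla|^{s/2}|\dot\lambda|^{1/2}$ acting on the high-frequency factor; this produces the $\sqrt\ep\,\rmCK_{\lambda,\varphi}+\ep\,\rmCK_{\Theta,\varphi}+\ep\,\rmCK_{M,\varphi}$ contributions once the low-frequency $\phi$ factor is bounded in $L^\infty$-type Gevrey norm by $\ep\langle t\rangle^{-2}$ (roughly the inviscid-damping rate, giving the $\ep^3\langle t\rangle^{-2}$ inhomogeneous term). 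The reaction term is where the last, structured error term appears: here $\phi$ is at high frequency and one cannot afford a bare commutator, so instead one keeps the factor $\langle \partial_v/(t\partial_z)\rangle^{-1}\pa_z^{-1}\Delta_L^2 P_{\neq}\phi$ intact and pairs it against $\rmA\varphi$ at low frequency; the $\langle\partial_v/(t\partial_z)\rangle^{-1}$ weight encodes that we gain from the separation of the critical time $t\approx\eta/k$, and the three multiplier pieces $|\nabla|^{s/2}\langle t\rangle^{-3s/2}\rmA + \sqrt{\partial_t\rmg/\rmg}\,\tilde{\tilde\rmA} + \sqrt{\partial_t\Theta/\Theta}\,\tilde\rmA$ are exactly what the commutator bounds for $\rmJ$, $\cM$ and the Gevrey exponent deliver. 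The remainder term is estimated crudely since both frequencies being comparable gives room to spare; the $\ep^2\langle t\rangle^2\|\rmA\partial_v^3 g\|_2^2$ term comes from the piece of the reaction/transport decomposition where the velocity's $v'$ or $g$-coefficient carries the top derivative, tied back to the zero-mode bound in Proposition~\ref{prop: g,f_0,h}.

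The main obstacle will be the reaction term, specifically controlling the commutator between $\rmA$ and the product when $\phi$ (hence $f$, hence $K$) sits at high frequency near its critical time $t_{k,\eta}$: one must show that the regularity/decay loss incurred when $\rmA_k(t,\eta)$ cannot be factored is precisely compensated by (i) the $\rmCK$ terms, (ii) the built-in decay of $\langle\partial_v/(t\partial_z)\rangle^{-1}$, and (iii) the worst-case growth $\langle t\rangle^{3s/2}$ that the norm tolerates — this is the quantitative heart of why Gevrey-$3_-$ is the natural threshold. Bookkeeping the interplay of $\rmJ$, $\cM$, $\rmB$ and $e^{\lambda|k,\eta|^s}$ in this regime, and verifying that all resonant contributions close with a factor of $\ep$ or land in the explicitly allowed error, is where essentially all the work lies; the transport and remainder pieces are comparatively routine given the estimates already established.
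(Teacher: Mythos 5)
Your proposal captures the same architecture the paper uses: first recast $\rmNL_\varphi$ as a commutator plus a divergence term $\tfrac12\int(\nabla\cdot\mathbf{u})|\rmA\varphi|^2$ (you imply this via the commutator estimates but should state it explicitly — without that initial algebraic step the low-frequency/high-frequency ``transport'' piece has a derivative loss with nothing to cancel it, since the $\rmCK$ terms can only absorb the \emph{difference} of multipliers, not a bare multiplier times $\nabla\varphi$), then do the Bony paraproduct, and handle transport, reaction, and remainder exactly as in Sections~\ref{Section_Transport}, \ref{Section_Reaction}. You also correctly identify that the $\ep^2\langle t\rangle^2\|\rmA\partial_v^3 g\|_2^2$ error comes from the $g\partial_v\varphi$ reaction piece (the paper's $\rmR_{1;\rmN}^2$), that the $\langle\partial_v/(t\partial_z)\rangle^{-1}\pa_z^{-1}\Delta_L^2 P_{\neq}\phi$ error comes from the reaction piece where $\phi$ carries the high frequency (paper's $\rmR_{1;\rmN}^1$ and $\rmR_{1;\rmN}^{\ep,1}$), and that the $\rmCK$ terms arise from decomposing $\rmA_k(\eta)-\rmA_l(\xi)$ into the five telescoping pieces corresponding to $e^{\lambda|\cdot|^s}$, $\rmJ$, $\cM$, $\langle\cdot\rangle^\sigma$, $\rmB$. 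One small slip: you wrote the transport commutator as $\rmA_k(t,\eta)-\rmA_{k-l}(t,\eta-\xi)$, but the high-frequency factor in the transport term sits at $(l,\xi)$, so the relevant difference is $\rmA_k(t,\eta)-\rmA_l(t,\xi)$; the paper's $\rmT_{1;\rmN}^1,\dots,\rmT_{1;\rmN}^5$ decompose exactly this. Modulo these clarifications, the roadmap is the paper's, and the ``hard core'' you identify — the resonant/non-resonant case analysis for the $(k,\eta)$/$(l,\xi)$ pairs in the reaction term, governed by Lemma~\ref{Lem:compare} and the $\Theta$, $\rmg$ construction — is indeed where the proof's length is concentrated.
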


The proof of Proposition \ref{prop: NL} is given in Section \ref{Section_Main_Nonlinear_terms} and it is one of the main parts of the proof.  

\begin{proposition}\label{prop: NL_2}
Under the bootstrap hypotheses, it holds that
\begin{align*}
|\rmN\rmL_{K}^2|
&\lesssim \f{\ep^3}{\langle t\rangle^2}+\ep\|\na_L\rmA K\|_2^2+\f{\ep}{\langle t\rangle^2}\left\|
\rmA\pa_z^{-1}\Delta_{L}P_{\neq}f\right\|_2^2\\
&\quad+\ep\left\|\left\langle\f{\pa_v}{t\pa_z}\right\rangle^{-1}\left(\f{|\na|^{\f{s}{2}}}{\langle t\rangle^{\f{3s}{2}}}\rmA+\sqrt{\f{\pa_t\rmg}{\rmg}}\tilde{\tilde{\rmA}}+\sqrt{\f{\pa_t\Theta}{\Theta}}\tilde{\rmA}\right)\pa_z^{-1}\Delta_{L}^2 P_{\neq}\phi\right\|_2^2.
\end{align*}
\end{proposition}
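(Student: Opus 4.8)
The plan is to estimate $\rmN\rmL_K^2$, which by \eqref{dE_K_dt} is the contribution of the linear-in-appearance term $-2(\pa_v-t\pa_z)\pa_z f$ paired against $\rmA K$ through the multiplier, i.e.\ $\rmN\rmL_K^2 = 2\int \rmA K\,\rmA\big((\pa_v-t\pa_z)\pa_z f\big)\,dzdv$. The first step is to express $f$ in terms of $K$ and $\rho$: from the definition \eqref{K_Equation}, $\Delta_t f = K + \gamma^2\pa_z\rho$, so $f = \Delta_t^{-1}(K+\gamma^2\pa_z\rho)$, and since $\Delta_t$ is a perturbation of $\Delta_L$, we write $\Delta_t^{-1} = \Delta_L^{-1} + (\Delta_t^{-1}-\Delta_L^{-1})$ and treat the perturbative piece as a genuinely nonlinear (hence higher-order) error absorbed into the $\ep^3/\langle t\rangle^2$ term and the other quadratic error terms via the elliptic estimates for $\Delta_t$ established earlier (these give $\|\Delta_t^{-1} - \Delta_L^{-1}\|$-type bounds controlled by $\|h\|$, $\|g\|$, $\|\phi_{\neq}\|$, all small by the bootstrap and Proposition~\ref{prop: g,f_0,h}). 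After this reduction, the main term to control is $2\int \rmA K\,\rmA\big((\pa_v-t\pa_z)\pa_z \Delta_L^{-1} K\big)\,dzdv$ plus the analogous term with $\gamma^2\pa_z\rho$ in place of $K$.

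The second step is the symbol/commutator analysis. On the Fourier side $(\pa_v-t\pa_z)\pa_z\Delta_L^{-1}$ has symbol $\frac{ik(\eta-kt)}{-(k^2+(\eta-kt)^2)}$, which is bounded by $1$ in modulus; moreover near the critical times $t\approx \eta/k$ the numerator $|k(\eta-kt)|$ is small, which is exactly where one expects to gain. The key is that $(\pa_v-t\pa_z)\pa_z\Delta_L^{-1}$ is comparable to $\na_L$ up to bounded, slowly-varying factors, so $\rmA\big((\pa_v-t\pa_z)\pa_z\Delta_L^{-1}K\big)$ can be compared with $\na_L \rmA K$ after commuting $\rmA$ past the Fourier multiplier. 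The commutator $[\rmA, (\pa_v-t\pa_z)\pa_z\Delta_L^{-1}]$ is handled using the standard facts about the regularity of the individual pieces of the multiplier $\rmA$ from \eqref{Multiplier}: $e^{\la(t)|k,\eta|^s}$ loses a $\rmC\rmK_\la$-controllable factor $|\na|^{s/2}/\langle t\rangle^{3s/2}$ when moved (this is the origin of the last error term in the statement, coming via $\phi = \Delta_t^{-1}f$ and the identity $\pa_z^{-1}\Delta_L^2 P_{\neq}\phi$), the Sobolev weight $\langle k,\eta\rangle^{\s}$ commutes up to lower order, and $\rmJ, \cM, \rmB$ have bounded logarithmic derivatives contributing $\rmC\rmK_\Theta$, $\rmC\rmK_M$, $\rmC\rmK_B$ terms — but here, because the symbol is itself bounded, these commutator contributions are already quadratic-small and can be placed into the $\ep^3/\langle t\rangle^2$ bin or absorbed against $\ep\|\na_L\rmA K\|_2^2$. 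Thus the leading contribution is $\lesssim \|\na_L\rmA K\|_2\,\|\rmA K\|_2 \lesssim \sqrt{\ep}\,\|\na_L\rmA K\|_2\cdot \sqrt{C_0}\,\ep \lesssim \ep\|\na_L\rmA K\|_2^2 + \ep^3$; a little more care (splitting into $|k|\geq |\eta|$ where $\na_L \gtrsim \langle k\rangle$ and the near-critical region) gives the cleaner form with the extra factor $\langle t\rangle^{-2}$ on the term $\|\rmA \pa_z^{-1}\Delta_L P_{\neq} f\|_2^2$, which is exactly $\|\rmA\pa_z^{-1}\Delta_L P_{\neq}f\|_2^2 = \|\rmA \pa_z^{-1}(K+\gamma^2\pa_z\rho)\|_2^2$ up to the perturbative $\Delta_t$ corrections, capturing how $f$ decays like $\langle t\rangle^{-2}$ relative to $K$.

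The third step is the $\rho$-term: $2\gamma^2\int \rmA K\,\rmA\big((\pa_v-t\pa_z)\pa_z\Delta_L^{-1}\pa_z\rho\big)\,dzdv$. Here the symbol $\frac{ik(\eta-kt)}{-(k^2+(\eta-kt)^2)}\cdot(ik)$ has an extra factor $|k|$, but also the extra $\pa_z$ means this is $\pa_z$ times the previous type, so after comparing with $\na_L\rmA K$ one pairs against $\rmA\pa_z\Delta_L^{-1}\pa_z\rho$-type quantities; these contribute through $\|\rmA\pa_z^{-1}\Delta_L^2 P_{\neq}\phi\|$-type norms with the $\langle \pa_v/(t\pa_z)\rangle^{-1}$ weight and the $\rmC\rmK$-gradients, reproducing the last error term. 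Again every contribution is either bounded by $\ep\|\na_L\rmA K\|_2^2$, or by the ghost/$\rmC\rmK$-terms with a small prefactor, or is cubic. The main obstacle I anticipate is the bookkeeping near the critical set $\rmI_{k,\eta}$: there the multiplier $\rmB_k$ is genuinely growing, $\na_L$ is degenerating, and one must verify that the symbol's smallness there (the factor $(\eta-kt)/(k^2+(\eta-kt)^2)$ is $\lesssim |k|^{-1}$ there, but $\rmB$ compensation and the $\rmC\rmK_B$ term must be tracked) is enough so that nothing is lost; this is the same mechanism that makes the $\rmB$-multiplier work in \cite{Bedrossian_2015}-type arguments, and the resolution is to invoke the already-established relations $\pa_t\rmB_k/\rmB_k \gtrsim \d_{\rmB}^{-1} b/(1+(t-\eta/k)^2)$ together with $\rmB\approx_{\d_{\rmB}}1$, converting any borderline term into a small multiple of $\rmC\rmK_{B,K}$ — but since $\rmC\rmK_{B,K}$ does not appear on the right-hand side of the proposition, one must instead be slightly more generous and bound those pieces directly by $\ep\|\na_L\rmA K\|_2^2$, exploiting that on the bulk complement of the critical intervals $\na_L \gtrsim 1$. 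Carrying out this final splitting cleanly, and confirming the perturbative $\Delta_t^{-1}-\Delta_L^{-1}$ errors really do land in the stated bins (they do, because the paper's elliptic lemmas give smallness $\lesssim\ep$ with the right decay), completes the proof.
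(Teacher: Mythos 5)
Your proposal is aimed at the wrong term, so the argument does not prove the stated proposition. In the organization of \eqref{dE_K_dt}, the five pieces on the right-hand side are matched to the five integrals in order: the viscous term gives $\rmE$, the transport term gives $-\rmNL_K^1$, the integral $\int\rmA K\,\rmA(\mathbf{H})\,dzdv$ gives $-\rmNL_K^2$, and the two genuinely linear integrals $\gamma^2\gamma_1\int\rmA K\,\rmA(\pa_{zz}\phi)$ and $-2\int\rmA K\,\rmA((\pa_v-t\pa_z)\pa_z f)$ give $\Pi_K^1$ and $\Pi_K^2$. So $\rmNL_K^2 = -\int\rmA K\,\rmA(\mathbf{H})\,dzdv$ where $\mathbf{H}$ is the quadratic forcing listed in \eqref{eq: H_formula}, not the linear term $-2(\pa_v-t\pa_z)\pa_z f$. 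The quantity you set out to bound — $2\int\rmA K\,\rmA((\pa_v-t\pa_z)\pa_z f)$ — is $\Pi_K^2$ and is treated in Proposition \ref{prop:Linear_Terms_K} and Section \ref{section: pi_k^2}, where indeed one writes it as $2\int\na_L\rmA K\,\rmA\,\tfrac{\pa_{zz}}{\Delta_L}\pa_z^{-1}\Delta_L P_{\neq}f$ and studies the bounded symbol $k^2/(k^2+(\eta-kt)^2)$ time-regime by time-regime — very much in the spirit of your symbol/commutator discussion.

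A quick sanity check confirms the mismatch: every term on the right-hand side of the statement carries an $\ep$ prefactor ($\ep^3/\langle t\rangle^2$, $\ep\|\na_L\rmA K\|_2^2$, $\tfrac{\ep}{\langle t\rangle^2}\|\cdot\|_2^2$, $\ep\|\cdot\|_2^2$), which is characteristic of a quadratic (truly nonlinear) quantity; the linear term $\Pi_K^2$ is instead bounded in Proposition \ref{prop:Linear_Terms_K} with absolute constants like $\tfrac18\|\na_L\rmA K\|_2^2$ and the $\rmCK_B$ ghost weight, with no $\ep$ smallness available. The actual proof of Proposition \ref{prop: NL_2} in Section \ref{Section_NL_K_2} splits $\rmNL_K^2$ into the six integrals $\mathbf{K}_1,\dots,\mathbf{K}_6$ corresponding to the six summands of $\mathbf{H}$, and estimates each by a paraproduct decomposition into reaction, transport and remainder parts, paralleling the treatment of $\rmNL_\rho$ in Section \ref{Section_Main_Nonlinear_terms}. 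The new ingredient relative to the $\rho$-equation is that the extra $z$-derivative appearing in several $\mathbf{K}_i$ (e.g.\ in $\mathbf{K}_1$, $\mathbf{K}_4$, $\mathbf{K}_5$, $\mathbf{K}_6$) is absorbed by putting one factor into the dissipation $\|\na_L\rmA K\|_2$, and the remaining structure is controlled with the elliptic lemmas for $\phi$, $f$ together with the low/assistant-energy bounds for $h$, $g$, $f_0$. None of the $\Delta_t^{-1}-\Delta_L^{-1}$ perturbation theory or $\na_L$-versus-$\pa_z(\pa_v-t\pa_z)\Delta_L^{-1}$ symbol comparison you describe is the content of this proposition; to produce a correct proof you would need to carry out the paraproduct analysis of $\mathbf{H}$.
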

The proof of Proposition \ref{prop: NL_2} is the subject of Section  \ref{Section_NL_K_2}.

In Propositions \ref{prop:Linear_Terms}, \ref{Proposition_E_Estimate} and  \ref{prop:Linear_Terms_K}, we state the estimates of the linear terms $\Pi_{\rho},\, \rmE,\, \Pi_K^1$ and $\Pi_K^2$ appearing in \eqref{rho_Energy_1} and \eqref{dE_K_dt}. The proofs of such estimates  are given in Section \ref{Section_Linear_Terms}.

\begin{proposition}\label{prop:Linear_Terms}
Under the bootstrap hypotheses, it holds that
\begin{equation}
|\Pi_{\rho}|\leq\f{C\ep}{\langle t\rangle^2}\left\|\left\langle\f{\pa_v}{t\pa_z}\right\rangle^{-1}\pa_z^{-1}\Delta_{L}^2\rmA P_{\neq}\phi\right\|_2
+C_1\d_{\rmL}\rmCK_{M,\rho}
+\d_{\rmL}\left\|\left\langle\f{\pa_v}{t\pa_z}\right\rangle^{-1}\sqrt{\f{\pa_t\rmg}{\rmg}}\pa_z^{-1}\Delta_{L}^2\tilde{\tilde{\rmA}} P_{\neq}\phi\right\|_2^2, 
\end{equation}
where $C_1$ is a constant independent of $\delta_\rmL$. 
\end{proposition}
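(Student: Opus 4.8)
The goal is to estimate the linear term $\Pi_{\rho}=\int \rmA\rho\,\rmA(\pa_z\phi)\,dzdv$ appearing in the density energy identity \eqref{rho_Energy_1}. The plan is to exploit the Cauchy–Schwarz inequality together with the elliptic relation $\Delta_t\phi=f$, which connects $\phi$ back to $f$ and hence, via $K=-\g^2\pa_z\rho+\Delta_t f$, back to the energy variables. First I would rewrite $\pa_z\phi$ using the decomposition $\pa_z\phi=\pa_z\Delta_L^{-1}\Delta_L\phi$ and then pass from $\Delta_L$ to $\Delta_t$, absorbing the error $\Delta_t-\Delta_L$ (which involves $h=v'-1$ and $v''$, small by the bootstrap, see Proposition \ref{prop: g,f_0,h}) into lower-order contributions. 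The point is that on the support of the relevant frequencies the inverse Laplacian $\Delta_L^{-1}$ gains two powers of $\langle t\pa_z\rangle^{-1}$ when $|\eta|\lesssim t|k|$, which is exactly the origin of the factor $\langle t\rangle^{-2}$ and of the operator $\langle \f{\pa_v}{t\pa_z}\rangle^{-1}\pa_z^{-1}\Delta_L^2$ acting on $P_{\neq}\phi$.

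Next I would split $\Pi_{\rho}$ into a ``reaction'' piece where the output frequency is comparable to the $\rho$ frequency and a ``transport/remainder'' piece, in the standard paraproduct fashion used throughout this circle of ideas (as in \cite{Bedrossian_2015}). For the bulk piece one applies Cauchy–Schwarz in the form $|\Pi_{\rho}|\leq \|\rmA\rho\|_2\cdot\|\rmA\pa_z\phi\|_2$ but with a more careful splitting of the multiplier $\rmA$: one keeps a copy of $\sqrt{\pa_t\rmg/\rmg}$ (the $\cM$-derived $\rmCK_{M}$ weight) on the $\rho$ factor, producing the $C_1\d_{\rmL}\rmCK_{M,\rho}$ term, and puts the complementary weight $\tilde{\tilde{\rmA}}$ (defined in \eqref{A_tildetilde}) on the $\phi$ factor. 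The smallness constant $\d_{\rmL}$ appears because the linear nonlocal interaction is precisely the one the multiplier $\cM$ was designed to absorb, with $\d_{\rmL}$ the tunable constant in its definition; this gives the last term $\d_{\rmL}\|\langle \f{\pa_v}{t\pa_z}\rangle^{-1}\sqrt{\pa_t\rmg/\rmg}\,\pa_z^{-1}\Delta_L^2\tilde{\tilde{\rmA}}P_{\neq}\phi\|_2^2$. The leftover part where $\sqrt{\pa_t\rmg/\rmg}$ is not available (the ``stationary-phase-away'' regime, $t$ outside the critical interval $\rmI_{k,\eta}$, or $|k|$ large) is handled by brute force using the extra $\langle t\rangle^{-2}$ decay and the $\ep$-smallness of the energy, landing in the first term $\f{C\ep}{\langle t\rangle^2}\|\langle\f{\pa_v}{t\pa_z}\rangle^{-1}\pa_z^{-1}\Delta_L^2\rmA P_{\neq}\phi\|_2$.

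More concretely, the key technical lemma I would invoke is the commutator/comparison estimate between $\rmA(t,\nabla)$ applied to a product and the product of $\rmA$-weighted factors — i.e.\ the subalgebra property of the Gevrey norm with Sobolev correction — combined with the pointwise bounds on $\pa_\eta b$, on $\pa_t\Theta/\Theta$ and on $\pa_t\rmg/\rmg$ recorded just before this proposition. One also needs the elementary Fourier-multiplier bound
\[
\frac{\rmA_k(t,\eta)}{\rmA_k(t,\eta')}\lesssim e^{c|\eta-\eta'|^s}\bigl(1+|\eta-\eta'|\bigr)^{|\s|}
\]
for frequencies on the relevant dyadic shells, which lets one move the weight from the low-frequency ($\phi$) factor onto the output and trade regularity for the time decay encoded in $\Delta_L^{-2}$. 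With these in hand the estimate is a (careful) bookkeeping exercise.

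The main obstacle I expect is the precise separation of the two regimes — distinguishing when the multiplier $\cM$ (equivalently $\rmg$) is actively growing, so that $\sqrt{\pa_t\rmg/\rmg}$ is a genuinely useful ``ghost'' weight that one can afford to keep with a small constant $\d_{\rmL}$, versus when it is essentially constant and one must instead extract the full $\langle t\rangle^{-2}$ from the elliptic gain $\Delta_L^{-1}$ — and making sure that in neither regime does one lose a power of $t$ or a factor of regularity. In particular one must verify that the operator $\langle \f{\pa_v}{t\pa_z}\rangle^{-1}\pa_z^{-1}\Delta_L^2$ acting on $P_{\neq}\phi$ is exactly the object that reappears (with the same weights) in Propositions \ref{prop: NL}, \ref{prop: NL_2}, so that when all the pieces of \eqref{rho_Energy_1}–\eqref{dE_K_dt} are summed the stream-function contributions can be absorbed via the elliptic estimate relating $\|\langle \f{\pa_v}{t\pa_z}\rangle^{-1}\pa_z^{-1}\Delta_L^2 P_{\neq}\phi\|$ back to $\|\pa_z^{-1}\Delta_L P_{\neq}f\|$ and hence to $\|\rmA K\|_2$ and $\|\rmA\rho\|_2$; getting the constants to line up so that the bootstrap closes is where the real care is needed.
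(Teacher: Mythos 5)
The proposal misidentifies the structure of the term $\Pi_\rho$ and, as a result, proposes machinery (paraproducts, commutator estimates, Gevrey subalgebra inequalities, passage from $\Delta_L$ to $\Delta_t$) that is both unnecessary and would not fit the actual object. Recall $\Pi_\rho=\int\rmA\rho\,\rmA(\pa_z\phi)\,dzdv$. Since $\rmA$ is a Fourier multiplier and $\pa_z\phi$ is \emph{not} a product of two unknowns, this is a \emph{diagonal} quadratic form: after Plancherel,
\[
\Pi_\rho=\f{i}{2\pi}\sum_{k\neq 0}\int \rmA_k(\eta)\overline{\hat\rho}_k(\eta)\,\rmA_k(\eta)\,k\,\hat\phi_k(\eta)\,d\eta
=-\f{1}{2\pi}\sum_{k\neq 0}\int \rmA_k(\eta)\overline{\hat\rho}_k(\eta)\,\rmA_k(\eta)\,\f{k^2}{(k^2+(\eta-kt)^2)^2}\,\widehat{\pa_z^{-1}\Delta_L^2\phi}_k(\eta)\,d\eta,
\]
so both factors are evaluated at the \emph{same} frequency $(k,\eta)$. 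There is no convolution and hence no reaction/transport/remainder decomposition, no frequency separation of the form ``move the weight from the low-frequency $\phi$ factor onto the output,'' and no commutator between $\rmA$ and a product. The entire proof consists of pointwise bounds on the scalar symbol $\f{k^2}{(k^2+(\eta-kt)^2)^2}$ in the various time regimes, followed by a single Cauchy–Schwarz. Concretely: if $\eta>\tfrac32 kt$ or $\eta<\tfrac12 kt$ the symbol is $\lesssim\langle\f{\eta}{kt}\rangle^{-1}t^{-4}$; if $\tfrac12 kt\leq\eta\leq\tfrac32 kt$ and $t\in\bar{\rmI}_{k,\eta}$ the symbol is $\leq C\d_\rmL\langle\f{\eta}{kt}\rangle^{-1}\f{\pa_t\rmg}{\rmg}$ by construction of $\rmg$; in the remaining non-resonant cases one gets $\langle\f{\eta}{kt}\rangle^{-1}t^{-2}$ or better. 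Cauchy–Schwarz then gives exactly the three terms in the claim, with $\|\rmA\rho\|_2\lesssim\ep$ from the bootstrap supplying the $\ep$ in the first term.

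You do have the right ideas in germ form — the regime split based on whether $t$ is in the resonant window, the use of the $\sqrt{\pa_t\rmg/\rmg}$ ghost weight with the $\d_\rmL$-small constant, Cauchy–Schwarz with the weight distributed between $\rho$ and $\phi$, and the origin of the $t^{-2}$ decay in the elliptic smallness of the symbol. But the mechanisms you attach to these ideas are those of the nonlinear reaction term $\rmR^1_{1;\rmN}$ in Section \ref{Section_Reaction}, not of the linear term $\Pi_\rho$. If you attempted to carry out the paraproduct plan as written, you would find there are no low-high/high-low pieces to separate; and the proposed detour through $\Delta_t$ would only introduce extra error terms involving $h$ and $v''$ that have no counterpart in the statement (which is phrased entirely in terms of $\Delta_L$). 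The fix is to drop all of that and work directly with the Fourier-diagonal representation above.
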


\begin{proposition}
\label{Proposition_E_Estimate}
Under the bootstrap hypotheses, it holds that
\ben
\rmE\leq -\f78\|\na_L\rmA K\|_2^2+\f{C\ep^3}{\langle t\rangle^2}. 
\een
\end{proposition}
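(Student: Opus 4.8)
\textbf{Proof proposal for Proposition \ref{Proposition_E_Estimate}.}

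The plan is to start from the definition of the linear term $\rmE$ appearing in \eqref{dE_K_dt}, namely
\beno
\rmE=\int \rmA K\,\rmA(\Delta_t K)\,dzdv,
\eeno
and to split $\Delta_t=\Delta_L+(\Delta_t-\Delta_L)$. The leading contribution $\int \rmA K\,\rmA(\Delta_L K)\,dzdv$ is, up to commuting the Fourier multiplier $\rmA$ through the constant-coefficient (in the transformed variables) operator $\Delta_L=\pa_{zz}+(\pa_v-t\pa_z)^2$, simply $-\|\na_L\rmA K\|_2^2$ where $\na_L=(\pa_z,\pa_v-t\pa_z)$. This is the term we want to extract with a good sign; the coefficient $7/8$ rather than $1$ leaves room to absorb commutator errors. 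So the first step is to write
\beno
\int \rmA K\,\rmA(\Delta_L K)\,dzdv=-\|\na_L\rmA K\|_2^2+\text{(commutator)},
\eeno
where the commutator measures the failure of $\rmA$ to commute with $\na_L$; since $\rmA_k(t,\eta)$ depends on $(t,\eta)$ and $\na_L$ in Fourier side is multiplication by $(ik, i(\eta-kt))$, the commutator only involves $\pa_\eta\rmA$-type quantities, which are controlled by the $\rmCK$ quantities (the $\la$ shrinking, the $\Theta$, $\rmg$, and $\rmB$ derivatives) together with a small constant. This is the standard mechanism by which one trades a derivative loss against the Cauchy--Kovalevskaya gain, exactly as in \cite{Bedrossian_2015}.

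The second step is to bound the perturbative part $\int \rmA K\,\rmA((\Delta_t-\Delta_L)K)\,dzdv$. Here $\Delta_t-\Delta_L=((v')^2-1)(\pa_v-t\pa_z)^2+v''(\pa_v-t\pa_z)$, so this term is genuinely quadratic/higher order in the coordinate-system unknowns $h=v'-1$ and $v''=\frac12\pa_v((v')^2-1)$. The idea is to move one copy of $\na_L$ onto $\rmA K$ (producing a factor $\|\na_L\rmA K\|_2$, which we can afford since we are keeping only $7/8$ of the coefficient), estimate the coefficients $h,v''$ in a Gevrey algebra using the product/commutator lemmas for the $\rmA$-norm, and use the decay estimates for $h$ from Proposition \ref{prop: g,f_0,h} (recall $\|h\|_{L^2}\lesssim \ep\langle t\rangle^{-1}$, and the corresponding Gevrey-norm and $\dot H^2$ control of $h$ via $\rmE_d$, $\rmE_{lo,h}$). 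After Cauchy--Schwarz and Young's inequality this produces a term $\leq \frac{1}{16}\|\na_L\rmA K\|_2^2$ plus a remainder of size $\ep\cdot(\text{controlled quantities})\cdot\|\na_L\rmA K\|_2$ or $\ep^2\langle t\rangle^{-2}\cdot(\cdots)$; using the bootstrap bounds $\rmE(t)\lesssim \ep^2$ and the integrated $\rmCK$-bounds, everything collapses into $\frac{1}{16}\|\na_L\rmA K\|_2^2+C\ep^3\langle t\rangle^{-2}$. One must be a little careful with the frequency regimes: when $|k|\gtrsim|\eta|$ the multiplier $\rmA$ behaves like a plain Gevrey weight and the algebra property is immediate; when $|\eta|\gg|k|$ one uses $\rmA\lesssim\tilde{\rmA}$, $\rmA\lesssim\tilde{\tilde{\rmA}}$ and the reaction/transport splitting to keep the derivative count balanced.

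Collecting the pieces: the main term gives $-\|\na_L\rmA K\|_2^2$, the commutator from step one is absorbed into $\frac{1}{16}\|\na_L\rmA K\|_2^2$ plus small multiples of the $\rmCK$ quantities (which, being nonnegative and integrable by the bootstrap, contribute at most $C\ep^3\langle t\rangle^{-2}$ after using their pointwise-in-time smallness, or are simply left on the good side since $\rmE$ is ultimately integrated against the energy identity), and step two gives another $\frac{1}{16}\|\na_L\rmA K\|_2^2+C\ep^3\langle t\rangle^{-2}$. Hence
\beno
\rmE\leq -\Big(1-\tfrac{1}{16}-\tfrac{1}{16}\Big)\|\na_L\rmA K\|_2^2+\f{C\ep^3}{\langle t\rangle^2}\leq -\f78\|\na_L\rmA K\|_2^2+\f{C\ep^3}{\langle t\rangle^2}.
\eeno
I expect the main obstacle to be the bookkeeping in step two: precisely quantifying how the coefficients $(v')^2-1$ and $v''$ interact with the $\rmA$-norm across the different frequency regimes, and making sure the time decay $\langle t\rangle^{-2}$ (and not merely $\langle t\rangle^{-1}$) is recovered — this is exactly where the extra $L^1$-type control \eqref{L_1_Assumption} and the sharp decay of $g$ and $h$ from Proposition \ref{prop: g,f_0,h} must be used, rather than the cruder bounds. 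The commutator in step one is routine given the construction of $\la(t)$, $\Theta$, $\rmg$ in Section \ref{Section_3}.
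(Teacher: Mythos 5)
Your overall plan matches the paper's proof, but there is one genuine conceptual error in step one that you should correct, even though it happens to be harmless for the final bound.

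You assert that commuting $\rmA$ through $\Delta_L$ (or $\nabla_L$) produces a commutator ``involving $\pa_\eta\rmA$-type quantities'' that must be absorbed by the $\rmCK$ terms. This is not so. In the $(z,v)$ coordinates, both $\rmA$ and $\Delta_L=\pa_{zz}+(\pa_v-t\pa_z)^2$ are Fourier \emph{multipliers}: in Fourier variables $(k,\eta)$, $\rmA$ acts by multiplication by $\rmA_k(t,\eta)$ and $\Delta_L$ by multiplication by $-(k^2+(\eta-kt)^2)$. Two multiplication operators commute exactly; there is no $\pa_\eta$-type commutator whatsoever (such commutators arise when $\rmA$ is commuted with a \emph{variable-coefficient} transport term like ${\bf u}\cdot\nabla$, which is where the $\rmCK$ mechanism is actually used). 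Consequently
\beno
\int \rmA K\,\rmA(\Delta_L K)\,dzdv = -\|\nabla_L\rmA K\|_2^2
\eeno
is an \emph{exact identity}, and the entire first paragraph of your argument about commutators and the Cauchy--Kovalevskaya gain is a phantom. Because the phantom remainder you were imagining has the good sign, it does not invalidate the final conclusion, but the reasoning is wrong and would mislead a reader into thinking the $\rmCK$ weights are needed here when they are not.

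Your step two is essentially the paper's estimate of the perturbative part. To make it precise you should, as the paper does, split $K$ into zero and nonzero modes: on the zero mode, the weight $\rmA_0$ is a Gevrey weight and one uses the algebra property of the $\rmA$-norm together with $\|K_0\|_{L^2}\lesssim\ep\langle t\rangle^{-9/4}$ to produce both the small multiple of $\|\pa_v\rmA K_0\|_2^2$ and the $C\ep^3\langle t\rangle^{-2}$ remainder; on nonzero modes one uses the multiplier comparison $\rmA_k(\eta)\lesssim\langle\eta-\xi\rangle\rmA_0(\eta-\xi)\rmA_k(\xi)$, moves one copy of $\nabla_L$ to each $K$ factor, and bounds the coefficient $(v')^2-1$ (and $v''=\f12\pa_v[(v')^2-1]$) in $\rmA\langle\pa_v\rangle^2$-norm by $\ep$ to obtain $\ep\|\nabla_L\rmA P_{\neq}K\|_2^2$, absorbed for $\ep$ small. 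No Young-type splitting into $\f{1}{16}+\f{1}{16}$ is needed; once the phantom commutator is removed, the coefficient $\f78$ is obtained simply by taking $\ep$ small against the single perturbative contribution.
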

\begin{proposition}\label{prop:Linear_Terms_K}
Under the bootstrap hypotheses, it holds that
\begin{equation}
\begin{aligned}
|\Pi_K^1|+|\Pi_K^2|\lesssim&\, \f{C}{\langle t\rangle^4}\left\|\left\langle\f{\pa_v}{t\pa_z}\right\rangle^{-1}\pa_z^{-1}\Delta_{L}^2\rmA P_{\neq}\phi\right\|_2^2+\f{C}{\langle t\rangle^4}\left\|\left\langle \f{\pa_v}{t\pa_z}\right\rangle^{-1}\rmA\pa_z^{-1}\Delta_{L}P_{\neq }f\right\|_2^2\\
&+\f{1}{8}\|\na_L\rmA K\|_2^2+C_1\d_{\rmL}\left\|\left\langle\f{\pa_v}{t\pa_z}\right\rangle^{-1}\sqrt{\f{\pa_t\rmg}{\rmg}}\pa_z^{-1}\Delta_{L}^2\tilde{\tilde{\rmA}} P_{\neq}\phi\right\|_2^2\\
&+C_1\left\|1_{t\geq M_0}\f{|\na|^{\f s 2}}{\langle t\rangle^{\f{3s}{2}}}\left\langle \f{\pa_v}{t\pa_z}\right\rangle^{-1}\rmA\pa_z^{-1}\Delta_{L}P_{\neq }f\right\|_2^2\\
&+C_1\d_{\rmB}\left\|\sqrt{\f{b(t,\na)\pa_{zz}}{\Delta_{L}}}\left\langle \f{\pa_v}{t\pa_z}\right\rangle^{-1}\rmA\pa_z^{-1}\Delta_{L}P_{\neq }f\right\|_2^2,
\end{aligned}
\end{equation}
where $C_1$ is a constant independent of $\delta_\rmL$, $\d_{\rmB}$ and $M_0$.
\end{proposition}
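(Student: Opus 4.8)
\textbf{Proof proposal for Proposition \ref{prop:Linear_Terms_K}.}
The plan is to estimate the four linear terms $\Pi_K^1$ and $\Pi_K^2$ coming from \eqref{dE_K_dt}, namely the contributions
\beno
\Pi_K^1=\gamma^2\gamma_1\int \rmA K\,\rmA(\partial_{zz}\phi)\,dzdv,\qquad
\Pi_K^2=-2\int \rmA K\,\rmA\big((\partial_v-t\partial_z)\partial_z f\big)\,dzdv,
\eeno
separately, and in each case to split the multiplier $\rmA$ into the frequency regions governing which component of $\rmA$ dominates, as in the analysis of $\Pi_\rho$ in Proposition \ref{prop:Linear_Terms}. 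For $\Pi_K^1$ one writes $\partial_{zz}\phi=\partial_z^{-1}\Delta_L P_{\neq}\phi\cdot(\text{multiplier})$ and exploits $\Delta_t\phi=f$ together with $K=-\gamma^2\partial_z\rho+\Delta_t f$ to express $\phi$ (hence $\partial_{zz}\phi$) via $\Delta_L^{-2}$ acting on $f$; the key elliptic fact is that $\Delta_L^{-1}$ gains $\frac{1}{k^2+(\eta-kt)^2}$, which near the critical times $t\approx \eta/k$ is only $O(k^{-2})$ but off the critical interval gives strong decay. Pairing against $\rmA K$ via Cauchy--Schwarz and absorbing one factor of $\|\nabla_L\rmA K\|_2$ into the $\frac18\|\nabla_L\rmA K\|_2^2$ on the right, one is left with $\langle t\rangle^{-4}$-weighted elliptic quantities of $\phi$ and $f$, which is exactly the first two terms on the right-hand side.

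The term $\Pi_K^2$ is the more delicate one. Here $(\partial_v-t\partial_z)\partial_z f=\nabla_L^{\perp}\!\cdot$-type object and one again writes $f=\Delta_L(\Delta_L^{-1}f)$ to bring in $\pa_z^{-1}\Delta_L P_{\neq}f$. The commutator between $\rmA$ and the multiplication by the coefficient $(\eta-kt)$ is what produces the genuinely new contributions: when $\partial_t$ of the various ghost weights $\Theta_k,\rmg,\rmB_k$ are hit, one gets the terms $\rmCK$-type terms, and the whole point of having introduced $\cM$ and $\rmB$ into the norm \eqref{Multiplier} is that these commutators are controlled by $\sqrt{\partial_t\rmg/\rmg}\,\tilde{\tilde{\rmA}}$, by $1_{t\geq M_0}\frac{|\nabla|^{s/2}}{\langle t\rangle^{3s/2}}\rmA$, and by $\sqrt{b(t,\nabla)\partial_{zz}/\Delta_L}\,\rmA$ respectively. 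One separates frequencies into (i) $|\eta|\lesssim |k|$, where $\Delta_L$ is comparable to $k^2$ and everything is trivially bounded by $\langle t\rangle^{-2}\|\rmA\pa_z^{-1}\Delta_L P_{\neq}f\|_2^2$-type quantities that feed into Proposition \ref{prop: NL_2}; (ii) $|\eta|\gg|k|$ and $t$ far from all critical intervals $\rmI_{k,\eta}$, where the factor $(\eta-kt)$ is comparable to $|\eta-kt|\sim\langle t\rangle|k|$ or $|\eta|$ and pairing with $\|\nabla_L\rmA K\|_2$ again absorbs into the $\frac18\|\nabla_L\rmA K\|_2^2$ reserve after using the elliptic gain; (iii) the resonant regime $t\in\rmI_{k,\eta}$, which is where $b(t,k,\eta)=1$ and the $\rmB$-weight and the $\cM$-weight are actively doing work, so that the loss is paid for by the $\rmCK_{B}$-type term and the $M_0$-dependent term.

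The main obstacle is the resonant regime (iii): near $t=\eta/k$ the elliptic operator $\Delta_L^{-1}$ only gains $k^{-2}$, the coefficient $(\eta-kt)$ is small but its $\eta$-derivative (which is how it interacts with the Gevrey multiplier through $\partial_\eta$) is $O(\langle t\rangle)$, and the vorticity $K$ itself carries the worst growth; so one must very carefully track that the product of the small elliptic gain, the bounded ghost weights $\rmB_k\approx 1$, and the structure of $b(t,k,\eta)$ (supported precisely where $\frac{1}{2}\le|\eta/(kt)|\le\frac32$, $|\eta|\lesssim k^3$, $|\eta|\lesssim t^3$) exactly balances. The mechanism is that on $\bar{\rmI}_{k,\eta}$ one has $|\eta-kt|\le \frac{|\eta|}{2|k|^2}$, hence $\frac{(\eta-kt)^2}{k^2+(\eta-kt)^2}\lesssim 1$ but more importantly the time-integral $\int \frac{b}{1+(s-\eta/k)^2}ds$ over the critical window is $O(1)$ uniformly, so the $\delta_{\rmB}^{-1}$ prefactor in $\rmB$ and the coefficient $C_1\delta_{\rmB}$ in the claimed bound trade off; one chooses $\delta_{\rmB}$ small at the very end. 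I expect the bookkeeping of exactly which frequency shells contribute which of the five right-hand-side terms — and checking that no term requires more than the available $\frac18$ of $\|\nabla_L\rmA K\|_2^2$ — to be the technically heaviest part, but conceptually it is the same splitting-and-absorbing scheme already used for $\Pi_\rho$, now carried out with the extra $(\partial_v-t\partial_z)$ factor that forces the appearance of the $b$-weighted and $M_0$-weighted correctors.
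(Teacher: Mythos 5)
Your high-level plan is on the right track, and the claimed budget management (integrate by parts, Cauchy--Schwarz, absorb $\frac18\|\nabla_L\rmA K\|_2^2$, and keep the $\phi$- and $f$-elliptic quantities on the right) matches the paper. But the central mechanism is misdescribed in a way that would not produce the estimate if carried out literally. You write that "the commutator between $\rmA$ and the multiplication by the coefficient $(\eta-kt)$ is what produces the genuinely new contributions" and that the $\rmCK$-type terms arise when "$\partial_t$ of the various ghost weights $\Theta_k,\rmg,\rmB_k$ are hit." Neither of these is what happens. On the Fourier side both $\rmA$ and the symbol of $(\partial_v-t\partial_z)\partial_z$ are Fourier multipliers, so there is no commutator, and no time derivative ever lands on the ghost weights in this estimate --- those $\rmCK$ terms already appear in \eqref{dE_K_dt} from $\frac{d}{dt}\|\rmA K\|_2^2$, not from bounding $\Pi_K^2$. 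The same issue appears in your remark about the $O(\langle t\rangle)$ "$\eta$-derivative of $(\eta-kt)$": no such derivative is taken.

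What the paper actually does is elementary: after moving one factor of $\nabla_L$ onto $K$, one is left with the explicit scalar symbol $\frac{k^2}{k^2+(\eta-kt)^2}$ (for $\Pi_K^2$) or $\frac{|k|^2}{(k^2+(\eta-kt)^2)^2}$ (for $\Pi_K^1$) multiplying $\rmA\pa_z^{-1}\Delta_L P_{\neq}f$ (resp.\ $\pa_z^{-1}\Delta_L^2\rmA P_{\neq}\phi$), and one bounds this symbol \emph{pointwise in $(t,k,\eta)$} by the symbols of the weights appearing on the right-hand side, according to the time window: when $|\eta-kt|\gtrsim|kt|$ or $t\leq M_0$ it is $\lesssim_{M_0}\langle\eta/(kt)\rangle^{-1}t^{-2}$; when $t\in[M_0,t_{\rmE(|\eta|^{2/3}),\eta}]$ and $\eta\approx kt$ (forcing $t\lesssim|\eta|^{1/3}$) one has $1\lesssim\langle\eta/(kt)\rangle^{-1}|\eta|^{s/2}t^{-3s/2}\mathbf 1_{t\geq M_0}$; when $t\in[t_{\rmE(|\eta|^{2/3}),\eta},t_{\rmE(|\eta|^{1/3}),\eta}]$ and $\eta\approx kt$ one has $\frac{k^2}{k^2+(\eta-kt)^2}=\frac{1}{1+(t-\eta/k)^2}\lesssim\langle\eta/(kt)\rangle^{-1}\sqrt{\frac{b(t,k,\eta)}{1+(t-\eta/k)^2}}$ since $b\equiv1$ there; and when $t\geq t_{\rmE(|\eta|^{1/3}),\eta}$ and $\eta\approx kt$ one has $\frac{k^2}{k^2+(\eta-kt)^2}\lesssim\sqrt{\d_\rmL}\,\langle\eta/(kt)\rangle^{-1}\sqrt{\pa_t\rmg/\rmg}$. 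Also note that the $O(1)$ time-integral of $b/(1+(s-\eta/k)^2)$ you invoke is relevant only for proving $\rmB_k\approx1$; it plays no direct role here. Until your sketch states the pointwise symbol bounds that replace the spurious "commutator," the proof is missing its core step.
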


\begin{proposition}\label{prop: elliptic-3}
Under the bootstrap hypotheses, it holds that
\begin{equation}\label{Elliptic_f_1}
\left\|
\rmA\pa_z^{-1}\Delta_{L}P_{\neq}f\right\|_2\lesssim \ep.
\end{equation}     
\end{proposition}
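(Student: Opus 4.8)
The plan is to express $\pa_z^{-1}\Delta_L P_{\neq} f$ in terms of the good unknown $K$ and the density $\rho$, and then to control each piece using the bootstrap hypotheses. Recall from \eqref{K_Equation} that $K = -\gamma^2\pa_z\rho + \Delta_t f$, so that $\Delta_L f = K + \gamma^2\pa_z\rho - (\Delta_t - \Delta_L)f$, where $\Delta_t - \Delta_L = ((v')^2-1)(\pa_v-t\pa_z)^2 + v''(\pa_v-t\pa_z)$ is a perturbative operator with coefficients built from $h$ and $v''=\tfrac12\pa_v((v')^2-1)$. Applying $\pa_z^{-1}P_{\neq}$, one gets schematically
\begin{equation*}
\rmA\pa_z^{-1}\Delta_L P_{\neq} f = \rmA\pa_z^{-1}P_{\neq}K + \gamma^2 \rmA P_{\neq}\rho - \rmA\pa_z^{-1}P_{\neq}\big((\Delta_t-\Delta_L)f\big).
\end{equation*}
First I would bound the first two terms: since we are on nonzero modes, $\pa_z^{-1}P_{\neq}$ is bounded on $L^2$ (frequencies $|k|\geq 1$), so $\|\rmA\pa_z^{-1}P_{\neq}K\|_2 \lesssim \|\rmA K\|_2 \lesssim \ep$ by \eqref{eq:B1}, and $\gamma^2\|\rmA P_{\neq}\rho\|_2 \lesssim \|\rmA\rho\|_2 \lesssim \ep$ again by \eqref{eq:B1}. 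These two contributions are therefore immediately of size $\ep$.

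The remaining work is the perturbative term $\rmA\pa_z^{-1}P_{\neq}\big(((v')^2-1)(\pa_v-t\pa_z)^2 f + v''(\pa_v-t\pa_z)f\big)$. Here I would pass to the $(z,v)$-Fourier side and use a standard paraproduct decomposition of the products $h\cdot(\pa_v-t\pa_z)^2 f$ and $v''\cdot(\pa_v-t\pa_z)f$, splitting into the regions where $h$ (resp.\ $v''$) is at low frequency and $f$ at high frequency, the reverse, and the comparable-frequency region. In the first region the multiplier $\rmA$ essentially falls on the $f$ factor, and one wants to reconstruct $\|\rmA\pa_z^{-1}\Delta_L P_{\neq}f\|_2$ on the right-hand side multiplied by a small factor coming from $\|h\|$-type norms; note $(\pa_v-t\pa_z)^2 = \Delta_L - \pa_{zz}$, so $\pa_z^{-1}(\pa_v-t\pa_z)^2 f = \pa_z^{-1}\Delta_L f - \pa_z f$, which is exactly the quantity being estimated plus a lower-order piece. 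Because $h$ and $\langle\pa_v\rangle^2 h$ are controlled in the strong norm $\rmA$ (via $\rmE_d$ in \eqref{eq:B1} and Proposition \ref{prop: g,f_0,h}, giving $\|h\|_{L^2}\lesssim \ep\langle t\rangle^{-1}$ and Gevrey control), the low-$h$/high-$f$ term absorbs into the left-hand side with a small constant $\lesssim \ep\langle t\rangle^{-1}$, and can be moved over. In the low-$f$/high-$h$ and comparable-frequency regions, the derivatives $(\pa_v-t\pa_z)^2$ and $\pa_z^{-1}$ act on relatively low $f$-frequencies while the Gevrey/Sobolev weight $\rmA$ is controlled by the $\langle\pa_v\rangle^2 h$-norm on the high-frequency $h$-factor; here one uses the algebra/product properties of the Gevrey-$\tfrac1s$ norm with Sobolev correction (as in \cite{Bedrossian_2015}) together with the decay of $h$ to close with an $O(\ep^2)$ bound, hence certainly $\lesssim \ep$.

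The main obstacle I anticipate is the bookkeeping in the comparable-frequency interaction of $h$ (or $v''$) with $(\pa_v-t\pa_z)^2 f$: there the large factor $t^2$ hidden in $(\pa_v-t\pa_z)^2$ must be paid for either by the decay of $h$ or by reading it as $\Delta_L$ acting on $f$ so that it reassembles into the quantity on the left; one has to be careful that the commutator between $\rmA_k(t,\eta)$ at the output frequency and at the $f$-input frequency is controlled (the usual $e^{\la|k,\eta|^s}$ versus $e^{\la|k,\eta'|^s}\langle k,\eta-\eta'\rangle^{O(1)}$ frequency-localization trick), and that $v'' = \tfrac12\pa_v((v')^2-1)$ really does carry one derivative's worth of smallness from $h$. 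Once these product estimates are in place, collecting terms gives $\|\rmA\pa_z^{-1}\Delta_L P_{\neq}f\|_2 \leq C\ep + C\ep\langle t\rangle^{-1}\|\rmA\pa_z^{-1}\Delta_L P_{\neq}f\|_2 + C\ep^2$, and absorbing the middle term for $\ep$ small (or $t$ large, with the $t\in[0,1]$ range handled by the local theory) yields \eqref{Elliptic_f_1}.
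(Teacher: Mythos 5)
Your proposal follows the same route as the paper: rewrite $\Delta_L f = K + \gamma^2\pa_z\rho - \big(((v')^2-1)(\pa_v-t\pa_z)^2 + v''(\pa_v-t\pa_z)\big)f$, control the $K$ and $\rho$ pieces directly by the bootstrap, and treat the perturbative terms via a $v$-paraproduct, absorbing the low-$h$/high-$f$ contribution into the left-hand side with a factor $\lesssim\ep$ (the paper uses $\|G_1\|_{\cG^{\lambda,0;s}}\lesssim\ep$, not $\ep\langle t\rangle^{-1}$, but this is immaterial) and closing the high-$h$ and comparable-frequency pieces with the $\langle\pa_v\rangle^2 h$ control. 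The only refinement you did not spell out is the paper's further split of the high-low piece into $|k|\gtrsim|\eta|$ (absorbed into the LHS like the low-high case) and $|k|\ll|\eta|$ (where the $\rmJ_k/\rmJ_0$ commutator costs a factor $\langle\xi\rangle$, paid by the $\langle\pa_v\rangle$-weighted $h$-norm), which is exactly the care you flagged.
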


The proof of Proposition \ref{prop: elliptic-3} is the subject of Section \ref{Section_Proof_elliptic-3}. 

\begin{proposition}\label{prop: elliptic-4}
Under the bootstrap hypotheses, for some $C_1\geq 1$  independent of $M_0$, it holds that
\begin{equation}\label{Elliptic_f_lambda}
\begin{aligned}
\left\|\mathbf{1}_{t\geq M_0}\f{|\na|^{\f s 2}}{\langle t\rangle^{\f{3s}{2}}}\left\langle \f{\pa_v}{t\pa_z}\right\rangle^{-1}\rmA\pa_z^{-1}\Delta_{L}P_{\neq }f\right\|_2^2\leq C_1 M_0^{2\tilde{q}-3s}(\rmCK_{\la,K}+\rmCK_{\la,\rho})+C\ep^2 \rmCK_{\la,h}
\end{aligned}
\end{equation}
and for some $C_2\geq1$ independent of $\d_{\rmL}$, it holds that
\begin{equation}\label{Elliptic_g_lambda}
\begin{aligned}
&\left\|\sqrt{\f{\pa_t\rmg}{\rmg}}\left\langle \f{\pa_v}{t\pa_z}\right\rangle^{-1}\tilde{\tilde{\rmA}}\pa_z^{-1}\Delta_{L}P_{\neq }f\right\|_2^2\\
&\leq C_2(\rmCK_{M,K}+\rmCK_{\la,K}+\rmCK_{M,\rho}+\rmCK_{\la,\rho})+C\ep^2 \|\rmA \langle\pa_{v}\rangle^2h\|_2^2,    
\end{aligned}
\end{equation}
and for some $C_3\geq 1$ independent of $\d_{\rmB}$, it holds that
\beno
\left\|\sqrt{\f{b(t,\na)\pa_{zz}}{\Delta_{L}}}\left\langle \f{\pa_v}{t\pa_z}\right\rangle^{-1}\rmA\pa_z^{-1}\Delta_{L}P_{\neq }f\right\|_2^2
\leq C_3(\rmCK_{B,K}+\rmCK_{B,\rho})+C\ep^2 \|\rmA \langle\pa_{v}\rangle^2h\|_2^2. 
\eeno
\end{proposition}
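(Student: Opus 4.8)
The plan is to reduce all three inequalities to one algebraic decomposition of $\pa_z^{-1}\Delta_L P_{\neq}f$, coming from the good unknown, followed by pointwise comparisons of the composite Fourier multipliers against the Cauchy--Kovalevskaya symbols of \eqref{Ck_Terms_rho}. From $K=-\g^2\pa_z\rho+\Delta_t f$ (see \eqref{K_Equation}) and the fact that $\Delta_t-\Delta_L=(2h+h^2)(\pa_v-t\pa_z)^2+v''(\pa_v-t\pa_z)$ has coefficients built only from $h$ and $\pa_v h$ (recall $v'=1+h$ and $v''=v'\pa_v h$), I would write
\begin{equation}
\pa_z^{-1}\Delta_L P_{\neq}f=\pa_z^{-1}P_{\neq}K+\g^2 P_{\neq}\rho-\pa_z^{-1}P_{\neq}\big[(2h+h^2)(\pa_v-t\pa_z)^2 f+v'\,\pa_v h\,(\pa_v-t\pa_z)f\big].
\end{equation}
Inserting this into each of the three left-hand sides splits them into a $K$-part, a $\rho$-part and an $h$-part, which I treat separately.

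For the $K$- and $\rho$-parts everything is a matter of symbols, using $\langle\tfrac{\pa_v}{t\pa_z}\rangle^{-1}$ at frequency $(k,\eta)$ equals $\langle\tfrac{\eta}{tk}\rangle^{-1}=\tfrac{t|k|}{\sqrt{t^2k^2+\eta^2}}\le 1$. For the first inequality the key pointwise bound is, for $|k|\geq1$ and $t\geq M_0$,
\begin{equation}
\Big(\mathbf{1}_{t\geq M_0}\,\tfrac{|k,\eta|^{s/2}}{\langle t\rangle^{3s/2}}\,\big\langle\tfrac{\eta}{tk}\big\rangle^{-1}\,\tfrac1{|k|}\Big)^2\ \lesssim\ M_0^{2\tilde{q}-3s}\,\tfrac{1+\la(t)}{\langle t\rangle^{2\tilde{q}}}\,|k,\eta|^s,
\end{equation}
which follows from $t^2k^2+\eta^2\geq t^2$ together with the arithmetic fact $2\tilde{q}<3s$ (from $\tilde{q}<\tfrac{3s+1}{4}$ and $s>\tfrac13$, so that $\langle t\rangle^{2\tilde{q}-3s}\leq\langle M_0\rangle^{2\tilde{q}-3s}\lesssim M_0^{2\tilde{q}-3s}$ for $t\geq M_0$); recalling $-\dt{\lambda}(t)\gtrsim\langle t\rangle^{-2\tilde{q}}(1+\la(t))$ from \eqref{lambda_Decay}, this turns the $K$-part into $\lesssim M_0^{2\tilde{q}-3s}\rmCK_{\la,K}$, and the $\rho$-part (no $\pa_z^{-1}$, only $\langle\tfrac{\eta}{tk}\rangle^{-1}\le1$) into $\lesssim M_0^{2\tilde{q}-3s}\rmCK_{\la,\rho}$, with the constant $\sim\delta_\la^{-1}$ independent of $M_0$. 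For the second and third inequalities the factor $\tfrac{|\na|^{s/2}}{\langle t\rangle^{3s/2}}$ is replaced by $\sqrt{\pa_t\rmg/\rmg}$, resp.\ $\sqrt{b(t,\na)\pa_{zz}/\Delta_L}$, and $\rmA$ by $\tilde{\tilde{\rmA}}$, resp.\ $\rmA$; here I use $\tilde{\tilde{\rmA}}\le\rmA$ together with the identity $\rmCK_{M,\varphi}=\sum_k\int\tfrac{\pa_t\rmg}{\rmg}\tilde{\tilde{\rmA}}_k\rmA_k|\widehat{\varphi}_k|^2$ read off from \eqref{A_tildetilde}, so that $\tfrac{\pa_t\rmg}{\rmg}\langle\tfrac{\eta}{tk}\rangle^{-2}\tilde{\tilde{\rmA}}_k^2|k|^{-2}\le\tfrac{\pa_t\rmg}{\rmg}\tilde{\tilde{\rmA}}_k\rmA_k$ directly yields $\rmCK_{M,K}$ and $\rmCK_{M,\rho}$, and an analogous bound with $b$ yields $\rmCK_{B,K}$ and $\rmCK_{B,\rho}$; the harmless low-frequency leftovers (where $\rmg\approx1$ or $b$ is supported off the resonance) are absorbed into $\rmCK_{\la,K}+\rmCK_{\la,\rho}$, which explains their presence on the right.

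For the $h$-part I split $\pa_z^{-1}(\pa_v-t\pa_z)^2f=\pa_z^{-1}\Delta_L f-\pa_z f$. The self-referential piece $h\,\pa_z^{-1}\Delta_L P_{\neq}f$, after a paraproduct decomposition, contributes on the high-$f$ output a term $\lesssim\ep$ times the very quantity being estimated on the left (using the smallness of $h$ in $\cG_1^{\la,\b;s}$ from Proposition~\ref{prop: g,f_0,h}), which is absorbed for $\ep$ small; on the high-$h$ output I use the Gevrey product estimates of Section~\ref{Section_Main_Nonlinear_terms}, paying $\|\rmA\pa_z^{-1}\Delta_L P_{\neq}f\|_2\lesssim\ep$ via Proposition~\ref{prop: elliptic-3} and converting the surplus decay $\langle t\rangle^{-3s/2}$ into $\sqrt{\rmCK_{\la,h}}$ (which is possible since $\langle t\rangle^{-3s}\lesssim-\dt{\lambda}(t)\langle t\rangle$, i.e.\ $2\tilde{q}\le1+3s$); the lower-order terms $\pa_z f$ and $v'\pa_v h(\pa_v-t\pa_z)f$ are handled identically. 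This lands the $h$-part in $C\ep^2\rmCK_{\la,h}$ for the first estimate and in $C\ep^2\|\rmA\langle\pa_v\rangle^2h\|_2^2$ for the other two; the constants multiplying the CK terms stay independent of $\d_{\rmL},\d_{\rmB},M_0$ because the $\ep^2$-remainders are kept strictly separate.

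\emph{Main obstacle.} The delicate point is not the $h$-part but the symbol comparisons against $\rmCK_{M,\cdot}$ and $\rmCK_{B,\cdot}$: one must check that the localization forced by $\langle\tfrac{\pa_v}{t\pa_z}\rangle^{-1}$ (effectively $|\eta|\lesssim t|k|$) is compatible with the support and the size of $\pa_t\rmg/\rmg$ and of $b(t,k,\eta)$ near the critical times $t\approx\eta/k$, and that replacing $\rmA$ by $\tilde{\tilde{\rmA}}$ loses nothing in the regime $|k|\le|\eta|$ where these weights are actually active, while the region $|k|>|\eta|$ must be disposed of by hand. Keeping $C_2,C_3$ genuinely free of $\d_{\rmL},\d_{\rmB}$ — so that the output terms can be absorbed once those parameters are taken small, and $C_1M_0^{2\tilde q-3s}$ becomes small once $M_0$ is taken large — is where the bookkeeping has to be done carefully.
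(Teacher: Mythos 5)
Your proposal is correct and follows essentially the same route as the paper: it starts from the identity $\Delta_L f=\gamma^2\partial_z\rho+K-((v')^2-1)(\pa_v-t\pa_z)^2f-v''(\pa_v-t\pa_z)f$ (equation \eqref{Eq_f_Delta_L}), treats the $K$- and $\rho$-parts by the same pointwise symbol comparisons against $-\dt{\lambda}(t)$, $\pa_t\rmg/\rmg$ and $b(t,k,\eta)$, and handles the coefficient terms $\mathcal{M}_{1,f},\mathcal{M}_{2,f}$ by the same paraproduct splitting, absorbing the low-high contribution with a factor of $\ep$ and paying with $\rmCK_{\la,h}$ (first estimate) or $\|\rmA\langle\pa_v\rangle^2h\|_2^2$ (second and third) on the high-$h$ output. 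One small remark: your use of $\rmCK_{M,\cdot}$ for the $\sqrt{\pa_t\rmg/\rmg}$ bound, via $\tilde{\tilde{\rmA}}^2\le\tilde{\tilde{\rmA}}\rmA$, is the correct reading of the definitions; the paper's proof at that point writes $\rmCK_{\Theta,\cdot}$, which appears to be a typo, as the multiplier involved there is $\tilde{\tilde{\rmA}}$ and the weight is $\pa_t\rmg/\rmg$, not $\pa_t\Theta/\Theta$ with $\tilde{\rmA}$.
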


The proof of Proposition \ref{prop: elliptic-4} is given in Section \ref{Section_Proof_elliptic-4}.

\begin{proposition}\label{prop: elliptic-1}
Under the bootstrap hypotheses, it holds that
\beno
\left\|\left\langle \f{\pa_v}{t\pa_z}\right\rangle^{-1} \rmA\pa_z^{-1}\Delta^{2}_{L}P_{\neq}\phi\right\|_2\lesssim \ep. 
\eeno
\end{proposition}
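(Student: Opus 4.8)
The idea is to transfer the bound already established for $f$ in Proposition~\ref{prop: elliptic-3} to $\phi$ through the elliptic equation $\Delta_t\phi=f$, exploiting that $\Delta_t$ is only a zero-mode perturbation of $\Delta_L$: since $v'=1+h$ and $v''=(1+h)\pa_vh$ depend on $(t,v)$ alone,
\[
\Delta_t-\Delta_L=(2h+h^2)(\pa_v-t\pa_z)^2+v''(\pa_v-t\pa_z),\qquad \pa_zh=\pa_zv''=0 ,
\]
so that $\Delta_LP_{\neq}\phi=P_{\neq}f-(\Delta_t-\Delta_L)P_{\neq}\phi$, and applying $\pa_z^{-1}\Delta_L$ once more,
\[
\pa_z^{-1}\Delta_L^2P_{\neq}\phi=\pa_z^{-1}\Delta_LP_{\neq}f-\pa_z^{-1}\Delta_L(\Delta_t-\Delta_L)P_{\neq}\phi=:T_1+T_2 .
\]
The plan is to bound $T_1$ directly by Proposition~\ref{prop: elliptic-3}, to bound $T_2$ by a paraproduct estimate carrying a factor $\ep$, and then to absorb the piece of $T_2$ proportional to the left-hand side of the asserted inequality (whose finiteness at each fixed time follows from the bootstrap regularity).

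For $T_1$, the Fourier symbol of $\left\langle\f{\pa_v}{t\pa_z}\right\rangle^{-1}$ is $\leq 1$ and commutes with $\rmA\pa_z^{-1}\Delta_L$, hence $\left\|\left\langle\f{\pa_v}{t\pa_z}\right\rangle^{-1}\rmA T_1\right\|_2\leq\left\|\rmA\pa_z^{-1}\Delta_LP_{\neq}f\right\|_2\lesssim\ep$ at once.

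For $T_2$, I would expand the two copies of $\Delta_L=\pa_{zz}+(\pa_v-t\pa_z)^2$ by the Leibniz rule. Because $\pa_zh=0$, only $\pa_v$-derivatives — and at most three of them, through $\pa_v^2v''$ — ever hit the coefficients, while the remaining derivatives act on $\phi$; each resulting $\phi$-factor is of the form $\pa_z^{-1}(\pa_v-t\pa_z)^j\Delta_L^iP_{\neq}\phi$, and a short symbol inspection (using $|\eta-kt|\le(\eta-kt)^2+k^2$ and $(\eta-kt)^2+k^2\ge1$ for $k\neq0$) shows that each such factor is dominated, pointwise on the Fourier side, either by $\pa_z^{-1}\Delta_L^2P_{\neq}\phi$ or by $\pa_z^{-1}\Delta_LP_{\neq}\phi$ — the latter being, by one more use of the elliptic identity, controlled in $\rmA$-norm by $\left\|\rmA\pa_z^{-1}\Delta_LP_{\neq}f\right\|_2$ plus a term $\lesssim\left\|\left\langle\f{\pa_v}{t\pa_z}\right\rangle^{-1}\rmA\pa_z^{-1}\Delta_L^2P_{\neq}\phi\right\|_2$; the weight $\left\langle\f{\pa_v}{t\pa_z}\right\rangle^{-1}$, being $\le1$, is simply carried along. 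Then I would apply the Gevrey product lemma used throughout (with multiplier $\rmA$) to each term: in the paraproduct branch where the output frequency is carried by the $\phi$-factor, the coefficient enters through a low Gevrey Fourier-$L^1$ norm — controlled, with $\langle t\rangle^{-1}$ decay, by $\rmE_{lo,h}$ (Proposition~\ref{prop: g,f_0,h}), since $h$ is a smooth zero mode — and the $\phi$-factor contributes at most $\left\|\left\langle\f{\pa_v}{t\pa_z}\right\rangle^{-1}\rmA\pa_z^{-1}\Delta_L^2P_{\neq}\phi\right\|_2$ plus a term $\lesssim\ep$; in the branch where the output frequency is carried by the coefficient, the coefficient enters in $\rmA$-$L^2$ norm ($\lesssim\ep$ by $\rmE_d$ and $\rmE_{lo,h}$) while the $\phi$-factor sits at comparatively low frequency, controlled by $\|\rmA K\|_2+\|\rmA\rho\|_2+\left\|\rmA\pa_z^{-1}\Delta_LP_{\neq}f\right\|_2\lesssim\ep$ through $K=-\g^2\pa_z\rho+\Delta_tf$, \eqref{eq:B1} and Proposition~\ref{prop: elliptic-3}. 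Collecting the terms,
\[
\left\|\left\langle\f{\pa_v}{t\pa_z}\right\rangle^{-1}\rmA T_2\right\|_2\lesssim\ep^2+\ep\left\|\left\langle\f{\pa_v}{t\pa_z}\right\rangle^{-1}\rmA\pa_z^{-1}\Delta_L^2P_{\neq}\phi\right\|_2 ,
\]
which, combined with the $T_1$ bound and $\ep\le\ep_0$ small enough, absorbs the last term into the left-hand side and proves the proposition.

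The crux is not conceptual but the execution of the product estimate for $T_2$: one must check that the full multiplier $\rmA$ — with the delicate components $\rmJ,\cM$ (and $\rmB$), built around the critical intervals $\rmI_{k,\eta}$ — distributes correctly across each paraproduct near the critical times $t\approx\eta/k$, and that the top-order coefficient $\pa_v^2v''$ (which contains $\pa_v^3h$) is still covered by the available norms of $h$. Both hinge on the structural fact $\pa_zh=0$, which prevents the coefficients from ever acquiring $(\pa_v-t\pa_z)$-growth, and on the triviality of the zero-mode multipliers, so that $\rmE_{lo,h}$ and $\rmE_d$ suffice.
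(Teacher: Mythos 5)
Your decomposition is essentially the one the paper uses: write $\pa_z^{-1}\Delta_L^2\phi=\pa_z^{-1}\Delta_L f-\pa_z^{-1}\Delta_L(\Delta_t-\Delta_L)\phi$, bound the first term by Proposition~\ref{prop: elliptic-3}, then absorb the highest-order pieces of the error by the left-hand side via a paraproduct argument. The paper's identity \eqref{Delta_L_phi} is your $T_2$ after Leibniz, grouped slightly differently. The low-high branch (coefficient at low $v$-frequency) is absorbed with a factor of $\ep$ exactly as you describe. The gap is in your treatment of the high-low branch, which you correctly flag as the crux but leave unresolved.

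Two things go wrong with your sketch of that branch. First, you cannot skip the further split into $|k|\gtrsim|\eta|$ and $|k|\ll|\eta|$: when $|k|$ is comparable to the total frequency, the Gevrey factor $e^{\la|k,\eta|^s}$ is dominated by $|k|$, not by the $G_1$-frequency $|\xi|$, so the usual high-low expansion that puts $e^{\la|\xi|^s}$ on $G_1$ fails; this piece is instead treated like low-high and absorbed. Second, and more seriously, for the genuinely high-low $v$-piece the top coefficient is $\pa_{vvv}G_1\sim\pa_v^3 h$, and neither $\rmE_d$ nor $\rmE_{lo,h}$ gives you $\|\rmA\langle\pa_v\rangle^3 h\|_2$ — your claimed ``$\lesssim\ep$ by $\rmE_d$ and $\rmE_{lo,h}$'' has no source for the extra derivative. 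The paper resolves this by using the weight $\langle\pa_v/(t\pa_z)\rangle^{-1}$ to trade one $|\xi|$ on $G_1$ for a factor $t|k|$ (since $|\eta|\approx|\xi|$ on the support), landing on $\|\rmA\langle\pa_v\rangle^2 h\|_2\lesssim\ep\langle t\rangle^{-1/2}$ at the cost of a power of $\langle t\rangle$. Moreover the multiplier ratio $\rmJ_k(\eta)/\rmJ_0(\xi)$ costs yet another $\langle t\rangle$ (see \eqref{J_J_0_Estimate}), so the total symbol growth is $\langle t\rangle^3$ and you must compensate with time decay of the $\phi$-factor: the lossy elliptic bound $\|P_{\neq}\phi\|_{\cG^{\la,0;s}}\lesssim\ep\langle t\rangle^{-4}$ of Lemma~\ref{Lemma_Lossy}, not the full-regularity norms $\|\rmA K\|_2+\|\rmA\rho\|_2+\|\rmA\pa_z^{-1}\Delta_L P_{\neq}f\|_2$, which are only $O(\ep)$ uniformly in time and would leave a growing factor $\ep^2\langle t\rangle^3$. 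So the closing of this branch rests precisely on the two ingredients your sketch omits: the $\langle\pa_v/(t\pa_z)\rangle^{-1}$ absorption of a derivative (at a $t$-cost) and the time-weighted lossy estimate on $\phi$ to pay for it.
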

The proof of Proposition \ref{prop: elliptic-1} is detailed in Section \ref{Section_Proof_Proposition_elliptic-1}

\begin{proposition}\label{prop: elliptic-2}
Under the bootstrap hypotheses, it holds that
\begin{align}\label{Main_elliptic_Estimate}
&\left\|\left\langle\f{\pa_v}{t\pa_z}\right\rangle^{-1}\left(\f{|\na|^{\f{s}{2}}}{\langle t\rangle^{\f{3s}{2}}}\rmA+\sqrt{\f{\pa_t\rmg}{\rmg}}\tilde{\tilde{\rmA}}+\sqrt{\f{\pa_t\Theta}{\Theta}}\tilde{\rmA}\right)\pa_z^{-1}\Delta_{L}^2 P_{\neq}\phi\right\|_2^2\\
&\leq C_1( \rmCK_{\la,K}+\rmCK_{\Theta,K}+\rmCK_{M,K}
+\rmCK_{\la,\rho}+\rmCK_{\Theta,\rho}+\rmCK_{M,\rho})\\
&\quad+C_2(\ep^2 \rmCK_{\la,h}+\ep^2\|\rmA\langle\pa_v\rangle^2h\|_2^2), 
\end{align}
where the constant $C_1$ is independent of $\d_{\rmL}$ and the constant $C_2$ may depend on $\delta_L$. 
\end{proposition}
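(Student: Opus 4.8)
\emph{Step 1 (reduction to main terms).} The starting point is the elliptic relation $\Delta_t\phi=f$ together with $K=-\g^2\pa_z\rho+\Delta_t f$ from \eqref{K_Equation}, i.e. $\Delta_t f=K+\g^2\pa_z\rho$. Writing $\Delta_L=\Delta_t+(\Delta_L-\Delta_t)$ twice gives, on the nonzero modes,
\[
\pa_z^{-1}\Delta_L^2P_{\neq}\phi=\pa_z^{-1}P_{\neq}K+\g^2P_{\neq}\rho+\pa_z^{-1}(\Delta_L-\Delta_t)P_{\neq}f+\pa_z^{-1}\Delta_L(\Delta_L-\Delta_t)P_{\neq}\phi ,
\]
where, with $h=v'-1$ and $v''=\pa_vh+h\pa_vh$, the difference operator $\Delta_L-\Delta_t=-(2h+h^2)(\pa_v-t\pa_z)^2-(\pa_vh+h\pa_vh)(\pa_v-t\pa_z)$ has a coefficient at least linear in $h$ and carries two copies of $\pa_v-t\pa_z$. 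Let $\mathcal{O}=\langle\tfrac{\pa_v}{t\pa_z}\rangle^{-1}\big(\tfrac{|\na|^{s/2}}{\langle t\rangle^{3s/2}}\rmA+\sqrt{\pa_t\rmg/\rmg}\,\tilde{\tilde{\rmA}}+\sqrt{\pa_t\Theta/\Theta}\,\tilde{\rmA}\big)$ denote the multiplier appearing on the left of \eqref{Main_elliptic_Estimate}; then $\|\mathcal{O}\pa_z^{-1}\Delta_L^2P_{\neq}\phi\|_2^2$ is estimated by treating the two ``main'' terms $\pa_z^{-1}P_{\neq}K$, $\g^2P_{\neq}\rho$ and the two ``error'' terms separately.

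\emph{Step 2 (main terms).} Since $|\pa_z^{-1}|\le1$ on $P_{\neq}$ and $\langle\pa_v/(t\pa_z)\rangle^{-1}\le1$, the $K$-contribution is $\lesssim\|\tfrac{|\na|^{s/2}}{\langle t\rangle^{3s/2}}\rmA K\|_2^2+\|\sqrt{\pa_t\rmg/\rmg}\,\tilde{\tilde{\rmA}}K\|_2^2+\|\sqrt{\pa_t\Theta/\Theta}\,\tilde{\rmA}K\|_2^2$. By the pointwise inequalities $\tilde{\tilde{\rmA}}\le\rmA$, $\tilde{\rmA}\le\rmA$ and the definitions in \eqref{Ck_Terms_rho}, the last two summands are $\le\rmCK_{M,K}$ and $\le\rmCK_{\Theta,K}$, while the first is $\le\delta_\la^{-1}\langle t\rangle^{2\tilde q-3s}\rmCK_{\la,K}\le\delta_\la^{-1}\rmCK_{\la,K}$, using $-\dt\la\gtrsim\delta_\la\langle t\rangle^{-2\tilde q}$ and $2\tilde q<3s$ (which follows from $4\tilde q\le3s+1$ and $s>\tfrac13$). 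The same estimates hold with $K$ replaced by $\g^2\rho$. In every one of these comparisons the $\d_{\rmL}$-dependent factors (carried by $\cM$ inside $\rmA$ and $\tilde{\rmA}$, and by the explicit exponential in $\tilde{\tilde{\rmA}}$) occur identically on both sides, so the constant is independent of $\d_{\rmL}$; this produces the term $C_1(\rmCK_{\la,K}+\rmCK_{\Theta,K}+\rmCK_{M,K}+\rmCK_{\la,\rho}+\rmCK_{\Theta,\rho}+\rmCK_{M,\rho})$ with $C_1$ depending only on $\la_0,\la',s,\s$.

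\emph{Step 3 (error terms).} Schematically the errors are $h\cdot(\pa_v-t\pa_z)^2\pa_z^{-1}P_{\neq}f$ and $\pa_z^{-1}\Delta_L\big(h\,(\pa_v-t\pa_z)^2P_{\neq}\phi\big)$; for the latter, moving $\pa_z^{-1}\Delta_L$ past $h$ by Leibniz and using $\Delta_L(\pa_v-t\pa_z)^2=\Delta_L^2-\Delta_L\pa_{zz}$ with $\Delta_L\phi=f+(\Delta_L-\Delta_t)\phi$, the leading piece becomes $h\cdot(\pa_z^{-1}\Delta_L^2P_{\neq}\phi-\pa_zf)$ plus lower-order commutator terms. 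One then applies a paraproduct splitting of each product. In the regime where the high frequency lies on $h$, the weight is placed on $h$; comparing against $\rmA\langle\pa_v\rangle^2$ and its square-root-weighted variants, and bounding the remaining (low-frequency) $f$- or $\phi$-factor by Proposition \ref{prop: elliptic-3}, resp. Proposition \ref{prop: elliptic-1}, one obtains a bound by $\ep^2\rmCK_{\la,h}+\ep^2\|\rmA\langle\pa_v\rangle^2h\|_2^2$ (the $\sqrt{\pa_t\Theta/\Theta}$ and $\sqrt{\pa_t\rmg/\rmg}$ contributions on $h$ being absorbed into the latter), the prefactor $\ep^2$ coming from $\|h\|=O(\ep)$, Proposition \ref{prop: g,f_0,h}. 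In the regime where the high frequency lies on the $f$- or $\phi$-factor, the weight lands there; for the $\phi$-error this factor is exactly $\|\mathcal{O}\pa_z^{-1}\Delta_L^2P_{\neq}\phi\|_2$ (plus $\|\mathcal{O}\pa_zf\|_2$, controlled by Proposition \ref{prop: elliptic-3} and the argument of Step 2), multiplied by a norm of $h$ that is $O(\ep)$; squaring and absorbing this self-referential $O(\ep^2)\|\mathcal{O}\pa_z^{-1}\Delta_L^2P_{\neq}\phi\|_2^2$ into the left-hand side closes the estimate. The comparable-frequency piece reduces to one of these two cases, and the commutators between the weight and the Littlewood--Paley projections are harmless since $\pa_\eta\log\rmg$, $\pa_\eta\log\Theta$, $\pa_\eta\log\rmJ$, $\pa_\eta\log\cM$ and $\pa_\eta b$ all obey the decay bounds recorded in Section \ref{Section_3} and in the construction of the multipliers.

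\emph{Main obstacle.} The hard part is the ``high frequency on $\phi$'' regime of the $\phi$-error: one must carry the full weight $\mathcal{O}$ onto $\Delta_L(\pa_v-t\pa_z)^2\pa_z^{-1}\phi$ without sacrificing the $\langle t\rangle^{-3s/2}$, $\sqrt{\pa_t\rmg/\rmg}$ and $\sqrt{\pa_t\Theta/\Theta}$ gains, which forces a step-by-step reduction of $\Delta_L\pa_z^{-1}\phi$ to $f$ and then to $K,\rho$ — each step producing another $h$-error to be reabsorbed — and a careful treatment of the resonant/non-resonant splitting near $t\approx\eta/k$, where both $\phi$ and the multiplier $\mathcal{O}$ change character. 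Verifying that every $h$-coefficient produced along the way is genuinely $O(\ep)$, so that the fixed point closes and $C_1$ stays independent of $\d_{\rmL}$ while $C_2$ depends on $\d_{\rmL}$ only through the ``low $h$'' paraproduct factors, is where most of the effort goes; the remaining bookkeeping of powers of $t$ parallels the elliptic estimates of \cite{Bedrossian_2015}.
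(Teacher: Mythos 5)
Your proposal is correct and follows essentially the same route as the paper: it decomposes $\pa_z^{-1}\Delta_L^2P_{\neq}\phi$ into the main contributions $\pa_z^{-1}K$ and $\g^2\rho$ plus $h$-errors via $\Delta_t\phi=f$ and $\Delta_tf=K+\g^2\pa_z\rho$, compares the three weight components pointwise against the CK terms for the main pieces, and uses a paraproduct with absorption of the self-referential high-frequency-on-$\phi$ contribution for the errors -- the paper merely organizes this by first isolating $\Delta_Lf$ (through the expanded identity \eqref{Delta_L_phi}) and quoting Proposition \ref{prop: elliptic-4}, whereas you unfold that intermediate step. One small slip: in the high-frequency-on-$h$ piece, the prefactor $\ep^2$ in $\ep^2\rmCK_{\la,h}+\ep^2\|\rmA\langle\pa_v\rangle^2h\|_2^2$ does not come from $\|h\|=O(\ep)$ -- the full weight lands on $h$ there, producing the un-scaled $h$-norms -- but from the low-frequency $f$- or $\phi$-factor, whose Gevrey norm is $O(\ep)$ after the lossy elliptic estimates (Propositions \ref{prop: elliptic-3}, \ref{prop: elliptic-1}) absorb the resulting powers of $t$; this is a bookkeeping misattribution, not a structural gap.
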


The proof of Proposition \ref{prop: elliptic-2} is the subject of Section \ref{Section_Proof_elliptic-2}.

\subsection{Conclusion of the proof}\label{Section_Main_proofs}

Now, to prove the estimates of the main Theorems \ref{Thm:main} and \ref{Thm:main-2}, we need to recover the estimates on the original systems \eqref{eq:NSB3} and \eqref{eq:NSB4}. So, we need to undo the coordinate transform and transfer the uniform energy bound on $\rmE(t)+\rmE_d(t)$ in the $(z,v)$ variables   into the original $(x,y)$ variables. This requires the use of an inverse function theorem in Gevrey spaces. We refer to \cite[Remark 7 and Section 2.4]{Bedrossian_2015} for more details. Here we only focus on the scattering results. 

Our goal now is to prove \eqref{eq: scatting}.  We have by using the second equation in \eqref{Main_New_system_3}, and for some $\la'<\la_1<\lambda_0/2+\lambda^\prime/2<\la$
\begin{equation}\label{rho_Scat}
\begin{aligned}
\Vert\rho(t)- \rho_\infty\Vert_{\cG^{\la_1,0;s}} \lesssim&\, \left\Vert\int_t^\infty g\pa_v\rho ds \right\Vert_{\cG^{\la_1,0;s}}+ \left\Vert \int_t^\infty v'\nabla _{z,\upsilon }^{\perp }P_{\neq }\phi\cdot 
\nabla _{z,\upsilon } \rho(s)ds\right\Vert_{\cG^{\la_1,0;s}}\\
&+\left\Vert \int_t^\infty\pa_z\phi(s)ds \right\Vert _{\cG^{\la_1,0;s}}. 
\end{aligned}
\end{equation}
 We estimate first   
$\Vert g\pa_v\rho\Vert_{\cG^{\la_1,0;s}}$. We have by using the physical definition of the Gevrey spaces  (see \eqref{Gevrey_Ph}) together with  Lemma \ref{Product_Gevrey_Lemma}
\begin{equation}
\begin{aligned}
\Vert g\pa_v\rho\Vert_{\cG^{\la_1;s}}=\,\|g\pa_v\rho\|_{\ell^2\rmL^2;\la_1}\lesssim & \, \|g\pa_v\rho\|_{\ell^1\rmL^2;\la_1}\\
\lesssim &\, 
 \Vert \pa_v\rho\Vert_{\ell^1\rmL^2;\la_1}\Vert g\Vert_{\ell^1\rmL^\infty;\la_1}. 
\end{aligned}
\end{equation} 
We have that
\begin{equation}\label{g_Generalized_Gevrey}
\begin{aligned}
\|g\|_{\ell^1\rmL^{\infty};\la_1}\lesssim&\, \|g\|_{\rmL^{\infty}}+\|\pa_v^3g\|_{\ell^1\rmL^{\infty};\la_1}  \\
\lesssim&\,\|g\|_{\rmL^{\infty}}+\|\pa_v^3g\|_{\ell^2\rmL^{\infty};\la_2} 
\lesssim\,\|g\|_{\rmL^{\infty}}+\|\pa_v^3g\|_{\cG^{\la_2,3;s}}
\end{aligned}
\end{equation}
where we have used \eqref{Estimate_Generalized_Gevrey} for some  $\lambda_0/2+\lambda^\prime/2>\la_3>\la_2>\la_1$.  

We also have  by \eqref{Estimate_Generalized_Gevrey}
\begin{equation}\label{rho_Generalized_Gevrey}
\Vert \pa_v\rho\Vert_{\ell^1\rmL^2;\la_1}\lesssim_{\la_2-\la_1} \Vert \pa_v\rho\Vert_{\ell^2\rmL^2;\la_2}. 
\end{equation}

Hence, \eqref{g_Generalized_Gevrey} and \eqref{rho_Generalized_Gevrey} together with the bootstrap assumption, yields  
\begin{equation}\label{g_Term_Sca}
\|g\pa_v\rho\|_{\cG^{\la_1,0;s}}\lesssim \|g\|_{L^{\infty}}\|\rho\|_{\cG^{\la_2,0;s}}+\|\pa_v^3g\|_{\cG^{\la_3,0;s}} \|\rho\|_{\cG^{\la_2,0;s}}\lesssim \ep^2\f{\ln (e+t)}{\langle t\rangle ^2}. 
\end{equation}

The last two terms can be easily estimated using the lossy elliptic estimates (see Lemma \ref{Lemma_Lossy}). Indeed, we have 
\begin{equation}\label{phi_sca_Term}
 \left\Vert \int_t^\infty v'\nabla _{z,\upsilon }^{\perp }P_{\neq }\phi\cdot 
\nabla _{z,\upsilon } \rho(s)ds\right\Vert_{\cG^{\la_1,0;s}}\lesssim \ep^2\int_t^\infty \f{1}{\langle s\rangle^4} ds\lesssim \f{\ep^2}{\langle t\rangle^3}. 
\end{equation}
and 
\begin{equation}\label{phi_sca_Term_2}
\left\Vert \int_t^\infty\pa_z\phi(s)ds \right\Vert _{\cG^{\la_1,0;s}}\lesssim \f{\ep}{\langle t\rangle^3}. 
\end{equation}  
Hence, keeping in mind \eqref{rho_Scat} and collecting \eqref{g_Term_Sca}, \eqref{phi_sca_Term} and \eqref{phi_sca_Term_2}, we obtain 
\begin{equation}
\Vert\rho(t)- \rho_\infty\Vert_{\cG^{\la_1,0;s}}\lesssim \f{\ep^2}{\langle t\rangle}\ln (e+t)+\f{\ep}{\langle t\rangle^{3}}. 
\end{equation}     
Again by the same argument in \cite[Section 2.3]{Bedrossian_2015}, we have the estimate in $(x,y)$ coordinate and get \eqref{eq: scatting}.

%%%%%%%%%%%%%%%%%%%%%%%%%%%%%%%%%%%%%%%  
\section{Growth mechanism and construction of weights}\label{Section_3}
In this section, we will construct the key weights  $\Theta$ and $\rmg$ which come from a toy model that capture the growth of the solutions (the worst case scenario) in the nonlinear interaction. 

From the argument of the linearized equation, due to the dissipation term, the good unknown $K$ decays. We may focus on the nonlinear interaction in the equation of $\rho$. 
Since we must pay regularity to deduce decay on the velocity $u$, it is natural to consider the frequency interactions in the product $u\cdot\na \rho$ with the frequencies of $u$ much larger than $\rho$, which is called the reaction. This leads us to study a simpler model 
\begin{equation}\label{rho_toy}
\partial _{t} \rho-\partial_{v }\phi_{\neq}\partial _{z} \rho_{lo}=0. 
\end{equation}
This contribution was chosen as $\Delta_t$ loses ellipticity in $v$, not in $z$. Suppose that instead of $\Delta_t\phi=f$ and $\Delta_tf-\g^2\pa_z\rho=K$, we had
$\Delta_L\phi=f$ and $\Delta_Lf-\g^2\pa_z\rho=0$, then on the Fourier side: 
\begin{equation}
\pa_t\hat{\rho}(t,k,\eta)=\frac{i\g^2}{2\pi }\sum_{l\neq 0}
\int_{\xi}\frac{\xi l (k-l) \hat{\rho}\left( t,l ,\xi \right) }{(l ^{2}+\left\vert \xi
-l t\right\vert ^{2})^2} \hat{\rho}_{lo}\left( t,k-l
,\eta-\xi \right) d\xi .  \label{eq:Toy model1}
\end{equation}
Since ${\rho}_{lo}$ weakens interactions between well-separated frequencies, let us consider a discrete model with $\eta$ as a fixed parameter:
\begin{equation}
\pa_t\hat{\rho}(t,k,\eta)=\frac{i\g^2}{2\pi }\sum_{l\neq 0}
\frac{\eta l (k-l) \hat{\rho}\left( t,l ,\eta \right) }{(l ^{2}+\left\vert \eta-l t\right\vert ^{2})^2} \hat{\rho}_{lo}\left( t,k-l
,0 \right) .  \label{eq: Toy model 2}
\end{equation}

This is an infinite system of ODEs. The coefficient $\frac{\eta l}{(l ^{2}+\left\vert \eta-l t\right\vert ^{2})^2}$ of $\hat{\rho}\left( t,l ,\eta \right)$ in the summation on the right hand side of \eqref{eq: Toy model 2} becomes large when $|t-\f{\eta}{l}|\leq \f{\eta}{l^3}$. Meanwhile, other coefficients $\frac{\eta \ell}{(k ^{2}+\left\vert \eta-\ell t\right\vert ^{2})^2}, \, \ell\neq l$ is small $|t-\f{\eta}{l}|\leq \f{\eta}{l^3}$. Thus for any $l$ and $|t-\f{\eta}{l}|\leq \f{\eta}{l^3}$, we divide $\{\hat{\rho}(t,\ell,\eta)\}_{\ell\neq 0}$ into two parts $\rho_{\rmR}=\hat{\rho}(t,l,\eta)$ and $\rho_{\rmNR}=\hat{\rho}(t,\ell,\eta), \, \ell\neq l$ where $  \rmR$ and $\rmNR$ stand for resonance and non resonance. Due to the existence of $(k-l)$ in the summation of \eqref{eq: Toy model 2}, the resonance part will never act on the resonance part. By choosing the largest coefficient of all non resonance, we have for $k=1,2,...,\rmE(|\eta|^{\f13})$ and $|t-\f{\eta}{k}|\lesssim \f{\eta}{k^3}$, the growth of the solutions to \eqref{eq: Toy model 2} is captured by the following ODE system: 
\beq\label{eq:toy-model-ode_0}
\left\{
\begin{aligned}
&\pa_t\rho_{\rmR}=\kappa\f{k^5}{\eta^3}\rho_{\rmN\rmR},\\
&\pa_t\rho_{\rmN\rmR}=\kappa\f{\eta k}{(k^2+(\eta-kt)^2)^2}\rho_{\rmR},
\end{aligned}
\right.
\eeq
This leads to the weight $\Theta$ which is constructed in the next section. 

We will also take into account  the growth for $|t-\f{\eta}{k}|\gtrsim \f{\eta}{k^3}$ with $k=1,2,...,\rmE(|\eta|^{\f23})$, which leads to the weight $\rmg$.

\subsection{Construction of $\Theta$}
The construction is done backward in time, starting with $k=1$. For $t\in \rmI_{k,\eta}=[t_{k,\eta}^{-},t_{k,\eta}^+]$  with $|k|\leq \rmE(|\eta|^{\f13})$. Let $\Theta_{\rmN\rmR}$ be a non-decreasing function of time with $\Theta_{\rmN\rmR}(t,\eta)=1$ for $t\geq 2|\eta|$. For definiteness, we remark here that for $|\eta|<1$, $\Theta_{\rmN\rmR}(t,\eta)=1$, which will be consequence of the ensuing the definition. Hence we may safely assume $|\eta|>1$ for the duration of the section. For $k\geq 1$, we assume that $\Theta_{\rmN\rmR}(\f{2\eta}{2k-1},\eta)$ was computed.

 To compute $\Theta_{\rmN\rmR}$ on the interval $\bar{\rmI}_{k,\eta}$, we define for $k=1,2,3,...,\rmE(|\eta|^{\f13})$ and $t\in [t_{k,\eta}^{+},\f{2\eta}{2k-1}]$, 
\beno
\Theta_{\rmR}(t,\eta)=\Theta_{\rmN\rmR}(t,\eta)=\Theta_{\rmN\rmR}\left(\f{2\eta}{2k-1},\eta\right).
\eeno

For $t\in [t_{k,\eta}^{-},t_{k,\eta}^+]$, we define
\beq\label{eq:w_NR+}\begin{split}
\Theta_{\rmN\rmR}(t,\eta)&=\left(\f{k^3}{\eta}\left[1+b_{k,\eta}\left|t-\f{\eta}{k}\right|\right]\right)^{C\kappa}\Theta_{\rmN\rmR}(t_{k,\eta}^+,\eta),\\
\forall t\in \rmI_{k,\eta}^{\rmR}&=\left[\f{\eta}{k},t_{k,\eta}^+\right],
\end{split}
\eeq
\beq\label{eq:w_NR-}\begin{split}
\Theta_{\rmN\rmR}(t,\eta)&=\left(1+a_{k,\eta}\left|t-\f{\eta}{k}\right|\right)^{-1-C\kappa}\Theta_{\rmN\rmR}\left(\f{\eta}{k},\eta\right),\\
\forall t\in \rmI_{k,\eta}^{\rmL}&=\left[t_{k,\eta}^-,\f{\eta}{k}\right].
\end{split}
\eeq

The constant $b_{k,\eta}$ is chosen to ensure that $\f{k^3}{\eta}\left[1+b_{k,\eta}\f{\eta}{2k^3}\right]=1$, hence for $k\geq 1$, we have
\ben
b_{k,\eta}=2(1-\f{k^3}{\eta}). 
\een
The constant $a_{k,\eta}$ is chosen to ensure that
 $\Theta_{\rmN\rmR}(t_{k,\eta}^+,\eta)=\Theta_{\rmN\rmR}(t_{k,\eta}^-,\eta)(\f{\eta}{k^3})^{1+2C\kappa}$, hence for $k\geq 1$, we have
\ben
a_{k,\eta}=2(1-\f{k^3}{\eta}).
\een
On each interval $\rmI_{k,\eta}$, we define $\Theta_{\rmR}(t,\eta)$ by
\ben
\Theta_{\rmR}(t,\eta)=\f{k^3}{\eta}\left(1+b_{k,\eta}\left|t-\f{\eta}{k}\right|\right)\Theta_{\rmN\rmR}(t,\eta),\quad \forall t\in \rmI_{k,\eta}^{\rmR}&=\left[\f{\eta}{k},t_{k,\eta}^+\right],
\een
\ben
\Theta_{\rmR}(t,\eta)=\f{k^3}{\eta}\left(1+a_{k,\eta}\left|t-\f{\eta}{k}\right|\right)\Theta_{\rmN\rmR}(t,\eta),\quad
\forall t\in \rmI_{k,\eta}^{\rmL}&=\left[t_{k,\eta}^-,\f{\eta}{k}\right]. 
\een

For $t\in [\f{2\eta}{2k+1},t_{k,\eta}^{-}]$, we define 
\beno
\Theta_{\rmR}(t,\eta)=\Theta_{\rmN\rmR}(t,\eta)=\Theta_{\rmNR}(t_{k,\eta}^-,\eta).
\eeno

Due to the choice of $a_{k,\eta}$ and $b_{k,\eta}$, we get that $\Theta_{\rmR}(t_{k,\eta}^{\pm},\eta)=\Theta_{\rmN\rmR}(t_{k,\eta}^{\pm},\eta)$, $\Theta_{\rmR}(\f{\eta}{k},\eta)=\f{k^3}{\eta}\Theta_{\rmN\rmR}(\f{\eta}{k},\eta)$ and for $t\in \rmI_{k,\eta}$, 
\beq\begin{split}
&\pa_{t}\Theta_{\rmR}\approx \f{k^3}{\eta}\Theta_{\rmN\rmR},\\
&\pa_t\Theta_{\rmN\rmR}\approx \f{\kappa\eta}{k^3(1+|t-\f{\eta}{k}|^2)}\Theta_{\rmR}.
\end{split}
\eeq
Then we define 
\beq
\Theta_{k}(t,\eta)=
\left\{
\begin{aligned}\label{Total_Multipl}
&\Theta_k(t_{\rmE(|\eta|^{\f13}),\eta},\eta),\quad t<t_{\rmE(|\eta|^{\f13}),\eta}\\
&\Theta_{\rmNR}(t,\eta), \quad t\in \left[t_{\rmE(|\eta|^{\f13}),\eta},2\eta\right]\setminus \rmI_{k,\eta}\\
&\Theta_{\rmR}(t,\eta),\quad t\in \rmI_{k,\eta}\\
&1, \quad t\geq 2\eta  
\end{aligned}
\right.
\eeq
 
\subsection{Construction of $\rmg(t,\eta)$}   

Again the construction is done backward in time, starting with $k=1$. For $t\in \bar{\rmI}_{k,\eta}=[\f{2\eta}{2k+1},\f{2\eta}{2k-1}]$ for $1\leq |k|\leq \rmE(|\eta|^{\f23})$. Let $\rmg$ be a non-decreasing function of time with $\rmg(t,\eta)=1$ for $t\geq 2|\eta|$. For definiteness, we remark here for $|\eta|<1$, $\rmg(t,\eta)=1$, which will be consequence of the ensuing the definition. Hence we may safely assume $|\eta|>1$ for the duration of the section. For $k\geq 1$, we assume that $\rmg(\f{2\eta}{2k-1},\eta)$ was computed. To compute $\rmg$ on the interval $\bar{\rmI}_{k,\eta}$, we define for $k=1,2,3,...,\rmE(|\eta|^{\f13})$ and $t\in [\f{2\eta}{2k+1},\f{2\eta}{2k-1}]$, 
\beno\label{Weight_g}
\pa_t\rmg=\f{\d_{\rmL}^{-1}}{1+|t-\f{\eta}{k}|^2}\rmg, \quad \rmg\big|_{t=\f{2\eta}{2k-1}}=\rmg\Big(\f{2\eta}{2k-1},\eta\Big),
\eeno
and for $k=\rmE(|\eta|^{\f13})+1,...,\rmE(|\eta|^{\f23})$ and 
$t\in [\f{2\eta}{2k+1},\f{2\eta}{2k-1}]$
\begin{equation}\label{Weight_g_2}
\pa_t\rmg=\f{\d_{\rmL}^{-1}\f{\eta}{k^3}}{1+|t-\f{\eta}{k}|^2}\rmg, \quad \rmg\big|_{t=\f{2\eta}{2k-1}}=\rmg\Big(\f{2\eta}{2k-1},\eta\Big).
\end{equation}

\subsection{The total growth predicted by the toy model}  
\begin{lemma}\label{lem:3.1}
For $\eta>1$, there exists $\mu=6(1+2C\kappa)$  such that 
\ben\label{Total_Growth_w}
\frac{\Theta_k(2\eta,\eta)}{\Theta_k(0,\eta)}=\f{1}{\Theta_k(0,\eta)}=\f{1}{\Theta_{k}\left(t_{\rmE(|\eta|^{\f13}),\eta},\eta\right)}\approx \f{1}{|\eta|^{\f{\mu}{12}}}e^{\f{\mu}{2}|\eta|^{\f13}}. 
\een
\end{lemma}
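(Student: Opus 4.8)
The claim is a purely computational statement about the explicitly constructed weight $\Theta_k$, so the strategy is to telescope the product of the multiplicative jumps that $\Theta_{\mathrm{NR}}$ accumulates as one sweeps backward through the critical intervals $\mathrm{I}_{k,\eta}$ for $k=1,2,\dots,\rmE(|\eta|^{1/3})$. First I would observe that $\Theta_k(0,\eta)=\Theta_k(t_{\rmE(|\eta|^{1/3}),\eta},\eta)=\Theta_{\mathrm{NR}}(t_{\rmE(|\eta|^{1/3}),\eta},\eta)$ by definition \eqref{Total_Multipl}, and $\Theta_k(2\eta,\eta)=1$, so the ratio in \eqref{Total_Growth_w} equals $1/\Theta_{\mathrm{NR}}(t_{\rmE(|\eta|^{1/3}),\eta},\eta)$. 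Since $\Theta_{\mathrm{NR}}$ is constant on $[t_{k,\eta}^+,\tfrac{2\eta}{2k-1}]$ and on $[\tfrac{2\eta}{2k+1},t_{k,\eta}^-]$ (i.e.\ it changes only inside the intervals $\mathrm{I}_{k,\eta}$), I can write
\[
\frac{1}{\Theta_{\mathrm{NR}}(t_{\rmE(|\eta|^{1/3}),\eta},\eta)}
=\prod_{k=1}^{\rmE(|\eta|^{1/3})}\frac{\Theta_{\mathrm{NR}}(t_{k,\eta}^+,\eta)}{\Theta_{\mathrm{NR}}(t_{k,\eta}^-,\eta)}
=\prod_{k=1}^{\rmE(|\eta|^{1/3})}\Big(\frac{\eta}{k^3}\Big)^{1+2C\kappa},
\]
where the last equality is exactly the normalization condition defining $a_{k,\eta}$, namely $\Theta_{\mathrm{NR}}(t_{k,\eta}^+,\eta)=\Theta_{\mathrm{NR}}(t_{k,\eta}^-,\eta)(\tfrac{\eta}{k^3})^{1+2C\kappa}$.

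**The main computation.** Having reduced the claim to estimating $\prod_{k=1}^{N}(\eta/k^3)^{1+2C\kappa}$ with $N=\rmE(|\eta|^{1/3})$, I would take logarithms: the exponent is $(1+2C\kappa)\big(N\ln\eta - 3\ln N!\big)$. Using $N=\rmE(|\eta|^{1/3})$, so $N\approx|\eta|^{1/3}$ and $\ln\eta\approx 3\ln N$, together with Stirling's formula $\ln N! = N\ln N - N + O(\ln N)$, the bracket becomes
\[
N\ln\eta-3\ln N! \approx 3N\ln N - 3(N\ln N - N) + O(\ln N) = 3N + O(\ln N)\approx 3|\eta|^{1/3}+O(\ln|\eta|).
\]
Multiplying by $(1+2C\kappa)$ gives a leading term $3(1+2C\kappa)|\eta|^{1/3}$, which is $\tfrac{\mu}{2}|\eta|^{1/3}$ once we set $\mu=6(1+2C\kappa)$, as stated. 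The $O(\ln|\eta|)$ correction exponentiates to a polynomial factor $|\eta|^{\pm O(1)}$; pinning down its precise constant $\tfrac{\mu}{12}$ requires keeping the next-order terms in Stirling carefully — i.e.\ $3N\ln N-3\ln N! = 3N + \tfrac32\ln N + O(1)$ combined with $N\ln\eta - 3N\ln N = N(\ln\eta - 3\ln N)$, where $\ln\eta-3\ln N = -3\ln(N/|\eta|^{1/3}) + o(1)$ is itself $O(1)$ but contributes an $O(N)\cdot o(1)$ term that needs the floor-function discrepancy $N=|\eta|^{1/3}+O(1)$ to be tracked. I would handle this by writing $N=|\eta|^{1/3}(1+O(|\eta|^{-1/3}))$ and expanding, collecting all $|\eta|^{1/3}$-order contributions into $\tfrac{\mu}{2}|\eta|^{1/3}$ and all logarithmic contributions into $-\tfrac{\mu}{12}\ln|\eta|$, with everything else $O(1)$ and hence absorbed into the implicit constant in $\approx$.

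**The main obstacle.** The conceptual content is light — it is a telescoping product plus Stirling — but the bookkeeping of the subleading polynomial factor is delicate: one must reconcile the ceiling/floor in $N=\rmE(|\eta|^{1/3})$, the approximation $\ln\eta\leftrightarrow 3\ln N$, and the $\tfrac32\ln N$ term from Stirling, and verify they combine to exactly the exponent $\tfrac{\mu}{12}$ on $|\eta|$ in the denominator (equivalently $-\tfrac{\mu}{12}\ln|\eta|$ in the log). This is the step where sign errors and off-by-a-factor-of-2 slips are easy, so I would organize it as: (i) exact identity for the product, (ii) $\ln$ of the product $=(1+2C\kappa)(N\ln\eta-3\ln N!)$, (iii) substitute $N=\rmE(|\eta|^{1/3})$ and apply Stirling with explicit remainder, (iv) show the result is $\tfrac{\mu}{2}|\eta|^{1/3}-\tfrac{\mu}{12}\ln|\eta|+O(1)$. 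Steps (i)–(ii) are the structurally important ones (they use the construction of $\Theta$ crucially); step (iv) is where the care is needed. One should also note at the outset that for $|\eta|<1$ (or $\rmE(|\eta|^{1/3})=0$) the statement is vacuous since $\Theta_k\equiv 1$, consistent with the stated regime $\eta>1$, though the formula only becomes meaningful once $\rmE(|\eta|^{1/3})\geq 1$, i.e.\ $\eta\geq 1$.
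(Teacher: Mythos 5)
Your proposal is correct and takes essentially the same route as the paper: reduce to the telescoping product $\prod_{k=1}^{N}(\eta/k^3)^{1+2C\kappa}=\big[\eta^N/(N!)^3\big]^{1+2C\kappa}$ with $N=\rmE(|\eta|^{1/3})$ (using that $\Theta_{\rmNR}$ is constant between consecutive critical intervals and jumps by $(\eta/k^3)^{1+2C\kappa}$ across each $\rmI_{k,\eta}$), then apply Stirling's formula and set $\mu=6(1+2C\kappa)$. The paper carries out the Stirling step multiplicatively (writing $\eta^N/(N!)^3\sim\eta^{-1/2}e^{3\eta^{1/3}}$ directly) while you work with logarithms, but this is a cosmetic difference and the bookkeeping you flag as delicate is exactly the $\sqrt{2\pi N}$ and $|N-\eta^{1/3}|\le 1$ terms the paper absorbs into the $\approx$.
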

\begin{proof}
The proof of Lemma \ref{lem:3.1} can be done as in \cite{Bedrossian_2015}. Indeed, the total growth over $\bigcup_{k=N}^2 \rmI_{k,\eta} $ is given by the product ($N=\rmE(|\eta|^{\f13})$) 
\begin{equation} \label{Growth_formula} 
\Big(\frac{\eta}{N^3}\Big)^c\Big(\frac{\eta}{(N-1)^3}\Big)^c\dots \Big(\frac{\eta}{1^3}\Big)^c =\Big[\frac{\eta^N}{(N!)^3}\Big]^c
\end{equation}
with $c=2C\kappa+1$. Using the Stirling formula: $N!\approx \sqrt{2\pi N} (\frac{N}{e})^N$, we get 
\begin{equation}
\frac{\eta^N}{(N!)^3} \sim \frac{1}{(2\pi)^{3/2}\sqrt{\eta}}e^{3\eta^{1/3}}\Big[\frac{\eta^{1/2}}{N^{3/2}}\Big(\frac{\eta}{N^3}\Big)^{N}e^{3N-3\eta^{1/3}}\Big]. 
\end{equation}
Since $|N-\eta^{1/3}|\leq 1$, then it holds that 
\begin{equation}
\frac{\eta^N}{(N!)^3}\sim  \frac{1}{\sqrt{\eta}}e^{3\eta^{1/3}}.
\end{equation}
Hence, this together with \eqref{Growth_formula}, yields \eqref{Total_Growth_w}. 
\end{proof}

\begin{lemma}\label{lem:3.1-1}
For $\eta>1$, it holds that
\begin{align*}
&\f{1}{\rmg(t_{\rmE(|\eta|^{\f23}),\eta},\eta)}=\f{\rmg(2\eta, \eta)}{\rmg(t_{\rmE(|\eta|^{\f23}),\eta},\eta)}
=\f{\rmg(t_{\rmE(|\eta|^{\f13}),\eta},\eta)}{\rmg(t_{\rmE(|\eta|^{\f23}),\eta},\eta)}\f{\rmg(2\eta, \eta)}{\rmg(t_{\rmE(|\eta|^{\f13}),\eta},\eta)}\\
&=\,\exp\left(\d_{\rmL}^{-1}\sum_{\rmE(|\eta|^{\f13})+1}^{\rmE(|\eta|^{\f23})}\f{\eta}{k^3}\Big(\arctan\big(\f{|\eta|}{|k|(2|k|+1)}\big)+\arctan\big(\f{|\eta|}{|k|(2|k|-1)}\big)\Big)\right)\\
&\quad \times \exp\left(\d_{\rmL}^{-1}\sum_{k=1}^{\rmE(|\eta|^{\f13})}\Big(\arctan\big(\f{|\eta|}{|k|(2|k|+1)}\big)+\arctan\big(\f{|\eta|}{|k|(2|k|-1)}\big)\Big)\right).
\end{align*}
This gives us for any $t\geq 0$, 
\begin{equation}\label{Estimate_g}
1\leq \f{1}{\rmg(t,\eta)}\leq e^{3\pi\d_{\rmL}^{-1}|\eta|^{\f13}}. 
\end{equation}
\end{lemma}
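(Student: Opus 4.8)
The plan is to integrate in closed form the two ODEs defining $\rmg$ on each dyadic slab $\bar{\rmI}_{k,\eta}=[\f{2\eta}{2k+1},\f{2\eta}{2k-1}]$, then telescope the resulting product of exponential factors using the normalization $\rmg(t,\eta)=1$ for $t\ge 2|\eta|$, and finally estimate the exponent by elementary bounds on $\arctan$ together with a tail-sum comparison.

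First I would fix $\eta>1$ (the case $|\eta|\le1$ being trivial by the stated convention). On $\bar{\rmI}_{k,\eta}$ the weight solves the scalar linear ODE $\pa_t\rmg=\f{c_k}{1+|t-\f{\eta}{k}|^2}\,\rmg$ with $c_k=\d_{\rmL}^{-1}$ for $1\le k\le \rmE(|\eta|^{\f13})$ and $c_k=\d_{\rmL}^{-1}\f{\eta}{k^3}$ for $\rmE(|\eta|^{\f13})<k\le \rmE(|\eta|^{\f23})$. Since $\arctan(t-\f{\eta}{k})$ is a primitive of the time coefficient and $\f{2\eta}{2k-1}-\f{\eta}{k}=\f{\eta}{k(2k-1)}$, $\f{2\eta}{2k+1}-\f{\eta}{k}=-\f{\eta}{k(2k+1)}$, using the oddness of $\arctan$ and integrating backward from the right endpoint of the slab gives
\[
\f{\rmg\big(\f{2\eta}{2k-1},\eta\big)}{\rmg\big(\f{2\eta}{2k+1},\eta\big)}
=\exp\Big(c_k\big[\arctan\big(\tfrac{|\eta|}{|k|(2|k|-1)}\big)+\arctan\big(\tfrac{|\eta|}{|k|(2|k|+1)}\big)\big]\Big).
\]
Because $\f{2\eta}{2(k-1)+1}=\f{2\eta}{2k-1}$, consecutive slabs share an endpoint, so the product of these ratios over $k=1,\dots,\rmE(|\eta|^{\f23})$ telescopes; its left-hand side collapses to $\rmg(2\eta,\eta)\big/\rmg(t_{\rmE(|\eta|^{\f23}),\eta},\eta)=1\big/\rmg(t_{\rmE(|\eta|^{\f23}),\eta},\eta)$, since $\rmg$ is constant below the last slab and $\rmg(2\eta,\eta)=1$. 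Regrouping the product at $k=\rmE(|\eta|^{\f13})$, exactly where $c_k$ changes form, gives the intermediate factorization of the lemma, and collecting exponents on each block gives the displayed closed-form identity.

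For \eqref{Estimate_g}, the lower bound is immediate: every ratio above is $\ge1$, so $\rmg(\cdot,\eta)$ is non-decreasing in $t$ and caps at $1$, whence $0<\rmg(t,\eta)\le1$ and $1/\rmg(t,\eta)\ge1$ for all $t\ge0$. For the upper bound, since $1/\rmg$ is maximal at the bottom of the last slab, it suffices to bound the two exponents in the identity. In the first block each $\arctan\le\f{\pi}{2}$, so it contributes at most $\d_{\rmL}^{-1}\pi\,\rmE(|\eta|^{\f13})\le\pi\,\d_{\rmL}^{-1}|\eta|^{\f13}$. In the second block, again using $\arctan\le\f{\pi}{2}$ and comparing the tail to an integral,
\[
\d_{\rmL}^{-1}\!\!\sum_{k=\rmE(|\eta|^{\f13})+1}^{\rmE(|\eta|^{\f23})}\!\!\f{\pi\eta}{k^3}
\ \le\ \d_{\rmL}^{-1}\pi\eta\!\int_{\rmE(|\eta|^{\f13})}^{\infty}\!\!\f{dx}{x^3}
\ =\ \f{\d_{\rmL}^{-1}\pi\eta}{2\,\rmE(|\eta|^{\f13})^2}
\ \le\ 2\pi\,\d_{\rmL}^{-1}|\eta|^{\f13},
\]
using $\rmE(|\eta|^{\f13})\ge\f12|\eta|^{\f13}$, which holds for every $\eta>1$. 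Adding the two contributions gives $1/\rmg(t,\eta)\le e^{3\pi\d_{\rmL}^{-1}|\eta|^{\f13}}$.

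I do not expect a genuine obstacle; the points that need care are (i) confirming the endpoint-matching $\f{2\eta}{2(k-1)+1}=\f{2\eta}{2k-1}$ that makes the product telescope and that $t_{\rmE(|\eta|^{\f23}),\eta}$ denotes the left end of the last slab, below which $\rmg$ is frozen, and (ii) keeping the numerical constant down to $3\pi$, which forces summing the $k>\rmE(|\eta|^{\f13})$ block through $\sum k^{-3}\lesssim|\eta|^{-2/3}$ rather than merely counting its terms, together with the elementary bound $\rmE(|\eta|^{\f13})\ge\f12|\eta|^{\f13}$.
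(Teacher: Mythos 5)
Your proof is correct and follows essentially the same route as the paper's: solve the scalar ODE on each slab $\bar{\rmI}_{k,\eta}$ in closed form via $\arctan$, exploit the endpoint identity $\tfrac{2\eta}{2(k-1)+1}=\tfrac{2\eta}{2k-1}$ to telescope the resulting ratios down to $\rmg(2\eta,\eta)/\rmg(t_{\rmE(|\eta|^{2/3}),\eta},\eta)$, and split the product at $k=\rmE(|\eta|^{1/3})$. Your explicit derivation of the constant $3\pi$ (using $\arctan\le\pi/2$, the integral comparison $\sum_{k>n}k^{-3}\le\tfrac1{2n^2}$, and $\rmE(|\eta|^{1/3})\ge\tfrac12|\eta|^{1/3}$) is a useful addition, since the paper simply asserts \eqref{Estimate_g} after displaying the identity.
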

\begin{proof} 
We have for $t\in \bar{\rmI}_{k,\eta}$, if $|k|\leq \rmE(|\eta|^{\f13})$, 
\beno
\rmg(t,\eta)=\exp\left(\d_{\rmL}^{-1}\Big(\arctan\big(t-\f{|\eta|}{|k|}\big)-\arctan\big(\f{|\eta|}{|k|(2|k|-1)}\big)\Big)\right)\rmg\Big(\f{2\eta}{2k-1},\eta\Big),
\eeno
and if $\rmE(|\eta|^{\f23})\geq |k|\geq \rmE(|\eta|^{\f13})+1$, 
\beno
\rmg(t,\eta)=\exp\left(\d_{\rmL}^{-1}\f{\eta}{k^3}\Big(\arctan\big(t-\f{|\eta|}{|k|}\big)-\arctan\big(\f{|\eta|}{|k|(2|k|-1)}\big)\Big)\right)\rmg\Big(\f{2\eta}{2k-1},\eta\Big),
\eeno
This yields for $|k|\leq \rmE(|\eta|^{\f13})$,
\begin{equation}\label{eq:g(k,eta)/g(k,eta)1}
\begin{aligned}
\frac{\rmg(t_{k-1,\eta},\eta)}{\rmg(t_{k,\eta},\eta)}=&\,\exp\left(\d_{\rmL}^{-1}\Big(\arctan\big(t_{k-1,\eta}-\f{|\eta|}{|k|}\big)-\arctan\big(t_{k,\eta}-\f{|\eta|}{|k|}\big)\Big)\right)\\
=&\,\exp\left(\d_{\rmL}^{-1}\Big(\arctan\big(\f{|\eta|}{|k|(2|k|+1)}\big)+\arctan\big(\f{|\eta|}{|k|(2|k|-1)}\big)\Big)\right)
\end{aligned}
\end{equation}
and $\rmE(|\eta|^{\f23})\geq |k|\geq \rmE(|\eta|^{\f13})+1$
\begin{equation}\label{eq:g(k,eta)/g(k,eta)2}
\frac{\rmg(t_{k-1,\eta},\eta)}{\rmg(t_{k,\eta},\eta)}
=\exp\left(\d_{\rmL}^{-1}\f{\eta}{k^3}\Big(\arctan\big(\f{|\eta|}{|k|(2|k|+1)}\big)+\arctan\big(\f{|\eta|}{|k|(2|k|-1)}\big)\Big)\right).
\end{equation}
Hence, we obtain (recall that $t_{0,\eta}=2\eta$)
\begin{align*}
\f{\rmg(2\eta, \eta)}{\rmg(t_{\rmE(|\eta|^{\f23}),\eta},\eta)}=&\,\f{\rmg(t_0,\eta, \eta)}{\rmg(t_{t_1,\eta},\eta)}\frac{\rmg(t_{t_1,\eta},\eta)}{\rmg(t_{t_2,\eta},\eta)}\dots \frac{\rmg(t_{\rmE(|\eta|^{\f23})-1,\eta},\eta)}{\rmg(t_{t_\rmE(|\eta|^{\f23}),\eta},\eta)}\\
=&\,\prod_{k=1}^{\rmE(|\eta|^{\f13})} \exp\left(\d_{\rmL}^{-1}\Big(\arctan\big(t_{k-1,\eta}-\f{|\eta|}{|k|}\big)-\arctan\big(t_{k,\eta}-\f{|\eta|}{|k|}\big)\Big)\right)\\
\times& \prod_{\rmE(|\eta|^{\f13})+1}^{\rmE(|\eta|^{\f23})} \exp\left(\d_{\rmL}^{-1}\f{\eta}{k^3}\Big(\arctan\big(t_{k-1,\eta}-\f{|\eta|}{|k|}\big)-\arctan\big(t_{k,\eta}-\f{|\eta|}{|k|}\big)\Big)\right)
\end{align*}
which gives the lemma.
\end{proof}

\begin{lemma}\label{Lem:compare}
Let $\xi$, $\eta$ be such that there exists some $\al\geq 1$ with $\f{1}{\al}|\xi|\leq |\eta|\leq \al|\xi|$ and let $k, n$ be such that $t\in \bar{\rmI}_{k,\eta}$ and $t\in \bar{\rmI}_{n,\xi}$ (note that $k\approx n$). Then at least one of the following holds:
\begin{itemize}
\item[(a)] $k=n$ and $t\in \rmI_{k,\xi}\cap \rmI_{k,\eta}$;
\item[(b)] $k=n$ and $|t-\f{\eta}{k}|\geq \f{1}{10\al}\f{|\eta|}{k^3}$ and $|t-\f{\xi}{k}|\geq \f{1}{10\al}\f{|\xi|}{k^3}$;
\item[(c)] $k=n$ and $|\xi-\eta|\gtrsim_{\al} \f{|\eta|}{k^2}$; 
\item[(d)] $|t-\f{\eta}{k}|\geq \f{1}{10\al}\f{|\eta|}{k^2}$ and $|t-\f{\xi}{n}|\geq \f{1}{10\al}\f{|\xi|}{n^2}$;
\item[(e)] $|\xi-\eta|\gtrsim_{\al} \f{|\eta|}{|n|}$.
\end{itemize}
Moreover if $t\in \rmI_{k,\eta}\cap \rmI_{n,\xi}$, then at least one of the following holds:
\begin{itemize}
\item[(a')] $k=n$; 
\item[(b')] $|t-\f{\eta}{k}|\gtrsim_{\al}\f{|\eta|}{k^3}$ and $|t-\f{\xi}{n}|\gtrsim_{\al} \f{|\xi|}{n^3}$;
\item[(c')] $|\xi-\eta|\gtrsim_{\al} \f{|\eta|}{|n|^2}$.
\end{itemize}
\end{lemma}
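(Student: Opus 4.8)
The plan is to carry out a careful case analysis driven entirely by the geometry of the critical intervals $\bar{\rmI}_{k,\eta}=[\f{2\eta}{2k+1},\f{2\eta}{2k-1}]$ and $\rmI_{k,\eta}=[t_{k,\eta}^-,t_{k,\eta}^+]$, exploiting the comparability $\f{1}{\al}|\xi|\le|\eta|\le\al|\xi|$. First I would record the elementary consequence that, since $t\in\bar{\rmI}_{k,\eta}$ forces $t\approx\f{\eta}{k}$ and $t\in\bar{\rmI}_{n,\xi}$ forces $t\approx\f{\xi}{n}$, and since $\xi\approx\eta$, one necessarily has $|k|\approx|n|$ with the ratio controlled by $\al$; in fact the length bounds on $\bar{\rmI}$ give that either $k=n$ or $|k-n|$ is at least comparable to $1$, and in the latter case the separation of the intervals $\bar{\rmI}_{k,\eta}$ and $\bar{\rmI}_{n,\xi}$ combined with $t$ lying in both translates directly into a lower bound $|\xi-\eta|\gtrsim_\al |\eta|/|n|$, which is alternative (e). This disposes of the case $k\ne n$ in the first list, and likewise shows that when $k=n$ fails in the second list one lands in (b') or (c') by the same interval-separation argument applied to $\rmI$ instead of $\bar{\rmI}$ — there the relevant interval half-length is $\f{|\eta|}{2|k|^3}$, so the analogue produces $|\xi-\eta|\gtrsim_\al|\eta|/|n|^2$.

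The substantive work is the case $k=n$ in the first list. Here I would split according to whether $t$ lies inside the small resonant interval $\rmI_{k,\eta}$ and/or $\rmI_{k,\xi}$. If $t\in\rmI_{k,\eta}\cap\rmI_{k,\xi}$ we are in case (a). If $t$ lies outside both, then by definition $|t-\f{\eta}{k}|\ge\f{|\eta|}{2|k|^3}$ and $|t-\f{\xi}{k}|\ge\f{|\xi|}{2|k|^3}$, giving case (b) with room to spare (the constant $\f{1}{10\al}$ is far from sharp). The remaining situation is the asymmetric one: $t\in\rmI_{k,\eta}$ but $t\notin\rmI_{k,\xi}$ (or vice versa). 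Then $|t-\f{\eta}{k}|\le\f{|\eta|}{2|k|^3}$ while $|t-\f{\xi}{k}|>\f{|\xi|}{2|k|^3}$, and subtracting, $|\f{\xi}{k}-\f{\eta}{k}|\ge|t-\f{\xi}{k}|-|t-\f{\eta}{k}|>\f{|\xi|}{2|k|^3}-\f{|\eta|}{2|k|^3}$. This is not yet useful on its own since $|\xi|-|\eta|$ could be small, so here I would instead argue that $t\approx\f{\eta}{k}\approx\f{\xi}{k}$ up to the interval widths, hence $|\xi-\eta|=|k|\,|\f{\xi}{k}-\f{\eta}{k}|$ and one must track the widths more carefully: either the two numbers $\f{\eta}{k}$ and $\f{\xi}{k}$ are within $O_\al(|\eta|/k^3)$ of each other — in which case, since $t\in\rmI_{k,\eta}$ forces $t$ within $|\eta|/(2k^3)$ of $\f{\eta}{k}$, the point $t$ would also be within $O_\al(|\eta|/k^3)$ of $\f{\xi}{k}$, and a careful choice of the absolute constants shows $t\in\rmI_{k,\xi}$ after all (contradiction with the asymmetric assumption once one has enough slack), forcing us back to case (a) — or else $|\f{\eta}{k}-\f{\xi}{k}|\gtrsim_\al |\eta|/k^3$, which multiplied by $|k|$ yields $|\xi-\eta|\gtrsim_\al|\eta|/k^2$, i.e.\ case (c).

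For the second list, once $k=n$ has been excluded, the hypothesis $t\in\rmI_{k,\eta}\cap\rmI_{n,\xi}$ with $k\ne n$ is handled exactly as the $k\ne n$ case above but with $\rmI$ in place of $\bar{\rmI}$: the intervals $\rmI_{k,\eta}$ and $\rmI_{n,\xi}$ have half-lengths $\asymp|\eta|/k^3$ and $\asymp|\xi|/n^3$, their centres are $\f{\eta}{k}$ and $\f{\xi}{n}$, and $t$ lying in both forces either that the centres are close, giving (c') via $|\xi-\eta|\gtrsim_\al|\eta|/n^2$, or that both distances $|t-\f{\eta}{k}|$ and $|t-\f{\xi}{n}|$ are bounded below by a fixed fraction of $|\eta|/k^3$ and $|\xi|/n^3$ respectively, which is (b'). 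I expect the main obstacle to be bookkeeping: one has to chase the implied constants (depending only on $\al$) through the width comparisons so that in the asymmetric subcase the ``contradiction'' branch really does close, i.e.\ choosing the numerical constants $\f{1}{10\al}$ generously enough that ``$t$ is close to the centre of $\rmI_{k,\xi}$'' genuinely implies ``$t\in\rmI_{k,\xi}$'' or else the centres are provably separated. This is the same type of dichotomy argument as in \cite{Bedrossian_2015}, so I would model the constant-tracking on the corresponding lemma there, and otherwise the proof is a routine, if somewhat tedious, enumeration of the listed alternatives.
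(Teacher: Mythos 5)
Your case decomposition for $k=n$ is not aligned with the alternatives (a)--(c), and as a result the ``asymmetric'' sub-case has a genuine gap. You split into: $t$ inside both $\rmI_{k,\eta}\cap\rmI_{k,\xi}$ (giving (a)); $t$ outside both (giving (b)); and $t$ inside exactly one. In the third sub-case you try to conclude (a) or (c), but (b) can hold while both (a) and (c) fail. Concretely, with $k=n=1$, $\eta=100$, $\xi=99$, $t=149$: then $t\in\rmI_{1,100}=[50,150]$ but $t\notin\rmI_{1,99}=[49.5,148.5]$, so we are in the asymmetric sub-case and (a) is false; $|\xi-\eta|=1$ while $|\eta|/k^2=100$, so (c) fails; yet (b) holds since $|t-\eta/k|=49$ and $|t-\xi/k|=50$ far exceed $\f{1}{10\al}\cdot 100$ and $\f{1}{10\al}\cdot 99$. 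Your internal dichotomy --- ``either $|\eta/k-\xi/k|\lesssim|\eta|/k^3$, which forces $t\in\rmI_{k,\xi}$, or else (c)'' --- breaks on this example: here $|\eta/k-\xi/k|=1\ll 100=|\eta|/k^3$, yet $t\notin\rmI_{k,\xi}$ and (c) fails. The issue is that $\rmI_{k,\xi}$ has half-width $|\xi|/(2k^3)$, while the threshold that matters for (b) is the much smaller $\f{1}{10\al}|\xi|/k^3$; being ``within $O_\al(|\eta|/k^3)$ of $\xi/k$'' does not place $t$ inside $\rmI_{k,\xi}$.

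There is a second, independent gap: you claim that $k\ne n$ with $t\in\bar{\rmI}_{k,\eta}\cap\bar{\rmI}_{n,\xi}$ directly forces (e). This is false. Take $\eta=\xi$, $k=n+1$, $t=\f{2\eta}{2n+1}$ (the common endpoint of $\bar{\rmI}_{n+1,\eta}$ and $\bar{\rmI}_{n,\eta}$). Then $t$ lies in both intervals and $k\ne n$, but $|\xi-\eta|=0$, so (e) fails; the lemma is saved only because (d) holds. The paper's strategy avoids both problems by running the contrapositive keyed to the $\f{1}{10\al}$ thresholds: for $k=n$, assume (a) and (b) are both false; failure of (b) gives, WLOG, $|t-\xi/k|<\f{1}{10\al}|\xi|/k^3<|\xi|/(2k^3)$, hence $t\in\rmI_{k,\xi}$; failure of (a) then forces $t\notin\rmI_{k,\eta}$, so $|t-\eta/k|\ge|\eta|/(2k^3)$, and the triangle inequality gives $|\xi-\eta|\ge k\bigl(|t-\eta/k|-|t-\xi/k|\bigr)\ge\f{|\eta|}{2k^2}-\f{|\xi|}{10\al k^2}\gtrsim\f{|\eta|}{k^2}$, which is (c). For $k\ne n$, assume (d) fails; WLOG $|t-\xi/n|<\f{1}{10\al}|\xi|/n^2$, while $t\in\bar{\rmI}_{k,\eta}$ with $k\ne n$ forces $t\notin\bar{\rmI}_{n,\eta}$, so $|t-\eta/n|\ge|\eta|/(2n(n+1))$, and the triangle inequality gives (e). The primed list is handled analogously. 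Your bookkeeping remark anticipates the need for care with constants, but the decomposition itself (``inside/outside the resonant intervals'') must be replaced by the contrapositive dichotomy for the argument to close.
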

\begin{proof}
To see that $k\approx n$ note that
\ben
\f{|k|}{|n|}=\f{|\eta|}{|\xi|}\f{|tk|}{|\eta|}\f{|\xi|}{|tn|}\approx_{\al}1. 
\een
If $n=k$ and $t\in \rmI_{k,\xi}\cap \rmI_{k,\eta}$, then there is nothing to prove. Suppose $n=k$ but (a) and (b) are false, which means one of the two inequalities in (b) fails. Without loss of generality suppose $|t-\f{\xi}{k}|<\f{1}{10\al}\f{|\xi|}{k^3}$. Then, $t\notin \rmI_{k,\eta}$ gives us that
\beno
|\xi-\eta|\geq k\left|\f{\xi}{k}-\f{\eta}{k}\right|\geq k\left|\f{\eta}{k}-t\right|-k\left|t-\f{\xi}{k}\right|
\geq \f{|\eta|}{2k^2}-\f{1}{10\al}\f{|\xi|}{k^2}\gtrsim\f{|\eta|}{k^2}.
\eeno
This proves (c).\\
Suppose $n\neq k$ and (d) is false. Without loss of generality suppose that $|t-\f{\xi}{n}|< \f{1}{10\al}\f{|\xi|}{n^2}$.  Then, $t\in \rmI_{k,\eta}$ gives us $t\notin \rmI_{n,\eta}$, which gives us that
\beno
\left|\f{\xi}{n}-\f{\eta}{n}\right|\geq \left|\f{\eta}{n}-t\right|-\left|t-\f{\xi}{n}\right|
\geq \f{|\eta|}{2n(n+1)}-\f{1}{10\al}\f{|\xi|}{n^2}\gtrsim\f{|\eta|}{n^2}.
\eeno
This proves (e). The proof of (a'), (b') and (c') is similar. 
\end{proof}

By the construction of $\Theta$, it is easy to check that the following lemma holds. 
\begin{lemma}\label{Lemma_Growth_1}
For $t\in \rmI_{k,\eta}$ and $t>\rmE(|\eta|^{\f13})$, we have the following with $\tau=t-\f{\eta}{k}$: 
\ben\label{Weight_ineq_1}
\f{\pa_t\Theta_{\rmNR}(t,\eta)}{\Theta_{\rmNR}(t,\eta)}\approx \f{1}{(1+|\tau|)}\approx \f{\pa_t\Theta_{\rmR}(t,\eta)}{\Theta_{\rmR}(t,\eta)}.
\een
\end{lemma}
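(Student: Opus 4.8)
The plan is to prove the comparison by differentiating the explicit power-law formulas \eqref{eq:w_NR+}, \eqref{eq:w_NR-} that define $\Theta_{\rmNR}$ on the two halves $\rmI_{k,\eta}^{\rmR}=[\f{\eta}{k},t_{k,\eta}^+]$ and $\rmI_{k,\eta}^{\rmL}=[t_{k,\eta}^-,\f{\eta}{k}]$ of the critical interval, together with the defining relations $\Theta_{\rmR}=\f{k^3}{\eta}(1+b_{k,\eta}|\tau|)\Theta_{\rmNR}$ on $\rmI_{k,\eta}^{\rmR}$ and $\Theta_{\rmR}=\f{k^3}{\eta}(1+a_{k,\eta}|\tau|)\Theta_{\rmNR}$ on $\rmI_{k,\eta}^{\rmL}$. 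Two elementary facts drive the verification: (i) on $\rmI_{k,\eta}$ the definition of $t_{k,\eta}^{\pm}$ gives $|\tau|=|t-\f{\eta}{k}|\le\f{|\eta|}{2|k|^3}$; and (ii) $a_{k,\eta}=b_{k,\eta}=2(1-\f{k^3}{|\eta|})$, so $0\le a_{k,\eta},b_{k,\eta}\le 2$ because $1\le|k|\le\rmE(|\eta|^{\f13})$ forces $k^3\le|\eta|$, and moreover $a_{k,\eta},b_{k,\eta}\approx 1$ as soon as $|k|$ stays in the bulk of the band (say $2|k|^3\le|\eta|$).

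Then I would differentiate. On $\rmI_{k,\eta}^{\rmR}$, taking logarithms in \eqref{eq:w_NR+} gives $\log\Theta_{\rmNR}=C\kappa\log\!\big(\f{k^3}{\eta}(1+b_{k,\eta}\tau)\big)+\mathrm{const}$, so
\[
\f{\pa_t\Theta_{\rmNR}}{\Theta_{\rmNR}}=\f{C\kappa\,b_{k,\eta}}{1+b_{k,\eta}\tau},\qquad
\f{\pa_t\Theta_{\rmR}}{\Theta_{\rmR}}=\f{\pa_t\Theta_{\rmNR}}{\Theta_{\rmNR}}+\f{b_{k,\eta}}{1+b_{k,\eta}\tau}=\f{(1+C\kappa)\,b_{k,\eta}}{1+b_{k,\eta}\tau},
\]
and on $\rmI_{k,\eta}^{\rmL}$, where $\pa_t|\tau|=-1$, differentiating \eqref{eq:w_NR-} yields $\f{\pa_t\Theta_{\rmNR}}{\Theta_{\rmNR}}=\f{(1+C\kappa)\,a_{k,\eta}}{1+a_{k,\eta}|\tau|}$ and $\f{\pa_t\Theta_{\rmR}}{\Theta_{\rmR}}=\f{C\kappa\,a_{k,\eta}}{1+a_{k,\eta}|\tau|}$. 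In all four cases the two ratios agree up to the fixed factor $\f{1+C\kappa}{C\kappa}$, so it suffices to compare, say, $\f{b_{k,\eta}}{1+b_{k,\eta}|\tau|}$ with $\f1{1+|\tau|}$. The bound $\f{b_{k,\eta}}{1+b_{k,\eta}|\tau|}\lesssim\f1{1+|\tau|}$ is unconditional (split according to whether $b_{k,\eta}|\tau|$ is $\ge1$ or $<1$, and use $b_{k,\eta}\le 2$, so that $|\tau|\ge\f12$ in the first case), while for the reverse inequality one invokes $b_{k,\eta}\approx 1$ to get $1+b_{k,\eta}|\tau|\approx 1+|\tau|$; the argument for $a_{k,\eta}$ on $\rmI_{k,\eta}^{\rmL}$ is identical.

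The point that needs care — and which I would flag as the only real obstacle — is the lower bound when $|k|$ approaches $\rmE(|\eta|^{\f13})$ (and, at small $|\eta|$, already $k=1$): there $a_{k,\eta}=b_{k,\eta}=2(1-k^3/|\eta|)$ can degenerate to $0$ when $|\eta|$ is near a perfect cube, and the sharp two-sided relation with $\f1{1+|\tau|}$ is no longer uniform. In that regime one still has $|\tau|\le\f{|\eta|}{2|k|^3}\lesssim 1$, hence $\f1{1+|\tau|}\approx 1$ and $\f{\pa_t\Theta}{\Theta}\lesssim 1$, and by construction $\Theta_{\rmNR}$ (hence also $\Theta_{\rmR}$, which differs from it by a factor lying in $[\f{k^3}{\eta},1]$) changes across such an interval only by the bounded factor $\big(\f{|\eta|}{|k|^3}\big)^{1+2C\kappa}\lesssim 1$; this is the same edge phenomenon already present in the construction of \cite{Bedrossian_2015} and is dealt with there by treating the boundary intervals separately, so for this lemma I would state the comparison on the bulk range $2|k|^3\le|\eta|$ and record the degenerate case explicitly. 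Everything else is the elementary calculus above, which is why the statement is quoted as ``easy to check''.
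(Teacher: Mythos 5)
The paper gives no proof of this lemma --- the sentence preceding it merely says ``it is easy to check'' --- so there is no argument of the authors' to compare against; your direct computation, taking logarithmic derivatives of the explicit formulas \eqref{eq:w_NR+}--\eqref{eq:w_NR-} and the defining relations for $\Theta_{\rmR}$, is certainly what is meant. The four expressions you derive for $\pa_t\Theta_{\rmNR}/\Theta_{\rmNR}$ and $\pa_t\Theta_{\rmR}/\Theta_{\rmR}$ on $\rmI_{k,\eta}^{\rmR}$ and $\rmI_{k,\eta}^{\rmL}$ are correct, and the reduction to comparing $b_{k,\eta}/(1+b_{k,\eta}|\tau|)$ with $1/(1+|\tau|)$, together with the unconditional upper bound from $b_{k,\eta}\le 2$, is sound.

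The edge case you flag is a genuine imprecision of the lemma as stated, not a gap in your argument. For $k=\rmE(|\eta|^{1/3})$ with $|\eta|$ close to $k^3$, the coefficient $a_{k,\eta}=b_{k,\eta}=2(1-k^3/|\eta|)$ is arbitrarily small (and vanishes when $|\eta|=k^3$), while $|\tau|\le |\eta|/(2|k|^3)\lesssim 1$ so $1/(1+|\tau|)\approx 1$; hence $\pa_t\Theta_{\rmNR}/\Theta_{\rmNR}=C\kappa\, b_{k,\eta}/(1+b_{k,\eta}\tau)$ is of order $b_{k,\eta}$ and the lower bound in \eqref{Weight_ineq_1} fails. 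The hypothesis $t>\rmE(|\eta|^{1/3})$ does not exclude this case: on $\rmI_{\rmE(|\eta|^{1/3}),\eta}$ one has $t\approx \eta/k\approx |\eta|^{2/3}\gg \rmE(|\eta|^{1/3})$, so the constraint is vacuous there. Your proposed remedy --- state the two-sided comparison on the bulk range $2|k|^3\le|\eta|$ and treat the boundary band separately, noting there $|\tau|\lesssim 1$ and $\Theta_{\rmR}/\Theta_{\rmNR}\in[k^3/|\eta|,1]\approx 1$, so only $O(1)$ quantities are involved --- is the right one. It is also worth noting that the companion multiplier $\rmg$ does not suffer this degeneracy: on every critical interval $\pa_t\rmg/\rmg=\d_{\rmL}^{-1}/(1+\tau^2)$ exactly, so wherever the text needs a quantitative lower bound on the full Cauchy--Kovalevskaya weight the $\rmg$ factor still supplies it.
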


\begin{lemma}\label{lem:g/g}
For all $t,\xi,\eta$, it holds that
\begin{align}\label{eq:g/g}
\f{\rmg(t,\xi)}{\rmg(t,\eta)}+\f{\rmg(t,\eta)}{\rmg(t,\xi)}
\lesssim  e^{C\d_{\rmL}^{-1}|\eta-\xi|^{\f13}}.
\end{align}
\end{lemma}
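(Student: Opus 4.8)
The plan is to establish \eqref{eq:g/g} by tracing the construction of $\rmg$ and using the explicit formula from Lemma \ref{lem:3.1-1}. By symmetry in $\xi,\eta$ it suffices to bound $\rmg(t,\xi)/\rmg(t,\eta)$, and since $1\le 1/\rmg(t,\eta)\le e^{3\pi\d_{\rmL}^{-1}|\eta|^{\f13}}$ by \eqref{Estimate_g}, the inequality is trivial unless $|\eta-\xi|$ is much smaller than $|\eta|$; so I would first dispose of the easy case $|\eta-\xi|\gtrsim \min(|\eta|,|\xi|)$ and henceforth assume $\f1\al|\xi|\le|\eta|\le\al|\xi|$ for a fixed moderate $\al$. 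In that regime, for fixed $t$ we locate the (essentially unique) indices $k,n$ with $t\in\bar{\rmI}_{k,\eta}$ and $t\in\bar{\rmI}_{n,\xi}$; by the computation at the start of the proof of Lemma \ref{Lem:compare}, $k\approx n$. Writing $\log\rmg(t,\eta)$ as the telescoping sum of increments across the intervals $\bar{\rmI}_{j,\eta}$ for $j>|k|$ (equivalently using the product formula in Lemma \ref{lem:3.1-1}, truncated at the current interval), the ratio $\rmg(t,\xi)/\rmg(t,\eta)$ becomes $\exp$ of a difference of two such sums with nearly matching index ranges, plus the contribution of the current interval.

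The key step is then to control, term by term, the difference of the arctan-increments. For each index $j$ in the common range, the increment for $\eta$ is $\d_{\rmL}^{-1}c_j(\eta)\big(\arctan(\tfrac{|\eta|}{|j|(2|j|+1)})+\arctan(\tfrac{|\eta|}{|j|(2|j|-1)})\big)$ where $c_j(\eta)=1$ for $|j|\le \rmE(|\eta|^{1/3})$ and $c_j(\eta)=\tfrac{|\eta|}{j^3}$ otherwise, and similarly for $\xi$. Since $\arctan$ has bounded derivative and $c_j$ is Lipschitz in the relevant variable with the right scaling, $|c_j(\eta)\,\mathrm{arctan}(\cdots)-c_j(\xi)\,\mathrm{arctan}(\cdots)|\lesssim \d_{\rmL}^{-1}\tfrac{|\eta-\xi|}{|\eta|}$ (one checks this separately on the two $j$-ranges; in the $\tfrac{|\eta|}{j^3}$ range the arctan is $O(|j|)$ and the prefactor difference is $O(\tfrac{|\eta-\xi|}{j^3})$, so the product difference is $O(\tfrac{|\eta-\xi|}{j^2})$, even better). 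Because the number of common indices is at most $\rmE(|\eta|^{2/3})\approx |\eta|^{2/3}$, summing gives a bound $\lesssim \d_{\rmL}^{-1}|\eta|^{2/3}\cdot\tfrac{|\eta-\xi|}{|\eta|}=\d_{\rmL}^{-1}|\eta|^{-1/3}|\eta-\xi|$. When $|\eta-\xi|\le|\eta|$ this is $\lesssim \d_{\rmL}^{-1}|\eta-\xi|^{2/3}\le\d_{\rmL}^{-1}|\eta-\xi|^{1/3}+C$, which is of the claimed form; and the opposite regime was already handled by the crude bound \eqref{Estimate_g}. The mismatch indices (those lying in the range for one of $\eta,\xi$ but not the other, which is $O(1)$ many by $k\approx n$ and the spacing of the $\bar{\rmI}$'s) each contribute at most $O(\d_{\rmL}^{-1})$ by \eqref{Estimate_g}-type bounds, hence a harmless additive constant, and the current interval contributes a single bounded arctan increment, again $O(\d_{\rmL}^{-1})$.

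The main obstacle I anticipate is the careful bookkeeping at the ``seam'' between the two $j$-ranges $|j|\le\rmE(|\eta|^{1/3})$ and $\rmE(|\eta|^{1/3})<|j|\le\rmE(|\eta|^{2/3})$: the threshold $\rmE(|\eta|^{1/3})$ depends on $\eta$ and can differ from $\rmE(|\xi|^{1/3})$, so a few indices $j$ may sit in the ``flat'' range for $\eta$ and the ``$\tfrac{|\eta|}{j^3}$'' range for $\xi$ (or vice versa). For those indices the two increments are not comparable term-by-term, but since $j\approx|\eta|^{1/3}$ there the prefactor $\tfrac{|\eta|}{j^3}\approx 1$ anyway and each such increment is $O(\d_{\rmL}^{-1})$ with only $O(1)$ such indices, so they again fold into the additive constant. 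Once this is organized, the estimate is robust; I would present it as: (i) reduce to $|\eta-\xi|\le \min(|\eta|,|\xi|)/2$ via \eqref{Estimate_g}; (ii) write the ratio via the telescoping product; (iii) split into common indices, mismatch indices, and seam indices; (iv) bound the common part by the Lipschitz estimate on $c_j\arctan(\cdots)$ summed over $O(|\eta|^{2/3})$ terms; (v) absorb everything else into an additive $O(\d_{\rmL}^{-1})$ and use $|\eta-\xi|^{2/3}\lesssim 1+|\eta-\xi|^{1/3}$.
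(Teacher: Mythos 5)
Your high-level plan — reduce to comparable frequencies via the crude bound \eqref{Estimate_g}, telescope $\log\rmg$, Lipschitz-estimate the arctan increments term by term, sum over the common index range, and fold mismatch/seam contributions into $O(\d_{\rmL}^{-1}|\eta-\xi|^{1/3})$ — is essentially the same strategy the paper uses (it organizes the bookkeeping into seven time-regime cases, but the mechanism is identical). However, there is a fatal error in the accounting of the common-index sum.

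You uniformize the per-index increment difference to $\lesssim\d_{\rmL}^{-1}|\eta-\xi|/|\eta|$, multiply by the number of indices $\approx|\eta|^{2/3}$, and obtain $\lesssim\d_{\rmL}^{-1}|\eta-\xi|/|\eta|^{1/3}\lesssim\d_{\rmL}^{-1}|\eta-\xi|^{2/3}$, and then assert $|\eta-\xi|^{2/3}\le|\eta-\xi|^{1/3}+C$. That last inequality is simply false: $|\eta-\xi|^{2/3}-|\eta-\xi|^{1/3}\to\infty$ as $|\eta-\xi|\to\infty$, so there is no universal $C$. And because the multiplier $\cM$ is built precisely to have Gevrey-$3$ (cube-root) commutator growth, a genuine $e^{C\d_{\rmL}^{-1}|\eta-\xi|^{2/3}}$ bound would not suffice; it cannot be absorbed by the surrounding Gevrey weights.

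The underlying reason your sum is a factor of $|\eta-\xi|^{1/3}$ too large is that you discard the $j$-dependence before summing. Keeping it, the per-index bounds are $|\rmF(j,\eta)-\rmF(j,\xi)|\lesssim j^2|\eta-\xi|/|\eta|^2$ for $j\le\rmE(|\eta|^{1/3})$ and $|\tilde\rmF(j,\eta)-\tilde\rmF(j,\xi)|\lesssim|\eta-\xi|/j^3$ for $\rmE(|\eta|^{1/3})<j\le\rmE(|\eta|^{2/3})$ (the paper's \eqref{Ineq_F_Mean_Value} and \eqref{F_tilde_Estimate}; in the second range $\rmF$ is $O(1)$, not $O(|j|)$ as you wrote). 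Summing these gives $O(|\eta-\xi|/|\eta|)$ plus $O(|\eta-\xi|/|\eta|^{2/3})$, and the dominant $|\eta-\xi|/|\eta|^{2/3}\le|\eta-\xi|^{1/3}$ in the regime $|\eta-\xi|\le|\eta|$ — which is the claimed form. So the approach can be repaired, but the uniformization step must be replaced by the $j$-weighted sum. Finally, the claim that there are only $O(1)$ seam indices at the $\rmE(|\eta|^{1/3})$ threshold is also inaccurate: $|\rmE(|\eta|^{1/3})-\rmE(|\xi|^{1/3})|$ can be as large as $\lesssim|\eta-\xi|/|\eta|^{2/3}\lesssim|\eta-\xi|^{1/3}$; it still yields an acceptable contribution since each such increment is $O(\d_{\rmL}^{-1})$, but the justification should reflect this.
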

\begin{proof}
It is enough to prove 
\begin{equation}\label{Equiv_Estimate}
e^{-C\d_{\rmL}^{-1}|\eta-\xi|^{\f13}}\lesssim \f{\rmg(t,\xi)}{\rmg(t,\eta)}\lesssim  e^{C\d_{\rmL}^{-1}|\eta-\xi|^{\f13}}.
\end{equation}
Without loss of generality, we assume that $|\xi|\leq |\eta|$.
If $|\eta|\leq 1$, we have $\rmg(t,\xi) =\rmg(t,\eta)=1$, so there is nothing to prove. If $|\xi|<1\leq |\eta|$, then we have $|\eta|\leq |\eta-\xi|$ and hence we have from \eqref{Estimate_g} 
\begin{equation}
e^{C\d_{\rmL}^{-1}|\eta-\xi|^{\f13}}\lesssim \f{\rmg(t,\xi)}{\rmg(t,\eta)}=\f{1}{\rmg(t,\eta)}\lesssim  e^{C\d_{\rmL}^{-1}|\eta|^{\f13}}\lesssim e^{C\d_{\rmL}^{-1}|\eta-\xi|^{\f13}}. 
\end{equation}
So from now on, we assume that $\min(|\xi|,|\eta|)\geq 1. $

 If $|\xi|<\f{|\eta|}{2}$, then it holds that $|\eta|\leq 2|\eta-\xi|$ and hence $|\xi|\leq |\eta-\xi|$. Therefore, we have by using \eqref{Estimate_g}, 
 \begin{equation}
e^{-C\d_{\rmL}^{-1}|\eta-\xi|^{\f13}}\lesssim e^{-C\d_{\rmL}^{-1}|\xi|^{\f13}}\lesssim \f{\rmg(t,\xi)}{\rmg(t,\eta)}\lesssim e^{C\d_{\rmL}^{-1}|\eta|^{\f13}}\lesssim e^{C\d_{\rmL}^{-1}|\eta-\xi|^{\f13}}. 
\end{equation}

Now, we may focus on the case $=|\eta|/2\leq |\xi|\leq |\eta|$. We need to discuss the following time regime: 
\begin{description}
\item[Case 1] $t\geq 2|\eta|$;
\item[Case 2] $t\leq \min (t_{\rmE(|\xi|^{\f23}),|\xi|}, t_{\rmE(|\eta|^{\f23}),|\eta|}) $;
\item [Case 3] $\min (t_{\rmE(|\xi|^{\f23}),|\xi|}, t_{\rmE(|\eta|^{\f23}),|\eta|})\leq t\leq \max  (t_{\rmE(|\xi|^{\f23}),|\xi|}, t_{\rmE(|\eta|^{\f23}),|\eta|})$;
\item [Case 4] $\max  (t_{\rmE(|\xi|^{\f23}),|\xi|}, t_{\rmE(|\eta|^{\f23}),|\eta|})\leq t\leq \min (t_{\rmE(|\xi|^{\f13}),|\xi|}, t_{\rmE(|\eta|^{\f13}),|\eta|})$;
\item [Case 5] $\min (t_{\rmE(|\xi|^{\f13}),|\xi|}, t_{\rmE(|\eta|^{\f13}),|\eta|})\leq t\leq \max  (t_{\rmE(|\xi|^{\f13}),|\xi|}, t_{\rmE(|\eta|^{\f13}),|\eta|})$;
\item [Case 6] $\max  (t_{\rmE(|\xi|^{\f13}),|\xi|}, t_{\rmE(|\eta|^{\f13}),|\eta|})\leq t\leq 2|\xi|$;
\item [Case 7] $2|\xi|\leq t\leq 2|\eta|$.
\end{description}

Now, we discuss each of the above cases separately.  
 Throughout  the proof, we will use the following notations:  
\begin{equation}
\rmF(k,\eta)=\arctan\big(\f{|\eta|}{|k|(2|k|+1)}\big)+\arctan\big(\f{|\eta|}{|k|(2|k|-1)}\big)
\end{equation}
and 
\begin{equation}\label{F_tilde}
\tilde{\rmF}(k,\eta)=\f{|\eta|}{|k|^3}\rmF(k,\eta). 
\end{equation}

\textbf{Case 1.}
For $t\geq 2|\eta|$, then $\rmg(t,\xi) =\rmg(t,\eta)=1$, so there is nothing to prove.

\textbf{Case 2.} For $t\leq \min (t_{\rmE(|\xi|^{\f23}),|\xi|}, t_{\rmE(|\eta|^{\f23}),|\eta|}), $ then we have  
\begin{equation}\label{Main_term_1}
\begin{aligned}
\f{\rmg(t,\xi)}{\rmg(t,\eta)}=\f{\rmg(0,\xi)}{\rmg(0,\eta)}=&\,\exp(G_1(k,\eta)-G_1(k,\xi))\exp(G_2(k,\eta)-G_2(k,\xi))\\
\end{aligned}
\end{equation}
with    
\begin{equation} 
\begin{aligned}  
G_1(k,\eta)=\d_{\rmL}^{-1}\sum_{k=1}^{\rmE(|\eta|^{\f13})}\rmF(k,\eta)\quad \text{and}\quad 
G_2(k,\eta)=\d_{\rmL}^{-1}\sum_{\rmE(|\eta|^{\f13})+1}^{\rmE(|\eta|^{\f23})}\tilde{\rmF}(k,\eta). 
\end{aligned}
\end{equation}

We have 
\begin{align*}
&\exp(G_1(k,\eta)-G_1(k,\xi))\\
\leq &\,\exp\bigg(\d_{\rmL}^{-1}\sum_{k=\min\{\rmE(|\xi|^{\f13}),\rmE(|\eta|^{\f13})\}+1}^{\max\{\rmE(|\xi|^{\f13}),\rmE(|\eta|^{\f13})\}}\Big(\arctan\big(\f{\max\{|\xi|,|\eta|\}}{|k|(2|k|+1)}\big)+\arctan\big(\f{\max\{|\xi|,|\eta|\}}{|k|(2|k|-1)}\big)\Big)\bigg)\\
&\qquad \times\exp\bigg(\d_{\rmL}^{-1}\sum_{k=1}^{\min\{\rmE(|\xi|^{\f13}),\rmE(|\eta|^{\f13})\}}|\rmF(k,\xi)-\rmF(k,\eta)|\bigg).
\end{align*}
By the fact that 
\beno
|\rmF(k,\xi)-\rmF(k,\eta)|\lesssim \f{|\xi-\eta|^{\f13}|k^2|}{|\xi|^{\f43}},
\eeno
we obtain that
\begin{equation}\label{G_1_Term}
\begin{aligned}
&\exp(G_1(k,\eta)-G_1(k,\xi))\\
&\leq \exp\bigg(\pi\d_{\rmL}^{-1}\left|\rmE(|\eta|^{\f13})-\rmE(|\xi|^{\f13})\right|\bigg)
\exp\bigg(C\d_{\rmL}^{-1}\sum_{k=1}^{\min\{\rmE(|\xi|^{\f13}),\rmE(|\eta|^{\f13})\}}\f{|\xi-\eta|^{\f13}|k^2|}{|\xi|}\bigg)\\
&\lesssim e^{C\d_{\rmL}^{-1}|\eta-\xi|^{\f13}}. 
\end{aligned}
\end{equation}
Similarly, we have 
\begin{align*}  
&\exp(G_2(k,\eta)-G_2(k,\xi))\\
\leq&\, \exp\bigg(\d_{\rmL}^{-1}\sum_{k=\min\{\rmE(|\xi|^{\f23}),\rmE(|\eta|^{\f23})\}+1}^{\max\{\rmE(|\xi|^{\f23}),\rmE(|\eta|^{\f23})\}}\f{|\eta|}{|k|^3}\Big(\arctan\big(\f{\max\{|\xi|,|\eta|\}}{|k|(2|k|+1)}\big)+\arctan\big(\f{\max\{|\xi|,|\eta|\}}{|k|(2|k|-1)}\big)\Big)\bigg)\\
& \times\exp\bigg(\d_{\rmL}^{-1}\sum_{k=\max\{\rmE(|\xi|^{\f13}),\rmE(|\eta|^{\f13})\}}^{\min\{\rmE(|\xi|^{\f23}),\rmE(|\eta|^{\f23})\}}|\tilde{\rmF}(k,\xi)-\tilde{\rmF}(k,\eta)|)\bigg)
\end{align*}
We have  
\begin{equation}\label{F_tilde_Estimate}
|\tilde{\rmF}(k,\xi)-\tilde{\rmF}(k,\eta)|\leq \f{|\eta-\xi|}{|k|^3}.  
\end{equation}
Consequently, we obtain 
\begin{equation}\label{G_2_Term}
\begin{aligned}
&\exp(G_2(k,\eta)-G_2(k,\xi))\\
\leq&\,\exp\bigg(\pi\d_{\rmL}^{-1}\left|\rmE(|\eta|^{\f23})-\rmE(|\xi|^{\f23})\right||\eta|^{-1}\bigg)
\exp\bigg(C\d_{\rmL}^{-1}\sum_{k=\max\{\rmE(|\xi|^{\f13}),\rmE(|\eta|^{\f13})\}}^{\min\{\rmE(|\xi|^{\f23}),\rmE(|\eta|^{\f23})\}}\f{|\eta-\xi|}{|k|^3}\bigg)\\
&\lesssim e^{C\d_{\rmL}^{-1}|\eta-\xi|^{\f13}}.
\end{aligned}
\end{equation}
Plugging \eqref{G_1_Term} and \eqref{G_2_Term} into \eqref{Main_term_1}, we obtain the desired result. 

\textbf{Case 3.} 
If $t_{\rmE(|\xi|^{\f23}),\xi}\leq t\leq t_{\rmE(|\eta|^{\f23}),\eta}$, we have
\begin{equation}\label{Eq_Compa_Case_3_1}
\f{\rmg(t_{\rmE(|\eta|^{\f23}),\eta},\xi)}{\rmg(t_{\rmE(|\eta|^{\f23}),\eta},\eta)}\geq \f{\rmg(t,\xi)}{\rmg(t,\eta)}\geq \f{\rmg(0,\xi)}{\rmg(0,\eta)},
\end{equation}
and if $t_{\rmE(|\eta|^{\f23}),\eta}\leq t\leq t_{\rmE(|\xi|^{\f23}),\xi}$, we have
\begin{equation}\label{g_Case_3_Estimate}
\f{\rmg(t_{\rmE(|\xi|^{\f23}),\xi},\xi)}{\rmg(t_{\rmE(|\xi|^{\f23}),\xi},\eta)}\leq \f{\rmg(t,\xi)}{\rmg(t,\eta)}\leq \f{\rmg(0,\xi)}{\rmg(0,\eta)}.  
\end{equation}
Thus we can deduce the estimates in Case 3 by the estimate of Case 4 and Case 2. Similar  argument can also apply to Case 7. Indeed, we have:

\textbf{Case 7.} If $2|\xi|\leq t\leq 2|\eta|$, we have $1\geq \rmg(t,\eta)\geq \rmg(2|\xi|,\eta)$ and $\rmg(t,\xi)=\rmg(2|\xi|,\xi)=1$, which implies
\beno
1\geq \f{\rmg(t,\eta)}{\rmg(t,\xi)}\geq \f{\rmg(2|\xi|,\eta)}{\rmg(2|\xi|,\xi)}. 
\eeno
Thus we can deduce the estimates in Case 7 by the estimate of Case 6.

For the cases $4$, $5$ and $6$, we have  $\max(t_{\rmE(|\eta|^{\f23}),\eta},t_{\rmE(|\xi|^{\f23}),\xi})\leq t\leq 2|\xi|$.  Let $j$ and $n$ be such that $t\in \bar{\rmI}_{n,\eta}\cap \bar{\rmI}_{j,\xi}$. Then we have $j\approx n\geq j$.

\textbf{Case 6.}
 Let $t$ be such that   $\max  (t_{\rmE(|\xi|^{\f13}),|\xi|}, t_{\rmE(|\eta|^{\f13}),|\eta|})\leq t\leq 2|\xi|$.  
We have in this case 
\begin{align*}
\rmg(t,\eta)=&\,\exp\left(-\d_{\rmL}^{-1}\sum_{k=1}^{n-1}\rmF(k,\eta)\right)\\
&\times \exp\left(\d_{\rmL}^{-1}\Big(\arctan\big(t-\f{|\eta|}{|n|}\big)-\arctan\big(\f{|\eta|}{|n|(2|n|-1)}\big)\Big)\right)
\end{align*}
and 
\begin{align*}
\rmg(t,\xi)=&\,\exp\left(-\d_{\rmL}^{-1}\sum_{k=1}^{j-1}\rmF(k,\xi)\right)\\
&\times \exp\left(\d_{\rmL}^{-1}\Big(\arctan\big(t-\f{|\xi|}{|j|}\big)-\arctan\big(\f{|\xi|}{|j|(2|j|-1)}\big)\Big)\right). 
\end{align*}  
By the fact that $n\geq j$ we have   
\begin{align*}
\f{\rmg(t,\xi)}{\rmg(t,\eta)}
&=\f{\exp\left(\d_{\rmL}^{-1}\sum\limits_{k=1}^{j-1}\rmF(k,\eta)\right)}{\exp\left(\d_{\rmL}^{-1}\sum\limits_{k=1}^{j-1}\rmF(k,\xi)\right)}
\underbrace{\exp\left(\d_{\rmL}^{-1}\sum_{k=j}^{n-1}\rmF(k,\eta)\right)}_{\text{when}\ j\leq n-1 }\\
&\quad\times \f{\exp\left(\d_{\rmL}^{-1}\big(\arctan\big(t-\f{|\xi|}{|j|}\big)-\arctan\big(\f{|\xi|}{|j|(2|j|-1)}\big)\big)\right)}{\exp\left(\d_{\rmL}^{-1}\big(\arctan\big(t-\f{|\eta|}{|n|}\big)-\arctan\big(\f{|\eta|}{|n|(2|n|-1)}\big)\big)\right)}
\end{align*}
We get 
\begin{equation}\label{Mean_Value_1}
\Big|\arctan\big(\f{|\eta|}{|k|(2|k|\pm1)}\big)-\arctan\big(\f{|\xi|}{|k|(2|k|\pm1)}\big)\Big|\lesssim \f{|\eta-\xi|}{\langle \xi\rangle}. 
\end{equation}
Since $t\in \bar{\rmI}_{n,\eta}\cap \bar{\rmI}_{j,\xi}$, then it holds that  $|jt-\xi|=|j||t-\f{\xi}{j}|\leq \f{|\xi|}{|j|}\approx t $ and also $|\eta-nt|\leq \f{|\eta|}{|n|}\approx t$, hence, we obtain the inequality 
\begin{equation}\label{Alg_Ineq_Im}
|j-n|\leq \f{|jt-\xi|+|\eta-\xi|+|\eta-nt|}{t}\lesssim 1+\f{|\eta-\xi||n|}{|\eta|}\lesssim 1+|\eta-\xi|^{\f13},
\end{equation}
Hence, we obtain 
\begin{equation}
\begin{aligned}
\f{\rmg(t,\xi)}{\rmg(t,\eta)} &\lesssim \exp\Big(C\d_{\rmL}^{-1}\f{|\eta-\xi|}{\langle\xi\rangle}n\Big)\exp(|n-j|),   
\end{aligned}
\end{equation}
and similarly we have $\f{\rmg(t,\eta)}{\rmg(t,\xi)} \lesssim \exp\Big(C\d_{\rmL}^{-1}\f{|\eta-\xi|}{\langle\xi\rangle}n\Big)\exp(|n-j|)$. 

By the fact that $n\lesssim |\xi|^{\f13}$ and $|\eta-\xi|\lesssim |\xi|$, we get 
\begin{align*}
\f{\rmg(t,\eta)}{\rmg(t,\xi)}+\f{\rmg(t,\xi)}{\rmg(t,\eta)}\lesssim\exp\Big(C\d_{\rmL}^{-1}|\eta-\xi|^{\f13}\Big).
\end{align*}

\textbf{Case 4.}
In the proof of Lemma 3.10, you may need more precise formula: (similar for the Case 5.)
Let $j$ and $n$ be such that $t\in \bar{\rmI}_{n,\eta}\cap \bar{\rmI}_{j,\xi}$.   
Using the fact that $|\eta|\geq |\xi|$ which gives that $\rmE(|\eta|^{\f13})\geq \rmE(|\xi|^{\f13})$, then $|n|\geq \rmE(|\eta|^{\f13})$ and $|j|\geq \rmE(|\xi|^{\f13})$. By the definition of $\rmg(t,\eta)$, we get that
\begin{equation}\label{g_eta_case_4}
\begin{aligned}
\rmg(t,\eta)=&\exp\left(-\d_{\rmL}^{-1}\sum_{k=1}^{\rmE(|\xi|^{\f13})}\rmF(k,\eta)\right) 
\underbrace{\exp\left(-\d_{\rmL}^{-1}\sum_{\rmE(|\xi|^{\f13})+1}^{\rmE(|\eta|^{\f13})}\rmF(k,\eta)\right)}_{\text{when}\ \rmE(|\eta|^{\f13})\geq \rmE(|\xi|^{\f13})+1}\\
 &\times\exp\left(-\d_{\rmL}^{-1}\sum_{k=\rmE(|\eta|^{\f13})+1}^{n-1}\tilde{\rmF}(k,\eta)\right)\\
&\times \exp\left(\d_{\rmL}^{-1}\f{|\eta|}{|n|^3}\Big(\arctan\big(t-\f{|\eta|}{|n|}\big)-\arctan\big(\f{|\eta|}{|n|(2|n|-1)}\big)\Big)\right)
\end{aligned}
\end{equation}
and if $\rmE(|\eta|^{\f13})\geq |j| \geq \rmE(|\xi|^{\f13})$, 
\begin{align*}
\rmg(t,\xi)=&\exp\left(-\d_{\rmL}^{-1}\sum_{k=1}^{\rmE(|\xi|^{\f13})}\rmF(k,\xi)\right)\exp\left(-\d_{\rmL}^{-1}\sum_{k=\rmE(|\xi|^{\f13})+1}^{j-1}\tilde{\rmF}(k,\xi)\right)\\
&\times \exp\left(\d_{\rmL}^{-1}\f{|\xi|}{|j|^3}\Big(\arctan\big(t-\f{|\xi|}{|j|}\big)-\arctan\big(\f{|\xi|}{|j|(2|j|-1)}\big)\Big)\right)
\end{align*}
and if $\rmE(|\eta|^{\f13})+1\leq |j|$
\begin{equation}\label{g_xi_Case_4_2}
\begin{aligned}
\rmg(t,\xi)=&\exp\left(-\d_{\rmL}^{-1}\sum_{k=1}^{\rmE(|\xi|^{\f13})}\rmF(k,\xi)\right) 
\exp\left(-\d_{\rmL}^{-1}\sum_{k=\rmE(|\xi|^{\f13})+1}^{\rmE(|\eta|^{\f13})}\tilde{\rmF}(k,\xi)\right) \\
&\times\exp\left(-\d_{\rmL}^{-1}\sum_{k=\rmE(|\eta|^{\f13})+1}^{j-1}\tilde{\rmF}(k,\xi)\right)\\
&\times \exp\left(\d_{\rmL}^{-1}\f{|\xi|}{|j|^3}\Big(\arctan\big(t-\f{|\xi|}{|j|}\big)-\arctan\big(\f{|\xi|}{|j|(2|j|-1)}\big)\Big)\right)
\end{aligned}  
\end{equation}
Thus we have that
\begin{align*}
&\f{\rmg(t,\eta)}{\rmg(t,\xi)}+\f{\rmg(t,\xi)}{\rmg(t,\eta)}\\
&\lesssim \exp\left(\d_{\rmL}^{-1}\sum_{k=1}^{\rmE(|\xi|^{\f13})}|\rmF(k,\eta)-\rmF(k,\xi)|\right)
\exp\big(\rmE(|\eta|^{\f13})-\rmE(|\xi|^{\f13})\big)\\
&\quad \times \exp\left(\d_{\rmL}^{-1}\sum_{k=\rmE(|\eta|^{\f13})+1}^{j-1}|\tilde{\rmF}(k,\eta)-\tilde{\rmF}(k,\xi)|\right)
\exp\left(\d_{\rmL}^{-1}\sum_{k=j}^{n-1}\tilde{\rmF}(k,\eta)\right)\\
&\lesssim \exp\left(C\d_{\rmL}^{-1}|\xi|^{\f13}\f{|\xi-\eta|}{\xi}\right)\exp\big(\rmE(|\eta|^{\f13})-\rmE(|\xi|^{\f13})\big)\exp\left(\d_{\rmL}^{-1}\sum_{k=\rmE(|\eta|^{\f13})+1}^{j-1}\f{|\xi-\eta|}{k^3}\right)\exp\Big(\f{\d_{\rmL}^{-1}|n-j|\eta}{n^3}\Big).
\end{align*}
By the fact that $|\eta-\xi|\lesssim |\xi|\approx|\eta|$, $|n|\gtrsim |\eta|^{\f13}$ and
\beno
|j-n|\leq \f{|jt-\xi|+|\eta-\xi|+|\eta-nt|}{t}\lesssim 1+\f{|\eta-\xi||n|}{|\eta|},
\eeno
we get 
\begin{align*}
\f{\rmg(t,\eta)}{\rmg(t,\xi)}+\f{\rmg(t,\xi)}{\rmg(t,\eta)}
\lesssim \exp\left(C\d_{\rmL}^{-1}|\xi-\eta|^{\f13}\right)\exp\Big(\f{\d_{\rmL}^{-1}|\eta-\xi|}{n^2}\Big)\lesssim \exp\left(C\d_{\rmL}^{-1}|\xi-\eta|^{\f13}\right).
\end{align*}

\textbf{Case 5.} Let $j$ and $n$ be such that $t\in \bar{\rmI}_{n,\eta}\cap \bar{\rmI}_{j,\xi}$. If $t_{\rmE(|\xi|^{\f13}),|\xi|}\leq t\leq t_{\rmE(|\eta|^{\f13}),|\eta|}$, we have $|j|\leq \rmE(|\xi|^{\f13})\leq \rmE(|\eta|^{\f13})\leq |n|\approx |j|\approx |\xi|^{\f13}\approx|\eta|^{\f13}$, we write 
\begin{align*}
\rmg(t,\eta)=&\exp\left(-\d_{\rmL}^{-1}\sum_{k=1}^{j-1}\rmF(k,\eta)\right) 
\exp\left(-\d_{\rmL}^{-1}\sum_{k=j}^{\rmE(|\eta|^{\f13})}\rmF(k,\eta)\right)
 \exp\left(-\d_{\rmL}^{-1}\sum_{k=\rmE(|\eta|^{\f13})+1}^{n-1}\tilde{\rmF}(k,\eta)\right)\\
&\times \exp\left(\d_{\rmL}^{-1}\f{|\eta|}{|n|^3}\Big(\arctan\big(t-\f{|\eta|}{|n|}\big)-\arctan\big(\f{|\eta|}{|n|(2|n|-1)}\big)\Big)\right)
\end{align*}
and
\begin{align*}
\rmg(t,\xi)=&\exp\left(-\d_{\rmL}^{-1}\sum_{k=1}^{j-1}\rmF(k,\xi)\right) \exp\left(\d_{\rmL}^{-1}\Big(\arctan\big(t-\f{|\eta|}{|j|}\big)-\arctan\big(\f{|\eta|}{|j|(2|j|-1)}\big)\Big)\right)
\end{align*}
By the fact that
\beno
|j-n|\leq \f{|jt-\xi|+|\eta-\xi|+|\eta-nt|}{t}\lesssim 1+\f{|\eta-\xi||n|}{|\eta|},
\eeno
we have
\begin{align*}
\f{\rmg(t,\xi)}{\rmg(t,\eta)}+\f{\rmg(t,\eta)}{\rmg(t,\xi)}\lesssim \exp\left(\d_{\rmL}^{-1}\sum_{k=1}^{j-1}|\rmF(k,\xi)-\rmF(k,\eta)|\right)\exp(|j-n|)\lesssim \exp(C\d_{\rmL}^{-1}|\xi-\eta|^{\f13}). 
\end{align*}
If $t_{\rmE(|\xi|^{\f13}),|\xi|}\geq t\geq t_{\rmE(|\eta|^{\f13}),|\eta|}$, then $\rmE(|\xi|^{\f13})\leq |j|\leq |n|\leq \rmE(|\eta|^{\f13})$, we write
\begin{align*}
\rmg(t,\eta)=&\exp\left(-\d_{\rmL}^{-1}\sum_{k=1}^{\rmE(|\xi|^{\f13})}\rmF(k,\eta)\right) 
\exp\left(-\d_{\rmL}^{-1}\sum_{k=\rmE(|\xi|^{\f13})+1}^{n-1}\rmF(k,\eta)\right)\\
&\times \exp\left(\d_{\rmL}^{-1}\Big(\arctan\big(t-\f{|\eta|}{|n|}\big)-\arctan\big(\f{|\eta|}{|n|(2|n|-1)}\big)\Big)\right)
\end{align*}
and
\begin{align*}
\rmg(t,\xi)=&
\exp\left(-\d_{\rmL}^{-1}\sum_{k=1}^{\rmE(|\xi|^{\f13})}\rmF(k,\eta)\right)
 \exp\left(-\d_{\rmL}^{-1}\sum_{k=\rmE(|\xi|^{\f13})+1}^{j-1}\tilde{\rmF}(k,\eta)\right)\\
&\times \exp\left(\d_{\rmL}^{-1}\f{|\xi|}{|j|^3}\Big(\arctan\big(t-\f{|\xi|}{|j|}\big)-\arctan\big(\f{|\xi|}{|j|(2|j|-1)}\big)\Big)\right).
\end{align*}
By the fact that
\beno
|j-n|\leq \f{|jt-\xi|+|\eta-\xi|+|\eta-nt|}{t}\lesssim 1+\f{|\eta-\xi||n|}{|\eta|},
\eeno
we have
\begin{align*}
\f{\rmg(t,\xi)}{\rmg(t,\eta)}+\f{\rmg(t,\eta)}{\rmg(t,\xi)}\lesssim \exp\left(\d_{\rmL}^{-1}\sum_{k=1}^{\rmE(|\xi|^{\f13})}|\rmF(k,\xi)-\rmF(k,\eta)|\right)\exp(|j-n|)\lesssim \exp(C\d_{\rmL}^{-1}|\xi-\eta|^{\f13}). 
\end{align*}
Thus we proved the lemma. 
\end{proof}

\begin{lemma}\label{lem:3.5}
For all $t,\eta,\xi,$ we have
\beno
\f{\Theta_{\rmNR}(t,\eta)}{\Theta_{\rmNR}(t,\xi)}\lesssim e^{\mu |\eta-\xi|^{\f13}}. 
\eeno
\end{lemma}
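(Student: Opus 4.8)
The plan is to mimic the structure of the proof of Lemma \ref{lem:g/g} for $\rmg$, since $\Theta$ and $\rmg$ are built by essentially the same backward-in-time recursion over the intervals $\bar{\rmI}_{k,\eta}$, with the arctan contributions for $\rmg$ replaced by the logarithmic/polynomial growth factors $(\frac{k^3}{\eta}[1+b_{k,\eta}|t-\frac{\eta}{k}|])^{C\kappa}$ and $(1+a_{k,\eta}|t-\frac{\eta}{k}|)^{-1-C\kappa}$ for $\Theta$. First I would reduce, exactly as in the proof of \eqref{eq:g/g}, to showing the two-sided bound $e^{-\mu|\eta-\xi|^{1/3}}\lesssim \Theta_{\rmNR}(t,\eta)/\Theta_{\rmNR}(t,\xi)\lesssim e^{\mu|\eta-\xi|^{1/3}}$, and dispose of the easy ranges: if $|\eta|\le 1$ or $|\xi|\le 1$ then one of the weights is $1$ and Lemma \ref{lem:3.1} gives $\Theta_{\rmNR}(t,\eta)^{-1}\lesssim |\eta|^{-\mu/12}e^{(\mu/2)|\eta|^{1/3}}\lesssim e^{\mu|\eta-\xi|^{1/3}}$; if $|\xi|<|\eta|/2$ then $|\xi|,|\eta|\lesssim|\eta-\xi|$ and the same total-growth bound closes it. So we may assume $1\le |\xi|/2\le|\eta|$ (wlog $|\xi|\le|\eta|$), and $|\eta-\xi|\lesssim|\xi|\approx|\eta|$.

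Next I would split into the time regimes as in Lemma \ref{lem:g/g}: $t\ge 2|\eta|$ (both weights $1$, trivial); $t\le\min(t_{\rmE(|\xi|^{1/3}),\xi},t_{\rmE(|\eta|^{1/3}),\eta})$, i.e.\ $t$ before all critical intervals, where $\Theta_{\rmNR}(t,\eta)=\Theta_{\rmNR}(0,\eta)$; and the intermediate regimes where $t\in\bar{\rmI}_{n,\eta}\cap\bar{\rmI}_{j,\xi}$ with $n\approx j$ (using $|k|/|n|\approx_1 1$ as in Lemma \ref{lem:g/g}), plus the transition regimes $2|\xi|\le t\le 2|\eta|$ handled by monotonicity from the adjacent case. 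In the ``before all intervals'' regime, writing $\log\Theta_{\rmNR}(0,\eta)$ as a telescoping sum of the per-interval growth logarithms $c\log(\eta/k^3)$ with $c=1+2C\kappa$ over $k=1,\dots,\rmE(|\eta|^{1/3})$ (this is exactly \eqref{Growth_formula}), the difference of the two sums decomposes into (i) the tail $\sum_{k=\rmE(|\xi|^{1/3})+1}^{\rmE(|\eta|^{1/3})}c\log(\eta/k^3)$, which is $\lesssim |\rmE(|\eta|^{1/3})-\rmE(|\xi|^{1/3})|\cdot\log(\text{bounded})\lesssim|\eta-\xi|^{1/3}$ since each factor $\eta/k^3\in[1,\eta]$ near $k\approx|\eta|^{1/3}$ contributes an $O(1)$ log and the number of terms is $\lesssim 1+|\eta-\xi|^{1/3}$, and (ii) $\sum_{k=1}^{\rmE(|\xi|^{1/3})}c|\log(\eta/k^3)-\log(\xi/k^3)|=\sum_k c|\log(\eta/\xi)|\le c\,\rmE(|\xi|^{1/3})\,\frac{|\eta-\xi|}{|\xi|}\lesssim |\eta-\xi|^{1/3}$ using $|\eta-\xi|\lesssim|\xi|$ and $|\xi|^{1/3}\cdot\frac{|\eta-\xi|}{|\xi|}=\frac{|\eta-\xi|}{|\xi|^{2/3}}\le|\eta-\xi|^{1/3}$. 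This is the direct analogue of \eqref{G_1_Term}, and the per-interval shape estimates from Lemma \ref{Lemma_Growth_1} make the in-interval contributions $\Theta_{\rmR}$ vs.\ $\Theta_{\rmNR}$ comparable up to the factor $\frac{k^3}{\eta}\le 1$, so restricting to $\Theta_{\rmNR}$ loses nothing.

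For the intermediate regimes $\max(t_{\rmE(|\eta|^{1/3}),\eta},t_{\rmE(|\xi|^{1/3}),\xi})$ down toward the present, I would write $\Theta_{\rmNR}(t,\eta)=\exp(-c\,\d_{\rmL}^{0}\sum_{k=1}^{n-1}\log(\eta/k^3))\times(\text{in-interval factor on }\bar{\rmI}_{n,\eta})$ — that is, a product over completed intervals times a partially-completed contribution on the current interval bounded using \eqref{eq:w_NR+}--\eqref{eq:w_NR-} by something like $(\frac{n^3}{\eta}[1+b_{n,\eta}|t-\frac{\eta}{n}|])^{\pm C\kappa}$ which lies in a range $[(\eta/n^3)^{-C\kappa},1]$ — and similarly for $\xi$ with index $j$. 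The ratio then splits exactly as in Cases 4--6 of Lemma \ref{lem:g/g}: the completed-interval sums differ by a common part $\sum_{k=1}^{\min(j,n)-1}c|\log(\eta/k^3)-\log(\xi/k^3)|\lesssim|\eta-\xi|^{1/3}$ plus a ``gap'' sum $\sum_{k=\min(j,n)}^{\max(j,n)-1}c\log(\max(\eta,\xi)/k^3)\lesssim|j-n|\cdot O(1)$, and one controls $|j-n|\lesssim 1+\frac{|\eta-\xi||n|}{|\eta|}\lesssim 1+|\eta-\xi|^{1/3}$ by the same three-term inequality $|j-n|\le\frac{|jt-\xi|+|\eta-\xi|+|\eta-nt|}{t}$ used repeatedly in Lemma \ref{lem:g/g} (valid since $t\in\bar{\rmI}_{n,\eta}\cap\bar{\rmI}_{j,\xi}$ forces $|\eta-nt|\lesssim t$ and $|\xi-jt|\lesssim t$). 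Finally the in-interval factors are bounded by constants uniformly in the relevant regime (or by $(\eta/n^3)^{C\kappa}$, an $O(1)$ log since $n\approx\eta^{1/3}$ there), contributing $O(1)$ to the exponent. The main obstacle I expect is purely bookkeeping: carefully matching up which of $j,n$ is larger in each of the $k$-ranges $[1,\rmE(\cdot^{1/3})]$ versus the fractional-index regimes, and keeping the per-interval log-growth constant $c=1+2C\kappa$ aligned with the claimed $\mu=6(1+2C\kappa)$ so that all the accumulated $O(|\eta-\xi|^{1/3})$ contributions fit under a single $\mu|\eta-\xi|^{1/3}$ — but conceptually nothing new beyond the $\rmg$-proof is required, since $\Theta_{\rmNR}$ is if anything simpler (polynomial rather than arctan factors, and only $k\le\rmE(|\eta|^{1/3})$ matters).
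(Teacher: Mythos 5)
Your high-level strategy — reduce to the nontrivial time window, telescope the logarithm of $\Theta_{\rmNR}$ over completed critical intervals, control the common and tail parts of the resulting sum, and control $|j-n|$ via the three-term inequality — is exactly the structure of the paper's proof, which explicitly instructs the reader to ``follow the proof of Lemma~\ref{lem:g/g}.'' Your reductions to $1\lesssim|\xi|\approx|\eta|$, your handling of the constant regime via the telescoping sum (the analogue of \eqref{G_1_Term}), and your bound $|j-n|\lesssim 1+|\eta-\xi|^{1/3}$ are all correct and match the paper.

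There is, however, a genuine gap in your treatment of the in-interval factor, and it is precisely where $\Theta$ and $\rmg$ diverge. You assert that ``the in-interval factors are bounded by constants uniformly in the relevant regime (or by $(\eta/n^3)^{C\kappa}$, an $O(1)$ log since $n\approx\eta^{1/3}$ there), contributing $O(1)$ to the exponent.'' This is true for $\rmg$, where the per-interval growth is an $\arctan$, hence uniformly bounded by $\pi\d_{\rmL}^{-1}$, and it is the reason you can say ``conceptually nothing new beyond the $\rmg$-proof is required.'' But it is \emph{false} for $\Theta_{\rmNR}$: on $\bar{\rmI}_{n,\eta}$, the in-interval factor ranges over $[(n^3/\eta)^{1+2C\kappa},1]$ (you also mis-state the exponent as $C\kappa$). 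For $n$ small relative to $|\eta|^{1/3}$ — which is generic in the intermediate regime as $t$ increases toward $2|\xi|$ — the quantity $(\eta/n^3)^{1+2C\kappa}$ grows like a large power of $\eta$ and is nowhere near $O(1)$. Its logarithm is $\gtrsim\log\eta$, not $O(1)$, so it cannot simply be ``absorbed into the exponent'' as a constant.

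The mechanism the paper uses to control this — and which your proposal does not name — is the alternative structure of Lemma~\ref{Lem:compare}. When the in-interval factors for $\eta$ and $\xi$ are \emph{asymmetric} (i.e.\ $t$ lies on opposite sides of $\eta/n$ and $\xi/n$, or $t$ is resonant for one frequency and not the other, or $j\neq n$), Lemma~\ref{Lem:compare} forces $|\eta-\xi|\gtrsim|\eta|/n^2$. The paper then bounds the dangerous factor by $(\eta/n^3)^{1+2C\kappa}\lesssim((1+|\eta-\xi|)/n)^{1+2C\kappa}\lesssim e^{c|\eta-\xi|^{1/3}}$, where the first step is exactly where $|\eta-\xi|\gtrsim\eta/n^2$ is used. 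When the factors are \emph{symmetric} ($j=n$ and $t$ on the same side of both critical times), the two in-interval factors are comparable and their ratio is controlled by $(\eta/\xi)^{O(1)}\lesssim e^{c|\eta-\xi|^{1/3}}$; the paper defers the details of this sub-case to the corresponding lemma in \cite{Bedrossian_2015}. Without invoking this dichotomy, your argument cannot close: the large in-interval factor will overwhelm the bound whenever $n\ll|\eta|^{1/3}$, which is most of the time. In short, the $\rmg$-analogy is a good scaffold, but $\Theta$ is not ``if anything simpler'' — the per-interval growth is polynomial rather than bounded, and absorbing it requires coupling the case analysis of Lemma~\ref{Lem:compare} to the frequency gap, which is the actual content of the paper's proof.
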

\begin{proof}
One may follow the proof of {\it Lemma 3.5} or the proof of Lemma \ref{lem:g/g} and deduce to prove for $\eta,\xi\geq 0$, $\eta/2\leq \xi\leq \eta$ and $t\in \max(t_{\rmE(|\eta|^{\f13}),\eta},t_{\rmE(|\xi|^{\f13}),\xi})\leq t\leq 2|\xi|$, it holds that
\ben\label{eq:up-loww/w}
e^{-\mu |\eta-\xi|^{\f13}}\lesssim \f{\Theta_{\rmNR}(t,\eta)}{\Theta_{\rmNR}(t,\xi)}\lesssim e^{\mu |\eta-\xi|^{\f13}}. 
\een
Let $j$ and $n$ be such that $t\in \bar{I}_{n,\eta}\cap \bar{I}_{j,\xi}$. Then we have $n\approx j\leq n$. We consider the following cases:

Case $j=n$ and $t\in \rm{I}_{n,\eta}^{\rmR}\cap \rm{I}_{j,\xi}^{\rmR}$ or $t\in \rm{I}_{n,\eta}^{\rmL}\cap {I}_{j,\xi}^{\rmL}$ or $t\in \rm{I}_{n,\eta}^{\rmL}\cap \rm{I}_{j,\xi}^{\rmR}$: We can use the same argument in the proof of {\it Lemma 3.5} to prove \eqref{eq:up-loww/w} and we omit the details here.

Case $j=n$, $|t-\f{\eta}{n}|\gtrsim \f{|\eta|}{n^3}$ and $|t-\f{\xi}{j}|\gtrsim \f{|\xi|}{j^3}$: It holds that 

for $t\geq \f{\eta}{n}$, $\Theta_{\rmNR}(t,\eta)\approx \Theta_{\rmNR}(t^+_{n,\eta},\eta)$; 

for $t\geq \f{\xi}{n}$, $\Theta_{\rmNR}(t,\xi)\approx \Theta_{\rmNR}(t^+_{n,\xi},\xi)$;

for $t\leq \f{\eta}{n}$, 
$
\Theta_{\rmNR}(t,\eta) \approx \left(\f{\eta}{n^3}\right)^{-1-2C\kappa}\Theta_{\rmNR}(t^+_{n,\eta},\eta)\approx \Theta_{\rmNR}(t^-_{n,\eta},\eta);
$

for $t\leq \f{\xi}{n}$,
$
\Theta_{\rmNR}(t,\xi) \approx \left(\f{\xi}{n^3}\right)^{-1-2C\kappa}\Theta_{\rmNR}(t^+_{n,\xi},\xi)\approx \Theta_{\rmNR}(t^-_{n,\xi},\xi). 
$

Thus if $t\geq \f{\eta}{n}$ and $t\leq \f{\xi}{n}$ or $t\leq \f{\eta}{n}$ and $t\geq \f{\xi}{n}$, then we have $|\eta-\xi|\geq \f{|\eta|}{n^2}$ and 
\begin{align*}
\f{\Theta_{\rmNR}(t,\xi)}{\Theta_{\rmNR}(t,\eta)}
&\lesssim \left(\f{\eta}{\xi}\right)^{c(n-1)+C\kappa}\left(\f{\eta}{n^3}\right)^{1+2C\kappa}\lesssim \left(1+\f{\eta-\xi}{\xi}\right)^{c|\eta|^{\f13}}\left(\f{1+|\eta-\xi|}{n}\right)^{1+2C\kappa}\\
&\lesssim e^{c|\eta-\xi|^{\f13}} 
\end{align*}
and if $t\geq \f{\eta}{n}$ and $t\geq \f{\xi}{n}$ or $t\leq \f{\eta}{n}$ and $t\leq \f{\xi}{n}$, then we have 
\begin{align*}
\f{\Theta_{\rmNR}(t,\xi)}{\Theta_{\rmNR}(t,\eta)}
\lesssim \left(\f{\eta}{\xi}\right)^{c(n-1)+C\kappa}\lesssim e^{c|\eta-\xi|^{\f13}}.
\end{align*}

Case $j=n$, $|\eta-\xi|\gtrsim \f{|\eta|}{n^2}$, similarly we have 
\begin{align*}
\f{\Theta_{\rmNR}(t,\xi)}{\Theta_{\rmNR}(t,\eta)}
&\lesssim \left(\f{\eta}{\xi}\right)^{c(n-1)+C\kappa}\left(\f{\eta}{n^3}\right)^{1+2C\kappa}\lesssim e^{c|\eta-\xi|^{\f13}}. 
\end{align*}

Case $j=n-1$: If $t\in \bar{\rmI}_{n,\eta}^{\rmL}$ then $t_{n-1,\xi}\leq \f{\eta}{n}$. If $t\in \bar{\rmI}_{n-1,\xi}^{\rmR}$, then $\f{\xi}{n-1}<t_{n-1,\eta}$. In either one of these case, we deduce that $\f{|\xi|}{n^2}\lesssim \f{\eta-\xi}{n}$ and thus
\begin{align}\label{eq:xi-eta}
\f{\Theta_{\rmNR}(t,\xi)}{\Theta_{\rmNR}(t,\eta)}
&\lesssim \left(\f{\eta}{\xi}\right)^{c(n-1)+C\kappa}\left(\f{\eta}{n^3}\right)^{1+2C\kappa}\lesssim e^{c|\eta-\xi|^{\f13}}. 
\end{align}
Next assume that $t\in \bar{\rmI}_{n,\eta}^{\rmR}\cap \bar{\rmI}_{n-1,\xi}^{\rmL}$ there are only two possibilities: One is $\f{|\xi|}{n^2}\lesssim \f{\eta-\xi}{n}$ which we can conclude in a similar way to the above \eqref{eq:xi-eta}; The other one is $|t-\f{\eta}{n}|\gtrsim \f{\eta}{n^2}\geq \f{\eta}{n^3}$ and $|t-\f{\xi}{j}|\gtrsim \f{\xi}{j^2}\geq \f{\xi}{j^3}$, which gives us that
$\Theta_{\rmNR}(t,\eta)\approx \Theta_{\rmNR}(t^+_{n,\eta},\eta)$ and $\Theta_{\rmR}(t,\xi)\approx \Theta_{\rmNR}(t^{-}_{n-1,\xi},\xi)=\Theta_{\rmNR}(t^{+}_{n,\xi},\xi)$. Thus we have
\begin{align*}
\f{\Theta_{\rmNR}(t,\xi)}{\Theta_{\rmNR}(t,\eta)}
\lesssim \left(\f{\eta}{\xi}\right)^{c(n-1)+C\kappa}\lesssim e^{c|\eta-\xi|^{\f13}}.
\end{align*}

Case $j<n-1$: In this case, it is easy to see that $\f{\xi}{n^2}\lesssim \f{\eta-\xi}{n}$ and we can conclude in a similar way to \eqref{eq:xi-eta}. 
\end{proof}

\begin{lemma}\label{lem: g/g-1-1}
Let $t\leq \f12\min\{|\xi|^{\f13},|\eta|^{\f13}\}$. Then
\beno
\left|\f{\cM_k(t,\eta)}{\cM_l(t,\xi)}-1\right|\lesssim \f{\langle k-l,\xi-\eta\rangle }{(|k|+|l|+|\eta|+|\xi|)^{\f23}}e^{C\d_{L}^{-1}|k-l,\xi-\eta|^{\f13}}
\eeno
\end{lemma}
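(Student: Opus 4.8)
The plan is to freeze the time variable and reduce the inequality to a ``discrete derivative'' bound for a sum of two Gevrey-$\f13$ type weights; the only genuinely new input is a sharpened form of Lemma \ref{lem:g/g}. First I would note that, by the construction of $\rmg$ in Section \ref{Section_3}, the function $\rmg(\cdot,\eta)$ is constant on $\big[0,\,\inf\bar{\rmI}_{\rmE(|\eta|^{\f23}),\eta}\big]$, and $\inf\bar{\rmI}_{\rmE(|\eta|^{\f23}),\eta}=\f{2|\eta|}{2\rmE(|\eta|^{\f23})+1}\geq\f23|\eta|^{\f13}$ for $|\eta|\geq1$ (while $\rmg(\cdot,\eta)\equiv1$ for $|\eta|\leq1$), and similarly in $\xi$; hence the hypothesis $t\leq\f12\min\{|\xi|^{\f13},|\eta|^{\f13}\}$ gives $\rmg(t,\eta)=\rmg(0,\eta)$ and $\rmg(t,\xi)=\rmg(0,\xi)$, so with $\Phi(\eta)=e^{4\pi\d_{\rmL}^{-1}|\eta|^{\f13}}/\rmg(0,\eta)$ and $\Psi(k)=e^{4\pi\d_{\rmL}^{-1}|k|^{\f13}}$ it suffices to bound
\beno
\left|\f{\Phi(\eta)+\Psi(k)}{\Phi(\xi)+\Psi(l)}-1\right|=\f{\big|\big(\Phi(\eta)-\Phi(\xi)\big)+\big(\Psi(k)-\Psi(l)\big)\big|}{\Phi(\xi)+\Psi(l)}.
\eeno
I would also record, for later use, the coarse ratio bound $\f{\cM_k(t,\eta)}{\cM_l(t,\xi)}+\f{\cM_l(t,\xi)}{\cM_k(t,\eta)}\lesssim e^{C\d_{\rmL}^{-1}|k-l,\xi-\eta|^{\f13}}$, which follows from $\f{\Phi(\eta)}{\Phi(\xi)+\Psi(l)}\leq\f{\Phi(\eta)}{\Phi(\xi)}=e^{4\pi\d_{\rmL}^{-1}(|\eta|^{\f13}-|\xi|^{\f13})}\f{\rmg(0,\xi)}{\rmg(0,\eta)}$ together with Lemma \ref{lem:g/g} and $\big||\eta|^{\f13}-|\xi|^{\f13}\big|\leq|\eta-\xi|^{\f13}$, and $\f{\Psi(k)}{\Phi(\xi)+\Psi(l)}\leq\f{\Psi(k)}{\Psi(l)}\leq e^{4\pi\d_{\rmL}^{-1}|k-l|^{\f13}}$ (and symmetrically).

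For the main estimate set $N=|k|+|l|+|\eta|+|\xi|$. If $|k-l|+|\xi-\eta|\geq N/C_0$ for a fixed large constant $C_0$, then the right-hand side of the claimed inequality is $\gtrsim N^{\f13}e^{C\d_{\rmL}^{-1}|k-l,\xi-\eta|^{\f13}}$, which dominates $1+\f{\cM_k(t,\eta)}{\cM_l(t,\xi)}\lesssim e^{C'\d_{\rmL}^{-1}|k-l,\xi-\eta|^{\f13}}$ after enlarging $C$. In the complementary regime $|k-l|+|\xi-\eta|<N/C_0$ the nonzero members of $\{|k|,|l|\}$ are mutually comparable and so are those of $\{|\eta|,|\xi|\}$, and I would treat the $\Psi$- and $\Phi$-differences separately. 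Using $\Phi(\xi)+\Psi(l)\geq e^{4\pi\d_{\rmL}^{-1}\max(|\xi|,|l|)^{\f13}}$ and $\max(|k|,|l|)\leq\max(|\xi|,|l|)+|k-l|$ one gets $\f{|\Psi(k)-\Psi(l)|}{\Phi(\xi)+\Psi(l)}\lesssim_{\d_{\rmL}}\big||k|^{\f13}-|l|^{\f13}\big|\,e^{4\pi\d_{\rmL}^{-1}|k-l|^{\f13}}$; this is the desired bound when $|k|+|l|\gtrsim N$ (via $\big||k|^{\f13}-|l|^{\f13}\big|\lesssim\f{|k-l|}{(|k|+|l|)^{\f23}}$), whereas if $|k|+|l|\ll N$ then $|\xi|\approx N$ and the same quantity is $\leq2e^{4\pi\d_{\rmL}^{-1}(\max(|k|,|l|)^{\f13}-|\xi|^{\f13})}\lesssim e^{-c\d_{\rmL}^{-1}N^{\f13}}\lesssim_{\d_{\rmL}}N^{-\f23}$ (choosing $C_0$ large). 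Similarly, when $|\eta|+|\xi|\gtrsim N$ one writes $\f{|\Phi(\eta)-\Phi(\xi)|}{\Phi(\xi)+\Psi(l)}\leq\big|e^{4\pi\d_{\rmL}^{-1}(|\eta|^{\f13}-|\xi|^{\f13})}-1\big|\f{\rmg(0,\xi)}{\rmg(0,\eta)}+\big|\f{\rmg(0,\xi)}{\rmg(0,\eta)}-1\big|$ and combines $\big|e^{4\pi\d_{\rmL}^{-1}(|\eta|^{\f13}-|\xi|^{\f13})}-1\big|\lesssim_{\d_{\rmL}}\f{|\eta-\xi|}{(|\eta|+|\xi|)^{\f23}}e^{C\d_{\rmL}^{-1}|\eta-\xi|^{\f13}}$, Lemma \ref{lem:g/g}, and the refined estimate $\big|\f{\rmg(0,\xi)}{\rmg(0,\eta)}-1\big|\lesssim_{\d_{\rmL}}\f{|\eta-\xi|}{(|\eta|+|\xi|)^{\f23}}e^{C\d_{\rmL}^{-1}|\eta-\xi|^{\f13}}$ (valid for $\f12|\eta|\le|\xi|\le2|\eta|$) to conclude; and when $|\eta|+|\xi|\ll N$ one has $|l|\approx N$, so $\Phi(\xi)+\Psi(l)\gtrsim e^{c\d_{\rmL}^{-1}N^{\f13}}$ while $\Phi(\eta)+\Phi(\xi)\lesssim e^{7\pi\d_{\rmL}^{-1}\max(|\eta|,|\xi|)^{\f13}}$ by \eqref{Estimate_g}, again $\lesssim_{\d_{\rmL}}N^{-\f23}e^{c\d_{\rmL}^{-1}N^{\f13}}$ for $C_0$ large. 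Adding the $\Psi$- and $\Phi$-contributions gives the lemma.

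The delicate point, and the only one requiring real work, is the refined $\rmg$-estimate invoked above: Lemma \ref{lem:g/g} furnishes only the ratio bound $\rmg(0,\xi)/\rmg(0,\eta)\lesssim e^{C\d_{\rmL}^{-1}|\eta-\xi|^{\f13}}$, which does not tend to $1$ as $\xi\to\eta$, while here the decay $\f{|\eta-\xi|}{(|\eta|+|\xi|)^{\f23}}$ is genuinely needed (without it the lemma would fail, e.g. for $k=l$ and $\eta,\xi$ close and large). I would obtain it by proving that $\eta\mapsto\log\rmg(0,\eta)$ is globally Lipschitz with constant $\lesssim_{\d_{\rmL}}|\eta|^{-\f23}$: differentiating the $\arctan$-sum representation from Lemma \ref{lem:3.1-1}, the dominant contribution is $\sum_{\rmE(|\eta|^{\f13})<k\leq\rmE(|\eta|^{\f23})}\f1{k^3}\rmF(k,\eta)\lesssim|\eta|^{-\f23}$, and one must check that $\log\rmg(0,\cdot)$ is continuous across the jumps of $\rmE(|\eta|^{\f13})$ and $\rmE(|\eta|^{\f23})$ — at each such jump an $\rmF$-summand is replaced by a $\tilde{\rmF}$-summand with the same limiting value, so no actual jump occurs — and then reduce the range $|\eta-\xi|\gtrsim\min(|\eta|,|\xi|)$ to the coarse ratio bound recorded above. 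This is a quantitative sharpening of the case analysis already carried out in the proof of Lemma \ref{lem:g/g}.
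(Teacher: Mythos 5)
Your overall strategy is the same as the paper's: freeze $t$ so that $\rmg(t,\cdot)=\rmg(0,\cdot)$, reduce the ratio to the ``discrete difference quotient'' of $\Phi+\Psi$, dispose of the regime $|k-l|+|\xi-\eta|\gtrsim N$ by the coarse ratio bound, and in the complementary regime handle the $\Psi$- and $\Phi$-parts via the three subcases $|k,l|\approx|\eta,\xi|$, $|\eta,\xi|\gg|k,l|$, $|\eta,\xi|\ll|k,l|$; the only new input is a sharpened control of $|\rmg(0,\xi)/\rmg(0,\eta)-1|$, which the paper derives by writing the ratio as a product of exponentials and invoking $|e^ae^b-1|\lesssim(|a|+|b|)e^{a+b}$ together with mean-value bounds on $\rmF$ and $\tilde{\rmF}$. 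Your Lipschitz-by-differentiation route to that sharpened control is essentially the same computation in disguise, and your identification of the $\tilde{\rmF}$-sum over $\rmE(|\eta|^{1/3})<k\le\rmE(|\eta|^{2/3})$ as the dominant piece, of order $|\eta|^{-2/3}$, is correct.

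The genuine flaw is in the continuity check. Your claim that ``at each such jump an $\rmF$-summand is replaced by a $\tilde{\rmF}$-summand with the same limiting value'' is correct only at the points $\eta=(N+1)^3$ where $\rmE(|\eta|^{1/3})$ increases (there $\tilde{\rmF}(N+1,\eta)=\tfrac{|\eta|}{(N+1)^3}\rmF(N+1,\eta)\to\rmF(N+1,\eta)$, so the two branches agree). At the points $\eta=(M+1)^{3/2}$ where $\rmE(|\eta|^{2/3})$ increases, nothing is replaced: a \emph{new} summand $\tilde{\rmF}(M+1,\eta)$ appears out of nowhere, and its boundary value is $\tilde{\rmF}(M+1,(M+1)^{3/2})=(M+1)^{-3/2}\rmF(M+1,(M+1)^{3/2})\approx(M+1)^{-3/2}\cdot(M+1)^{-1/2}\approx|\eta|^{-4/3}\neq0$. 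So $\log\rmg(0,\cdot)$ genuinely has jumps of size $\approx\d_{\rmL}^{-1}|\eta|^{-4/3}$, and the estimate $\big|\tfrac{\rmg(0,\xi)}{\rmg(0,\eta)}-1\big|\lesssim\tfrac{|\eta-\xi|}{(|\eta|+|\xi|)^{2/3}}e^{C\d_{\rmL}^{-1}|\eta-\xi|^{1/3}}$ that you invoke is \emph{false}: take $k=l$ and $\eta,\xi$ squeezing the jump point, so that the left side stays $\gtrsim|\eta|^{-4/3}$ while the right side tends to $0$. This does not kill the lemma, because the lemma's numerator is $\langle k-l,\xi-\eta\rangle\ge1$ rather than $|k-l,\xi-\eta|$, and the jump $|\eta|^{-4/3}\ll|\eta|^{-2/3}$ is comfortably dominated by $\langle\eta-\xi\rangle/(|\eta|+|\xi|)^{2/3}$. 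You should either (a) state and prove the refined $\rmg$-estimate with $\langle\eta-\xi\rangle$ in the numerator, so that the absolutely-continuous part is controlled by your $|\eta|^{-2/3}$ Lipschitz bound and the jump part is controlled by the crude $\sum\tilde{\rmF}(k,\eta)\lesssim|\eta|^{-4/3}$ per transition index, multiplied by the number $\lesssim1+\tfrac{|\eta-\xi|}{|\eta|^{1/3}}$ of transitioning indices; or (b) keep the $|\eta-\xi|$-version but explicitly restrict to $|\eta-\xi|\gtrsim1$, falling back on the coarse Lemma \ref{lem:g/g} for $|\eta-\xi|<1$. (The paper's own display just before \eqref{g_0_g_0} has the same imprecision, in the form of the unjustified bound $\tfrac{|\rmE(|\eta|^{2/3})-\rmE(|\xi|^{2/3})|}{|\xi|}\lesssim\tfrac{|\xi-\eta|}{|\xi|^{2/3}}$, so you are not alone — but it needs to be fixed rather than asserted.)
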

\begin{proof}
By Lemma \ref{lem:g/g}, we get that
\begin{equation}\label{Eq_M/M}
\f{\cM_k(t,\eta)}{\cM_l(t,\xi)}\lesssim \f{\rmg(t,\xi)}{\rmg(t,\eta)}e^{C\d_{\rmL}^{-1}|\xi-\eta|^{\f13}}+e^{C\d_{\rmL}^{-1}|k-l|^{\f13}}\lesssim e^{C\d_{\rmL}^{-1}|k-l,\xi-\eta|^{\f13}}.
\end{equation}
If $(|k|+|l|+|\eta|+|\xi|)^{\f23}\lesssim \max\{1,|k-l|+|\xi-\eta|\}$, we get the lemma. 

From now on, we assume that 
\beno
|k-l|+|\xi-\eta|\leq \f{1}{100}\Big(|k|^{\f23}+|l|^{\f23}+|\eta|^{\f23}+|\xi|^{\f23}\Big)\leq \f{1}{100}\Big(|k|+|l|+|\eta|+|\xi|\Big). 
\eeno
Case 1: $\f{1}{10}(|k|+|l|)\leq |\eta|+|\xi|\leq 10(|k|+|l|)$: In this case, we have
\beno
|k-l,\xi-\eta|\lesssim |k|\approx |l|\approx |\xi|\approx |\eta|,
\eeno
and
\begin{align*}
\left|\f{\cM_k(t,\eta)}{\cM_l(t,\xi)}-1\right|
&\leq \f{\left|\f{e^{4\pi\d_{\rmL}^{-1}|\eta|^{\f13}}}{\rmg(t,\eta)}-\f{e^{4\pi\d_{\rmL}^{-1}|\xi|^{\f13}}}{\rmg(t,\xi)}\right|}{\f{e^{4\pi\d_{\rmL}^{-1}|\xi|^{\f13}}}{\rmg(t,\xi)}+e^{4\pi\d_{\rmL}^{-1} |l|^{\f13}}}
+\f{\left|e^{4\pi\d_{\rmL}^{-1}|k|^{\f13}}-e^{4\pi\d_{\rmL}^{-1}|l|^{\f13}}\right|}{\f{e^{4\pi\d_{\rmL}^{-1}|\xi|^{\f13}}}{\rmg(t,\xi)}+e^{4\pi\d_{\rmL}^{-1} |l|^{\f13}}}\\
&\leq \f{\rmg(t,\xi)}{\rmg(t,\eta)}\left||\xi|^{\f13}-|\eta|^{\f13}\right|e^{4\pi\d_{\rmL}^{-1}|\eta-\xi|^{\f13}}+\left|\f{\rmg(t,\xi)}{\rmg(t,\eta)}-1\right|
+\left||k|^{\f13}-|l|^{\f13}\right|e^{4\pi\d_{\rmL}^{-1}|k-l|^{\f13}}\\
&\lesssim \f{|\xi-\eta|}{|\xi|^{\f23}+|\eta|^{\f23}}e^{C\d_{\rmL}^{-1}|\xi-\eta|^{\f13}}
+\left|\f{\rmg(0,\xi)}{\rmg(0,\eta)}-1\right|
+\f{|l-k|}{|l|^{\f23}+|k|^{\f23}}e^{4\pi\d_{\rmL}^{-1}|l-k|^{\f13}}. 
\end{align*}
Recall that \eqref{eq:g(k,eta)/g(k,eta)1} and \eqref{eq:g(k,eta)/g(k,eta)2}, we have 

\beno
\begin{aligned}
\f{1}{\rmg(0,\eta)}
&=\,\exp\left(\d_{\rmL}^{-1}\sum_{\rmE(|\eta|^{\f13})+1}^{\rmE(|\eta|^{\f23})}\tilde{\rmF}(k,\eta)\right)
 \exp\left(\d_{\rmL}^{-1}\sum_{k=1}^{\rmE(|\eta|^{\f13})}\rmF(k,\eta)\right).  
\end{aligned}
\eeno
and 
\beno
\begin{aligned}
\f{1}{\rmg(0,\xi)}
&=\,\exp\left(\d_{\rmL}^{-1}\sum_{\rmE(|\xi|^{\f13})+1}^{\rmE(|\xi|^{\f23})}\tilde{\rmF}(k,\xi)\right)
 \exp\left(\d_{\rmL}^{-1}\sum_{k=1}^{\rmE(|\xi|^{\f13})}\rmF(k,\xi)\right).
\end{aligned}
\eeno
This gives (if $|\xi|\leq |\eta|$)
\begin{align*}
\f{\rmg(0,\xi)}{\rmg(0,\eta)}=&\,\exp\left(\d_{\rmL}^{-1}\sum_{k=1}^{\rmE(|\xi|^{\f13})}(\rmF(k,\eta)-\rmF(k,\xi))\right)\exp\left(\d_{\rmL}^{-1}\sum_{k=\rmE(|\xi|^{\f13})+1}^{\rmE(|\eta|^{\f13})}\rmF(k,\eta)\right)\\
&\times \exp\left(\d_{\rmL}^{-1}\sum_{\rmE(|\eta|^{\f13})+1}^{\rmE(|\xi|^{\f23})}(\tilde{\rmF}(k,\eta)-\tilde{\rmF}(k,\xi))\right)\exp\left(-\d_{\rmL}^{-1}\sum_{\rmE(|\xi|^{\f13})+1}^{\rmE(|\eta|^{\f13})}\tilde{\rmF}(k,\xi)\right)\\ 
&\times \exp\left(\d_{\rmL}^{-1}\sum_{\rmE(|\xi|^{\f23})+1}^{\rmE(|\eta|^{\f23})}\tilde{\rmF}(k,\eta)\right)\\
\f{\rmg(0,\eta)}{\rmg(0,\xi)}=&\,\exp\left(-\d_{\rmL}^{-1}\sum_{k=1}^{\rmE(|\xi|^{\f13})}(\rmF(k,\eta)-\rmF(k,\xi))\right)\exp\left(-\d_{\rmL}^{-1}\sum_{k=\rmE(|\xi|^{\f13})+1}^{\rmE(|\eta|^{\f13})}\rmF(k,\eta)\right)\\
&\times \exp\left(-\d_{\rmL}^{-1}\sum_{\rmE(|\eta|^{\f13})+1}^{\rmE(|\xi|^{\f23})}(\tilde{\rmF}(k,\eta)-\tilde{\rmF}(k,\xi))\right)\exp\left(\d_{\rmL}^{-1}\sum_{\rmE(|\xi|^{\f13})+1}^{\rmE(|\eta|^{\f13})}\tilde{\rmF}(k,\xi)\right)\\ 
&\times \exp\left(-\d_{\rmL}^{-1}\sum_{\rmE(|\xi|^{\f23})+1}^{\rmE(|\eta|^{\f23})}\tilde{\rmF}(k,\eta)\right)
\end{align*}
By using the fact that $|e^ae^b-1|\lesssim (|a|+|b|)e^{a+b}$, we have 
\begin{align*}
&\left|\f{\rmg(0,\xi)}{\rmg(0,\eta)}-1\right|+\left|\f{\rmg(0,\eta)}{\rmg(0,\xi)}-1\right|\\
&\lesssim \bigg(\d_{\rmL}^{-1}\sum_{k=1}^{\rmE(|\xi|^{\f13})}|\rmF(k,\eta)-\rmF(k,\xi)|+\d_{\rmL}^{-1}\sum_{\rmE(|\eta|^{\f13})+1}^{\rmE(|\xi|^{\f23})}|\tilde{\rmF}(k,\eta)-\tilde{\rmF}(k,\xi)|\\
&\quad\quad+\sum_{\min\{\rmE(|\eta|^{\f13}),\rmE(|\xi|^{\f13})\}}^{\max\{\rmE(|\eta|^{\f13}),\rmE(|\xi|^{\f13})\}}1
+\sum_{\min\{\rmE(|\eta|^{\f23}),\rmE(|\xi|^{\f23})\}}^{\max\{\rmE(|\eta|^{\f23}),\rmE(|\xi|^{\f23})\}}\f{\max\{|\xi|,|\eta|\}}{|k|^3}\bigg)
\max\left\{\f{\rmg(0,\xi)}{\rmg(0,\eta)},\f{\rmg(0,\eta)}{\rmg(0,\xi)}\right\}.  
\end{align*}
 Now applying Lemma \ref{lem:g/g}, we deduce that  
\begin{align*}
&\left|\f{\rmg(0,\xi)}{\rmg(0,\eta)}-1\right|+\left|\f{\rmg(0,\eta)}{\rmg(0,\xi)}-1\right|
\\
&\lesssim \bigg(\sum_{k=1}^{\rmE(|\xi|^{\f13})}|\rmF(k,\eta)-\rmF(k,\xi)|+\sum_{\rmE(|\eta|^{\f13})+1}^{\rmE(|\xi|^{\f23})}|\tilde{\rmF}(k,\eta)-\tilde{\rmF}(k,\xi)|\\
&\quad\quad+|\rmE(|\eta|^{\f13})-\rmE(|\xi|^{\f13})|
+\f{|\rmE(|\eta|^{\f23})-\rmE(|\xi|^{\f23})|}{|\xi|}\bigg)e^{C\d_{\rmL}^{-1}|\xi-\eta|^{\f13}}.
\end{align*}

We have for $|\xi|\approx |\eta|$ and $1\leq |k|\leq \min\{\rmE(|\eta|^{\f13}),\rmE(|\xi|^{\f13})\}$ that  
\begin{equation}\label{Ineq_F_Mean_Value}
|\rmF(k,\eta)-\rmF(k,\xi)|\lesssim \f{|\xi-\eta|}{k^2+|\xi|^2},
\end{equation}
and for $\max\{\rmE(|\eta|^{\f13}),\rmE(|\xi|^{\f13})\} \leq |k|\leq \min\{\rmE(|\eta|^{\f23}),\rmE(|\xi|^{\f23})\}$ that
\begin{equation}\label{F_tilde_Mean_Value}
|\tilde{\rmF}(k,\eta)-\tilde{\rmF}(k,\xi)|\lesssim \f{\eta}{k^3}\f{|\xi-\eta|}{k^2+|\xi|^2}+\f{|\eta-\xi|}{k^3}\lesssim \f{|\eta-\xi|}{k^3},
\end{equation}
which gives us that
\begin{equation}\label{g_0_g_0}
\left|\f{\rmg(0,\eta)}{\rmg(0,\xi)}-1\right|+\left|\f{\rmg(0,\xi)}{\rmg(0,\eta)}-1\right|\lesssim \f{|\xi-\eta|}{|\xi|^{\f23}+|\eta|^{\f23}}e^{C\d_{\rmL}^{-1}|\xi-\eta|^{\f13}}
\end{equation}
Case 2: $|\eta|+|\xi|\geq 10(|k|+|l|)$: We have $|\xi|\geq 4(|k|+|l|)$ and
\begin{align*}
\left|\f{\cM_k(t,\eta)}{\cM_l(t,\xi)}-1\right|
&\leq \f{\left|\f{e^{4\pi\d_{\rmL}^{-1}|\eta|^{\f13}}}{\rmg(t,\eta)}-\f{e^{4\pi\d_{\rmL}^{-1}|\xi|^{\f13}}}{\rmg(t,\xi)}\right|}{\f{e^{4\pi\d_{\rmL}^{-1}|\xi|^{\f13}}}{\rmg(t,\xi)}+e^{4\pi \d_{\rmL}^{-1}|l|^{\f13}}}
+\f{\left|e^{4\pi\d_{\rmL}^{-1}|k|^{\f13}}-e^{4\pi\d_{\rmL}^{-1}|l|^{\f13}}\right|}{\f{e^{4\pi\d_{\rmL}^{-1}|\xi|^{\f13}}}{\rmg(t,\xi)}+e^{4\pi\d_{\rmL}^{-1} |l|^{\f13}}}\\
&\leq \f{\rmg(t,\xi)}{\rmg(t,\eta)}\left||\xi|^{\f13}-|\eta|^{\f13}\right|e^{4\pi\d_{\rmL}|\eta-\xi|^{\f13}}+\left|\f{\rmg(t,\xi)}{\rmg(t,\eta)}-1\right|
+\f{\left||k|^{\f13}-|l|^{\f13}\right|e^{4\pi\d_{\rmL}^{-1}|k-l|^{\f13}}}{e^{4\pi\d_{\rmL}^{-1}|\xi|^{\f13}-4\pi \d_{\rmL}^{-1}|l|^{\f13}}}\\
&\lesssim \f{|\xi-\eta|}{|\xi|^{\f23}+|\eta|^{\f23}}e^{C\d_{\rmL}^{-1}|\xi-\eta|^{\f13}}
+\f{\left||k|-|l|\right|e^{4\pi\d_{\rmL}^{-1}|k-l|^{\f13}}}{|\xi|}
\lesssim \f{|k-l, \xi-\eta|}{|\xi|^{\f23}+|\eta|^{\f23}}e^{C\d_{\rmL}^{-1}|\xi-\eta|^{\f13}}. 
\end{align*}
Case 3: $|\eta|+|\xi|\leq \f{1}{10}(|k|+|l|)$: We have 
\begin{align*}
|k-l|+|\xi-\eta|\leq \f{11}{1000}(|k|+|l|),\quad \f{1011}{989}|l|\geq |k|\geq \f{989}{1011}|l|,
\end{align*}
thus $2|\xi|\leq |\xi-\eta|+|\eta|+|\xi|\leq \f{111}{1000}(|k|+|l|)\leq \f{222}{989}|l|$, which implies $|l|\geq 8|\xi|$. Thus
\beno
\f{e^{4\pi\d_{\rmL}^{-1}|\xi|^{\f13}}}{\rmg(t,\xi)}
\leq Ce^{7\pi\d_{\rmL}^{-1}|\xi|^{\f13}}\leq Ce^{3.5\pi\d_{\rmL}^{-1}|l|^{\f13}},
\eeno
and then 
\begin{align*}
\left|\f{\cM_k(t,\eta)}{\cM_l(t,\xi)}-1\right|
&\leq \left(\f{\rmg(t,\xi)}{\rmg(t,\eta)}\left||\xi|^{\f13}-|\eta|^{\f13}\right|e^{4\pi\d_{\rmL}|\eta-\xi|^{\f13}}+\left|\f{\rmg(t,\xi)}{\rmg(t,\eta)}-1\right|\right)e^{-0.5\pi\d_{\rmL}^{-1}|l|^{\f13}}\\
&\quad+\left||k|^{\f13}-|l|^{\f13}\right|e^{4\pi\d_{\rmL}^{-1}|k-l|^{\f13}}\\
&\lesssim \f{|\xi-\eta|}{|\xi|^{\f23}+|\eta|^{\f23}}e^{C\d_{\rmL}^{-1}|\xi-\eta|^{\f13}}e^{-0.5\pi\d_{\rmL}^{-1}|l|^{\f13}}
+\f{\left||k|-|l|\right|e^{4\pi\d_{\rmL}^{-1}|k-l|^{\f13}}}{|l|^{\f23}+|k|^{\f23}}\\
&\lesssim \f{|k-l, \xi-\eta|}{|l|^{\f23}+|k|^{\f23}}e^{C\d_{\rmL}^{-1}|\xi-\eta|^{\f13}}. 
\end{align*}
Thus we proved the lemma. 
\end{proof}
\begin{lemma}\label{Lemma_Comm_M_0}
Let $t\leq \f12\min\{|\xi|^{\f23},|\eta|^{\f23}\}$. Then
\beno
\left|\f{\cM_k(t,\eta)}{\cM_k(t,\xi)}-1\right|\lesssim \f{\langle\xi-\eta\rangle }{(|k|+|\eta|+|\xi|)^{\f13}}e^{C\d_{\rmL}^{-1}|\xi-\eta|^{\f13}}.
\eeno
\end{lemma}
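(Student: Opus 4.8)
The plan is to reduce this statement to the already-proved Lemma \ref{lem: g/g-1-1} (the case $l=k$), but with the weaker time restriction $t\le \f12\min\{|\xi|^{\f23},|\eta|^{\f23}\}$ instead of $t\le\f12\min\{|\xi|^{\f13},|\eta|^{\f13}\}$, and therefore with the weaker gain $(|k|+|\eta|+|\xi|)^{-1/3}$ instead of $(\cdots)^{-2/3}$. First I would recall that $\cM_k(t,\eta)=\f{e^{4\pi\d_{\rmL}^{-1}|\eta|^{\f13}}}{\rmg(t,\eta)}+e^{4\pi\d_{\rmL}^{-1}|k|^{\f13}}$, so $\cM_k(t,\eta)-\cM_k(t,\xi)=\Big(\f{e^{4\pi\d_{\rmL}^{-1}|\eta|^{\f13}}}{\rmg(t,\eta)}-\f{e^{4\pi\d_{\rmL}^{-1}|\xi|^{\f13}}}{\rmg(t,\xi)}\Big)$, with the $e^{4\pi\d_{\rmL}^{-1}|k|^{\f13}}$ term cancelling exactly (this is the simplification over Lemma \ref{lem: g/g-1-1}, where $k\ne l$ left a residual term). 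Dividing by $\cM_k(t,\xi)\ge e^{4\pi\d_{\rmL}^{-1}|\xi|^{\f13}}/\rmg(t,\xi)$ and writing the difference of the two fractions as
\beno
\f{e^{4\pi\d_{\rmL}^{-1}|\eta|^{\f13}}}{\rmg(t,\eta)}-\f{e^{4\pi\d_{\rmL}^{-1}|\xi|^{\f13}}}{\rmg(t,\xi)}
=\f{e^{4\pi\d_{\rmL}^{-1}|\xi|^{\f13}}}{\rmg(t,\xi)}\Big(\f{\rmg(t,\xi)}{\rmg(t,\eta)}e^{4\pi\d_{\rmL}^{-1}(|\eta|^{\f13}-|\xi|^{\f13})}-1\Big),
\eeno
one gets
\beno
\Big|\f{\cM_k(t,\eta)}{\cM_k(t,\xi)}-1\Big|\le \Big|\f{\rmg(t,\xi)}{\rmg(t,\eta)}e^{4\pi\d_{\rmL}^{-1}(|\eta|^{\f13}-|\xi|^{\f13})}-1\Big|.
\eeno

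Next I would split the right-hand side via the triangle inequality into
$\Big|e^{4\pi\d_{\rmL}^{-1}(|\eta|^{\f13}-|\xi|^{\f13})}-1\Big|\cdot\f{\rmg(t,\xi)}{\rmg(t,\eta)}$ and $\Big|\f{\rmg(t,\xi)}{\rmg(t,\eta)}-1\Big|$. The first factor is handled by the elementary estimate $\big||\eta|^{\f13}-|\xi|^{\f13}\big|\lesssim \f{|\eta-\xi|}{|\eta|^{\f23}+|\xi|^{\f23}}$ together with $|e^a-1|\lesssim |a|e^{|a|}$ and Lemma \ref{lem:g/g} (which controls $\rmg(t,\xi)/\rmg(t,\eta)$ by $e^{C\d_{\rmL}^{-1}|\eta-\xi|^{\f13}}$); this already gives a bound of the form $\f{|\xi-\eta|}{|\xi|^{\f23}+|\eta|^{\f23}}e^{C\d_{\rmL}^{-1}|\xi-\eta|^{\f13}}$, which is stronger than what is claimed (since $|\xi|^{\f23}\gtrsim (|k|+|\eta|+|\xi|)^{\f13}$ needs to be checked in each frequency regime, exactly as in the three cases of Lemma \ref{lem: g/g-1-1}). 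The genuinely new point is the second factor $\big|\f{\rmg(t,\xi)}{\rmg(t,\eta)}-1\big|$: under the restriction $t\le\f12\min\{|\xi|^{\f23},|\eta|^{\f23}\}$ the parameter $t$ may now lie inside the second family of critical intervals (the $k\in(\rmE(|\eta|^{\f13}),\rmE(|\eta|^{\f23})]$ regime governed by \eqref{Weight_g_2}), so unlike in Lemma \ref{lem: g/g-1-1} one cannot simply replace $\rmg(t,\cdot)$ by $\rmg(0,\cdot)$. I would therefore write out $\rmg(t,\eta)$ and $\rmg(t,\xi)$ explicitly using the product formulas for the two $\arctan$-type sums (as in the proof of Lemma \ref{lem:g/g}, Cases 2 and 4), take the difference of the exponents, and bound it using $|\rmF(k,\eta)-\rmF(k,\xi)|\lesssim \f{|\xi-\eta|}{k^2+|\xi|^2}$ (cf. \eqref{Ineq_F_Mean_Value}), $|\tilde{\rmF}(k,\eta)-\tilde{\rmF}(k,\xi)|\lesssim \f{|\eta-\xi|}{k^3}$ (cf. \eqref{F_tilde_Mean_Value}), plus the index-count discrepancies $|\rmE(|\eta|^{\f13})-\rmE(|\xi|^{\f13})|$ and the arctan-argument Lipschitz bound \eqref{Mean_Value_1}; summing these over $k$ up to $|\eta|^{\f13}$ (resp. $|\eta|^{\f23}$) yields a factor $\lesssim \f{\langle\xi-\eta\rangle}{(|k|+|\eta|+|\xi|)^{\f13}}$, the loss of one power of $\f13$ relative to Lemma \ref{lem: g/g-1-1} coming precisely from summing $\sum_{k\le|\eta|^{\f13}} \f{|\xi-\eta|}{k^2+|\xi|^2}\approx\f{|\xi-\eta||\eta|^{\f13}}{|\eta|^2}$ being replaced, in the regime where $t$ reaches the second family, by the cruder $\sum_{k\le|\eta|^{\f13}}1 \approx |\eta|^{\f13}$ for the index mismatch, giving $\f{|\xi-\eta|^{1/3}}{1}$ rather than $\f{|\xi-\eta|}{|\eta|^{2/3}}$.

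Finally I would organize everything into the same three frequency cases as Lemma \ref{lem: g/g-1-1}: (i) $\f1{10}|k|\le|\eta|+|\xi|\le 10|k|$ where all of $|k|,|\eta|,|\xi|$ are comparable; (ii) $|\eta|+|\xi|\ge 10|k|$ where $|\xi|\gtrsim |k|$ dominates; (iii) $|\eta|+|\xi|\le\f1{10}|k|$ where $|k|$ dominates and $e^{4\pi\d_{\rmL}^{-1}(|\xi|^{\f13}-|k|^{\f13})}$ supplies an extra exponentially small factor — in each case converting the numerator estimate into the claimed $\f{\langle\xi-\eta\rangle}{(|k|+|\eta|+|\xi|)^{\f13}}e^{C\d_{\rmL}^{-1}|\xi-\eta|^{\f13}}$; when $(|k|+|\eta|+|\xi|)^{\f13}\lesssim \langle\xi-\eta\rangle$ the statement is trivial from \eqref{Eq_M/M}-type bounds, so one may assume the reverse. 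The main obstacle is the careful bookkeeping in the regime where $t$ lies in the second family of intervals governed by \eqref{Weight_g_2}, since there the telescoping product for $\rmg(t,\cdot)$ has an extra block of $\tilde{\rmF}$-factors and one must verify that the differences $\tilde{\rmF}(k,\eta)-\tilde{\rmF}(k,\xi)$ and the index mismatch $|\rmE(|\eta|^{\f23})-\rmE(|\xi|^{\f23})|$ still sum to something of size $\lesssim\langle\xi-\eta\rangle\cdot(|k|+|\eta|+|\xi|)^{-1/3}$; everything else is a routine adaptation of Lemma \ref{lem: g/g-1-1}.
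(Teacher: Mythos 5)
Your proposal follows the paper's argument in all essentials: exploit the cancellation of the $e^{4\pi\d_{\rmL}^{-1}|k|^{\f13}}$ terms when $l=k$, reduce to $\big|\f{\rmg(t,\xi)}{\rmg(t,\eta)}e^{4\pi\d_{\rmL}^{-1}(|\eta|^{1/3}-|\xi|^{1/3})}-1\big|$, split into an exponential factor plus $\big|\f{\rmg(t,\xi)}{\rmg(t,\eta)}-1\big|$, and attack the latter by writing out the telescoping product for $\rmg(t,\cdot)$ together with the Lipschitz bounds \eqref{Ineq_F_Mean_Value}, \eqref{F_tilde_Mean_Value}, \eqref{Mean_Value_1}. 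Your opening algebraic step (dividing by $\cM_k(t,\xi)\geq e^{4\pi\d_{\rmL}^{-1}|\xi|^{1/3}}/\rmg(t,\xi)$) is if anything a cleaner derivation of the paper's intermediate display \eqref{M_0_main_Estimate}. The first factor estimate and the "assume $(|\eta|+|\xi|)^{1/3}\gtrsim\langle\eta-\xi\rangle$ or the statement is trivial" reduction are both exactly as in the paper.

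Where you go astray is the stated \emph{mechanism} for the $1/3$-loss relative to Lemma \ref{lem: g/g-1-1}. You attribute it to the index-mismatch sum degrading to $\sum_{k\leq|\eta|^{1/3}}1\approx|\eta|^{1/3}$, but the index mismatch $|\rmE(|\eta|^{1/3})-\rmE(|\xi|^{1/3})|$ is $O(1)$ throughout (after the restriction $|\eta-\xi|\leq\f1{100}(|\eta|^{1/3}+|\xi|^{1/3})$ forces $|\eta|\approx|\xi|$); you appear to have conflated the number of summands with the endpoint discrepancy. The real source of the weaker exponent is that with $t\leq\f12|\eta|^{2/3}$ the parameter $t$ may sit inside $\bar{\rmI}_{j,\eta}$ for some $\rmE(|\eta|^{1/3})+1\leq j\leq\rmE(|\eta|^{2/3})$, and the resulting $t$-dependent endpoint term $\f{|\xi|}{|j|^3}\big(\arctan(t-\f{|\xi|}{|j|})-\arctan(\f{|\xi|}{|j|(2|j|-1)})\big)$ in $\log\rmg(t,\cdot)$ contributes, by the arctan Lipschitz bound, roughly $\f{|\xi|}{|j|^4}|\xi-\eta|$; since $j$ can be as small as $\approx|\eta|^{1/3}$ this is only $\lesssim\f{|\xi-\eta|}{|\eta|^{1/3}}$ --- this is precisely the paper's display \eqref{Fourth_Term_1}, and no such term occurs in Lemma \ref{lem: g/g-1-1} where $t\leq\f12|\eta|^{1/3}$ forces $\rmg(t,\cdot)=\rmg(0,\cdot)$. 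Your outline ("write out the product formulas and use the arctan Lipschitz bound") does reach this term if carried out, so the plan is sound, but your rationale for the exponent needs correcting. You should also keep in mind a small but essential claim the paper proves en route: in the regime $\max\{t_{\rmE(|\eta|^{2/3}),\eta},t_{\rmE(|\xi|^{2/3}),\xi}\}\leq t\leq\f12\min\{|\xi|^{2/3},|\eta|^{2/3}\}$ with $|\eta-\xi|\leq\f1{100}(|\eta|^{1/3}+|\xi|^{1/3})$, the indices $n,j$ with $t\in\bar{\rmI}_{n,\eta}\cap\bar{\rmI}_{j,\xi}$ satisfy $||n|-|j||\leq 1$; the case $j=n-1$ then produces an extra boundary block $\tilde{\rmF}(n-1,\xi)$ that must be paired with the endpoint arctan term before the Lipschitz bound closes.
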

\begin{proof}
By the argument in the proof of Lemma \ref{lem: g/g-1-1} (see Case 3) and Lemma \ref{lem:g/g}, we have for $|\eta|+|\xi|\leq \f{1}{5}|k|$, 
\begin{align*}
\left|\f{\cM_k(t,\eta)}{\cM_k(t,\xi)}-1\right|
&\leq \left(\f{\rmg(t,\xi)}{\rmg(t,\eta)}\left||\xi|^{\f13}-|\eta|^{\f13}\right|e^{4\pi\d_{\rmL}|\eta-\xi|^{\f13}}+\left|\f{\rmg(t,\xi)}{\rmg(t,\eta)}-1\right|\right)e^{-0.5\pi\d_{\rmL}^{-1}|k|^{\f13}}\\
&\lesssim \f{1}{|k|^{\f13}}e^{C\d_{\rmL}^{-1}|\xi-\eta|^{\f13}}\lesssim \f{\langle\xi-\eta\rangle }{(|k|+|\eta|+|\xi|)^{\f13}}e^{C\d_{\rmL}^{-1}|\xi-\eta|^{\f13}}. 
\end{align*}
Now we focus on the case $|\eta|+|\xi|>\f{1}{5}|k|$. 
We have from Lemma \ref{lem:g/g} that 
\begin{equation}
\Big|\f{\cM_k(t,\eta)}{\cM_k(t,\xi)}-1\Big|\lesssim 1+e^{C\d_{\rmL}^{-1}|\xi-\eta|^{\f13}}
\end{equation}
So, if $(|\xi|+|\eta|)^{\f13}\lesssim \max\{1,|\eta-\xi|\}$, then we obtain the lemma. So from now on, we assume that $|\eta|^{\f13}+|\xi|^{\f13}\gtrsim 1$ and 
\begin{equation}\label{xi_eta_ineq}
|\xi-\eta|\leq \frac{1}{100} (|\eta|^{\f13}+|\xi|^{\f 13})\leq \frac{1}{100}(|\eta|+|\xi|). 
\end{equation}
  
We have 
\begin{align}\label{M_0_main_Estimate}
\left|\f{\cM_k(t,\eta)}{\cM_k(t,\xi)}-1\right|
&\leq \Big|\f{\rmg(t,\xi)}{\rmg(t,\eta)}\Big[e^{4\pi\d_{\rmL}^{-1}(|\xi|^{1/3}-|\eta|^{\f13})}-1\Big]\Big|+\left|\f{\rmg(t,\xi)}{\rmg(t,\eta)}-1\right|. 
\end{align}
Using Lemma \ref{lem:g/g}, we estimate the first term as 
\begin{equation}
\Big|\f{\rmg(t,\xi)}{\rmg(t,\eta)}\Big[e^{4\pi\d_{\rmL}^{-1}(|\xi|^{1/3}-|\eta|^{\f13})}-1\Big]\Big|\lesssim \f{|\eta-\xi|}{|\eta|^{\f23}+|\xi|^{\f23}}e^{C\d_{\rmL}^{-1}|\eta-\xi|^{\f13}}. 
\end{equation}

Now, we estimate the second term in \eqref{M_0_main_Estimate}. 
We first consider the time regime $t\leq \min (t_{\rmE(|\xi|^{\f23}),|\xi|}, t_{\rmE(|\eta|^{\f23}),|\eta|}). $ In this case, we have (see  \eqref{Main_term_1})
\begin{equation}\label{Estim_Case_2_1}
\Big|\f{\rmg(t,\xi)}{\rmg(t,\eta)}-1\Big|
=\Big|\f{\rmg(0,\xi)}{\rmg(0,\eta)}-1\Big|\lesssim \f{|\xi-\eta|}{|\xi|^{\f23}+|\eta|^{\f23}}e^{C\d_{\rmL}^{-1}|\xi-\eta|^{\f13}}.  
\end{equation}

Now, we consider the time regime: $\min (t_{\rmE(|\xi|^{\f23}),|\xi|}, t_{\rmE(|\eta|^{\f23}),|\eta|})\leq t\leq \max  (t_{\rmE(|\xi|^{\f23}),|\xi|}, t_{\rmE(|\eta|^{\f23}),|\eta|})$
 which corresponds to Case 3 in the proof of Lemma \ref{lem:g/g}. Let $t\in \bar{\rmI}_{n,\eta}\cap \bar{\rmI}_{j,\xi}$
 
 If $t_{\rmE(|\xi|^{\f23}),\xi}\leq t\leq t_{\rmE(|\eta|^{\f23}),\eta}$, we have
\begin{equation}\label{Eq_Compa_Case_3_1}
\f{\rmg(t_{\rmE(|\eta|^{\f23}),\eta},\xi)}{\rmg(t_{\rmE(|\eta|^{\f23}),\eta},\eta)}\geq \f{\rmg(t,\xi)}{\rmg(t,\eta)}\geq \f{\rmg(0,\xi)}{\rmg(0,\eta)},
\end{equation}
This gives that if $\f{\rmg(t,\xi)}{\rmg(t,\eta)}\leq 1$, then we have
\begin{equation}
\Big|\f{\rmg(t,\xi)}{\rmg(t,\eta)}-1\Big|\leq\Big|\f{\rmg(0,\xi)}{\rmg(0,\eta)}-1\Big|\lesssim \f{|\xi-\eta|}{|\xi|^{\f23}+|\eta|^{\f23}}e^{C\d_{\rmL}^{-1}|\xi-\eta|^{\f13}}
\end{equation}
and if 
$\f{\rmg(t,\xi)}{\rmg(t,\eta)}\geq 1$, then we have
\begin{equation}
\Big|\f{\rmg(t,\xi)}{\rmg(t,\eta)}-1\Big|\leq\Big|\f{\rmg(t_{\rmE(|\eta|^{\f23}),\eta},\xi)}{\rmg(t_{\rmE(|\eta|^{\f23}),\eta},\eta)}-1\Big|.
\end{equation}
Similarly, we have for $t_{\rmE(|\eta|^{\f23}),\eta}\leq t\leq t_{\rmE(|\xi|^{\f23}),\xi}$,
\beno
\Big|\f{\rmg(t,\xi)}{\rmg(t,\eta)}-1\Big|\leq \Big|\f{\rmg(t_{\rmE(|\xi|^{\f23}),\xi},\xi)}{\rmg(t_{\rmE(|\xi|^{\f23}),\xi},\eta)}-1\Big|+\f{|\xi-\eta|}{|\xi|^{\f23}+|\eta|^{\f23}}e^{C\d_{\rmL}^{-1}|\xi-\eta|^{\f13}}.
\eeno
Thus we deduce the problem to consider the time regime  $\max\left\{t_{\rmE(|\eta|^{\f23}),\eta},t_{\rmE(|\xi|^{\f23}),\xi}\right\}\leq t\leq \f12\min\{|\xi|^{\f23},|\eta|^{\f23}\}$, 
which corresponds to Case 4 in proof of Lemma \ref{lem:g/g}. 

Without loss of generality, let us assume $|\eta|\geq |\xi|$ and prove 
\ben
\left|\f{\rmg(t,\xi)}{\rmg(t,\eta)}-1\right|+\left|\f{\rmg(t,\eta)}{\rmg(t,\xi)}-1\right|\lesssim \f{\langle\eta-\xi\rangle}{|\eta|^{\f13}+|\xi|^{\f13}}e^{C\d_{\rmL}^{-1}|\eta-\xi|^{\f13}}
\een

So, let $n$ and $j$ be such that $t\in \bar{\rmI}_{n,\eta}\cap \bar{\rmI}_{j,\xi}$. 
Then we have $|n|\geq |j|$,

{\bf Claim}: It holds that $\f{2|\eta|}{2|n|-1}< \f{1.5|\xi|}{1.5n-1}<\f{|\xi|}{n-1}$ which implies $||n|-|j||\leq 1$. 

Indeed if not, then $\f{2|\eta|}{2|n|-1}\geq \f{1.5|\xi|}{1.5|n|-1}$ which implies $||\eta|-|\xi||\geq \f{2|\eta|-1.5|\xi|}{3|n|}$. It holds from \eqref{xi_eta_ineq} that 
\beno
\f{99}{101}|\xi|\leq |\eta|\leq \f{101}{99}|\xi|,\quad |\eta-\xi|\leq \f{3}{100}|\xi|^{\f13}.
\eeno
Thus we get that 
\beno
\f{3}{100}|\xi|^{\f13}\geq |\eta-\xi|\geq \f{2|\eta|-1.5|\xi|}{3|n|}\geq \f{|\xi|}{7\rmE(|\eta|^{\f23})}\geq \f{|\xi|}{12|\xi|^{\f23}}=\f{1}{12}|\xi|^{\f13},
\eeno
which leads a contradiction. 

Therefore, it holds that $n\geq \rmE(|\eta|^{\f13})$, $j\geq \rmE(|\xi|^{\f13})$ and $\rmE(|\eta|^{\f13})\geq \rmE(|\xi|^{\f13})$. 
For $j=n$ (hence in this case $|j|\geq \rmE(|\eta|^{\f13})+1$), thus  recalling \eqref{g_eta_case_4} and \eqref{g_xi_Case_4_2} and 
using the inequality $|e^ae^b-1|\lesssim (|a|+|b|)e^{a+b}$, we get  
\begin{align*}
&\Big|\f{\rmg(t,\xi)}{\rmg(t,\eta)}-1\Big|+\Big|\f{\rmg(t,\eta)}{\rmg(t,\xi)}-1\Big|\\
&\lesssim \bigg(\sum_{k=1}^{\rmE(|\xi|^{\f13})}\d_{\rmL}^{-1}|\rmF(k,\eta)-\rmF(k,\xi)| 
 + \sum_{k=\rmE(|\xi|^{\f13})+1}^{\rmE(|\eta|^{\f13})}\d_{\rmL}^{-1}|\tilde{\rmF}(k,\eta)-\tilde{\rmF}(k,\xi)|\bigg.
\\
&\bigg.+ \sum_{k=\rmE(|\eta|^{\f13})+1}^{j-1}\d_{\rmL}^{-1}|\tilde{\rmF}(k,\eta)-\tilde{\rmF}(k,\xi)|\bigg.
+\d_{\rmL}^{-1}\Big|\f{|\xi|}{|j|^3}\arctan\big(t-\f{|\xi|}{|j|}\big)-\f{|\eta|}{|j|^3}\arctan\big(t-\f{|\eta|}{|j|}\big)\Big|\bigg.\\
&+\bigg.\d_{\rmL}^{-1}\Big|\f{|\eta|}{|j|^3}\arctan\big(\f{|\eta|}{|j|(2|j|-1)}\big)-\f{|\xi|}{|j|^3}\arctan\big(\f{|\xi|}{|j|(2|j|-1)}\big)\Big|
\bigg)\max\left\{\f{\rmg(t,\xi)}{\rmg(t,\eta)},\f{\rmg(t,\eta)}{\rmg(t,\xi)}\right\} 
\end{align*}

First applying \eqref{Ineq_F_Mean_Value}, we have 
\begin{equation}\label{First_Term_1}
\sum_{k=1}^{\rmE(|\xi|^{\f13})}\d_{\rmL}^{-1}|\rmF(k,\eta)-\rmF(k,\xi)|\lesssim |\xi|^{\f13}\f{|\xi-\eta|}{k^2+|\xi|^2}\lesssim \f{\langle \xi-\eta\rangle}{|\xi|^{
\f53}+|\eta|^{\f53}}. 
\end{equation}  
Similarly, using \eqref{F_tilde_Mean_Value}, we have 
\begin{equation}\label{Second_Term_1}
\begin{aligned}
\sum_{k=\rmE(|\xi|^{\f13})+1}^{\rmE(|\eta|^{\f13})}\d_{\rmL}^{-1}|\tilde{\rmF}(k,\eta)-\tilde{\rmF}(k,\xi)|\lesssim |\rmE(|\eta|^{\f13})-\rmE(|\xi|^{\f13})|\f{|\eta-\xi|}{|\xi|}\lesssim \f{\langle \xi-\eta\rangle}{|\xi|^{
\f23}+|\eta|^{\f23}} 
\end{aligned}
\end{equation}
and 
\begin{equation}\label{Third_Term_1}
 \sum_{k=\rmE(|\eta|^{\f13})+1}^{j-1}\d_{\rmL}^{-1}|\tilde{\rmF}(k,\eta)-\tilde{\rmF}(k,\xi)|\lesssim \f{|\eta-\xi|}{|\eta|^{\f23}}\lesssim \f{\langle \xi-\eta\rangle}{|\xi|^{
\f23}+|\eta|^{\f23}}.  
 \end{equation}
Also, we have 
\begin{equation}\label{Fourth_Term_1}
\begin{aligned}
&\Big|\f{|\xi|}{|j|^3}\arctan\big(t-\f{|\xi|}{|j|}\big)-\f{|\eta|}{|j|^3}\arctan\big(t-\f{|\eta|}{|j|}\big)\Big|\\
\lesssim&\, \f{|\xi|}{|j|^3} \Big|\arctan\big(t-\f{|\xi|}{|j|}\big)-\arctan\big(t-\f{|\eta|}{|j|}\big)\Big|
+\Big|\arctan\big(t-\f{|\eta|}{|j|}\big)\Big| \Big|\f{|\xi|}{|j|^3}-\f{|\eta|}{|j|^3}\Big|\\
\lesssim&\,\f{|\xi|}{|j|^3} \f{|\eta-\xi|}{|j|}+\f{|\eta-\xi|}{|j|^3}
\lesssim \,\f{\langle \xi-\eta\rangle}{|\xi|^{
\f13}+|\eta|^{\f13}}. 
\end{aligned}
\end{equation}
Similarly, for the last term, we have by applying \eqref{Mean_Value_1} 
\begin{equation}\label{Fifth_Term_1}
\begin{aligned}
&\Big|\f{|\eta|}{|j|^3}\arctan\big(\f{|\eta|}{|j|(2|j|-1)}\big)-\f{|\xi|}{|j|^3}\arctan\big(\f{|\xi|}{|j|(2|j|-1)}\big)\Big|\\
&\lesssim \f{|\eta|}{|j|^3} \Big|\arctan\big(\f{|\eta|}{|j|(2|j|-1)}\big)-\arctan\big(\f{|\xi|}{|j|(2|j|-1)}\big)\Big|\\
&+\Big|\arctan\big(\f{|\xi|}{|j|(2|j|-1)}\big)\Big|\Big|\f{|\xi|}{|j|^3}-\f{|\eta|}{|j|^3}\Big| \\
&\lesssim \f{|\eta|}{|j|^3}\f{|\eta-\xi|}{\langle \xi\rangle}+\f{|\eta-\xi|}{|j|^3}\lesssim \f{\langle \xi-\eta\rangle}{|\xi|+|\eta|}.   
\end{aligned}
\end{equation}
Collecting \eqref{First_Term_1}- \eqref{Fifth_Term_1}  together with Lemma \ref{lem:g/g} yield Lemma \ref{Lemma_Comm_M_0} in the above time regime with $j=n$. 

Now, we discuss the case $n\neq j$. We then have
\begin{equation}\label{Time_Regime} 
\f{|\eta|}{|n|}\leq \f{2|\xi|}{2|n|-1}\leq t\leq \f{2|\eta|}{2|n|-1}<\f{1.5|\xi|}{1.5n-1}<\f{|\xi|}{n-1}
\end{equation}
 It is clear that since $t\approx \f{|\eta|}{|n|}\leq \f12\min\{|\xi|^{\f23},|\eta|^{\f23}\}$, then it holds that $j=n-1\geq 2|\eta|^{\f23}-1\geq |\eta|^{\f23}+1$. Hence, using \eqref{g_eta_case_4} and \eqref{g_xi_Case_4_2}, we have  

\begin{align}\label{Estimate_g/g-1_2}
&\Big|\f{\rmg(t,\xi)}{\rmg(t,\eta)}-1\Big|+\Big|\f{\rmg(t,\eta)}{\rmg(t,\xi)}-1\Big|\\
\nonumber&\lesssim \d_{\rmL}^{-1}\bigg(\sum_{k=1}^{\rmE(|\xi|^{\f13})}|\rmF(k,\eta)-\rmF(k,\xi)| 
 + \sum_{k=\rmE(|\xi|^{\f13})+1}^{\rmE(|\eta|^{\f13})}\rmF(k,\eta)+\sum_{k=\rmE(|\xi|^{\f13})+1}^{\rmE(|\eta|^{\f13})}\tilde{\rmF}(k,\xi)
\\
\nonumber&\quad+ \sum_{k=\rmE(|\eta|^{\f13})+1}^{n-1}|\tilde{\rmF}(k,\eta)-\tilde{\rmF}(k,\xi)|
+\f{|\eta|}{|n|^3}\Big|\arctan\big(t-\f{|\eta|}{|n|}\big)-\arctan\big(\f{|\eta|}{|n|(2|n|-1)}\big)\Big|\bigg.\\
\nonumber&\quad+\Big|\f{|\xi|}{|n-1|^3}\Big(\arctan\big(t-\f{|\xi|}{|n-1|}\big)-\arctan\big(\f{|\xi|}{|n-1|(2n-3)}\big)\Big)+\tilde{\rmF}(n-1,\xi)\Big|\bigg)\\
\nonumber&\qquad\times\max\left\{\f{\rmg(t,\xi)}{\rmg(t,\eta)},\f{\rmg(t,\eta)}{\rmg(t,\xi)}\right\}.
\end{align}
The first four terms can be estimated as in \eqref{First_Term_1},  \eqref{Second_Term_1} and \eqref{Third_Term_1}. 
 
 We also have in the time regime \eqref{Time_Regime}, 
 \begin{equation*}
 \begin{aligned}
&\Big|\arctan\big(t-\f{|\eta|}{|n|}\big)-\arctan\big(\f{|\eta|}{|n|(2|n|-1)}\big)\Big|\\
\lesssim &\,\Big|\arctan\big(\f{2|\xi|}{2|n|-1}-\f{|\eta|}{|n|}\big)-\arctan\big(\f{2|\eta|}{2|n|-1}-\f{|\eta|}{|n|}\big)\Big|\\
\lesssim &\,\f{|\xi-\eta|}{|n|}\lesssim\,\f{\langle \xi-\eta\rangle}{|\xi|^{
\f13}+|\eta|^{\f13}}. 
\end{aligned}
\end{equation*}
Now, using \eqref{F_tilde_Estimate}, we have 
\begin{equation}
\begin{aligned}
|\tilde{\rmF}(n-1,\eta)-\tilde{\rmF}(n-1,\xi)|\lesssim&\, \f{|\eta-\xi|}{(|n|-1)^3}
\lesssim\f{|\eta-\xi|}{|\xi|}\lesssim  \f{\langle \xi-\eta\rangle}{|\xi|^{
\f13}+|\eta|^{\f13}}.  
\end{aligned}
\end{equation}

Now, we need to estimate the last term in \eqref{Estimate_g/g-1_2}. 

Recalling   \eqref{F_tilde}, we have 
\begin{equation}
\begin{aligned}
&\Big|\f{|\xi|}{|n-1|^3}\Big(\arctan\big(t-\f{|\xi|}{|n-1|}\big)-\arctan\big(\f{|\xi|}{(|n|-1)(2n-3)}\big)\Big)+\tilde{\rmF}(n-1,\xi)\Big|\\
=&\,\Big|\f{|\xi|}{|n-1|^3}\Big(\arctan\big(t-\f{|\xi|}{|n-1|}\big)-\arctan\big(\f{|\xi|}{|n-1|(2n-3)}\big)\Big.\\
&\Big.+\arctan\big(\f{|\xi|}{|n-1|(2n-1)}\big)+\arctan\big(\f{|\xi|}{|n-1|(2n-3)}\big)\Big)\Big|\\
=&\,\Big|\f{|\xi|}{|n-1|^3}\Big(\arctan\big(t-\f{|\xi|}{|n-1|}\big)+\arctan\big(\f{|\xi|}{|n-1|(2n-1)}\big)\Big)\Big|. 
\end{aligned}
\end{equation}

We have by exploiting \eqref{Time_Regime}, 
\begin{equation}
\begin{aligned}
&\f{|\xi|}{|n-1|^3}\Big|\arctan\big(t-\f{|\xi|}{|n-1|}\big)+\arctan\big(\f{|\xi|}{|n-1|(2n-1)}\big)\Big|\\
=&\,\f{|\xi|}{|n-1|^3}\Big|\arctan\big(\f{|\xi|}{|n-1|}-\f{2|\xi|}{2n-1}\big)-\arctan\big(\f{|\xi|}{|n-1|}-t\big)\Big|\\
\lesssim&\f{|\xi|}{|n-1|^3}\Big|\arctan\big(\f{|\xi|}{|n-1|}-\f{2|\xi|}{2n-1}\big)-\arctan\big(\f{|\xi|}{|n-1|}-\f{2|\eta|}{2|n|-1}\big)\Big|\\
\lesssim& \f{|\xi|}{|n-1|^3} \f{|\eta-\xi|}{|n|}\lesssim \f{\langle \eta-\xi\rangle}{|\xi|^{1/3}+|\eta|^{1/3}}. 
\end{aligned}
\end{equation}

Putting the estimates together, we proved the lemma.
\end{proof}

\begin{lemma}\label{Lemma_Cmmutator_J}
Let $t\leq \f12\min\{|\xi|^{\f23},|\eta|^{\f23}\}$. Then
\beno
\left|\f{\rmJ_k(t,\eta)}{\rmJ_l(t,\xi)}-1\right|\lesssim \f{\langle k-l,\xi-\eta\rangle }{(|k|+|l|+|\eta|+|\xi|)^{\f23}}e^{C\mu|k-l,\xi-\eta|^{\f13}}.
\eeno
\end{lemma}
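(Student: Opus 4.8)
The plan is to reduce the statement on $\rmJ$ to the already-proved estimates on $\Theta_{\rmNR}$ (Lemma~\ref{lem:3.5}) and on $\cM$ (Lemma~\ref{lem: g/g-1-1}), exploiting the structural identity
\beno
\rmJ_k(t,\eta)=\f{e^{\mu|\eta|^{\f13}}}{\Theta_k(t,\eta)}+e^{\mu|k|^{\f13}}.
\eeno
First I would observe that in the regime $t\leq \f12\min\{|\xi|^{\f23},|\eta|^{\f23}\}$ one has $t\leq 2|\xi|$ and $t\leq 2|\eta|$, so $\Theta_k(t,\eta)=\Theta_{\rmNR}(t,\eta)$ or $\Theta_{\rmR}(t,\eta)$; in either case $\Theta_k(t,\eta)\approx \Theta_{\rmNR}(t,\eta)$ up to the factors $\f{k^3}{\eta}(1+b_{k,\eta}|t-\f\eta k|)$ or $\f{k^3}{\eta}(1+a_{k,\eta}|t-\f\eta k|)$, which are $\approx 1$ on $\rmI_{k,\eta}$. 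Actually, since $t\le \f12|\eta|^{\f23}<t_{\rmE(|\eta|^{\f13}),\eta}$ forces $\Theta_k(t,\eta)$ to be constant in $t$ (it equals $\Theta_k(t_{\rmE(|\eta|^{\f13}),\eta},\eta)$), I would instead just use $\Theta_k(t,\eta)=\Theta_{\rmNR}(t_{\rmE(|\eta|^{\f13}),\eta},\eta)$, so the ratio $\rmJ_k(t,\eta)/\rmJ_l(t,\xi)$ is actually time-independent in this regime and the problem becomes purely a frequency comparison.

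The key computation is the elementary algebraic fact that for positive reals $a_1,a_2,b_1,b_2$,
\beno
\left|\f{a_1+b_1}{a_2+b_2}-1\right|\le \f{a_2}{a_2+b_2}\left|\f{a_1}{a_2}-1\right|+\f{b_2}{a_2+b_2}\left|\f{b_1}{b_2}-1\right|
\le \left|\f{a_1}{a_2}-1\right|+\left|\f{b_1}{b_2}-1\right|,
\eeno
applied with $a_1=e^{\mu|\eta|^{\f13}}/\Theta_k(t,\eta)$, $a_2=e^{\mu|\xi|^{\f13}}/\Theta_l(t,\xi)$, $b_1=e^{\mu|k|^{\f13}}$, $b_2=e^{\mu|l|^{\f13}}$. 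For the $b$-term one uses $|e^{\mu|k|^{\f13}}/e^{\mu|l|^{\f13}}-1|=|e^{\mu(|k|^{\f13}-|l|^{\f13})}-1|\lesssim |k-l|^{\f13}e^{C\mu|k-l|^{\f13}}$, and when $|l|$ is much larger than the other scales one gains the damping factor $e^{-c\mu|l|^{\f13}}$ as in Case~3 of Lemma~\ref{lem: g/g-1-1}. For the $a$-term one writes
\beno
\f{a_1}{a_2}=e^{\mu(|\eta|^{\f13}-|\xi|^{\f13})}\cdot \f{\Theta_{\rmNR}(t,\xi)}{\Theta_{\rmNR}(t,\eta)},
\eeno
and estimates $|e^{\mu(|\eta|^{\f13}-|\xi|^{\f13})}-1|\lesssim \f{|\eta-\xi|}{|\eta|^{\f23}+|\xi|^{\f23}}e^{C\mu|\eta-\xi|^{\f13}}$ together with the ratio $\Theta_{\rmNR}(t,\xi)/\Theta_{\rmNR}(t,\eta)$; here I would need a quantitative refinement of Lemma~\ref{lem:3.5}, namely $|\Theta_{\rmNR}(t,\xi)/\Theta_{\rmNR}(t,\eta)-1|\lesssim \f{\langle\xi-\eta\rangle}{(|\xi|+|\eta|)^{\f13}}e^{C\mu|\xi-\eta|^{\f13}}$, which follows from the same case analysis as in the proof of Lemma~\ref{lem:g/g}/Lemma~\ref{lem:3.5} by replacing the $\lesssim e^{c|\xi-\eta|^{\f13}}$ bounds with the sharper mean-value bounds $|\rmF(k,\eta)-\rmF(k,\xi)|\lesssim \f{|\xi-\eta|}{k^2+|\xi|^2}$ (cf. \eqref{Ineq_F_Mean_Value}) and the analogous one for the exponent $c(n-1)+C\kappa$ in $(\eta/\xi)^{c(n-1)+C\kappa}$, which gives $\lesssim n\cdot\f{|\eta-\xi|}{|\xi|}\lesssim \f{\langle\xi-\eta\rangle}{|\xi|^{\f23}}$.

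The main obstacle I expect is precisely this quantitative version of Lemma~\ref{lem:3.5}: one must carefully track, in each of the time/frequency cases $j=n$, $j=n-1$, $j<n-1$ (and the subcases about which side of $\f\eta k$ the time $t$ lies on), that every factor which was previously only bounded by $e^{C|\xi-\eta|^{\f13}}$ is in fact of the form $1+O\big(\langle\xi-\eta\rangle(|\xi|+|\eta|)^{-\f13}\big)$ times something bounded, using $|n-j|\lesssim 1+|\eta-\xi|^{\f13}$ and $|N(\eta)-N(\xi)|\lesssim 1+|\eta-\xi|^{\f13}$ where $N(\eta)=\rmE(|\eta|^{\f13})$. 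Once that refined ratio estimate is in hand, combining the two pieces via the elementary inequality above and using $\tilde{\rmA}\le\rmA$-type trivial bounds to absorb the remaining exponential factors yields the claimed estimate; the rest is bookkeeping entirely parallel to the proofs of Lemmas~\ref{lem: g/g-1-1} and~\ref{Lemma_Comm_M_0}.
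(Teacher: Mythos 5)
Your proposal is correct and takes essentially the same route as the paper: both split $\rmJ_k$ into its two summands, bound each quotient via mean-value and explicit-formula estimates (with $\Theta_k(t,\eta)$ frozen at its $t=0$ value in this time regime), and hinge on the quantitative estimate for $\Theta_{\rmNR}(0,\xi)/\Theta_{\rmNR}(0,\eta)-1$ obtained from the Stirling-type formula \eqref{Growth_formula}. One small slip: the exponent in your stated refinement of Lemma~\ref{lem:3.5} should read $(|\xi|+|\eta|)^{-2/3}$ rather than $(|\xi|+|\eta|)^{-1/3}$, as your own sketched derivation (and the paper's) delivers.
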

\begin{proof}
Keeping \eqref{Total_Multipl} in mind and due to our assumption on the range of $t$, then it holds that 
$\rmJ_k(t,\eta)=\rmJ_k(t_{\rmE(|\eta|^{\f13}),\eta},\eta)=\rmJ_k(0,\eta)$ and 
$\rmJ_k(t,\xi)=\rmJ_k(t_{\rmE(|\xi|^{\f13}),\xi},\xi)=\rmJ_k(0,\xi)$. First, we have from Lemma \ref{lem:3.5}:
\begin{equation}\label{J_Main_Estimate}
\begin{aligned}
\Big|\f{\rmJ_k(t,\eta)}{\rmJ_l(t,\xi)}\Big|\lesssim e^{\mu |\eta-\xi|^{\f13}}+e^{\mu|k-l|^{\f13}}\lesssim e^{C\mu |\eta-\xi,k-l|^{\f13}} . 
\end{aligned}
\end{equation}
Hence, we obtain 
 \begin{equation}
\begin{aligned}
\left|\f{\rmJ_k(t,\eta)}{\rmJ_l(t,\xi)}-1\right|\leq  \Big|\f{\rmJ_k(t,\eta)}{\rmJ_l(t,\xi)}\Big|+1. 
\end{aligned}
\end{equation}
Hence, the lemma holds for $(|k|+|l|+|\eta|+|\xi|)^{\f23}\lesssim |k-l|+|\xi-\eta|$.  Also, if $(|k|+|l|+|\eta|+|\xi|)^{\f23}\leq 1$, then Lemma \ref{Lemma_Cmmutator_J} holds since due to \eqref{J_Main_Estimate} by allowing the constant $C$ in the exponent to be large enough.  
So, from now on we restrict to the case  
$|k|^{\f23}+|l|^{\f23}+|\eta|^{\f23}+|\xi|^{\f23}\gtrsim 1$.

We assume now that 
\begin{equation}
|k|^{\f23}+|l|^{\f23}+|\eta|^{\f23}+|\xi|^{\f23}\geq  100(|k-l|+|\xi-\eta|)
\end{equation}
That is, we have in this case 
\begin{equation}\label{Ineq_int}
|k|+|l|+|\eta|+|\xi|\geq  100(|k-l|+|\xi-\eta|). 
\end{equation}

 We define the multiplier $\tilde{\rmJ}_k(t,\eta)$ as 
\begin{equation}
\tilde{\rmJ}_k(t,\eta)=\f{e^{\mu|\eta|^{\f13}}}{\Theta_k(t,\eta)}.
\end{equation}
Case 1. If $|k,l|\approx |\eta,\xi|$. That is if for instance: $\frac{1}{10} (|k|+|l|)\leq |\eta|+|\xi|\leq 10 (|k|+|l|)$. Then this together with \eqref{Ineq_int} implies that 
 \beno
|k-l,\xi-\eta|\lesssim |k|\approx |l|\approx |\xi|\approx |\eta|. 
\eeno

Hence, we have
\begin{equation}\label{J_Commu_1}
\left|\f{\rmJ_k(t,\eta)}{\rmJ_l(t,\xi)}-1\right|\leq  \Big|\f{\tilde{\rmJ}_k(t,\eta)-\tilde{\rmJ}_l(t,\xi)}{\rmJ_l(t,\xi)}\Big|+\Big|\frac{e^{\mu|k|^{\f13}}-e^{\mu|l|^{\f13}}}{\rmJ_l(t,\xi)}\Big|. 
\end{equation}
The second term on the right-hand side of \eqref{J_Commu_1} can be estimate as 
\begin{equation}
\begin{aligned}
\Big|\frac{e^{\mu|k|^{\f13}}-e^{\mu|l|^{\f13}}}{\rmJ_l(t,\xi)}\Big|\lesssim&\, |e^{\mu(|k|^{\f13}-|l|^{\f13})}-1|\\
\lesssim &\,\,\frac{|k-l|}{|k|^{2/3}+|k|^{1/3}|l|^{1/3}+|l|^{2/3}}e^{\mu(|k-l|^{1/3}}\\
\lesssim&\,\f{\langle k-l,\eta-\xi\rangle }{(|k|+|l|+|\eta|+|\xi|)^{\f23}}e^{\mu|k-l,\eta-\xi|^{\f13}}. 
\end{aligned}
\end{equation}

 For the first term, we have 

 \begin{align}\label{J_Comm_Estimate}
\Big|\f{\tilde{\rmJ}_k(t,\eta)-\tilde{\rmJ}_l(t,\xi)}{\rmJ_l(t,\xi)}\Big|\leq&\, \Big|\f{\tilde{\rmJ}_k(t,\eta)-\tilde{\rmJ}_l(t,\xi)}{\tilde{\rmJ}_l(t,\xi)}\Big|\\
\nonumber\lesssim&\, \f{\Theta_l(0,\xi)} {\Theta_k(0,\eta)} |e^{\mu(|\eta|^{1/3}-|\xi|^{1/3})}-1|+\Big|\f{\Theta_l(0,\xi)} {\Theta_k(0,\eta)}-1\Big|. 
\end{align}

Then, we control the first term as, by using the mean value theorem  
\begin{equation}
\begin{aligned}
|e^{\mu(|\eta|^{1/3}-|\xi|^{1/3})}-1|\leq &\,\mu ||\eta|^{1/3}-|\xi|^{1/3}|e^{\mu(|\eta|^{1/3}-|\xi|^{1/3}}\\ 
\lesssim&\,\frac{|\eta-\xi|}{|\eta|^{2/3}+|\eta|^{1/3}|\xi|^{1/3}+|\xi|^{2/3}}e^{\mu(|\eta|^{1/3}-|\xi|^{1/3}}\\
\lesssim&\,\f{\langle k-l,\eta-\xi\rangle }{(|k|+|l|+|\eta|+|\xi|)^{\f23}}e^{\mu|k-l,\eta-\xi|^{\f13}},
\end{aligned}
\end{equation}
 which together with Lemma \ref{lem:3.5} implies the control of the first term in \eqref{J_Comm_Estimate}. 
 
 Our goal now is to control the second term on the right-hand side of \eqref{J_Comm_Estimate}. Due to \eqref{Ineq_int} and the fact that $ |k|+|l|\approx |\eta|+|\xi|$, it holds that $|\rmE(\eta)-\rmE(\xi)|\leq 1$.
 
 Keeping in mind \eqref{Growth_formula} and if $\rm E(|\eta|^{1/3})=\rm E(|\xi|^{1/3})$, then we have  
 \begin{equation}
\Big|\f{\Theta_l(0,\xi)} {\Theta_k(0,\eta)}-1\Big|=\Big|\Big(\f{|\eta|}{|\xi|}\Big)^{c\rmE(|\eta|^{1/3})}-1\Big|
\end{equation}
with $2C\kappa+1$. This implies 
\begin{equation}
\Big|\Big(\f{|\eta|}{|\xi|}\Big)^{c\rmE(|\eta|^{1/3})}-1\Big|\leq \Big|\Big(1+\f{|\eta-\xi|}{|\xi|}\Big)^{c\rmE(|\eta|^{1/3})}-1\Big|. 
\end{equation}
Using the inequality 
\begin{equation*}
\Big(1+\f{a}{b^3}\Big)^{b}-1\leq C\frac{|a|}{b^2},\qquad |a|<1,\quad b>1,
\end{equation*}
we obtain 
\begin{equation}
\Big|\Big(1+\f{|\eta-\xi|}{|\xi|}\Big)^{c\rmE(|\eta|^{1/3})}-1\Big|\lesssim \frac{|\eta-\xi|}{|\xi|^{\f23}}.   
\end{equation}

Next, if $\rmE(|\eta|^{1/3})=\rmE(|\xi|^{1/3})+1 $, we have $|\xi|^{1/3}\leq \rmE(|\eta|^{1/3})\leq |\eta|^{1/3}$ 
\begin{equation}
\begin{aligned}
\Big|\f{\Theta_l(0,\xi)} {\Theta_k(0,\eta)}-1\Big|=&\Big|\Big(\f{|\eta|}{|\xi|}\Big)^{c\rmE(|\xi|^{1/3})}\Big(\frac{|\eta|}{(\rmE(|\eta|^{1/3}))^3}\Big)^c-1\Big|\\
\lesssim &\frac{|\eta-\xi|}{|\xi|^{\f23}}+ \Big|\Big(\frac{|\eta|}{(\rmE(|\eta|^{1/3}))^3}\Big)^c-1\Big|
\end{aligned}
\end{equation}
Since, 
\begin{equation}
\Big|\Big(\frac{|\eta|}{(\rmE(|\eta|^{1/3}))^3}\Big)^c-1\Big|\leq \Big(\frac{|\eta|}{|\xi|}\Big)^c-1\Big|\lesssim \frac{\langle \eta-\xi\rangle}{|\xi|}. 
\end{equation}
We omit the case $\rmE(|\eta|^{1/3})=\rmE(|\xi|^{1/3})-1 $ since it can be treated similarly. 

The other two cases $|\xi|+|\eta|\geq 10(|k|+|\eta|)$ and $|k|+|l|\geq 10 (|\xi|+|\eta|)$ can be treated by modifying slightly the above argument and using the fact that the first condition together with \eqref{Ineq_int} implies that 
$|\xi|\geq \frac{989}{200}(|l|+|k|)$ and the second condition together with \eqref{Ineq_int} implies that 
$|\xi|\geq \frac{989}{200}(|l|+|k|)$ and the second condition together with \eqref{Ineq_int} implies $|l|\geq \frac{989}{200}(|\xi|+|\eta|)$

\end{proof}

\section{Zero mode and coordinate system}\label{Zero_mod_Section}
In this section, we deal with the coordinate system and prove Proposition \ref{prop: g,f_0,h}. 

\subsection{Assistant estimate}
In this section, we prove the estimate of $\psi_0$. Recall \eqref{eq:psi_0}, by the Duhamel’s principle, we have
\begin{equation}\label{Duhamel_Formula}
<\psi >=e^{t\pa_{yy}}<\psi_{\mathrm{in}} >-\int_0^t e^{(t-s)\pa_{yy}}<\pa_x\psi_{\neq} u^x_{\neq}>(s)ds. 
\end{equation}
Under the bootstrap hypotheses, by the fact that for any $f_1(y)=f_2(v)$ we have for any $k\geq 0$ and $1\leq p\leq \infty$, $\|\langle\pa_v\rangle^{k+2}h\|_{L^{\infty}}\lesssim \ep$ and then 
\ben\label{eq:Sobolev-equal}
\|f_1\|_{W^{k,p}_y}\eqdef \sum_{i=0}^{k}\left(\int_{\R}|\pa_y^if_1(y)|^pdy\right)^{\f1p} \approx \|f_2\|_{W^{k,p}_v}\eqdef \sum_{i=0}^{k}\left(\int_{\R}|\pa_v^if_2(v)|^pdv\right)^{\f1p}
\een
and for any $\g\in (0,1)$
\ben\label{eq:Sobolev-equal-2}
\begin{aligned}
\|f_1\|_{\dot{H}^{\g}_y}
&\eqdef \left(\int_{\R^2}\f{|f_1(y_1)-f_1(y_2)|^2}{|y_1-y_2|^{1+2\g}}dy_1dy_2\right)^{\f12}\\
&\approx \left(\int_{\R^2}\f{|f_2(v_1)-f_2(v_2)|^2}{|v_1-v_2|^{1+2\g}}dv_1dv_2\right)^{\f12}\eqdef \|f_2\|_{\dot{H}^{\g}_v}. 
\end{aligned}
\een
which together with the elliptic estimate, we have for any $k\geq 0$
\begin{equation}\label{Decay_avr_psi}
\|<\pa_z\psi_{\neq} u^x_{\neq}>\|_{W^{k,p}_y}\lesssim 
\|<\pa_z\phi_{\neq} \tilde{u}^x_{\neq}>\|_{W^{k,p}_v}
\lesssim \|\phi_{\neq}\|_{H^{k+3}}\|\tilde{u}^x_{\neq}\|_{H^{k+3}}\lesssim \f{\ep^2}{1+t^4}.
\end{equation}
Thus we get that for $0\leq j\leq 7$
\begin{align*}
&\|\pa_{y}^j<\psi>\|_{L^{p}}\\
&\lesssim \f{1}{\langle t\rangle^{\f12(1-\f1p)+\f{j}{2}}}\Big(\|<\psi_{in}>\|_{L^1}+\|<\psi_{in}>\|_{W^{j,\infty}}\Big)
+\int_0^t\f{1}{\langle t-s\rangle^{\f12(1-\f1p)+\f{j}{2}}}\f{\ep^2}{1+s^4}ds\\
&\lesssim \f{\ep}{\langle t\rangle^{\f12(1-\f1p)+\f{j}{2}}}.
\end{align*}
By choosing different $j$ and $p$, we get $\rmE_{as,\psi_0}(t)\lesssim \ep^2$.

By using the fact that $\om_0=\pa_{yy}<\psi>$ and $<u^x>=-\pa_y<\psi>$, we have
\begin{align*}
&\|\om_0\|_{L^2}\lesssim \ep\langle t\rangle^{-\f54}, \quad \|\pa_y\om_0\|_{L^2}\lesssim \ep\langle t\rangle^{-\f74},\quad 
\|\pa_{yy}\om_0\|_{L^2}\lesssim \ep\langle t\rangle^{-\f94},\quad
\|\pa_{yy}<u^x>\|_{L^{\infty}}\lesssim \f{\ep}{1+t^2},
\end{align*}
which together with \eqref{eq:Sobolev-equal}, \eqref{Decay_avr_psi} and \eqref{eq:u_1_Equation} gives us that
\begin{align*}
&\|f_0\|_{L^2}\lesssim \ep\langle t\rangle^{-\f54},\quad 
\|\pa_vf_0\|_{L^2}\lesssim \ep\langle t\rangle^{-\f74},\quad
\|\pa_tu_0^x\|_{L^{\infty}}\lesssim \f{\ep}{1+t^2},\\
&\|\pa_tu_0^x\|_{L^2_y}\lesssim \ep\langle t\rangle^{-\f74},\quad 
\|\pa_tu^x_0\|_{\dot{H}^{\f12-\ep_1}}\lesssim \ep\langle t\rangle^{-2+\f{\ep_1}{2}},\quad
\|\pa_tu^x_0\|_{\dot{H}^{\f12+\ep_2}}\lesssim \ep\langle t\rangle^{-2-\f{\ep_2}{2}}.
\end{align*}
Moreover recall that $K_0(t,v)=\pa_{yy}\om_0(t,y)$, we get that
\ben\label{eq:K_0L^2}
\|K_0\|_{L^2}\lesssim \ep\langle t\rangle^{-\f94}.
\een
Recall that $g(t,v)=\pa_tv(t,y)$ and $h(t,v)=\pa_yv(t,y)-1$ and that $\pa_tv(t,y)=\f{1}{t^2}\int_0^ts\pa_tu_0^x(s,y)ds$ and $\pa_yv(t,y)-1=-\f{1}{t}\int_0^t\om_0(s,y)ds$, we get that
\begin{align*}
&\|\pa_tv(t,y)\|_{L^{\infty}_y}\lesssim \f{1}{\langle t\rangle^2}\int_0^t s\|\pa_tu_0^x\|_{L^{\infty}_y}ds
\lesssim \ep(\ln\langle t\rangle+1)\langle t\rangle^{-2},\\
&\|\pa_tv(t,y)\|_{L^2}\lesssim \f{1}{\langle t\rangle^2}\int_0^t s\|\pa_tu_0^x\|_{L^2_y}ds\lesssim \ep\langle t\rangle^{-\f74},\\
&\|\pa_tv(t,y)\|_{\dot{H}^{\f12-\ep_1}}\lesssim \f{\ep}{\ep_1}\langle t\rangle^{-2+\f{\ep_1}{2}},\quad
\|\pa_tv(t,y)\|_{\dot{H}^{\f12+\ep_2}}\lesssim \f{1}{\langle t\rangle^2}\int_0^t\f{\ep}{\langle s\rangle^{1+\f{\ep_2}{2}}}ds\lesssim \f{\ep}{\ep_2}\langle t\rangle^{-2}
\end{align*}
and
\beno
\|\pa_yv(t,y)-1\|_{L^2_y}\lesssim \f{1}{\langle t\rangle}\int_0^t\|\om_0\|_{L^2}ds\lesssim \f{\ep}{\langle t\rangle},
\eeno

Thus by \eqref{eq:Sobolev-equal} and \eqref{eq:Sobolev-equal-2}, we get that 
\ben\label{eq: h-L^2}
\|h\|_{L^2}\lesssim \f{\ep}{\langle t\rangle},
\een 
and $\rmE_{as,g}(t)\lesssim \ep^2$.

\subsection{Estimate of $g(t,y)$}

\begin{proof}
Recall \eqref{eq: bar h}. Under the bootstrap hypotheses we have 
\beno
\|\rmA K_0\|_2\leq 5\ep,\quad 
\int_1^t\|\pa_v\rmA K_0\|_2^2ds\leq 8\ep^2
\eeno
By the fact that $K_0=\pa_{vv}f_0+h(h+2)\pa_{vv}f_0+\pa_vh(1+h)\pa_vf_0$, we have
\begin{align*}
\|\rmA\pa_{vv}f_0\|_2
&\lesssim \|\rmA K_0\|_2+\|\rmA h\|_2(2+\|\rmA h\|_2)\|\rmA\pa_{vv}f_0\|_2+\|\rmA\pa_vh\|_2(1+\|\rmA h\|_2)\|\rmA\pa_vf_0\|_2\\
&\lesssim \|\rmA K_0\|_2+\ep\|\rmA\pa_{vv}f_0\|_2
+\ep(\|\pa_vf_0\|_{L^2}+\|\rmA\pa_{vv}f_0\|_2),
\end{align*}
which gives us that
\ben\label{eq: f_0-1}
\|\rmA\pa_{vv}f_0\|_2\lesssim \|\rmA K_0\|_2+\ep^2\langle t\rangle^{-\f74}\lesssim \ep.
\een
By the fact that 
\ben\label{eq: K_0-1}
\|\rmA K_0\|_2\leq\|K_0\|_{L^2}+\|\pa_v\rmA K_0\|_{L^2}, 
\een
we have by \eqref{eq:K_0L^2},
\beno
\int_1^t\|\rmA\pa_{vv}f_0\|_2^2ds\lesssim \ep^2.
\eeno
Thus we get that
\begin{align*}
t\|\pa_{vv}\rmA\bar{h}\|_{L^2}\lesssim \|\rmA\pa_{vv}f_0\|_2+\|\rmA\pa_{vv}h\|_{L^2}\lesssim \ep.
\end{align*}
and
\begin{align*}
\int_1^ts\|\pa_{vv}\rmA\bar{h}(s)\|_{L^2}^2ds\lesssim \int_1^t\|\rmA\pa_{vv}f_0(s)\|_2^2ds+\int_1^t\|\rmA\pa_{vv}h(s)\|_{L^2}^2ds\lesssim \ep^2.
\end{align*}
Note that 
\begin{align*}
\|\rmA\pa_{vvv} g\|_2
&\leq \|\rmA\pa_{vv}\bar{h}\|_2
+\|\rmA\pa_{vv}(h\pa_v g)\|_2\\
&\lesssim \|\rmA\pa_{vv}\bar{h}\|_2
+\|\rmA h\|_{2}\|\rmA\pa_{vvv} g)\|_2+\|\rmA\pa_{vv}h\|_2\|\rmA\pa_vg\|_2\\
&\lesssim \|\rmA\pa_{vv}\bar{h}\|_2
+\|\rmA h\|_{2}\|\rmA\pa_{vvv} g)\|_2+\|\rmA\pa_{vv}h\|_2(\|\rmA\pa_{vvv}g\|_2+\|g\|_{L^2}),
\end{align*}
which together with the bootstrap assumption implies that
\beno
\|\rmA\pa_{vvv} g\|_2\lesssim \|\rmA\pa_{vv}\bar{h}\|_2+\ep^2\langle t\rangle^{-\f74},
\eeno
which implies the first inequality in Proposition \ref{prop: g,f_0,h}. 

Moreover by the fact that $\|f_0\|_{L^2}\lesssim \ep\langle t\rangle^{-\f54}$ and $\|h\|_{L^2}\lesssim \f{\ep}{\langle t\rangle}$, we have $\|\bar{h}\|_{L^2}\lesssim \f{\ep}{\langle t\rangle}$, thus under the bootstrap hypotheses, we get that 
\ben\label{eq:g-H1}
\|\pa_vg\|_{L^2}\lesssim \f{\ep}{\langle t\rangle^2}.
\een 
\end{proof}

\subsection{Low Gevrey norm estimate}
In this section we prove the rest parts of Proposition \ref{prop: g,f_0,h}. It is natural to compute the time evolution of $\rmE_{lo,f_0}$ and $\rmE_{lo,h}$. Recall that $f_0$ satisfies \eqref{Equ_Av_f} and $h$ satisfies \eqref{eq: h-equ}. We then have
\begin{align*}
&\f{1}{2}\f{d}{dt}\Big(\sum_{k=0}^{3}\f{t^{k}}{4^k}\|\pa_v^kf_0\|_{\cG^{\la,\b;s}}^2\Big)-\dt{\la}(t)\sum_{k=0}^{3}\f{t^{k}}{4^k}\||\pa_v|^{\f{s}{2}}\pa_v^kf_0\|_{\cG^{\la,\b;s}}^2+\rmD_{lo,f_0}(t)\\
&=\sum_{k=1}^{3}\f{k}{2}\f{t^{k-1}}{4^k}\|\pa_v^kf_0\|_{\cG^{\la,\b;s}}^2
+\sum_{k=0}^{3}\f{t^{k}}{4^k}<\pa_v^kf_0,v'<\na^{\bot}\phi_{\neq}\cdot\na f_{\neq}>>_{\cG^{\la,\b;s}}^2\\
&\quad-<f_0,g\pa_vf_0>_{\cG^{\la,\b;s}}
+<f_0,((v')^2-1)\pa_{vv}f_0>_{\cG^{\la,\b;s}}
+<f_0,v''\pa_vf_0>_{\cG^{\la,\b;s}}\\
&\quad+\sum_{k=1}^{3}\Big(
-\f{t^{k}}{4^k}<\pa_v^kf_0,\pa_v^k(g\pa_vf_0)>_{\cG^{\la,\b;s}}
+\f{t^{k}}{4^k}<\pa_v^kf_0,\pa_v^k\big(((v')^2-1)\pa_{vv}f_0\big)>_{\cG^{\la,\b;s}}\\
&\qquad\quad\quad+\f{t^{k}}{4^k}<\pa_v^kf_0,\pa_v^k(v''\pa_vf_0)>_{\cG^{\la,\b;s}}
\Big)
=\sum_{j=1}^{8}\Pi_{f_0,j},
\end{align*}
where
\beno
\rmD_{lo,f_0}(t)=\sum_{k=0}^{3}\f{t^{k}}{4^k}\|\pa_v^{k+1}f_0\|_{\cG^{\la,\b;s}}^2,
\eeno 
and the inner product is defined as follows:
\beno
<f_1,f_2>_{\cG^{\la,\b;s}}=\f{1}{2\pi}\int_{\R}\langle \eta\rangle^{2\b}e^{2\la(t)|\eta|^s}\overline{\hat{f_1}}(\eta)\hat{f_2}(\eta)d\eta.
\eeno
And we also have
\begin{align*}
&\f12\f{d}{dt}(t^2\|\pa_v^2h\|_{\cG^{\la,\b;s}}^2)-\dt{\la}(t)t^2\|\pa_v^2h\|_{\cG^{\la,\b;s}}^2\\
&=-t^2<\pa_{vv}h,\pa_{vv}(g\pa_vh)>_{\cG^{\la,\b;s}}
-t<\pa_{vv}f_0,\pa_{vv}h>_{\cG^{\la,\b;s}}=\Pi_{h,1}+\Pi_{h,2}.
\end{align*}
By \eqref{eq: h-L^2}, it is easy to check that
\beno
t\|h\|_{\cG^{\la,\b+2;s}}\lesssim \ep+\rmE_{lo,h}^{\f12}
\eeno
and by \eqref{h_bar_Equation_2}, \eqref{eq:g-H1} and using the same argument as above section, we have
\beno
t^2\|\pa_vg\|_{\cG^{\la,\b+2;s}}\leq \ep+\rmE_{lo,g}^{\f12},\quad 
\rmE_{lo,g}\lesssim \rmE_{lo,f_0}+\rmE_{lo,h}
\eeno
It is easy to check that 
\beno
|\Pi_{f_0,1}|\leq \f{3}{8}\rmD_{lo,f_0}(t).
\eeno
By the elliptic estimate, we have
\beno
|\Pi_{f_0,2}|\lesssim t^k\|\pa_v^kf_0\|_{\cG^{\la,\b;s}}(1+\|h\|_{\cG^{\la,\b;s}})\|\na P_{\neq}\phi\|_{\cG^{\la,\b;s}}\|\na f_{\neq}\|_{\cG^{\la,\b;s}}\lesssim \f{\ep^2}{\langle t\rangle^{\f92}}\rmE_{lo,f_0}^{\f12}.
\eeno
By the fact that $\|g\|_{L^2}\lesssim \f{\ep}{\langle t\rangle^{\f74}}$, we get that
\begin{align*}
|\Pi_{f_0,3}|
&\lesssim \|f_0\|_{\cG^{\la,\b;s}}
\|g\|_{\cG^{\la,\b;s}}\|\pa_vf_0\|_{\cG^{\la,\b;s}}\\
&\lesssim \|f_0\|_{\cG^{\la,\b;s}}
(\|g\|_{L^2}+\|\pa_vg\|_{\cG^{\la,\b;s}})\|\pa_vf_0\|_{\cG^{\la,\b;s}}\lesssim
\f{\ep}{\langle t\rangle^{2}}\rmE_{lo,f_0}+\f{1}{\langle t\rangle^2}\rmE_{lo,f_0}\rmE_{lo,g}^{\f12}.
\end{align*}
Similarly, we have 
\begin{align*}
&|\Pi_{f_0,4}|\leq \|f_0\|_{\cG^{\la,\b;s}}\|h\|_{\cG^{\la,\b;s}}(2+\|h\|_{\cG^{\la,\b;s}})\|\pa_{vv}f_0\|_{\cG^{\la,\b;s}}
\lesssim \f{\ep}{\langle t\rangle^2}\rmE_{lo,f_0}
+\f{1}{\langle t\rangle^2}\rmE_{lo,f_0}\rmE_{lo,h}^{\f12},\\
&|\Pi_{f_0,5}|\lesssim \|f_0\|_{\cG^{\la,\b;s}}\|\pa_vh\|_{\cG^{\la,\b;s}}(1+\|h\|_{\cG^{\la,\b;s}})\|\pa_{v}f_0\|_{\cG^{\la,\b;s}}
\lesssim \f{\ep}{\langle t\rangle^{\f32}}\rmE_{lo,f_0}
+\f{1}{\langle t\rangle^{\f32}}\rmE_{lo,f_0}\rmE_{lo,h}^{\f12}.
\end{align*}
By using the fact that $<f_1,\pa_vf_2>_{\cG^{\la,\b;s}}=-<\pa_vf_1,f_2>_{\cG^{\la,\b;s}}$, we have 
\begin{align*}
|\Pi_{f_0,6}|
&\lesssim \sum_{k=1}^3\Big(t^k\|\pa_v^{k+1}f_0\|_{\cG^{\la,\b;s}}\|g\|_{\cG^{\la,\b;s}}\|\pa_v^kf_0\|_{\cG^{\la,\b;s}}+t^k\|\pa_v^{k+1}f_0\|_{\cG^{\la,\b;s}}\|\pa_v^{2}g\|_{\cG^{\la,\b;s}}\|\pa_vf_0\|_{\cG^{\la,\b;s}}\Big)\\
&\lesssim \rmD_{lo,f_0}^{\f12}\rmE_{lo,f_0}^{\f12}\left(\f{\ep}{\langle t\rangle^{\f74}}+\f{1}{\langle t\rangle^2}\rmE_{lo,g}^{\f12}\right)
+\rmD_{lo,f_0}^{\f12}\rmE_{lo,f_0}^{\f12}\left(\f{\ep}{\langle t\rangle}+\f{\rmE_{lo,g}^{\f12}}{\langle t\rangle}\right),\\
|\Pi_{f_0,7}|
&\lesssim \sum_{k=1}^3\Big(\ep t^k\|\pa_v^{k+1}f_0\|_{\cG^{\la,\b;s}}^2+t^{\f{k}{2}}\|\pa_v^{k+1}f_0\|_{\cG^{\la,\b;s}}
t\|h\|_{\cG^{\la,\b+2;s}}t^{\f12}\|\pa_{vv}f_0\|_{\cG^{\la,\b;s}}\Big)\\
&\lesssim \ep\rmD_{lo,f_0}+\rmE_{lo,h}^{\f12}\rmD_{lo,f_0}.
\end{align*}
Now we deal with $\Pi_{f_0,8}$, for $k=3$, we have
\begin{align*}
&t^3|<\pa_v^3f_0,\pa_v^3\big(\pa_v((v')^2-1)\pa_vf_0\big)>_{\cG^{\la,\b;s}}|\\
&\lesssim t^{\f32}\|\pa_v^4f_0\|_{\cG^{\la,\b;s}}
t^{\f32}\left\|\pa_v^2\big(\pa_v((v')^2-1)\pa_vf_0\big)\right\|_{\cG^{\la,\b;s}}\\
&\lesssim \rmD_{lo,f_0}^{\f12}\sqrt{t}\|h\|_{\cG^{\la,\b+1;s}}t\|\pa_v^3f_0\|_{\cG^{\la,\b;s}}
+\rmD_{lo,f_0}^{\f12}\sqrt{t}\|\pa_v^2h\|_{\cG^{\la,\b+1;s}}
t\|\pa_vf_0\|_{\cG^{\la,\b;s}}\\
&\lesssim \ep\rmD_{lo,f_0}+\rmE_{lo,h}^{\f12}\rmD_{lo,f_0}
+\rmD_{lo,f_0}^{\f12}\sqrt{t}\|\rmA\pa_v^2h\|_2
t\Big(\|\pa_vf_0\|_{L^2}+\|\pa_{vv}f_0\|_{\cG^{\la,\b;s}}\Big)\\
&\lesssim \ep\rmD_{lo,f_0}+\rmE_{lo,h}^{\f12}\rmD_{lo,f_0}
+\Big(\ep+\rmE_{lo,f_0}^{\f12}\Big)\Big(\rmD_{lo,f_0}
+\langle t\rangle\|\rmA\pa_v^2h\|_2^2\Big),
\end{align*}
for $k=1,2$, we have
\begin{align*}
&t^k|<\pa_v^kf_0,\pa_v^k\big(\pa_v((v')^2-1)\pa_vf_0\big)>_{\cG^{\la,\b;s}}|\\
&\lesssim t^{\f k2}\|\pa_v^{k+1}f_0\|_{\cG^{\la,\b;s}}
t\|h\|_{\cG^{\la,\b+2;s}}\Big(\|\pa_{v}f_0\|_{\cG^{\la,\b;s}}+\|\pa_{vv}f_0\|_{\cG^{\la,\b;s}}\Big)\\
&\lesssim \ep\rmD_{lo,f_0}+\rmE_{lo,h}^{\f12}\rmD_{lo,f_0}. 
\end{align*}
Next we turn to $\Pi_{h,1}$ and $\Pi_{h,2}$. We get that
\begin{align*}
|\Pi_{h,1}|
&\lesssim t\|\pa_{vv}h\|_{\cG^{\la,\b;s}}
t\|g\|_{\cG^{\la,\b;s}} \|\pa_{vv}h\|_{\cG^{\la,\b+1;s}}
+t\|\pa_{vv}h\|_{\cG^{\la,\b;s}}t\|\pa_{v}^3g\|_{\cG^{\la,\b;s}} \|h\|_{\cG^{\la,\b;s}}\\
&\lesssim \rmE_{lo,h}^{\f12}\sqrt{t}\big(\|g\|_{L^2}+\|\pa_v^3g\|_{\cG^{\la,\b;s}}\big)\sqrt{t}\|\rmA\pa_{vv}h\|_2
+\f{1}{\langle t\rangle}\rmE_{lo,h}^{\f12}\rmE_{lo,g}^{\f12}\big(\|h\|_{L^2}+\|\pa_{vv}h\|_{\cG^{\la,\b;s}}\big)\\
&\lesssim \f{\ep+\rmE_{lo,g}^{\f12}}{\langle t\rangle^{\f54}}\big(\rmE_{lo,h}+\ep\big)
+\f{\ep+\rmE_{lo,h}^{\f12}}{\langle t\rangle^2}\rmE_{lo,h}^{\f12}\rmE_{lo,g}^{\f12}.
\end{align*}
Thus we conclude by taking $\ep$ small enough that
\begin{align*}
&\f12\f{d}{dt}\big(\rmE_{lo,f}+\rmE_{lo,h}\big)+\f12\rmD_{lo,f_0}\\
&\lesssim \f{\ep^3}{\langle t\rangle^{\f54}}
+\f{1}{\langle t\rangle^{\f54}}\big(\rmE_{lo,f}+\rmE_{lo,h}\big)^{\f32}
+\big(\rmE_{lo,f}+\rmE_{lo,h}\big)^{\f12}\rmD_{lo,f_0}
+\ep\langle t\rangle\|\rmA\pa_v^2h\|_2^2
+\rmE_{lo,f_0}^{\f12}\langle t\rangle\|\rmA\pa_v^2h\|_2^2,
\end{align*}
which implies that $\rmE_{lo,g}\lesssim \ep^2$ and
\ben
\big(\rmE_{lo,f}+\rmE_{lo,h}\big)+\f{1}{3}\int_0^t\rmD_{lo,f_0}(s)ds\lesssim \ep^2.
\een

We have 
\begin{align*}
\|g\|_{\cG_1^{\la,\b;s}}
&\lesssim \int_{|\xi|\leq 1}|\hat{g}(t,\xi)|d\xi+\|\pa_{vvv}g\|_{\cG_1^{\la,\b;s}}\\
&\lesssim \left\||\xi|^{-\f12+\ep_1}\right\|_{L^2}\|g\|_{\dot{H}^{\f12-\ep_1}}+\ep\langle t\rangle^{-2}\\
&\lesssim
 \f{1}{\sqrt{\ep_1}}\|g\|_{\dot{H}^{\f12-\ep_1}}+\ep\langle t\rangle^{-2}\lesssim
 \f{\ep}{\sqrt{\ep_1^3}\langle t\rangle^{2-\f{\ep_1}{2}}}. 
\end{align*}
which gives the proposition. 

\begin{remark}
It holds that
\beno
\|\pa_vg\|_{\cG^{\la,\b+2;s}}\lesssim \f{\ep}{\langle t\rangle^2}.
\eeno
\end{remark}
\section{Elliptic estimate}
In this section, we study the elliptic estimate. We give the proofs of Propositions \ref{prop: elliptic-1}-\ref{prop: elliptic-4}. Before giving the proof of these propositions, we start with fundamental estimate on the stream function $\phi$ in a lower norm. The regularity gap between the higher regularity  norms and this lower regularity norm allows us to trade the regularity of $f$ in higher  norms for the decay of the stream function in lower norms. In other word, we prove the following lemma. 
\begin{lemma}\label{Lemma_Lossy}
Under the bootstrap hypothesis and for $\ep$ sufficiently small, it holds that  
\begin{equation}\label{Lossy_1}
\begin{aligned}
&\Vert P_{\neq} \pa_z^{-1}\Delta_L\phi(t)\Vert_{\cG^{\lambda, \sigma-2;s}}+\langle t \rangle^2\Vert P_{\neq} \pa_z^{-1}\phi(t)\Vert_{\cG^{\lambda, \sigma-4;s}}\lesssim \Vert \pa_z^{-1}P_{\neq}f(t)\Vert_{\cG^{\lambda, \sigma-2;s}}
\end{aligned}  
\end{equation} 
and
\begin{equation}\label{Decay_f_ep}
\|P_{\neq}\pa_{z}^{-1}\Delta_{L}f\|_{\cG^{\la,\sigma;s}}+{\langle t\rangle^{2}}\Vert \pa_z^{-1}P_{\neq}f(t)\Vert_{\cG^{\lambda, \sigma-2;s}}\lesssim \ep. 
\end{equation}
\end{lemma}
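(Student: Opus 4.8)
\medskip
\noindent\textbf{Proof proposal.}
The idea is to regard $\Delta_t=\pa_{zz}+(v')^2(\pa_v-t\pa_z)^2+v''(\pa_v-t\pa_z)$ as a perturbation of $\Delta_L=\pa_{zz}+(\pa_v-t\pa_z)^2$ and to invert $\Delta_L$ at the level of Fourier symbols, the gain in time decay and the corresponding loss of Sobolev regularity coming entirely from the symbol $-(k^2+(\eta-kt)^2)/(ik)$ of $\pa_z^{-1}\Delta_L$ on $P_{\neq}$. Writing $v'=1+h$, so that $(v')^2-1=h(h+2)$ and $v''=(1+h)\pa_vh$, set
\[
\cR=\Delta_t-\Delta_L=h(h+2)(\pa_v-t\pa_z)^2+(1+h)(\pa_vh)(\pa_v-t\pa_z).
\]
The structural point to exploit is that the coefficients $h(h+2)$ and $(1+h)\pa_vh$ depend only on $(t,v)$, so multiplication by them commutes with $\pa_z^{-1}$, and $\pa_v-t\pa_z$ commutes with $\pa_z^{-1}$ as well; hence $\pa_z^{-1}\cR$ acts by these same coefficients on $(\pa_v-t\pa_z)^j\pa_z^{-1}$, and no powers of $t$ are created.

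\medskip
First I would record two elementary Fourier-multiplier bounds, valid on $P_{\neq}$ (i.e. $k\neq0$) for all $\eta$ and $t\geq0$:
\[
\Big|\frac{(\eta-kt)^2}{k^2+(\eta-kt)^2}\Big|+\Big|\frac{|k|\,|\eta-kt|}{k^2+(\eta-kt)^2}\Big|\lesssim1,\qquad
\frac{\langle t\rangle^2}{k^2+(\eta-kt)^2}\lesssim\langle k,\eta\rangle^2.
\]
The second is the ``lossy'' inequality: if $|\eta-kt|\gtrsim|k|t$ then $k^2+(\eta-kt)^2\gtrsim\langle t\rangle^2$, and otherwise $|\eta|\approx|kt|\geq t$, so that $\langle t\rangle^2\lesssim\langle k,\eta\rangle^2$ while the quotient is $\leq1$. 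Together with $\widehat{\pa_z^{-1}\Delta_L\Psi}=-\frac{k^2+(\eta-kt)^2}{ik}\widehat{\Psi}$, these give, for any $\Psi$, any $j\in\{0,1,2\}$ and any Sobolev index $r$,
\[
\big\|(\pa_v-t\pa_z)^j\pa_z^{-1}P_{\neq}\Psi\big\|_{\cG^{\la,r;s}}\lesssim\big\|\pa_z^{-1}P_{\neq}\Delta_L\Psi\big\|_{\cG^{\la,r;s}},\qquad
\langle t\rangle^2\big\|\pa_z^{-1}P_{\neq}\Psi\big\|_{\cG^{\la,r-2;s}}\lesssim\big\|\pa_z^{-1}P_{\neq}\Delta_L\Psi\big\|_{\cG^{\la,r;s}},
\]
which will be applied with $\Psi=\phi$ and $\Psi=f$.

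\medskip
To prove \eqref{Lossy_1} I would start from $\Delta_L\phi=\Delta_t\phi-\cR\phi=f-\cR\phi$, apply $\pa_z^{-1}P_{\neq}$, and use the commutation above to write $\pa_z^{-1}\cR\phi=h(h+2)(\pa_v-t\pa_z)^2\pa_z^{-1}\phi+(1+h)(\pa_vh)(\pa_v-t\pa_z)\pa_z^{-1}\phi$. The Gevrey product estimate (Lemma \ref{Product_Gevrey_Lemma}), the first multiplier bound above, and the bootstrap bounds $\|h\|_{\cG^{\la,\s;s}}+\|\pa_vh\|_{\cG^{\la,\s;s}}\lesssim\ep$ (from \eqref{eq:B1} via $\rmE_d$ and from Proposition \ref{prop: g,f_0,h}) then yield
\[
\big\|\pa_z^{-1}\cR\phi\big\|_{\cG^{\la,\s-2;s}}\lesssim\ep\,\big\|\pa_z^{-1}\Delta_L\phi\big\|_{\cG^{\la,\s-2;s}}.
\]
Since the norms in \eqref{Lossy_1} are finite for each fixed $t$ (as part of the continuity/bootstrap set-up), this error may be absorbed, giving $\|\pa_z^{-1}\Delta_L\phi\|_{\cG^{\la,\s-2;s}}\lesssim\|\pa_z^{-1}P_{\neq}f\|_{\cG^{\la,\s-2;s}}$, and then the second multiplier bound with $\Psi=\phi$, $r=\s-2$ controls $\langle t\rangle^2\|\pa_z^{-1}P_{\neq}\phi\|_{\cG^{\la,\s-4;s}}$. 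For \eqref{Decay_f_ep} I would instead use the good unknown: by \eqref{K_Equation}, $\Delta_Lf=K+\gamma^2\pa_z\rho-\cR f$, so $\pa_z^{-1}P_{\neq}\Delta_Lf=\pa_z^{-1}P_{\neq}K+\gamma^2P_{\neq}\rho-\pa_z^{-1}P_{\neq}\cR f$. The term $\pa_z^{-1}P_{\neq}\cR f$ is absorbed exactly as above, while on $P_{\neq}$ one has $\|\pa_z^{-1}P_{\neq}K\|_{\cG^{\la,\s;s}}\leq\|P_{\neq}K\|_{\cG^{\la,\s;s}}\lesssim\|\rmA K\|_2\lesssim\ep$ and $\|P_{\neq}\rho\|_{\cG^{\la,\s;s}}\lesssim\|\rmA\rho\|_2\lesssim\ep$ by \eqref{eq:B1}; hence $\|P_{\neq}\pa_z^{-1}\Delta_Lf\|_{\cG^{\la,\s;s}}\lesssim\ep$, and the second multiplier bound with $\Psi=f$, $r=\s$ gives $\langle t\rangle^2\|\pa_z^{-1}P_{\neq}f\|_{\cG^{\la,\s-2;s}}\lesssim\ep$.

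\medskip
I expect the main obstacle to be the bookkeeping in the two absorption steps: one must keep the Sobolev indices aligned so that the product estimate only ever sees $h$ and $\pa_vh$ at indices where the bootstrap controls them by $\ep$ — this is exactly why the statement tolerates the loss from $\s$ down to $\s-2$ and $\s-4$ — and one must justify the a priori finiteness of the norms being absorbed (via the local theory, or by working with frequency truncations). The other point that deserves care, though only a short case analysis, is the lossy symbol inequality $\langle t\rangle^2(k^2+(\eta-kt)^2)^{-1}\lesssim\langle k,\eta\rangle^2$, which is precisely where Sobolev regularity is traded for the quadratic-in-time decay of $\phi$ and of $P_{\neq}f$.
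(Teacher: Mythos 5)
Your proposal matches the paper's proof in every essential step: the same symbol bound $\langle t\rangle^2/(k^2+(\eta-kt)^2)\lesssim\langle k,\eta\rangle^2$ to trade Sobolev regularity for time decay, the same identity $\Delta_L\phi=f-(\Delta_t-\Delta_L)\phi$ (and $\Delta_L f=K+\gamma^2\pa_z\rho-(\Delta_t-\Delta_L)f$) with the perturbative terms absorbed via the Gevrey algebra property and the $O(\ep)$ bootstrap bounds on $h,\pa_vh$, and the same control of $\pa_z^{-1}K$ and $\rho$ on non-zero modes by $\|\rmA K\|_2,\|\rmA\rho\|_2\lesssim\ep$. The only cosmetic differences are that you spell out the two-case proof of the lossy symbol inequality and package the error as $\cR=\Delta_t-\Delta_L$, neither of which changes the argument.
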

\begin{proof}
We have for any $\varphi$ and $\s'>0$
\begin{align*}
\Vert P_{\neq} \pa_z^{-1}\varphi(t)\Vert_{\cG^{\lambda, \sigma';s}}^2=&\sum_{k\neq0}\int_\eta e^{2\lambda |k,\eta|^s}\langle k,\eta \rangle^{2\sigma'}|\rhat{\pa_z^{-1}\phi}(k,\eta)|^2 d\eta\\
=&\sum_{k\neq0}\int_\eta e^{2\lambda |k,\eta|^s}\f{\langle k,\eta \rangle^{2\sigma'+4}}{\langle k,\eta \rangle^4(k^2+(\eta-kt)^2)^2}\\
&\times (k^2+(\eta-kt)^2)^2|\rhat{\pa_z^{-1}\varphi}(k,\eta)|^2 d\eta\\
\lesssim &\, \f{1}{\langle t \rangle^4}\Vert P_{\neq} \Delta_L\pa_z^{-1}\varphi(t)\Vert_{\cG^{\lambda, \sigma'+2;s}}^2,
\end{align*}

On the other hand, we have
\begin{equation}\label{Phi_Identity}
 \begin{aligned}
\Delta_LP_{\neq}\phi=&\,P_{\neq}f+(1-(v^\prime)^2)(\pa_v-t\pa_z)^2P_{\neq}\phi-v^{\prime\prime}(\pa_v-t\pa_z)P_{\neq}\phi. 
\end{aligned}
\end{equation}
and
\begin{equation}\label{Eq_f_Delta_L}
\Delta_L f=\gamma^2\partial_z \rho+K-((v')^2-1)(\pa_v-t\pa_z)^2f-v''(\pa_v-t\pa_z)f.
\end{equation}
Hence, it holds that by using the algebra property of Gevrey spaces together with the bootstrap assumption   
\begin{equation}
\begin{aligned}
\Vert P_{\neq} \pa_z^{-1}\Delta_L\phi(t)\Vert_{\cG^{\lambda, \sigma-2;s}}\lesssim&\, \Vert P_{\neq} \pa_z^{-1}f(t)\Vert_{\cG^{\lambda, \sigma-2;s}}+ \Vert 1-(v^\prime)^2\Vert_{\cG^{\lambda, \sigma-2;s}}\Vert P_{\neq}\pa_z^{-1} \Delta_L\phi(t)\Vert_{\cG^{\lambda, \sigma-2;s}}\\
&+ \Vert v^{\prime\prime}\Vert_{\cG^{\lambda, \sigma-2;s}}\Vert P_{\neq} \pa_z^{-1}\Delta_L\phi(t)\Vert_{\cG^{\lambda, \sigma-2;s}}\\
\lesssim&\, \Vert P_{\neq}\pa_z^{-1} f(t)\Vert_{\cG^{\lambda, \sigma-2;s}}+\ep\Vert P_{\neq} \pa_z^{-1}\Delta_L\phi(t)\Vert_{\cG^{\lambda, \sigma-2;s}}.   
\end{aligned}
\end{equation}
Thus by taking $\varphi=\phi$, $\s'=\s-4$ and $\ep$ sufficiently small, we get  \eqref{Lossy_1}.  

Similarly, we have from \eqref{Eq_f_Delta_L} 
\begin{equation}
\begin{aligned}
\|P_{\neq}\pa_{z}^{-1}\Delta_{L}f\|_{\cG^{\la,\sigma;s}}\lesssim&\,\|\rmA P_{\neq}\rho\|_2+\|\rmA \pa_{z}^{-1}P_{\neq}K\|_2+\Vert G_1\Vert_{\cG^{\la,\sigma;s}}\|P_{\neq}\pa_{z}^{-1}\Delta_{L}f\|_{\cG^{\la,\sigma;s}}\\
&+\Vert\pa_vG_1\Vert_{\cG^{\la,\sigma;s}}\|P_{\neq}\pa_{z}^{-1}\Delta_{L}f\|_{\cG^{\la,\sigma;s}}\\
\lesssim&\, \ep+\ep \|P_{\neq}\pa_{z}^{-1}\Delta_{L}f\|_{\cG^{\la,\sigma;s}}.
\end{aligned}
\end{equation}
Therefore by taking $\varphi=f$, $\s'=\s-2$ and $\ep$ sufficiently small, we get \eqref{Decay_f_ep}. 
\end{proof}

\subsection{Proof of Proposition \ref{prop: elliptic-3}}
\label{Section_Proof_elliptic-3}
\begin{proof}
It is also easy to check that
\begin{equation}\label{f_Ellip_Estimate}
\begin{aligned}
\|\rmA P_{\neq}\pa_z^{-1}\Delta_L f\|_2
\leq &\g^2\|\rmA P_{\neq}\rho\|_2+\|\rmA \pa_{z}^{-1}P_{\neq}K\|_2\\
&+\|\rmA P_{\neq}\pa_{z}^{-1}\mathcal{M}_{1,f}\|_2
+\|\rmA P_{\neq}\pa_{z}^{-1}\mathcal{M}_{2,f}\|_2.
\end{aligned}
\end{equation}
where 
\beno
\mathcal{M}_{1,f}=((v')^2-1)(\pa_v-t\pa_z)^2f,\quad 
\mathcal{M}_{2,f}=v''(\pa_v-t\pa_z)f.
\eeno
Hence,  by dividing each via a paraproduct decomposition in the $v$ variable only we have that
\begin{align*}
\widehat{\cM}_{1,f}(t,k,\eta)
&=-\f{1}{2\pi}\sum_{\rmM\geq 8}\int \widehat{G_1}(\xi)_{\rmM}((\eta-\xi)-kt)^2\hat{f}_k(\eta-\xi)_{<\rmM/8}d\xi\\
&\quad-\f{1}{2\pi}\sum_{\rmM\geq 8}\int\widehat{G_1}(\eta-\xi)_{<\rmM/8}(\xi-kt)^2\hat{f}_k(\xi)_{\rmM}d\xi\\
&\quad-\f{1}{2\pi}\sum_{\rmM\in \mathbf{D}}\sum_{\f18\rmM\leq \rmM'\leq 8\rmM}\int \widehat{G_1}(\xi)_{\rmM'}((\eta-\xi)-kt)^2\hat{f}_k(\eta-\xi)_{\rmM}d\xi\\
&=\widehat{\cM}_{1,f;\rmH\rmL}+\widehat{\cM}_{1,f;\rmL\rmH}+\widehat{\cM}_{1,f;\rmH\rmH},
\end{align*}
and 
\begin{align*}
\widehat{\cM}_{2,f}(t,k,\eta)
&=\f{i}{2\pi}\sum_{\rmM\geq 8}\int\widehat{v''}(\xi)_{\rmM}((\eta-\xi)-kt)\hat{f}_k(\eta-\xi)_{<\rmM/8}d\xi\\
&\quad+\f{i}{2\pi}\sum_{\rmM\geq 8}\int\widehat{v''}(\eta-\xi)_{<\rmM/8}(\xi-kt)\hat{f}_k(\xi)_{\rmM}d\xi\\
&\quad+\f{i}{2\pi}\sum_{\rmM\in \mathbf{D}}\sum_{\f18\rmM\leq \rmM'\leq 8\rmM}\int\widehat{v''}(\eta-\xi)_{\rmM'}(\xi-kt)\hat{f}_k(\xi)_{\rmM}d\xi\\
&=\widehat{\cM}_{2,f;\rmH\rmL}+\widehat{\cM}_{2,f;\rmL\rmH}+\widehat{\cM}_{2,f;\rmH\rmH}.
\end{align*}
where $G_1=(v')^2-1$ and $v''=\f12\pa_vG_1$. 

The treatment of $\widehat{\cM}_{1,f;\rmL\rmH}$ and $\widehat{\cM}_{2,f;\rmL\rmH}$ is similar.
 By the fact that
\beno
\f{\rmJ_k(\eta)}{\rmJ_k(\xi)}\lesssim e^{C|\eta-\xi|^{\f13}}
\eeno
We get that 
\begin{equation}\label{f_1_LH}
\begin{aligned}
\|\rmA P_{\neq}\pa_{z}^{-1}\mathcal{M}_{1,f;\rmL\rmH}\|_2^2
&\lesssim \sum_{\rmM\geq 8}\|G_1\|_{\cG^{\la,0;s}}^2
\|\rmA\pa_z^{-1}(\pa_v-t\pa_z)^2P_{\neq}f_{\rmM}\|_2^2,\\
&\lesssim \ep^2\|\rmA P_{\neq}\pa_z^{-1}\Delta_{L}f\|_2^2,
\end{aligned}
\end{equation}
and
\begin{equation}\label{f_2_LH}
\begin{aligned}
\|\rmA P_{\neq}\pa_{z}^{-1}\mathcal{M}_{2,f;\rmL\rmH}\|_2^2
&\lesssim \sum_{\rmM\geq 8}\|v''\|_{\cG^{\la,0;s}}^2
\|\rmA\pa_z^{-1}(\pa_v-t\pa_z)P_{\neq}f_{\rmM}\|_2^2\\
&\lesssim \ep^2\|\rmA P_{\neq}\pa_z^{-1}\Delta_{L}f\|_2^2.
\end{aligned}
\end{equation}

Next we consider the high-low interaction. The notation is deceptive: the frequency in $z$ could be very
large and hence more ‘derivatives’ are appearing on $f$ and we will be in a situation like the low-high interaction. Hence we break into two cases: 
\begin{align*}
\widehat{\cM}_{1,f;\rmH\rmL}
&=-\f{1}{2\pi}\sum_{\rmM\geq 8}\int [\mathbf{1}_{|k|\geq \f{1}{16}|\eta|}+\mathbf{1}_{|k|< \f{1}{16}|\eta|}]\widehat{G_1}(\xi)_{\rmM}((\eta-\xi)-kt)^2\hat{f}_k(\eta-\xi)_{<\rmM/8}d\xi\\
&=\widehat{\cM}_{1,f;\rmH\rmL}^{z}+\widehat{\cM}_{1,f;\rmH\rmL}^{v}, \\
\widehat{\cM}_{2,f;\rmH\rmL}
&=\f{i}{2\pi}\sum_{\rmM\geq 8}\int [\mathbf{1}_{|k|\geq \f{1}{16}|\eta|}+\mathbf{1}_{|k|< \f{1}{16}|\eta|}]\widehat{v''}(\xi)_{\rmM}((\eta-\xi)-kt)\hat{f}_k(\eta-\xi)_{<\rmM/8}d\xi\\
&=\widehat{\cM}_{2,f;\rmH\rmL}^{z}+\widehat{\cM}_{2,f;\rmH\rmL}^{v}. 
\end{align*}
Let us first treat $\widehat{\cM}_{1,f;\rmH\rmL}^{z}$ and $\widehat{\cM}_{2,f;\rmH\rmL}^{z}$. 
On the support of the integrand, we get that there is some $c\in (0,1)$ such that,
\begin{equation}
|k,\eta|^s\leq |k,\eta-\xi|^s+c|\xi|^{s}
\end{equation}

We also have 
\begin{equation}\label{eq_J_eta_J_xi}
\f{\rmJ_k(\eta)}{\rmJ_k(\xi)}\lesssim e^{C|k,\eta-\xi|^{\f13}}.
\end{equation}
Thus we get that 
\begin{equation}\label{f_1_HL}
\begin{aligned}
&\|\rmA P_{\neq}\pa_{z}^{-1}\mathcal{M}_{1,f;\rmH\rmL}^z\|_2^2
+\|\rmA P_{\neq}\pa_{z}^{-1}\mathcal{M}_{2,f;\rmH\rmL}^z\|_2^2\\
\lesssim &\,\sum_{\rmM\geq 8}\big(\|G_1\|_{\cG^{\la,0;s}}^2+\|v''\|_{\cG^{\la,0;s}}^2\big)
\|\rmA\pa_z^{-1}\Delta_{L}P_{\neq}f_{\rmM}\|_2^2
\lesssim \ep^2\|\rmA\pa_z^{-1}\Delta_{L}P_{\neq}f\|_2^2, 
\end{aligned}
\end{equation}
Next we consider $\mathcal{M}_{1,f;\rmH\rmL}^v$ and $\mathcal{M}_{2,f;\rmH\rmL}^v$. Due to the fact that $G_1$ admits two more derivate. By the fact that
\begin{equation}\label{J_J_0_Estimate_1}
\f{\rmJ_k(\eta)}{\rmJ_0(\xi)}\lesssim \langle \xi\rangle e^{C|k,\eta-\xi|^{\f13}},
\end{equation}

we have
\begin{equation}\label{f_1_2_HL}
\begin{aligned}
&\|\rmA P_{\neq}\pa_{z}^{-1}\mathcal{M}_{1,f;\rmH\rmL}^v\|_2^2
+\|\rmA P_{\neq}\pa_{z}^{-1}\mathcal{M}_{2,f;\rmH\rmL}^v\|_2^2\\
&\lesssim \sum_{\rmM\geq 8}\big(\|\langle \pa_v\rangle \rmA (v'')_{\rmM}\|_2^2+\|\langle \pa_v\rangle \rmA (G_1)_{\rmM}\|_2^2\big)\|P_{\neq}\pa_{z}^{-1}\Delta_{L}f\|_{\cG^{\la,0;s}}^2
\lesssim \ep^4. 
\end{aligned}
\end{equation}
The high-high interaction is easy to treat. We show the results and omit the proof. 
\begin{align}\label{f_1_HH}
\|\rmA P_{\neq}\pa_{z}^{-1}\mathcal{M}_{1,f;\rmH\rmH}^v\|_2^2
+\|\rmA P_{\neq}\pa_{z}^{-1}\mathcal{M}_{2,f;\rmH\rmH}^v\|_2^2\lesssim \ep^4
\end{align}
Plugging \eqref{f_1_LH}, \eqref{f_2_LH}, \eqref{f_1_HL}, \eqref{f_1_2_HL} and \eqref{f_1_HH} into \eqref{f_Ellip_Estimate} using the bootstrap assumption and  taking $\ep$ small enough, we get the proposition. 
\end{proof}

\subsection{Proof of Proposition \ref{prop: elliptic-1}}\label{Section_Proof_Proposition_elliptic-1}
\begin{proof}
We write 
 \begin{equation}
 \begin{aligned}
\Delta_L\phi=&\,f+(1-(v^\prime)^2)(\pa_v-t\pa_z)^2\phi-v^{\prime\prime}(\pa_v-t\pa_z)\phi. 
\end{aligned}
\end{equation}
This yields by using the fact that 
$\pa_v (v^\prime)^2=2\pa_v h(h+1)$ and 
\begin{equation}\label{Delta_L_phi}
 \begin{aligned}
\Delta_L^2\phi=&\,\Delta_L f-G_1(\pa_v-t\pa_z)^2\Delta_L\phi-\pa_vG_1(\pa_v-t\pa_z)\Big(\f52(\pa_v-t\pa_z)^2+\f12\pa_z^2\Big)\phi\\
&-2\pa_{vv} G_1(\pa_v-t\pa_z)^2\phi-\f12\pa_{vvv}G_1(\pa_v-t\pa_z)\phi\\
=&\,\mathcal{M}_{1,\phi}+\mathcal{M}_{2,\phi}+\mathcal{M}_{3,\phi}+\mathcal{M}_{4,\phi}. 
\end{aligned}
\end{equation}

Similarly by following the proof of Lemma \ref{Lemma_Lossy}, it is easy to check that under the bootstrap assumption, it holds for $\s_0\leq \s-1$.  
\begin{equation}\label{Gev_phi}
\|P_{\neq}\pa_{z}^{-1}\Delta_{L}^2\phi\|_{\cG^{\la,\sigma_0;s}}\lesssim \ep
\end{equation}

Hence, it holds that by using \eqref{Elliptic_f_1} 
\begin{equation}\label{phi_Ellip_Main}
\begin{aligned}
\Big\|\Big\langle \f{\pa_v}{t\pa_z}\Big\rangle^{-1} \rmA\pa_z^{-1}\Delta^{2}_{L}P_{\neq}\phi\Big\|_2\lesssim &\,\Big\|\Big\langle \f{\pa_v}{t\pa_z}\Big\rangle^{-1}\rmA P_{\neq}\pa_z^{-1}\Delta_L f\Big\|_2+\sum_{i=1}^4 \Big\|\Big\langle \f{\pa_v}{t\pa_z}\Big\rangle^{-1} \rmA P_{\neq}\pa_z^{-1}\mathcal{M}_{i,\phi}\Big\|_2\\
\lesssim &\, \ep+\sum_{i=1}^4 \Big\|\Big\langle \f{\pa_v}{t\pa_z}\Big\rangle^{-1} \rmA P_{\neq}\pa_z^{-1}\mathcal{M}_{i,\phi}\Big\|_2. 
\end{aligned}
\end{equation}
We write    
\begin{align*}
\widehat{\cM}_{i, \phi}(t,k,\eta)
&=\widehat{\cM}_{i,\phi;\rmH\rmL}+\widehat{\cM}_{i,\phi;\rmL\rmH}+\widehat{\cM}_{i,\phi;\rmH\rmH},\quad i=1,\dots 4.  
\end{align*}
%%%%%%%%%%%%%%%%%%%%%%%%%%%%%%%%%
To estimate $\widehat{\cM}_{1,\phi;\rmL\rmH}$, we proceed as in the estimate involving $\widehat{\cM}_{1,f;\rmL\rmH}$ and   using the fact that on the support of the integrand, we have for the  $\langle \f{\eta}{tk}\rangle^{-1}\approx \langle \f{\xi}{tk}\rangle^{-1} $, which means that we can move this factor to $\phi$ and obtain as in \eqref{f_1_LH} 
 \begin{equation}
 \begin{aligned}
\Big\|\Big\langle \f{\pa_v}{t\pa_z}\Big\rangle^{-1} \rmA P_{\neq}\pa_z^{-1}\mathcal{M}_{1,\phi;\rmL\rmH}\Big\|_2\lesssim \ep \Big\|\Big\langle \f{\pa_v}{t\pa_z}\Big\rangle^{-1} \rmA\pa_z^{-1}\Delta^{2}_{L}P_{\neq}\phi\Big\|_2. 
\end{aligned}
\end{equation}
We also have similarly 
\begin{equation}
 \begin{aligned}
\Big\|\Big\langle \f{\pa_v}{t\pa_z}\Big\rangle^{-1} \rmA P_{\neq}\pa_z^{-1}\mathcal{M}_{2,\phi;\rmL\rmH}\Big\|_2\lesssim \ep \Big\|\Big\langle \f{\pa_v}{t\pa_z}\Big\rangle^{-1} \rmA\pa_z^{-1}\Delta^{2}_{L}P_{\neq}\phi\Big\|_2
\end{aligned}
\end{equation}
Both terms can be absorbed by the left-hand side of \eqref{phi_Ellip_Main} for sufficiently small $\ep$.

 The other two terms $\widehat{\cM}_{3,\phi;\rmL\rmH}$ and $\widehat{\cM}_{4,\phi;\rmL\rmH}$  can be treated similarly. We omit the details and write the result
 \begin{equation}
 \begin{aligned}
&\Big\|\Big\langle \f{\pa_v}{t\pa_z}\Big\rangle^{-1} \rmA P_{\neq}\pa_z^{-1}\mathcal{M}_{3,\phi;\rmL\rmH}\Big\|_2+\Big\|\Big\langle \f{\pa_v}{t\pa_z}\Big\rangle^{-1} \rmA P_{\neq}\pa_z^{-1}\mathcal{M}_{4,\phi;\rmL\rmH}\Big\|_2\\
&\lesssim\, \ep \Big\|\Big\langle \f{\pa_v}{t\pa_z}\Big\rangle^{-1} \rmA\pa_z^{-1}\Delta^{2}_{L}P_{\neq}\phi\Big\|_2. 
\end{aligned}
\end{equation}

 For the high-low interaction, we write 
 \begin{align*}
\widehat{\cM}_{1,\phi;\rmH\rmL}
&=-\f{1}{2\pi}\sum_{\rmM\geq 8}\int \Big\langle \f{\eta}{tk}\Big\rangle^{-1}[\mathbf{1}_{|k|\geq \f{1}{16}|\eta|}+\mathbf{1}_{|k|< \f{1}{16}|\eta|}]\widehat{G_1}(\xi)_{\rmM}((\eta-\xi)-kt)^2\rhat{\Delta_L \phi}_k(\eta-\xi)_{<\rmM/8}d\xi\\
&=\widehat{\cM}_{1, \phi;\rmH\rmL}^{z}+\widehat{\cM}_{1,\phi;\rmH\rmL}^{v}, \\
\widehat{\cM}_{2,\phi;\rmH\rmL}
&=\f{i}{2\pi}\sum_{\rmM\geq 8}\int \Big\langle \f{\eta}{tk}\Big\rangle^{-1}[\mathbf{1}_{|k|\geq \f{1}{16}|\eta|}+\mathbf{1}_{|k|< \f{1}{16}|\eta|}]\widehat{v''}(\xi)_{\rmM}((\eta-\xi)-kt)\rhat{\Delta_L \phi}_k(\eta-\xi)_{<\rmM/8}d\xi\\
&=\widehat{\cM}_{2,\phi;\rmH\rmL}^{z}+\widehat{\cM}_{2,\phi;\rmH\rmL}^{v}. 
\end{align*}
  
As in \eqref{f_1_2_HL} we have the estimate 
\begin{equation}\label{f_1_2_HL}
\begin{aligned}
&\Big\|\Big\langle \f{\pa_v}{t\pa_z}\Big\rangle^{-1} \rmA P_{\neq}\pa_{z}^{-1}\mathcal{M}_{1,\phi;\rmH\rmL}^z\Big\|_2
+\Big\|\Big\langle \f{\pa_v}{t\pa_z}\Big\rangle^{-1} \rmA P_{\neq}\pa_{z}^{-1}\mathcal{M}_{2,\phi;\rmH\rmL}^z\Big\|_2\\
\lesssim &\,\ep\Big\|\Big\langle \f{\pa_v}{t\pa_z}\Big\rangle^{-1} \rmA\pa_z^{-1}\Delta^{2}_{L}P_{\neq}\phi\Big\|_2  
\end{aligned}
\end{equation}
which again can be absorbed by the left-hand side of \eqref{phi_Ellip_Main}.

 The estimate of the terms $\widehat{\cM}_{1,\phi;\rmH\rmL}^{v}$ and $\widehat{\cM}_{2,\phi;\rmH\rmL}^{v}$ can also be done as in \eqref{f_1_2_HL} and by using \eqref{Gev_phi},  we get 
 \begin{equation}\label{phi_1_2_HL}
\begin{aligned}
&\Big\|\Big\langle \f{\pa_v}{t\pa_z}\Big\rangle^{-1} \rmA P_{\neq}\pa_{z}^{-1}\mathcal{M}_{1,\phi;\rmH\rmL}^v\Big\|_2^2
+\Big\|\Big\langle \f{\pa_v}{t\pa_z}\Big\rangle^{-1} \rmA P_{\neq}\pa_{z}^{-1}\mathcal{M}_{2,\phi;\rmH\rmL}^v\Big\|_2^2\\
&\lesssim \sum_{\rmM\geq 8}\big(\|\langle \pa_v\rangle \rmA (v'')_{\rmM}\|_2^2+\|\langle \pa_v\rangle \rmA (G_1)_{\rmM}\|_2^2\big)\|P_{\neq}\pa_{z}^{-1}\Delta_{L}^2\phi\|_{\cG^{\la,0;s}}^2
\lesssim \ep^4. 
\end{aligned}
\end{equation}

 Now, we prove estimates for $\widehat{\cM}_{4,\phi;\rmH\rmL}$. The one of $\widehat{\cM}_{3,\phi;\rmH\rmL}$ is easier compared to $\widehat{\cM}_{4,\phi;\rmH\rmL}$.
 We write as above  $\widehat{\cM}_{4,\phi;\rmH\rmL}=\widehat{\cM}_{4,\phi;\rmH\rmL}^{z}+\widehat{\cM}_{4,\phi;\rmH\rmL}^v$. The term $\widehat{\cM}_{4,\phi;\rmH\rmL}^{z}$ can be treated as the low-high interaction and we have 
 \begin{equation}
\Big\|\Big\langle \f{\pa_v}{t\pa_z}\Big\rangle^{-1} \rmA P_{\neq}\pa_{z}^{-1}\mathcal{M}_{1,\phi;\rmH\rmL}^z\Big\|_2\lesssim \ep\Big\|\Big\langle \f{\pa_v}{t\pa_z}\Big\rangle^{-1} \rmA\pa_z^{-1}\Delta^{2}_{L}P_{\neq}\phi\Big\|_2
\end{equation}
 So, the most changeling term is the term  $\widehat{\cM}_{4,\phi;\rmH\rmL}^v$ since in this term all the derivatives are landing on the term $\pa_{vvv}G_1$ which will have a regularity loss. Here, where we need to use the factor $\langle \xi/(lt)  \rangle^{-1}$ to absorb one derivative by paying time decay.  
 
 By using the estimate 
 \begin{equation}\label{J_J_0_Estimate}
\f{\rmJ_k(\eta)}{\rmJ_0(\xi)}\lesssim \langle t\rangle e^{C|k,\eta-\xi|^{\f13}},
\end{equation}
we get by using the fact that on the support of the integrand $|\eta|\approx |\xi|$, and since $s>\f13$, 
\begin{equation}
\begin{aligned}
&\Big|\Big\langle \f{\pa_v}{t\pa_z}\Big\rangle^{-1} \rmA P_{\neq}\pa_{z}^{-1}\mathcal{M}_{4,\phi;\rmH\rmL}^v\Big|\\
&\lesssim\sum_{\rmM\geq 8}\int \Big\langle \f{\eta}{tk}\Big\rangle^{-1}\mathbf{1}_{|k|< \f{1}{16}|\eta|}\rmA_0 (\xi)|\xi|^3|\widehat{G_1}(\xi)_{\rmM}|\langle t\rangle|(\eta-\xi)-kt|e^{c\la|k,\eta-\xi|^{s}}e^{C|k,\eta-\xi|^{\f13}}|\rhat{\phi}_k(\eta-\xi)_{<\rmM/8}|d\xi\\
&\lesssim
\sum_{\rmM\geq 8}\int \mathbf{1}_{|k|< \f{1}{16}|\eta|}\langle \xi\rangle^{2}\rmA_0 (\xi)|\widehat{G_1}(\xi)_{\rmM}|\langle t\rangle^3\langle k,\eta-\xi\rangle^2e^{c\la|k,\eta-\xi|^{s}}e^{C|k,\eta-\xi|^{\f13}}|\rhat{\phi}_k(\eta-\xi)_{<\rmM/8}|d\xi
\end{aligned}
\end{equation}
This yields by using the bootstrap assumption together with Lemma \ref{Lemma_Lossy},   
\begin{equation}
\begin{aligned}
\Big\|\Big\langle \f{\pa_v}{t\pa_z}\Big\rangle^{-1} \rmA P_{\neq}\pa_{z}^{-1}\mathcal{M}_{4,\phi;\rmH\rmL}^v\Big\|_2^2\lesssim&\,\langle t\rangle^6 \sum_{\rmM\geq 8}\big(\|\langle \pa_v\rangle^2 \rmA h_{\rmM}\|_2\big)\|_2^2\big)\|P_{\neq}\phi\|_{\cG^{\la,0;s}}^2
\lesssim\ep^4. 
\end{aligned}
\end{equation}

For the terms $\mathcal{M}_{i,\phi;\rmH\rmH},\, i=1,\dots,4$, we have by using the fact that on the support of the integrand, we have  
$|\eta|^s \leq c|\eta-\xi|^s+c|\xi|^s,\, c\in(0,1)$ together with \eqref{J_J_0_Estimate_1}, we have, by absorbing all the possible loss of derivatives by the Gevery term (since $c<1$), we have 
\begin{equation}
\begin{aligned}
\sum_{i=1}^4\|\rmA P_{\neq}\pa_{z}^{-1}\mathcal{M}_{i,\phi;\rmH\rmH}^v\|_2^2\lesssim \ep^4. 
\end{aligned}
\end{equation}
Collecting all the above estimates and taking $\ep$ sufficiently small, we get the desired result. 
\end{proof}

\subsection{Proof of Proposition \ref{prop: elliptic-4}}. 
\label{Section_Proof_elliptic-4}
In this section, we prove the estimate the estimate in Proposition \ref{prop: elliptic-4} .
\subsubsection{Proof of \eqref{Elliptic_f_lambda}}

We have by using \eqref{Eq_f_Delta_L} (see \eqref{f_Ellip_Estimate})
\begin{equation}\label{f_M_0_Estimate}
\begin{aligned}
\bigg\|\mathbf{1}_{t\geq M_0}\f{|\na|^{\f s 2}}{\langle t\rangle^{\f{3s}{2}}}\rmA\pa_z^{-1}\Delta_{L}P_{\neq }f\bigg\|_2^2\leq&\, \g^2\bigg\|\mathbf{1}_{t\geq M_0}\f{|\na|^{\f s 2}}{\langle t\rangle^{\f{3s}{2}}}\rmA P_{\neq}\rho\bigg\|_2^2+\bigg\|\mathbf{1}_{t\geq M_0}\f{|\na|^{\f s 2}}{\langle t\rangle^{\f{3s}{2}}}\rmA \pa_{z}^{-1}P_{\neq}K\bigg\|_2^2\\
&+\bigg\|\mathbf{1}_{t\geq M_0}\f{|\na|^{\f s 2}}{\langle t\rangle^{\f{3s}{2}}}\rmA P_{\neq}\pa_{z}^{-1}\mathcal{M}_{1,f}\bigg\|_2^2
+\bigg\|\mathbf{1}_{t\geq M_0}\f{|\na|^{\f s 2}}{\langle t\rangle^{\f{3s}{2}}}\rmA P_{\neq}\pa_{z}^{-1}\mathcal{M}_{2,f}\bigg\|_2^2. 
\end{aligned}   
\end{equation}
 Keeping in mind \eqref{lambda_Decay}, the first two terms in \eqref{f_M_0_Estimate} can be estimates using $\rmCK_{\la,K}$ and $\rmCK_{\la,\rho}$ as follows:  
 \begin{equation}
\begin{aligned}
\bigg\|\mathbf{1}_{t\geq M_0}\f{|\na|^{\f s 2}}{\langle t\rangle^{\f{3s}{2}}}\rmA P_{\neq}\rho\bigg\|_2^2\leq&\, -C_1\f{\dt{\lambda}(t)}{t^{3s-2\tilde{q}}}\mathbf{1}_{t\geq M_0}\Big\||\na|^{\f s 2} \rmA P_{\neq}\rho\Big\|_2^2\leq C_1 M_0^{2\tilde{q}-3s}\rmCK_{\la,\rho}. 
\end{aligned}
\end{equation}
Similarly, we have
\begin{equation}
\bigg\|\mathbf{1}_{t\geq M_0}\f{|\na|^{\f s 2}}{\langle t\rangle^{\f{3s}{2}}}\rmA \pa_{z}^{-1}P_{\neq}K\bigg\|_2^2\leq C_1 M_0^{2\tilde{q}-3s}\rmCK_{\la,K}. 
\end{equation}
The above   constant $C_1$ depends on $\delta_\lambda$ but it is independent of $M_0$.   
 
 Now, we estimate the last two terms in \eqref{f_M_0_Estimate}. We simply first write  
 \begin{equation}\label{Decom_M_i}
\mathcal{M}_{i,f}=\mathcal{M}_{i,f;\rmH\rmL}+\mathcal{M}_{i,f;\rmL\rmH}+\mathcal{M}_{i,f;\rmH\rmH}
\end{equation}
as in the proof of Proposition \ref{prop: elliptic-3}. Following similar ideas as in the proof of Proposition \ref{prop: elliptic-3}, we have ((by using the same notation) 
 \begin{equation}
\begin{aligned}
&\sum_{i=1}^2\bigg\|\mathbf{1}_{t\geq M_0}\f{|\na|^{\f s 2}}{\langle t\rangle^{\f{3s}{2}}}\rmA P_{\neq}\pa_{z}^{-1}\mathcal{M}_{i,f;\rmL\rmH}\bigg\|_2+\sum_{i=1}^2\bigg\|\mathbf{1}_{t\geq M_0}\f{|\na|^{\f s 2}}{\langle t\rangle^{\f{3s}{2}}}\rmA P_{\neq}\pa_{z}^{-1}\mathcal{M}^z_{i,f;\rmH\rmL}\bigg\|_2\\
\lesssim &\,\ep\bigg\|\mathbf{1}_{t\geq M_0}\f{|\na|^{\f s 2}}{\langle t\rangle^{\f{3s}{2}}}\rmA\pa_z^{-1}\Delta_{L}P_{\neq }f\bigg\|_2
\end{aligned}
\end{equation}
which will be absorbed by the left-hand side of \eqref{f_M_0_Estimate}, provided that $\ep$ is sufficiently small. 
 
 Next, the terms involving $\mathcal{M}^v_{i,f;\rmH\rmL}$ can be estimated as (we omit the details)  
 \begin{equation}
\begin{aligned}
\sum_{i=1}^2\bigg\|\mathbf{1}_{t\geq M_0}\f{|\na|^{\f s 2}}{\langle t\rangle^{\f{3s}{2}}}\rmA P_{\neq}\pa_{z}^{-1}\mathcal{M}^v_{i,f;\rmH\rmL}\bigg\|_2^2\lesssim \ep^2 \rmCK_{\la,h}(t). 
\end{aligned}
\end{equation}
 
 \subsubsection{Proof of \eqref{Elliptic_g_lambda}}
 In this section, we prove the estimate \eqref{Elliptic_g_lambda}.   
  Using \eqref{Eq_f_Delta_L}, we have 
  \begin{equation}\label{g_Ellip_1}
\begin{aligned}
&\left\|\sqrt{\f{\pa_t\rmg}{\rmg}}\left\langle \f{\pa_v}{t\pa_z}\right\rangle^{-1}\tilde{\tilde{\rmA}}\pa_z^{-1}\Delta_{L}P_{\neq }f\right\|_2^2\\
\leq &\,\g^2\bigg\|\sqrt{\f{\pa_t\rmg}{\rmg}}\left\langle \f{\pa_v}{t\pa_z}\right\rangle^{-1}\tilde{\tilde{\rmA}} P_{\neq}\rho\bigg\|_2^2+\bigg\|\sqrt{\f{\pa_t\rmg}{\rmg}}\left\langle \f{\pa_v}{t\pa_z}\right\rangle^{-1}\tilde{\tilde{\rmA}}\pa_{z}^{-1}P_{\neq}K\bigg\|_2^2\\
&+\bigg\|\sqrt{\f{\pa_t\rmg}{\rmg}}\left\langle \f{\pa_v}{t\pa_z}\right\rangle^{-1}\tilde{\tilde{\rmA}} P_{\neq}\pa_{z}^{-1}\mathcal{M}_{1,f}\bigg\|_2^2
+\bigg\|\sqrt{\f{\pa_t\rmg}{\rmg}}\left\langle \f{\pa_v}{t\pa_z}\right\rangle^{-1}\tilde{\tilde{\rmA}} P_{\neq}\pa_{z}^{-1}\mathcal{M}_{2,f}\bigg\|_2^2.
\end{aligned}
\end{equation}
A direct calculation shows that 
\begin{equation}
\begin{aligned}
\bigg\|\sqrt{\f{\pa_t\rmg}{\rmg}}\left\langle \f{\pa_v}{t\pa_z}\right\rangle^{-1}\tilde{\tilde{\rmA}} P_{\neq}\rho\bigg\|_2^2+\bigg\|\sqrt{\f{\pa_t\rmg}{\rmg}}\left\langle \f{\pa_v}{t\pa_z}\right\rangle^{-1}\tilde{\tilde{\rmA}}\pa_{z}^{-1}P_{\neq}K\bigg\|_2^2\leq C_2( \rmCK_{\Theta,\rho} +\rmCK_{\Theta,K}) 
\end{aligned}
\end{equation}
for some $C_2>0$ independent of $\delta_L$. 

Similarly we write 
\beno
\mathcal{M}_{i,f}=\mathcal{M}_{i,f;\rmL\rmH}
+\mathcal{M}_{i,f;\rmH\rmL}
+\mathcal{M}_{i,f;\rmH\rmH},\quad i=1,2.
\eeno
We write 
\begin{align*}
&\bigg|\mathcal{F}\bigg(\sqrt{\f{\pa_t\rmg}{\rmg}}\left\langle \f{\pa_v}{t\pa_z}\right\rangle^{-1}\tilde{\tilde{\rmA}} P_{\neq}\pa_{z}^{-1}\mathcal{M}_{1,f;\rmL\rmH}\bigg)\bigg|\\
\lesssim &\,\sum_{\rmM\geq 8}\sum_{k\neq 0}\int \sqrt{\f{\pa_t\rmg(t,\eta)}{\rmg(t,\eta)}}\left\langle \f{\eta}{tk}\right\rangle^{-1}\tilde{\tilde{\rmA}}(\eta)\widehat{G_1}(\eta-\xi)_{<\rmM/8}(\xi-kt)^2\rhat{\pa_z^{-1}f}(k,\xi)_{\rmM}d\xi
\end{align*} 

To simplify our proof, let us take advantage of the $\f1t$ decay of $G_1$ and $\pa_v G_1$. By the fact that on the support of integrand $|\eta|^{\f13}\lesssim t\lesssim |\eta|\approx |\xi|$, 
 $\tilde{\tilde{\rmA}}\leq \rmA$ and $\sqrt{\f{\pa_t\rmg(t,\eta)}{\rmg(t,\eta)}}\lesssim 1$, we have by using the fact that   $1\leq\langle t\rangle^{2s} \f{|\eta|^s}{\langle t\rangle^{3s}}$
\begin{align*}
&\bigg\|\sqrt{\f{\pa_t\rmg}{\rmg}}\left\langle \f{\pa_v}{t\pa_z}\right\rangle^{-1}\tilde{\tilde{\rmA}} P_{\neq}\pa_{z}^{-1}\mathcal{M}_{1,f;\rmL\rmH}\bigg\|_2^2
+\bigg\|\sqrt{\f{\pa_t\rmg}{\rmg}}\left\langle \f{\pa_v}{t\pa_z}\right\rangle^{-1}\tilde{\tilde{\rmA}} P_{\neq}\pa_{z}^{-1}\mathcal{M}_{2,f;\rmL\rmH}\bigg\|_2^2\\
&\lesssim \langle t\rangle^{2s}(\|G_1\|_{\cG^{\la,1;s}}^{2}+{\|\pa_vG_1\|_{\cG^{\la,1;s}}^2}) \left\|\f{|\na|^{\f s2}}{\langle t\rangle^{\f{3s}{2}}}\left\langle \f{\pa_v}{t\pa_z}\right\rangle^{-1}\rmA\pa_z^{-1}\Delta_{L}P_{\neq }f\right\|_2^2\\
&\lesssim \ep^{2} \left\|\f{|\na|^{\f s2}}{\langle t\rangle^{\f{3s}{2}}}\left\langle \f{\pa_v}{t\pa_z}\right\rangle^{-1}\rmA\pa_z^{-1}\Delta_{L}P_{\neq }f\right\|_2^2. 
\end{align*}
The high-low and high-high interactions are easy. 
Again by using the fact $\sqrt{\f{\pa_t\rmg(t,\eta)}{\rmg(t,\eta)}}\lesssim 1$ and following the proof of Proposition \ref{prop: elliptic-3}, we have 
\begin{align*}
&\bigg\|\sqrt{\f{\pa_t\rmg}{\rmg}}\left\langle \f{\pa_v}{t\pa_z}\right\rangle^{-1}\tilde{\tilde{\rmA}} P_{\neq}\pa_{z}^{-1}\mathcal{M}_{1,f;\rmH\rmL}\bigg\|_2^2
+\bigg\|\sqrt{\f{\pa_t\rmg}{\rmg}}\left\langle \f{\pa_v}{t\pa_z}\right\rangle^{-1}\tilde{\tilde{\rmA}} P_{\neq}\pa_{z}^{-1}\mathcal{M}_{2,f;\rmH\rmH}\bigg\|_2^2\\
&\quad+\bigg\|\sqrt{\f{\pa_t\rmg}{\rmg}}\left\langle \f{\pa_v}{t\pa_z}\right\rangle^{-1}\tilde{\tilde{\rmA}} P_{\neq}\pa_{z}^{-1}\mathcal{M}_{1,f;\rmH\rmH}\bigg\|_2^2
+\bigg\|\sqrt{\f{\pa_t\rmg}{\rmg}}\left\langle \f{\pa_v}{t\pa_z}\right\rangle^{-1}\tilde{\tilde{\rmA}} P_{\neq}\pa_{z}^{-1}\mathcal{M}_{2,f;\rmH\rmL}\bigg\|_2^2\\
&\lesssim \ep^{2} \left\|\f{|\na|^{\f s2}}{\langle t\rangle^{\f{3s}{2}}}\left\langle \f{\pa_v}{t\pa_z}\right\rangle^{-1}\rmA\pa_z^{-1}\Delta_{L}P_{\neq }f\right\|_2^2
+\ep^2 \big(\|\langle \pa_v\rangle \rmA G_1\|_2^2+\|\langle \pa_v\rangle \rmA \pa_vG_1\|_2^2\big)\\
&\lesssim \ep^{2} \left\|\f{|\na|^{\f s2}}{\langle t\rangle^{\f{3s}{2}}}\left\langle \f{\pa_v}{t\pa_z}\right\rangle^{-1}\rmA\pa_z^{-1}\Delta_{L}P_{\neq }f\right\|_2^2
+\ep^2\|\rmA\langle \pa_v\rangle^2 h\|_2^2.
\end{align*}
Similarly, by using the fact that $\tilde{\rmA}\leq \rmA$ and
\beno
\sqrt{\f{b(t,k,\eta)k^2}{k^2+(\eta-kt)^2}}+\sqrt{\f{\pa_t\Theta_k(t,\eta)}{\Theta_k(t,\eta)}}\lesssim 1.
\eeno
We get that 
\begin{align}
\nonumber&\left\|\left\langle \f{\pa_v}{t\pa_z}\right\rangle^{-1}\left(\sqrt{\f{b(t,\na)\pa_{zz}}{\Delta_{L}}}\rmA+\sqrt{\f{\pa_t\Theta}{\Theta}}\tilde{\rmA}\right)\pa_z^{-1}\Delta_{L}P_{\neq }\cM_{1,f}\right\|_2^2\\
\nonumber&+\left\|\left\langle \f{\pa_v}{t\pa_z}\right\rangle^{-1}\left(\sqrt{\f{b(t,\na)\pa_{zz}}{\Delta_{L}}}\rmA+\sqrt{\f{\pa_t\Theta}{\Theta}}\tilde{\rmA}\right)\pa_z^{-1}\Delta_{L}P_{\neq }\cM_{2,f}\right\|_2^2\\
\label{f_B_estimate}&\lesssim 
\ep^{2} \left\|\f{|\na|^{\f s2}}{\langle t\rangle^{\f{3s}{2}}}\left\langle \f{\pa_v}{t\pa_z}\right\rangle^{-1}\rmA\pa_z^{-1}\Delta_{L}P_{\neq }f\right\|_2^2
+\ep^2\|\rmA\langle \pa_v\rangle^2 h\|_2^2.
\end{align}
Thus by taking $\ep$ small enough, we proved Proposition \ref{prop: elliptic-4}. 
 
\subsection{Proof of Proposition \ref{prop: elliptic-2}}\label{Section_Proof_elliptic-2}

Recalling \eqref{Delta_L_phi}, we have 
\begin{align}
\nonumber&\Big\|\Big\langle \f{\pa_v}{t\pa_z}\Big\rangle^{-1} \f{|\na|^{\f{s}{2}}}{\langle t\rangle^{\f{3s}{2}}}\rmA\pa_z^{-1}\Delta^{2}_{L}P_{\neq}\phi\Big\|_2\\
\label{phi_Ellip_Main_2}&\lesssim \,\Big\|\f{|\na|^{\f{s}{2}}}{\langle t\rangle^{\f{3s}{2}}}\rmA P_{\neq}\pa_z^{-1}\Delta_L f\Big\|_2
+\sum_{i=1}^4 \Big\|\Big\langle \f{\pa_v}{t\pa_z}\Big\rangle^{-1} \f{|\na|^{\f{s}{2}}}{\langle t\rangle^{\f{3s}{2}}}\rmA P_{\neq}\pa_z^{-1}\mathcal{M}_{i,\phi}\Big\|_2\\
\nonumber&\lesssim\,\Big\|\Big\langle \f{\pa_v}{t\pa_z}\Big\rangle^{-1}\f{|\na|^{\f{s}{2}}}{\langle t\rangle^{\f{3s}{2}}}\rmA P_{\neq}\pa_z^{-1}\Delta_L f\Big\|_2 \\
\nonumber&\quad+ \sum_{i=1}^4 \Big\|\Big\langle \f{\pa_v}{t\pa_z}\Big\rangle^{-1} \f{|\na|^{\f{s}{2}}}{\langle t\rangle^{\f{3s}{2}}}\rmA P_{\neq}\pa_z^{-1}\Big(\mathcal{M}_{i,\phi;\rmH\rmL}+\mathcal{M}_{i,\phi;\rmL\rmH}+\mathcal{M}_{i,\phi;\rmH\rmH}\Big)\Big\|_2. 
\end{align}
Applying Proposition \ref{prop: elliptic-4}, we estimate the first term in \eqref{phi_Ellip_Main_2} as  
\begin{equation}\label{C_K_lambda}
\,\Big\|\f{|\na|^{\f{s}{2}}}{\langle t\rangle^{\f{3s}{2}}}\rmA P_{\neq}\pa_z^{-1}\Delta_L f\Big\|_2\lesssim C_1 (\rmCK_{\la,K}+\rmCK_{\la,\rho})+C\ep \rmCK_{\la,h}. 
\end{equation}

Now, we estimate the second term in \eqref{phi_Ellip_Main_2}. For the low-high terms and since on the support of the integrand, we have $|k,\eta|\approx |k,\xi|$ and $|\eta|\approx |\xi|$, then we can  move  the term $\Big\langle \f{\pa_v}{t\pa_z}\Big\rangle^{-1}\f{|\na|^{\f{s}{2}}}{\langle t\rangle^{\f{3s}{2}}}\rmA P_{\neq}$ to land on $\phi$ and get as in the proof of Proposition \ref{prop: elliptic-1}, and under the bootstrap assumption  
\begin{equation}
\begin{aligned}
\sum_{i=1}^4 \Big\|\Big\langle \f{\pa_v}{t\pa_z}\Big\rangle^{-1} \f{|\na|^{\f{s}{2}}}{\langle t\rangle^{\f{3s}{2}}}\rmA P_{\neq}\pa_z^{-1}\mathcal{M}_{i,\phi;\rmL\rmH}\Big\|_2\lesssim \ep\Big\|\Big\langle \f{\pa_v}{t\pa_z}\Big\rangle^{-1} \f{|\na|^{\f{s}{2}}}{\langle t\rangle^{\f{3s}{2}}}\rmA\pa_z^{-1}\Delta^{2}_{L}P_{\neq}\phi\Big\|_2.
\end{aligned}
\end{equation}

Next, we treat the high-low part. We use the decomposition 
\begin{equation}\label{f_CK_Terms_phi}
\rhat{\mathcal{M}}_{i,\phi;\rmL\rmH}=\rhat{\mathcal{M}}_{i,\phi;\rmH\rmL}^z+\rhat{\mathcal{M}}_{i,\phi;\rmH\rmL}^v,\quad i=1,\dots,4. 
\end{equation}
 The terms involving $\rhat{\mathcal{M}}_{i,\phi;\rmH\rmL}^z$ can be treated as the low-high part and we have 
\begin{equation}
\begin{aligned}
\sum_{i=1}^4 \Big\|\Big\langle \f{\pa_v}{t\pa_z}\Big\rangle^{-1} \f{|\na|^{\f{s}{2}}}{\langle t\rangle^{\f{3s}{2}}}\rmA P_{\neq}\pa_z^{-1}\mathcal{M}_{i,\phi;\rmL\rmH}^z\Big\|_2\lesssim \ep\Big\|\Big\langle \f{\pa_v}{t\pa_z}\Big\rangle^{-1} \f{|\na|^{\f{s}{2}}}{\langle t\rangle^{\f{3s}{2}}}\rmA\pa_z^{-1}\Delta^{2}_{L}P_{\neq}\phi\Big\|_2. 
\end{aligned}
\end{equation}
Now, we treat the term   involving 
$\rhat{\mathcal{M}}_{4,\phi;\rmH\rmL}^v$, which is the most challenging terms.  We have by using \eqref{J_J_0_Estimate},  with the fact that on the support of the integrand we have $|\eta|\approx \xi$ and $|k,\eta|\lesssim |\xi|$ and by making use of the bootstrap assumption 
\begin{equation}      
\begin{aligned}
&\Big|\f{|\na|^{\f{s}{2}}}{\langle t\rangle^{\f{3s}{2}}}\Big\langle \f{\pa_v}{t\pa_z}\Big\rangle^{-1} \rmA P_{\neq}\pa_{z}^{-1}\mathcal{M}_{4,\phi;\rmH\rmL}^v\Big|\\
\lesssim&\sum_{\rmM\geq 8}\int  \langle \xi\rangle^2 \f{|\xi|^{\f{s}{2}}}{\langle t\rangle^{\f{3s}{2}}}\mathbf{1}_{|k|< \f{1}{16}|\eta|}\rmA_0 (\xi)\langle t\rangle^3|  \widehat{G_1}(\xi)_{\rmM}||k,\eta-\xi| e^{c\la |k,\eta-\xi|^{s}}e^{C|k,\eta-\xi|^{\f13}}|\rhat{\phi}_k(\eta-\xi)_{<\rmM/8}|d\xi.  
\end{aligned}
\end{equation}
This yields since  
\begin{equation}
\begin{aligned}
\Big\|\f{|\na|^{\f{s}{2}}}{\langle t\rangle^{\f{3s}{2}}}\Big\langle \f{\pa_v}{t\pa_z}\Big\rangle^{-1} \rmA P_{\neq}\pa_{z}^{-1}\mathcal{M}_{4,\phi;\rmH\rmL}^v\Big\|_2^2\lesssim\ep^2 \rmCK_{\la,h}(t).  
\end{aligned}
\end{equation}
Similarly, we cam prove that  
\begin{equation}
\begin{aligned}
\sum_{i=1}^3\Big\|\f{|\na|^{\f{s}{2}}}{\langle t\rangle^{\f{3s}{2}}}\Big\langle \f{\pa_v}{t\pa_z}\Big\rangle^{-1} \rmA P_{\neq}\pa_{z}^{-1}\mathcal{M}_{i,\phi;\rmH\rmL}^v\Big\|_2^2\lesssim\ep^2 \rmCK_{\la,h}(t).  
\end{aligned}
\end{equation}
Hence, collecting all the above estimates, we obtain 
\begin{equation}\label{Estimate_Ck_Lambda_Phi}
\begin{aligned}
\Big\|\Big\langle \f{\pa_v}{t\pa_z}\Big\rangle^{-1} \f{|\na|^{\f{s}{2}}}{\langle t\rangle^{\f{3s}{2}}}\rmA\pa_z^{-1}\Delta^{2}_{L}P_{\neq}\phi\Big\|_2
\lesssim \,C_1 (\rmCK_{\la,K}+\rmCK_{\la,\rho})+C\ep \rmCK_{\la,h}.
\end{aligned}
\end{equation}
Next, we treat the $\Theta$-term on the left-hand side of \eqref{Main_elliptic_Estimate}. The $\rmg$-term is similar and we omit the details. We have by using \eqref{Delta_L_phi}, 
\begin{equation}\label{Estimate_CKw_Lambda_Phi}
\begin{aligned}
\left\|\left\langle\f{\pa_v}{t\pa_z}\right\rangle^{-1}\sqrt{\f{\pa_t\Theta}{\Theta}}\tilde{\rmA}\pa_z^{-1}\Delta_{L}^2 P_{\neq}\phi\right\|_2^2
\lesssim  &\,\bigg\|\sqrt{\f{\pa_t\Theta}{\Theta}}\left\langle \f{\pa_v}{t\pa_z}\right\rangle^{-1}\tilde{\rmA}\pa_z^{-1}\Delta_L P_{\neq} f\bigg\|_2^2\\
&+\sum_{i=1}^4\bigg\|\sqrt{\f{\pa_t\Theta}{\Theta}}\left\langle \f{\pa_v}{t\pa_z}\right\rangle^{-1}\tilde{\rmA} P_{\neq}\pa_{z}^{-1}\mathcal{M}_{i,\phi}\bigg\|_2^2. 
\end{aligned}
\end{equation}
The  first term on the right-hand side of   \eqref{Estimate_CKw_Lambda_Phi} has been already estimated in \eqref{f_B_estimate}.  

 Using the fact that $\tilde{\rmA}\lesssim \rmA$ together with  the bootstrap assumption, we have 
 \begin{equation}
\bigg\|\sqrt{\f{\pa_t\Theta}{\Theta}}\left\langle \f{\pa_v}{t\pa_z}\right\rangle^{-1}\tilde{\rmA} P_{\neq}\rho\bigg\|_2^2+\bigg\|\sqrt{\f{\pa_t\Theta}{\Theta}}\left\langle \f{\pa_v}{t\pa_z}\right\rangle^{-1}\tilde{\rmA}\pa_{z}^{-1}P_{\neq}K\bigg\|_2^2\lesssim C_2( \rmCK_{\Theta,\rho} +\rmCK_{\Theta,K}). 
\end{equation}

   By the fact that on the support of integrand $|\eta|^{\f23}\lesssim t\lesssim |\eta|\approx |\xi|$, 
 $\tilde{\rmA}\leq \rmA$ and $\sqrt{\f{\pa_t\Theta(t,\eta)}{\Theta(t,\eta)}}\lesssim 1$, we have by using the fact that   $1\leq\langle t\rangle^{2s} \f{|\eta|^s}{\langle t\rangle^{3s}}$,
 \begin{equation}
\begin{aligned}
&\sum_{i=1}^4\bigg\|\sqrt{\f{\pa_t\Theta}{\Theta}}\left\langle \f{\pa_v}{t\pa_z}\right\rangle^{-1}\tilde{\rmA} P_{\neq}\pa_{z}^{-1}\mathcal{M}_{i,\phi;\rmL\rmH}\bigg\|_2^2\\ &\lesssim \langle t\rangle^{2s}\Big(\sum_{j=0}^3\|\pa_v^jG_1\|_{\cG^{\la,1;s}}^{2}\Big) \left\|\f{|\na|^{\f s2}}{\langle t\rangle^{\f{3s}{2}}}\left\langle \f{\pa_v}{t\pa_z}\right\rangle^{-1}\rmA\pa_z^{-1}\Delta_{L}^2P_{\neq }\phi\right\|_2^2\\
&\lesssim \ep^{2} \left\|\f{|\na|^{\f s2}}{\langle t\rangle^{\f{3s}{2}}}\left\langle \f{\pa_v}{t\pa_z}\right\rangle^{-1}\rmA\pa_z^{-1}\Delta_{L}^2P_{\neq }\phi\right\|_2^2.
\end{aligned}  
\end{equation}

Similarly, we have  
\begin{equation}
\begin{aligned}
&\sum_{i=1}^4\bigg\|\sqrt{\f{\pa_t\Theta}{\Theta}}\left\langle \f{\pa_v}{t\pa_z}\right\rangle^{-1}\tilde{\rmA} P_{\neq}\pa_{z}^{-1}\mathcal{M}^z_{i,\phi;\rmH\rmL}\bigg\|_2^2 \lesssim \, \ep^{2} \left\|\f{|\na|^{\f s2}}{\langle t\rangle^{\f{3s}{2}}}\left\langle \f{\pa_v}{t\pa_z}\right\rangle^{-1}\rmA\pa_z^{-1}\Delta_{L}^2P_{\neq }\phi\right\|_2^2. 
\end{aligned}
\end{equation}

For the terms involving $\mathcal{M}^v_{i,\phi;\rmH\rmL}$, we treat the most problematic term which is the term  $\mathcal{M}^v_{4,\phi;\rmH\rmL}$ which contains the loss of three  derivatives.  The other terms can be treated by the same method (even easier). We have 
by the fact that $\f{\rmJ_k(t,\eta)}{\rmJ_0(\xi)}\lesssim \langle t\rangle e^{C|k,\eta-\xi|^{\f13}}$ and $|\eta-\xi-kt|\lesssim \langle k,\eta-\xi\rangle \langle t\rangle$
\begin{align*}
&\Big|\sqrt{\f{\pa_t\Theta}{\Theta}}\Big\langle \f{\pa_v}{t\pa_z}\Big\rangle^{-1} \tilde{\rmA} P_{\neq}\pa_{z}^{-1}\mathcal{M}_{4,\phi;\rmH\rmL}^v\Big|\\
&\lesssim\sum_{\rmM\geq 8}\int \Big\langle \f{\eta}{tk}\Big\rangle^{-1}\mathbf{1}_{|k|< \f{1}{16}|\eta|}\rmA_0 (\xi)|\xi|^3|\widehat{G_1}(\xi)_{\rmM}|\langle t\rangle\\
&\quad\times \langle k,\eta-\xi\rangle \langle t\rangle^2 e^{c\la |k,\eta-\xi|^s}e^{C|k,\eta-\xi|^{\f13}}|\rhat{\phi}_k(\eta-\xi)_{<\rmM/8}|d\xi\\
&\lesssim\sum_{\rmM\geq 8}\int \mathbf{1}_{|k|< \f{1}{16}|\eta|}\rmA_0 (\xi)|\xi|^2|\widehat{G_1}(\xi)_{\rmM}|\langle t\rangle\\
&\quad\times \langle k,\eta-\xi\rangle^2 \langle t\rangle^3 e^{c\la |k,\eta-\xi|^s}e^{C|k,\eta-\xi|^{\f13}}|\rhat{\phi}_k(\eta-\xi)_{<\rmM/8}|d\xi,
\end{align*}
which together with the fact that $\|P_{\neq}\phi\|_{\cG^{\la,0;s}}\lesssim \f{\ep}{\langle t\rangle^4}$, implies that 

\beno
\Big\|\sqrt{\f{\pa_t\Theta}{\Theta}}\Big\langle \f{\pa_v}{t\pa_z}\Big\rangle^{-1} \tilde{\rmA} P_{\neq}\pa_{z}^{-1}\mathcal{M}_{4,\phi;\rmH\rmL}^v\Big\|_2^2
\lesssim \|\rmA\langle \pa_v\rangle^2G_1\|_2^2\langle t\rangle^6\|P_{\neq}\phi\|_{\cG^{\la,0;s}}^2\lesssim \ep^2\|\rmA\langle \pa_v\rangle^2h\|_2^2. 
\eeno
Similarly we have 
\beno
\Big\|\sqrt{\f{\pa_t\Theta}{\Theta}}\Big\langle \f{\pa_v}{t\pa_z}\Big\rangle^{-1} \tilde{\rmA} P_{\neq}\pa_{z}^{-1}\mathcal{M}_{4,\phi;\rmH\rmH}^v\Big\|_2^2
\lesssim \ep^2\|\rmA\langle \pa_v\rangle^2h\|_2^2. 
\eeno
Thus we have proved the proposition.

\section{Estimate of $\rmNL_{\rho}$ and $\rmNL_{K}^1$}
\label{Section_Main_Nonlinear_terms}
This section is devoted to the proof of Proposition \ref{prop: NL} and hence we estimate $\rmNL_{\rho}$ and $\rmNL_{K}^1$. Let focus on $\rmNL_{\rho}$, the estimate of 
$\rmNL_{K}^1$ can be obtained by easily replacing $\rho$ by $K$. 

We have
\begin{equation}\label{Main_Identiy}
\begin{aligned}
\rmNL_{\rho}
&=\int \rmA\rho \big[\rmA({\bf{u}}\cdot\na\rho)-{\bf{u}}\cdot\na\rmA\rho\big]dzdv
-\f12\int \na\cdot {\bf{u}}|\rmA\rho|^2 dzdv\\
&=\rmNL_{1}-\f12\int \na\cdot {\bf{u}}|\rmA\rho|^2 dzdv.   
\end{aligned}
\end{equation}
The second term in \eqref{Main_Identiy} can be estimated as,   
\begin{equation}
\begin{aligned}
\Big|\int \na\cdot {\bf{u}}|\rmA\rho|^2 dzdv\Big|\leq \Vert \nabla \mathbf{u}\Vert_{L^\infty} \Vert \rmA\rho \Vert_{L^2}^2.  
\end{aligned}
\end{equation}
Recall the fact that 
\begin{equation}\label{u_formula}
\mathbf{u}(t,z,\upsilon)=(0,g)^T+v^\prime\nabla _{z,\upsilon }^{\perp }P_{\neq }\phi=(0,g)^T+h\nabla _{z,\upsilon }^{\perp }P_{\neq }\phi+\nabla _{z,\upsilon }^{\perp }P_{\neq }\phi,
\end{equation}
then we have 
\begin{equation}
\Vert  \na \mathbf{u} \Vert_{L^{\infty}}\lesssim (\Vert \pa_vg\Vert_{L^{\infty}}+(1+\Vert h\Vert_{H^2})\Vert P_{\neq} \phi \Vert_{H^3})\lesssim \f{\ep}{\langle t\rangle^{2}}. 
\end{equation}

To handle $\rmNL_{1}$, we use a paraproduct decomposition. Precisely, we define three main contributions: {\it transport} (low-high interaction),  {\it reaction} (high-low interaction) and a {\it remainder}:
\begin{align*}
\rmNL_{1}&=\int \rmA\rho \big[\rmA\big({\bf{u}}\cdot\na_{z,v}\rho\big)-{\bf{u}}\cdot\na_{z,v}\rmA\rho\big]dzdv\\
&=\f{1}{2\pi}\sum_{\rmN\geq 8}\rmT_{1;\rmN}
+\f{1}{2\pi}\sum_{\rmN\geq 8}\rmR_{1;\rmN}
+\f{1}{2\pi}\cR_{1},
\end{align*}
where
\begin{align*}
\rmT_{1;\rmN}&=2\pi\int \rmA\rho \big[\rmA\big({\bf{u}}_{<\rmN/8}\cdot\na_{z,v}\rho_{\rmN}\big)-{\bf{u}}_{<\rmN/8}\cdot\na_{z,v}\rmA\rho_{\rmN}\big]dzdv\\
\rmR_{1;\rmN}&=2\pi\int \rmA\rho \big[\rmA\big({\bf{u}}_{\rmN}\cdot\na_{z,v}\rho_{<\rmN/8}\big)-{\bf{u}}_{\rmN}\cdot\na_{z,v}\rmA\rho_{<\rmN/8}\big]dzdv\quad \\
\cR_{1}&=2\pi\sum_{\rmN\in \bbD}\sum_{\f18\rmN\leq \rmN'\leq 8\rmN}\int \rmA\rho \big[\rmA\big({\bf{u}}_{\rmN}\cdot\na_{z,v}\rho_{\rmN'}\big)-{\bf{u}}_{\rmN}\cdot\na_{z,v}\rmA\rho_{\rmN'}\big]dzdv. 
\end{align*}

 \subsection{Reaction term $\rmR_{1;\rmN}$} \label{Section_Reaction}
Recall \eqref{u_formula}, we write 
\begin{align*}
\rmR_{1;\rmN}&=\rmR_{1;\rmN}^{1}+\rmR_{1;\rmN}^{\ep,1}+\rmR_{1;\rmN}^{2}+\rmR_{1;\rmN}^3  
\end{align*}
where 
\begin{align*}
\rmR_{1;\rmN}^{1}&=\sum_{k,l\neq 0}\int_{\eta,\xi}\rmA \overline{\widehat{\rho}}_{k}(\eta)\rmA_{k}(\eta)(\eta l-\xi k)\hat{\phi}_{l}(\xi)_{\rmN}\widehat{\rho}_{k-l}(\eta-\xi)_{<\rmN/8}d\eta d\xi\\
\rmR_{1;\rmN}^{\ep,1}&=\sum_{k,l\neq 0}\int_{\eta,\xi}\rmA\overline{\widehat{\rho}}_{k}(\eta)\rmA_{k}(\eta)\left[\widehat{h\na^{\bot}\phi_l}\right](\xi)_{\rmN}\cdot\widehat{\na \rho}_{k-l}(\eta-\xi)_{<\rmN/8}d\eta d\xi\\
\rmR_{1;\rmN}^{2}&=-\sum_{k}\int_{\eta,\xi}\rmA\overline{\widehat{\rho}}_{k}(\eta)\rmA_{k}(\eta)\widehat{g}(\xi)_N\cdot\widehat{\pa_v \rho}_{k}(\eta-\xi)_{<\rmN/8}d\eta d\xi\\
\rmR_{1;\rmN}^{3}&=-\sum_{k,l}\int_{\eta,\xi}\rmA\overline{\hat{\rho}}_{k}(\eta)\rmA_{k-l}(\eta-\xi)\widehat{{\bf u}}_{l}(\xi)_{N}\widehat{\na\rho}_{k-l}(\eta-\xi)_{<N/8}d\eta d\xi.
\end{align*}
Here $\epsilon$  stands for the smallness of the term coefficient $h$.  

In this section we will prove the following propositions
\begin{proposition}
Under the bootstrap hypotheses, it holds that,
\begin{align*}
\sum_{\rmN\geq 8}\rmR_{1;\rmN}&\lesssim 
\f{\ep^{2}}{\langle t\rangle^2}+\ep\langle t\rangle \|\rmA\pa_{v}g\|_{2}^2+
\ep\rmCK_{\la,\rho}+\ep\rmCK_{\Theta,\rho}+\ep\rmCK_{M,\rho}\\
&\quad+\ep\left\|\left\langle\f{\pa_v}{t\pa_z}\right\rangle^{-1}\left(\f{|\na|^{\f{s}{2}}}{\langle t\rangle^{\f{3s}{2}}}\rmA+\sqrt{\f{\pa_t\rmg}{\rmg}}\tilde{\tilde{\rmA}}+\sqrt{\f{\pa_t\Theta}{\Theta}}\tilde{\rmA}\right)\pa_z^{-1}\Delta_{L}^2 P_{\neq}\phi\right\|_2^2.
\end{align*}
\end{proposition}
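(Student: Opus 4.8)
The strategy is to estimate the four pieces $\rmR_{1;\rmN}^{1}, \rmR_{1;\rmN}^{\ep,1}, \rmR_{1;\rmN}^{2}, \rmR_{1;\rmN}^{3}$ separately, summing in $\rmN$ at the end. The governing principle in every piece is the reaction structure: the stream function (or $g$, or $\mathbf{u}$) carries the high frequency $\rmN$ while $\rho$ (or $\pa_v\rho$) carries the low frequency $<\rmN/8$, so on the support of the integrand $|k,\eta|\approx|l,\xi|\approx \rmN$ and $|k-l,\eta-\xi|\lesssim \rmN/8$. Thus one has the multiplier comparison $\rmA_k(\eta)\lesssim \rmA_l(\xi) e^{c\la|k-l,\eta-\xi|^s}$ for some $c<1$, the loss $e^{c\la|k-l,\eta-\xi|^s}$ being absorbed by the Gevrey regularity already present on the low-frequency factor $\widehat{\rho}_{k-l}(\eta-\xi)_{<\rmN/8}$ via the bootstrap bound $\|\rho\|_{\cG^{\la,\sigma;s}}\lesssim \ep$ — more precisely one keeps a small fraction of that Gevrey weight, say $e^{(1-c)\la|k-l,\eta-\xi|^s}$, on the $\rho_{lo}$-factor and bounds the remaining $L^2$-type quantity in $\ell^1$ in $\rmN$.

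First I would treat the main term $\rmR_{1;\rmN}^{1}$, which is $\int \rmA\rho\, \rmA(\nabla^\perp\phi_{\neq}\cdot\nabla\rho_{lo})$. Here the symbol $(\eta l-\xi k)$ must be reorganized: one writes $\eta l - \xi k = k(\eta-\xi) - (k-l)\xi$, and because $\rho_{lo}$ is at low frequency the dominant contribution puts the derivatives on $\phi$. The key device is to express everything through $\Delta_L^2 P_{\neq}\phi$ with the $\langle \pa_v/(t\pa_z)\rangle^{-1}$ correction, using the lossy elliptic estimate of Lemma \ref{Lemma_Lossy} and Proposition \ref{prop: elliptic-1} to trade regularity of $\phi$ for powers of $\langle t\rangle^{-1}$. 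When the resulting $\rho$-derivative weight $\rmA_k(\eta)$ can itself be split off as a $\rmCK$-type quantity — using the structure $\rmA = e^{\la|k,\eta|^s}\langle k,\eta\rangle^\sigma \rmJ\cM\rmB$ — one extracts the factors $\sqrt{-\dt\la|\na|^{s}}$, $\sqrt{\pa_t\Theta/\Theta}$, $\sqrt{\pa_t\rmg/\rmg}$, $\sqrt{b\,\pa_{zz}/\Delta_L}$ landing on $\rho$ to produce $\rmCK_{\la,\rho}+\rmCK_{\Theta,\rho}+\rmCK_{M,\rho}$, while the $\phi$-factor produces the elliptic quantity appearing in the last line of the claimed bound; the leftover purely-time-decaying piece contributes $\ep^2/\langle t\rangle^2$. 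One has to be careful about the resonant/non-resonant splitting as in Section \ref{Section_3}: when $t\in \rmI_{k,\eta}$ the weight $\Theta$ can grow, and this is precisely where the toy-model weights were designed so that $\partial_t\Theta/\Theta$ controls the transfer; Lemma \ref{Lem:compare} and Lemma \ref{lem:3.5} are invoked to compare weights at the frequencies $(k,\eta)$ and $(l,\xi)$ and $(k-l,\eta-\xi)$.

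Next, $\rmR_{1;\rmN}^{\ep,1}$ carries the extra factor $h$, which by Proposition \ref{prop: g,f_0,h} satisfies $\|h\|_{L^2}\lesssim \ep/\langle t\rangle$ and $\|\rmA\langle\pa_v\rangle^2 h\|_2$ is controlled; this term is handled exactly as $\rmR_{1;\rmN}^{1}$ but gains an extra $\ep$ (or extra $\langle t\rangle^{-1}$) from the $h$-factor, so it contributes to the $\ep$-small multiples of the $\rmCK$ and elliptic terms. The term $\rmR_{1;\rmN}^{2}$ involves $g$ and $\pa_v\rho$: using the commutator-type structure and the sharp decay of $g$ in various norms from Proposition \ref{prop: g,f_0,h} — in particular $\|g\|_{\cG_1^{\la,\b;s}}\lesssim \sqrt\ep\langle t\rangle^{-2+\ep_1/2}$ and $\langle t\rangle^2\|\rmA\pa_v^3 g\|_2^2$ integrable — one gets the $\ep\langle t\rangle\|\rmA\pa_v g\|_2^2$ contribution together with $\ep^2/\langle t\rangle^2$; the $\ell^1$-in-frequency norm of $g$ is essential here to avoid a derivative loss on $g$. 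Finally $\rmR_{1;\rmN}^{3}$, where the weight $\rmA_{k-l}(\eta-\xi)$ is evaluated at the low frequency, is the easiest: the weight ratio is now favorable, and one bounds it by $\|\mathbf{u}_{\rmN}\|$ in $L^2$ against $\|\nabla\rho_{lo}\|$ in the adapted norm, again producing $\ep^2/\langle t\rangle^2$ plus $\ep$ times $\rmCK_{\la,\rho}$. I expect the main obstacle to be the bookkeeping in $\rmR_{1;\rmN}^{1}$ in the resonant region — choosing the correct split of the symbol $\eta l-\xi k$, correctly identifying which of the four $\rmCK$ mechanisms dominates in each time regime dictated by whether $t$ lies in $\rmI_{k,\eta}$, near $2\eta/(2k\pm1)$, or outside — and ensuring the elliptic quantity $\|\langle\pa_v/(t\pa_z)\rangle^{-1}(\cdots)\pa_z^{-1}\Delta_L^2 P_{\neq}\phi\|_2^2$ appears with an $O(\ep)$ (not $O(1)$) coefficient so that Proposition \ref{prop: elliptic-2} can later close the loop.
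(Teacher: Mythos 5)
Your outline matches the paper's approach: the same four-piece decomposition, the same expression of $\hat\phi$ through $\pa_z^{-1}\Delta_L^2\phi$ with the $\langle\pa_v/(t\pa_z)\rangle^{-1}$ correction, the resonant/non-resonant partition via $\rmI_{k,\eta}$ and $\rmI_{l,\xi}$ and Lemma~\ref{Lem:compare}, and the extraction of $\rmCK_\la$, $\rmCK_\Theta$, $\rmCK_M$ with small coefficients from the weight comparisons. Two caveats worth noting: your description of $\rmR_{1;\rmN}^{\ep,1}$ as ``handled exactly as $\rmR_{1;\rmN}^{1}$'' understates the additional second paraproduct in $v$ that the paper performs (splitting into $\rmL\rmH$, $\rmH\rmL$, $\rmH\rmH$ with a further $z$/$v$ split in the $\rmH\rmL$ case, because when $h$ is the high-frequency factor the loss must be absorbed against $\|\rmA\langle\pa_v\rangle^2 h\|_2$ rather than $\|\phi\|$); and for $\rmR_{1;\rmN}^{2}$ the paper's computation actually lands on $\ep\langle t\rangle^2\|\rmA\pa_v^3 g_{\rmN}\|_2^2$ (controlled by Proposition~\ref{prop: g,f_0,h}) rather than the $\ep\langle t\rangle\|\rmA\pa_v g\|_2^2$ that the proposition statement displays, so you should not take the statement's literal form at face value here but track what your estimate actually produces.
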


\subsubsection{Main contribution}
\label{Sec_Main_Cont}
The main contribution comes from $\rmR_{1;\rmN}^{1}$. We subdivide this integral depending on whether or not $(l,\xi)$ and/or $(k,\eta)$ are resonant as each combination requires a slightly different treatment.

Define the partition
\begin{equation}\label{Partition_Unity_Reaction}
1=\mathbf{1}_{t\notin \rmI_{k,\eta},t\notin \rmI_{l,\xi}}+\mathbf{1}_{t\notin \rmI_{k,\eta},t\in \rmI_{l,\xi}}+\mathbf{1}_{t\in \rmI_{k,\eta},t\notin \rmI_{l,\xi}}+\mathbf{1}_{t\in \rmI_{k,\eta},t\in \rmI_{l,\xi}}. 
\end{equation}
Correspondingly, denote
\begin{align*}
\rmR_{1;\rmN}^{1}&=\sum_{k,l\neq 0}\int_{\eta,\xi}[\mathbf{1}_{t\notin \rmI_{k,\eta},t\notin \rmI_{l,\xi}}+\mathbf{1}_{t\notin \rmI_{k,\eta},t\in \rmI_{l,\xi}}+\mathbf{1}_{t\in \rmI_{k,\eta},t\notin \rmI_{l,\xi}}+\mathbf{1}_{t\in \rmI_{k,\eta},t\in \rmI_{l,\xi}}]\\
&\qquad\qquad \times\rmA \overline{\widehat{\rho}}_{k}(\eta)\rmA_{k}(\eta)(\eta l-\xi k)\hat{\phi}_{l}(\xi)_{\rmN}\widehat{\rho}_{k-l}(\eta-\xi)_{<\rmN/8}d\eta d\xi\\
&=\rmR_{1;\rmN;\rmNR,\rmNR}^{1}
+\rmR_{1;\rmN;\rmNR,\rmR}^{1}
+\rmR_{1;\rmN;\rmR,\rmNR}^{1}
+\rmR_{1;\rmN;\rmR,\rmR}^{1}.
\end{align*}
On the support of the integrand of $\rmR_{1;\rmN}^{1}$, it holds that
\begin{subequations}\label{Support_Reaction}
\beq\label{eq: support}
\left||l,\xi|-|k,\eta|\right|\leq |k-l,\eta-\xi|\leq \f{6}{32}|l,\xi|.
\eeq
This implies that $|k,\eta|\approx |l,\xi|$ since $|l,\xi|\leq \frac{32}{28}|k,\eta|$ and
\begin{equation}\label{eq: support_2}
|k,\eta|\leq |k-l,\eta-\xi|+|l,\xi|\leq \frac{38}{32}|l,\xi|.
\end{equation}
\end{subequations}

\no{\bf Treatment of $\rmR_{1;\rmN;\rmNR,\rmNR}^{1}$}\\
We write first 
\begin{align*}
\rmR_{1;\rmN;\rmNR,\rmNR}^{1}=\sum_{k,l\neq 0}\int_{\eta,\xi}\mathbf{1}_{t\notin \rmI_{k,\eta},t\notin \rmI_{l,\xi}}\rmA \overline{\widehat{\rho}}_{k}(\eta)\rmA_{k}(\eta)\frac{i\  l(\eta l-\xi k)}{(l^2+(\xi-  lt)^2)^2}\widehat{\pa_z^{-1}\Delta_{L}^2\phi}_{l}(\xi)_{\rmN}\widehat{\rho}_{k-l}(\eta-\xi)_{<\rmN/8}d\eta d\xi.
\end{align*}

First, if $l\xi<0$, we do not  have resonances for positive times. In this case, we have 
\begin{align*}
&\f{|l||l,\xi|^{}}{(l^2+|\xi-lt|^2)^2}\approx \f{|l||l,\xi|^{}}{l^4\langle t\rangle^4+\xi^4}\lesssim \f{1}{\langle t\rangle^2}|k,\eta|^{\f s2}|l,\xi|^{\f s 2}\left\langle \f{\xi}{lt}\right\rangle^{-1},
\end{align*}
we have 
\beno
|\rmR_{1;\rmN;\rmNR,\rmNR}^{1}|+|\rmR_{1;\rmN;\rmR,\rmNR}^{1}|
\lesssim \f{1}{\langle t\rangle^{2}}\left\||\na|^{\f s 2}\rmA \rho_{\sim \rmN}\right\|_2
\left\|1_{\rmNR}\left\langle\f{\pa_v}{t\pa_z}\right\rangle^{-1}|\na|^{\f s 2}\pa_z^{-1}\Delta_{L}^2\rmA P_{\neq}\phi_{\rmN}\right\|_2\|\rho\|_{\mathcal{G}^{s}},
\eeno
which together with the bootstrap hypotheses implies that
\beq\label{eq:R-1}
\sum_{\rmN\geq 8}|\rmR_{1;\rmN;\rmNR,\rmNR}^{1}|+|\rmR_{1;\rmN;\rmR,\rmNR}^{1}|
\lesssim \ep\rmCK_{\la,\rho}
+\ep\left\|1_{\rmNR}\left\langle\f{\pa_v}{t\pa_z}\right\rangle^{-1}\f{|\na|^{\f s 2}}{\langle t\rangle^{\f{3s}{2}}}\pa_z^{-1}\Delta_{L}^2\rmA P_{\neq}\phi\right\|_2^2.
\eeq

Second, let us assume that  $l\xi>0$.  
Now we consider the following two cases: $|\xi|\geq 6|l|$ and $|\xi|<6|l|$ and in each case, we consider several sub-cases, depending on the time regime.  

\textbf{Case 1:} $|\xi|\geq 6|l|$, then it holds by \eqref{eq: support} that 
\begin{equation}\label{equiv_eta_xi}
|\eta-\xi|\leq \f{7}{32}|\xi|\qquad  \text{and} \qquad \f{25}{32}|\xi|\leq |\eta|\leq \f{39}{32}|\xi|.
\end{equation}
we obtain 
\begin{align*}
\f{\rmJ_k(\eta)}{\rmJ_l(\xi)}
&\leq \f{\Theta_l(t,\xi)}{\Theta_k(t,\eta)}e^{\mu |\eta-\xi|^{\f13}}+e^{\mu |k-l|^{\f13}}
\leq \f{\Theta_{\rmNR}(t,\xi)}{\Theta_{\rmNR}(t,\eta)}e^{\mu |\eta-\xi|^{\f13}}+e^{\mu |k-l|^{\f13}}.
\end{align*}
By Lemma \ref{lem:3.5}, we have
\ben\label{Mult_J_Ineq_0}
\f{\rmJ_k(\eta)}{\rmJ_l(\xi)}\lesssim e^{10\mu |k-l,\eta-\xi|^{\f13}}
\een
We also have 
\begin{equation}
\begin{aligned}\label{M_Multp_Ineq_0}
\f{\cM_k(\eta)}{\cM_l(\xi)}
\leq \f{\rmg(t,\xi)}{\rmg(t,\eta)}e^{4\pi\d_{L}^{-1} |\eta-\xi|^{\f13}}+e^{4\pi\d_{L}^{-1} |k-l|^{\f13}}
\lesssim e^{C\d_{L}^{-1} |k-l,\eta-\xi|^{\f13}}.
\end{aligned}
\end{equation}
By using \eqref{eq: support}, we have $|k,\eta|\approx |\xi,l|$, this means that we can freely interchange between $(k,\eta)$ and $(l,\xi)$ in the Sobolev correction as well as in the Gevrey part   in $\rmA_{k}(t,\eta)$. We get that
\begin{equation}
\begin{aligned}\label{R_NR_NR_Main_Term}
|\rmR_{1;\rmN;\rmNR,\rmNR}^1|
\lesssim&\, \sum_{k,l\neq 0}\int_{\eta,\xi}\mathbf{1}_{t\notin \rmI_{k,\eta},t\notin \rmI_{l,\xi}}|\rmA \overline{\widehat{\rho}}_{k}(\eta)|\f{\rmJ_k(\eta)}{\rmJ_l(\xi)}\f{\cM_k(\eta)}{\cM_l(\xi)}\rmA_{l}(\xi)\\
&\times \frac{|\xi|}{|l|^3(1+(\frac{\xi}{l}-  t)^2)^2}|\widehat{\pa_z^{-1}\Delta_{L}^2\phi}_{l}(\xi)_{\rmN}|
|\widehat{\na\rho}_{k-l}(\eta-\xi)_{<\rmN/8}|d\eta d\xi.
\end{aligned}
\end{equation}

Case 1.1: $1\leq t\leq \max\left\{\f{|\eta|}{2\rmE(|\eta|^{\f23})+1},\ \f{|\xi|}{2\rmE(|\xi|^{\f23})+1}\right\}$: We have $t\lesssim |\xi|^{\f13}$ and hence, we obtain for $|\xi|\leq 2lt$, 
\begin{align}
&\f{|\xi|}{l^3(1+(t-\f{\xi}{l})^2)^2}\lesssim 1\lesssim 
\left\{
\begin{aligned}\label{Inequality_1_0}
&\f{|\xi|^{s}}{\langle t\rangle^{3s}}\left\langle\f{\xi}{lt}\right\rangle^{-1}\quad \text{if} \quad |l|\geq \rmE(|\xi|^{\f13})+1\\
&\f{|\xi|^{s}}{\langle t\rangle^{3s}}\left\langle\f{\xi}{lt}\right\rangle^{-1}\quad \text{if} \quad 1\leq |l|\leq \rmE(|\xi|^{\f13}),
\end{aligned}
\right.
\end{align}
and for $\xi\geq 2lt$, then $\f{\xi}{l}\geq 2t$ which implies $|\f{\xi}{l}-t|\gtrsim \f{\xi}{l}$
\ben
\f{|\xi|}{l^3(1+(t-\f{\xi}{l})^2)^2}\lesssim \f{|l|}{|\xi|^3}\lesssim \f{|\xi|^{s}}{\langle t\rangle^{3s}}\left\langle\f{\xi}{lt}\right\rangle^{-1}
\een

Consequently, in this case, we get from \eqref{R_NR_NR_Main_Term}, 
\begin{equation}\label{Estimate_R_NR_NR_Case 1.1}
|\rmR_{1;\rmN;\rmNR,\rmNR}^1|
\lesssim\f{1}{\langle t\rangle^{3s}}\left\||\na|^{\f{s}{2}}\rmA \rho_{\sim \rmN}\right\|_2
\left\||\na|^{\f{s}{2}}1_{\rmNR}\left\langle\f{\pa_v}{t\pa_z}\right\rangle^{-1}\pa_z^{-1}\Delta_{L}^2\rmA P_{\neq}\phi_{\rmN}\right\|_2\|\rho\|_{\mathcal{G}^{s}}.  
\end{equation}

Case 1.2: $\max\left\{\f{|\eta|}{2\rmE(|\eta|^{\f23})+1},\ \f{|\xi|}{2\rmE(|\xi|^{\f23})+1}\right\}\leq t\leq \f{|\xi|}{2\rmE(|\xi|^{\f13})+1}$: 
In this case, Let 
 $j$ and $n$ be such that $t\in \bar{\rmI}_{j,\eta}\cap \bar{\rmI}_{n,\xi}$. Then we have form \eqref{equiv_eta_xi} that $|\xi|^{\f13}\approx |\eta|^{\f13}\lesssim t\lesssim |\xi|^{\f23}\approx |\eta|^{\f23}$ and 
if $|\xi|\geq 2lt$, we have $|\f{\xi}{l}-t|\gtrsim \f{\xi}{l}\gtrsim t$ which implies that
\ben
\f{|\xi|}{l^3(1+(t-\f{\xi}{l})^2)^2}\lesssim \f{l}{|\xi|^3}\lesssim \f{|\xi|^{s}}{\langle t\rangle^{3s}}\left\langle\f{\xi}{lt}\right\rangle^{-1},
\een
and if $|\xi|<2lt$, then 
\begin{align}\label{Inequality_Case_1_2}
\hspace{0.7cm}
&\f{|\xi|}{l^3(1+(t-\f{\xi}{l})^2)^2}\lesssim 
\left\{
\begin{aligned}
&\f{1}{\langle \xi\rangle}\lesssim \f{|\xi|^{s}}{\langle t\rangle^{3s}}\left\langle\f{\xi}{lt}\right\rangle^{-1}\quad \text{if} \quad |l|\geq \rmE(|\xi|^{\f23})+1\\
&\f{{|\xi|}/{n^3}}{(1+(t-\f{\xi}{n})^2)^{2}}\left\langle\f{\xi}{lt}\right\rangle^{-1}\quad \text{if} \quad\rmE(|\xi|^{\f13})+1\leq |l|=|n|\leq \rmE(|\xi|^{\f23})\\
&\f{1}{\sqrt{\xi}}\lesssim \f{|\xi|^{s}}{\langle t\rangle^{3s}}\left\langle\f{\xi}{lt}\right\rangle^{-1}
\quad \text{if}\quad  |l|\neq |n| \quad \text{and} \quad \left\{\begin{aligned} &\text{if}\quad \rmE(|\xi|^{\f13})+1\leq |l|\leq |\xi|^{\f12}\\
&\text{if}\quad  |\xi|^{\f12}+1\leq |l|\leq \rmE(|\xi|^{\f23})
\end{aligned}
\right.\\
&\f{1}{(1+(t-\f{\xi}{l})^2)^{\f32}}\lesssim \f{l^2}{\xi}\lesssim \f{|\xi|^{s}}{\langle t\rangle^{3s}}\left\langle\f{\xi}{lt}\right\rangle^{-1}\quad \text{if} \quad 1\leq |l|\leq \rmE(|\xi|^{\f13}).
\end{aligned}
\right.
\end{align}
By Lemma \ref{Lem:compare}, for the second case, we have if $|k|\approx |j|\approx |n|=|l|\gtrsim |\xi|^{\f13}$, $\eta\approx \xi$ then
\begin{align*}
\f{\xi/n^3}{(1+(t-\f{\xi}{n})^2)^{2}}\lesssim \f{\sqrt{\xi/n^3}}{1+|t-\f{\xi}{n}|}\f{\langle \xi-\eta\rangle\sqrt{\eta/j^3}}{1+|t-\f{\eta}{j}|}+\f{|\xi|^{s}}{\langle t\rangle^{3s}}\langle \xi-\eta\rangle^3,
\end{align*}
 In summary, we get that
\begin{align*}
|\rmR_{1;\rmN;\rmNR,\rmNR}^1|
&\lesssim \ep\left\|\f{|\na|^{\f{s}{2}}}{\langle t\rangle^{\f{3s}{2}}}\rmA \rho_{\sim \rmN}\right\|_2
\left\|1_{\rmNR}\f{|\na|^{\f{s}{2}}}{\langle t\rangle^{\f{3s}{2}}}\left\langle\f{\pa_v}{t\pa_z}\right\rangle^{-1}\pa_z^{-1}\pa_z^{-1}\Delta_{L}^2\rmA P_{\neq}\phi_{\rmN}\right\|_2\\
&\quad+\ep\left\|\sqrt{\f{\pa_t\rmg}{\rmg}}\tilde{\tilde{\rmA}} \rho_{\sim \rmN}\right\|_2
\left\|1_{\rmNR}\sqrt{\f{\pa_t\rmg}{\rmg}}\left\langle\f{\pa_v}{t\pa_z}\right\rangle^{-1}\pa_z^{-1}\pa_z^{-1}\Delta_{L}^2\tilde{\tilde{\rmA}}P_{\neq}\phi_{\rmN}\right\|_2.
\end{align*}
Here we use that fact that $|k|\leq |\eta|$, $|l|\leq |\xi|$ and then $\rmA\lesssim \tilde{\tilde{\rmA}}$. 

Case 1.2'(if possible): $\f{|\xi|}{2\rmE(|\xi|^{\f13})+1}\leq t\leq \max\left\{\f{|\xi|}{2\rmE(|\xi|^{\f13})+1},\f{|\eta|}{2\rmE(|\eta|^{\f13})+1}\right\}$: Let $j$ and $n$ be such that $t\in \bar{\rmI}_{j,\eta}\cap \bar{\rmI}_{n,\xi}$. Then we have $t\approx  |\xi|^{\f23}\approx |\eta|^{\f23}$, $n\approx j\approx |\xi|^{\f13}\approx |\eta|^{\f13}$ and 
\begin{align*}
&\f{|\xi|}{l^3(1+(t-\f{\xi}{l})^2)^2}\lesssim 
\left\{
\begin{aligned}
&\f{1}{\langle \xi\rangle}\lesssim \f{|\xi|^{s}}{\langle t\rangle^{3s}}\left\langle\f{\xi}{lt}\right\rangle^{-1}\quad \text{if} \quad |l|\geq \rmE(|\xi|^{\f23})+1\\
&\f{1}{\sqrt{\xi}}\lesssim \f{|\xi|^{s}}{\langle t\rangle^{3s}}\left\langle\f{\xi}{lt}\right\rangle^{-1}
\quad \text{if}\quad  |l|\neq |n| \quad \text{and} \quad \left\{\begin{aligned} &\text{if}\quad \rmE(|\xi|^{\f13})+1\leq |l|\leq |\xi|^{\f12}\\
&\text{if}\quad  |\xi|^{\f12}+1\leq |l|\leq \rmE(|\xi|^{\f23})
\end{aligned}
\right.\\
&\f{1}{(1+(t-\f{\xi}{n})^2)^{2}}\left\langle\f{\xi}{lt}\right\rangle^{-1}\quad \text{if} \quad |\xi|^{\f13}\approx |l|=|n|\leq \rmE(|\xi|^{\f13})\\
&\f{|l|^5}{|\xi|^3}\lesssim \f{|\xi|^{s}}{\langle t\rangle^{3s}}\left\langle\f{\xi}{lt}\right\rangle^{-1} \quad \text{if}\quad  |l|\leq \rmE(|\xi|^{\f13})\quad  \text{and}\quad |n|\neq |l|
\end{aligned}
\right.
\end{align*}
By Lemma \ref{Lem:compare}, for the third case, we have $|j|\approx |k|\approx |n|= |l|\approx  |\xi|^{\f13}$, $\eta\approx \xi$ and 
\begin{align*}
\f{1}{(1+(t-\f{\xi}{n})^2)^{2}}\lesssim \f{\sqrt{\xi/n^3}}{1+|t-\f{\xi}{n}|}\f{\langle \xi-\eta\rangle\sqrt{\eta/j^3}}{1+|t-\f{\eta}{j}|}+\f{|\xi|^{s}}{\langle t\rangle^{3s}}\langle \xi-\eta\rangle^3,
\end{align*}
which implies that 
\begin{align*}
|\rmR_{1;\rmN;\rmNR,\rmNR}^1|
&\lesssim \ep\left\|\f{|\na|^{\f{s}{2}}}{\langle t\rangle^{\f{3s}{2}}}\rmA \rho_{\sim \rmN}\right\|_2
\left\|1_{\rmNR}\f{|\na|^{\f{s}{2}}}{\langle t\rangle^{\f{3s}{2}}}\left\langle\f{\pa_v}{t\pa_z}\right\rangle^{-1}\pa_z^{-1}\Delta_{L}^2\rmA P_{\neq}\phi_{\rmN}\right\|_2\\
&\quad+\ep\left\|\sqrt{\f{\pa_t\rmg}{\rmg}}\tilde{\tilde{\rmA}} \rho_{\sim \rmN}\right\|_2
\left\|1_{\rmNR}\sqrt{\f{\pa_t\rmg}{\rmg}}\left\langle\f{\pa_v}{t\pa_z}\right\rangle^{-1}\pa_z^{-1}\Delta_{L}^2\tilde{\tilde{\rmA}}P_{\neq}\phi_{\rmN}\right\|_2.
\end{align*}

Case 1.3: $t\geq \min\{2|\eta|,2|\xi|\}$: 
We have 
$|l|t\gtrsim |lt-\xi|\gtrsim |l|t$, and
thus by the fact that $\rmg(t,\eta)=\rmg(t,\xi)$ and $\Theta_l(t,\xi)=\Theta_k(t,\eta)$, we obtain that
\begin{align*}
&|\rmR_{1;\rmN;\rmNR,\rmNR}^1|
\lesssim \left|\sum_{k,l\neq 0}\int_{\eta,\xi}1_{t\notin \rmI_{k,\eta},t\notin \rmI_{l,\xi}}\rmA \overline{\widehat{\rho}}_{k}(\eta)\rmA_{k}(\eta)(\eta l-\xi k)\hat{\phi}_{l}(\xi)_{N}\widehat{\rho}_{k-l}(\eta-\xi)_{<N/8}d\eta d\xi\right|\\
&\lesssim \sum_{k,l\neq 0}\int_{\eta,\xi}1_{t\notin \rmI_{k,\eta},t\notin \rmI_{l,\xi}}|\rmA \overline{\widehat{\rho}}_{k}(\eta)|\rmA_{l}(\xi)|\xi||\widehat{\phi}_{l}(\xi)_{N}|{e^{C\d_{L}^{-1}|k-l,\eta-\xi|^{\f13}}}|\widehat{\na\rho}_{k-l}(\eta-\xi)_{<N/8}|d\eta d\xi\\
&\lesssim \f{1}{\langle t\rangle^2}\left\||\na|^{\f s2}\rmA \rho_{\sim \rmN}\right\|_2  
\left\|1_{\rmNR}|\na|^{\f{s}{2}}\left\langle\f{\pa_v}{t\pa_z}\right\rangle^{-1}\pa_z^{-1}\Delta_{L}^2\rmA P_{\neq}\phi_{\rmN}\right\|_2\|\rho\|_{\mathcal{G}^{s}}.
\end{align*}

Case 1.4: $\max\left\{\f{|\eta|}{2\rmE(|\eta|^{\f13})+1},\ \f{|\xi|}{2\rmE(|\xi|^{\f13})+1}\right\}< t< \min\{2|\eta|,2|\xi|\}$:
Let $j$ and $n$ be such that $t\in \bar{\rmI}_{n,\eta}\cap \bar{\rmI}_{j,\xi}$ and we may first consider the case $|lt|\geq 2|\xi|$, then we have 
\beno
\f{|\xi|}{l^3(1+(t-\f{\xi}{l})^2)^2}\lesssim\f{|\xi||l|}{(lt-\xi)^4}\lesssim \f{1}{|l|^2|t|^2}\lesssim \f{|\xi|^{s}}{\langle t\rangle^{3s}}\left\langle\f{\xi}{lt}\right\rangle^{-1}.
\eeno
Then we focus on the case $|lt|<2|\xi|$, which implies $|l|\lesssim |\xi|^{\f13}$, if $|l|\geq \rmE(|\xi|^{\f13})+1$, then $l\neq j$, $\left|t-\f{\xi}{l}\right|\gtrsim \f{\xi}{l^2}\gtrsim |\xi|^{\f13}$ and thus
\beno
\f{|\xi|}{l^3(1+(t-\f{\xi}{l})^2)^2}\lesssim \f{1}{|\xi|^{\f43}}
\lesssim  \f{|\xi|^{s}}{\langle t\rangle^{3s}}\left\langle\f{\xi}{lt}\right\rangle^{-1}.
\eeno
Now we consider the case $|l|\leq \rmE(|\xi|^{\f13})$, and we have for $|l|\leq \f{1}{10}|j|$, 
\beno
\Big|\f{\xi}{l}-t\Big|\gtrsim \f{\xi}{l}\gtrsim \f{\xi}{j}\approx t,
\eeno
which implies that
\beno
\f{|\xi|}{l^3(1+(t-\f{\xi}{l})^2)^2}
\lesssim  \f{|\xi|^{s}}{\langle t\rangle^{3s}}\left\langle\f{\xi}{lt}\right\rangle^{-1}.
\eeno

For $|l|\geq \f{1}{10}|j|$, we have $|lt|\gtrsim |jt|\approx |\xi|$ and 
\beno
\left|t-\f{\xi}{l}\right|\geq \f{\xi}{l^3},\quad
\left|t-\f{\xi}{j}\right|\leq \left|t-\f{\xi}{l}\right|,\quad 
\left|t-\f{\eta}{n}\right|\leq \left|t-\f{\eta}{k}\right|,
\eeno
and
\begin{align*}
\f{|\xi|}{l^3(1+|t-\f{\xi}{l}|^2)^2}
&\lesssim \f{l^3}{\xi}\f{1}{(1+|t-\f{\xi}{l}|)^2}
\lesssim \f{1}{(1+|t-\f{\xi}{j}|)^2}\left\langle\f{\xi}{lt}\right\rangle^{-1}
\end{align*}
Case 1.3.1 $j=n$: We have
\begin{align*}
\f{1}{(1+|t-\f{\xi}{j}|)^2}\lesssim \f{1}{(1+|t-\f{\xi}{j}|)}\f{1}{(1+|t-\f{\eta}{j}|)}\langle \xi-\eta\rangle,
\end{align*}
which implies 
\begin{align*}
&|\rmR_{1;\rmN;\rmNR,\rmNR}^1|
\lesssim \left|\sum_{k,l\neq 0}\int_{\eta,\xi}1_{t\notin \rmI_{k,\eta},t\notin \rmI_{l,\xi}}\rmA \overline{\widehat{\rho}}_{k}(\eta)\rmA_{k}(\eta)(\eta l-\xi k)\hat{\phi}_{l}(\xi)_{\rmN}\widehat{\rho}_{k-l}(\eta-\xi)_{<\rmN/8}d\eta d\xi\right|\\
&\lesssim \sum_{k,l\neq 0}\int_{\eta,\xi}1_{t\notin \rmI_{k,\eta},t\notin \rmI_{l,\xi}}|\rmA \overline{\widehat{\rho}}_{k}(\eta)|\sqrt{\f{\pa_t\rmg(t,\eta)}{\rmg(t,\eta)}}\sqrt{\f{\pa_t\rmg(t,\xi)}{\rmg(t,\xi)}}\rmA_{l}(\xi)|\pa_z^{-1}\Delta_{L}^2\widehat{\phi}_{l}(\xi)_{\rmN}|\\
&\qquad\qquad\qquad\qquad\qquad\qquad \qquad\times e^{C\d_{L}^{-1}|k-l,\eta-\xi|^{\f13}}|\widehat{\na^2\rho}_{k-l}(\eta-\xi)_{<N/8}|d\eta d\xi\\
&\lesssim \left\|\sqrt{\f{\pa_t\rmg}{\rmg}}\tilde{\tilde{\rmA}} \rho_{\sim \rmN}\right\|_2\left\|\sqrt{\f{\pa_t\rmg}{\rmg}}\left\langle\f{\pa_v}{t\pa_z}\right\rangle^{-1}1_{\rmNR}\pa_z^{-1}\Delta_{L}^2\tilde{\tilde{\rmA}} P_{\neq}\phi_{\rmN}\right\|_2\|\rho\|_{\mathcal{G}^{s}}.  
\end{align*}

Case 1.3.2 $|t-\f{\xi}{j}|\gtrsim\f{\xi}{j^2}$ and $|t-\f{\eta}{n}|\gtrsim\f{\eta}{n^2}$: We have
\begin{align*}
\f{1}{(1+|t-\f{\xi}{j}|^2)}\lesssim \f{1}{(1+|t-\f{\xi}{j}|)}\f{1}{(1+|t-\f{\eta}{n}|)},
\end{align*}
which gives us that
\begin{align*}
|\rmR_{1;\rmN;\rmNR,\rmNR}^1|
\lesssim \left\|\sqrt{\f{\pa_t\rmg}{\rmg}}\tilde{\tilde{\rmA}} \rho_{\sim \rmN}\right\|_2\left\|\sqrt{\f{\pa_t\rmg}{\rmg}}\left\langle\f{\pa_v}{t\pa_z}\right\rangle^{-1}1_{\rmNR}\pa_z^{-1}\Delta_{L}^2\tilde{\tilde{\rmA}} P_{\neq}\phi_{\rmN}\right\|_2\|\rho\|_{\mathcal{G}^{s}}.
\end{align*}

Case 1.3.3 $|\xi-\eta|\gtrsim \f{|\eta|}{n}\approx t$: We have
\begin{align*}
\f{|\xi|}{l^3(1+|t-\f{\xi}{l}|)}\lesssim \f{\langle \xi-\eta\rangle^2}{\langle t\rangle^2},
\end{align*}
which gives us that
\begin{align*}
|\rmR_{1;\rmN;\rmNR,\rmNR}^1|
\lesssim \f{1}{\langle t\rangle^2}\left\|\rmA \rho_{\sim \rmN}\right\|_2
\left\|1_{\rmNR}\left\langle\f{\pa_v}{t\pa_z}\right\rangle^{-1}\pa_z^{-1}\Delta_{L}^{2}\rmA P_{\neq}\phi_{\rmN}\right\|_2\|\rho\|_{\mathcal{G}^{s}}. 
\end{align*}

\no \textbf{Case 2:} $|\xi|<6|l|$: We have
\beno
\f{|l|}{l^2+(\xi-lt)^2}\lesssim \f{1}{\langle t\rangle^2}
\eeno
which implies
\begin{align*}
&|\rmR_{1;\rmN;\rmNR,\rmNR}^1|
\lesssim \left|\sum_{k,l\neq 0}\int_{\eta,\xi}1_{t\notin \rmI_{k,\eta},t\notin \rmI_{l,\xi}}\rmA \overline{\widehat{\rho}}_{k}(\eta)\rmA_{k}(\eta)(\eta l-\xi k)\hat{\phi}_{l}(\xi)_{N}\widehat{\rho}_{k-l}(\eta-\xi)_{<N/8}d\eta d\xi\right|\\
&\lesssim \sum_{k,l\neq 0}\int_{\eta,\xi}1_{t\notin \rmI_{k,\eta},t\notin \rmI_{l,\xi}}|\rmA \overline{\widehat{\rho}}_{k}(\eta)|\rmA_{l}(\xi)|l||\widehat{\phi}_{l}(\xi)_{N}|e^{C\d_{L}^{-1}|k-l,\eta-\xi|^{\f13}}|\widehat{\na\rho}_{k-l}(\eta-\xi)_{<N/8}|d\eta d\xi\\
&\lesssim \f{1}{\langle t\rangle^2}\left\||\na|^{\f s 2}\rmA \rho_{\sim \rmN}\right\|_2
\left\|\left\langle\f{\pa_v}{t\pa_z}\right\rangle^{-1}1_{\rmNR}|\na|^{\f s 2}\pa_{z}^{-1}\Delta_{L}^2\rmA P_{\neq}\phi_{\rmN}\right\|_2\|\rho\|_{\mathcal{G}^{s}}.
\end{align*} 

\no{\bf Treatment of $\rmR_{1;\rmN;\rmR,\rmNR}^{1}$}\\
We have 
\begin{align*}
|\rmR_{1;\rmN;\rmR,\rmNR}^{1}|\lesssim &\,\sum_{k,l\neq 0}\int_{\eta,\xi} \mathbf{1}_{t\in \rmI_{k,\eta},t\notin \rmI_{l,\xi}}|\rmA \overline{\widehat{\rho}}_{k}(\eta)\rmA_{k}(\eta)||\eta l-\xi k||\hat{\phi}_{l}(\xi)_{N}\widehat{\rho}_{k-l}(\eta-\xi)_{<N/8}|d\eta d\xi\\
\lesssim &\,\sum_{k,l\neq 0}\int_{\eta,\xi} \mathbf{1}_{t\in \rmI_{k,\eta},t\notin \rmI_{l,\xi}}\rmA \overline{\widehat{\rho}}_{k}(\eta)\f{\rmJ_k(t,\eta)}{\rmJ_l(t,\xi)}\f{\cM(t,\eta)}{\cM(t,\xi)}\\
&\times\rmA_{l}(\xi)|\eta l-\xi k|\frac{|\xi|}{|l|^3(1+(\frac{\xi}{l}-  t)^2)^2}|\widehat{\pa_z^{-1}\Delta_{L}^2\phi}_{l}(\xi)_{N}||\widehat{\rho}_{k-l}(\eta-\xi)_{<N/8}|d\eta d\xi.
\end{align*}
Note that for $t\in \rmI_{k,\eta}$, we have 
\begin{equation}
\Theta_{\rmR}(t,\eta)\approx \Theta_{\rmN\rmR}(t,\eta)\Big[\frac{k^3}{|\eta|}\Big(1+|t-\frac{|\eta|}{|k|}|\Big)\Big], 
\end{equation}
thus we obtain that
\begin{align*}
\f{\rmJ(t,\eta)}{\rmJ(t,\xi)}\leq&\, \frac{\Theta_{\rmN\rmR}(t,\xi)}{\Theta_{\rmR}(t,\eta)}e^{\mu|\eta-\xi|^{1/3}}+e^{\mu |k-l|^{1/3}}\\
\leq&\, \frac{\Theta_{\rmN\rmR}(t,\xi)}{\Theta_{\rmN\rmR} (t,\eta)}\frac{\Theta_{\rmN\rmR}(t,\eta)}{\Theta_{\rmR}(t,\eta)} e^{\mu|\eta-\xi|^{1/3}}+e^{\mu |k-l|^{1/3}}\\
\lesssim&\, e^{C\mu|\eta-\xi|^{1/3}}\frac{|\eta|}{|k|^3(1+|t-\frac{|\eta|}{|k|}|)}+e^{\mu |k-l|^{1/3}}\\
\lesssim&\,\frac{|\eta|}{|k|^3(1+|t-\frac{|\eta|}{|k|}|)} e^{C\mu|\eta-\xi,k-l|^{1/3}} 
\end{align*}
Also using the fact that 
\begin{equation}
\f{\cM_k(\eta)}{\cM_l(\xi)}\lesssim e^{C\d_{L}^{-1} |k-l,\eta-\xi|^{\f13}} 
\end{equation}
Hence, we obtain that 
\begin{equation}\label{R_1_R_NR_Main_term}
\begin{aligned}
|\rmR_{1;\rmN;\rmR,\rmNR}^{1}|\lesssim& \,\sum_{k,l\neq 0}\int_{\eta,\xi} \mathbf{1}_{t\in \rmI_{k,\eta},t\notin \rmI_{l,\xi}}
 \rmA \overline{\widehat{\rho}}_{k}(\eta)
\rmA_{l}(\xi)\frac{|\xi|}{|l|^3(1+(\frac{\xi}{l}-  t)^2)^2}\frac{|\eta|}{|k|^3(1+|t-\frac{|\eta|}{|k|}|)}\\ &\times |\widehat{\pa_z^{-1}\Delta_{L}^2\phi}_{l}(\xi)_{N}|
|k-l,\eta-\xi|e^{c\la|\eta-\xi,k-l|^{s}}|\widehat{\rho}_{k-l}(\eta-\xi)_{<N/8}|d\eta d\xi. 
\end{aligned}
\end{equation}
with $c\in (0,1)$ and $s>\f13$. 
Our goal now is to estimate the symbol  
\begin{equation}\label{Factor_Reson}
\frac{|\xi|}{|l|^3(1+(\frac{\xi}{l}-  t)^2)^2}\frac{|\eta|}{|k|^3(1+|t-\frac{|\eta|}{|k|}|)}
\end{equation}
in different time regimes in order to  absorb the large  factor 
\begin{equation}\label{Prob_term}
\frac{|\eta|}{|k|^3(1+|t-\frac{|\eta|}{|k|}|)}.
\end{equation}

Case 1. First, for $lt\geq 2|\xi|$ {and $|l|\geq 1$}, it holds that 
\begin{equation}\label{t_2xi}
\Big|t-\frac{\xi}{l}\Big|\gtrsim \f{|\xi|}{l}\qquad\text{and}\qquad \Big|t-\frac{\xi}{l}\Big|\geq  t-\Big|\frac{\xi}{l}\Big|\geq t-\frac{t}{2l}\gtrsim t. 
\end{equation}
Hence, keeping in mind the fact that that $|\xi|\approx |\eta|$,  we estimate the factor in \eqref{Factor_Reson} as 
\begin{equation}
\begin{aligned}
\frac{|\xi|}{|l|^3(1+(\frac{\xi}{l}-  t)^2)^2}\frac{|\eta|}{|k|^3(1+|t-\frac{|\eta|}{|k|}|)}\lesssim \frac{1}{1+(\frac{\xi}{l}-  t)^2}\lesssim \frac{1}{\langle t \rangle^2}\left\langle\f{\xi}{lt}\right\rangle^{-1}. 
\end{aligned}
\end{equation}
Consequently, it holds from \eqref{R_1_R_NR_Main_term}, that
\begin{equation}\label{R_R_NR_First_Estimate}
|\rmR_{1;\rmN;\rmR,\rmNR}^{1}|\lesssim  \f{1}{\langle t\rangle^2}\left\||\na|^{\f s 2}\rmA \rho_{\sim \rmN}\right\|_2
\left\||\na|^{\f s 2}\left\langle \f{\pa_v}{t\pa_z}\right\rangle^{-1}\pa_z^{-1}\Delta_{L}^2\rmA P_{\neq}\phi_{\rmN}\right\|_2\|\rho\|_{\mathcal{G}^{s}}. 
\end{equation}

Now we always assume that $|l|t\leq 2|\xi|$. 

 Case 2. Now, for $\f{2|\xi|}{2\rmE(|\xi|^{\f23})+1}< t< 2|\xi|$:
then there exists $n$ such that $t\in \bar{\rmI}_{n,\xi}$. Hence, we have $t\in \rmI_{k,\eta}\cap \bar{\rmI}_{n,\xi}\subset \bar{\rmI}_{k,\eta}\cap \bar{\rmI}_{n,\xi}$, $|k|\approx |n|\lesssim |\xi|^{\f13}$ and $\left|t-\f{\xi}{n}\right|\lesssim \left|t-\f{\xi}{l}\right|$, $\Big|t- \f{\xi}{n}\Big|\leq\f{\xi}{n^2}$. 

If $|l|\leq \f{1}{10}|n|\lesssim |\xi|^{\f13}$, then $\Big|\f{\xi}{l}-t\Big|\gtrsim \f{\xi}{l}\gtrsim \f{\xi}{n}\approx t\approx \f{\eta}{k}$, thus we have
\beno
\frac{|\xi|}{|l|^3(1+(\frac{\xi}{l}-  t)^2)^2}\frac{|\eta|}{|k|^3(1+|t-\frac{|\eta|}{|k|}|)}\lesssim \f{|\xi|^{s}}{\langle t\rangle^{3s}}\left\langle\f{\xi}{lt}\right\rangle^{-1},
\eeno
which implies that
\beno
|\rmR_{1;\rmN;\rmR,\rmNR}^{1}|\lesssim  \left\|\f{|\na|^{\f s 2}}{\langle t\rangle^{3s}}\rmA \rho_{\sim \rmN}\right\|_2
\left\|\f{|\na|^{\f s 2}}{\langle t\rangle^{3s}}\left\langle \f{\pa_v}{t\pa_z}\right\rangle^{-1}\pa_z^{-1}\Delta_{L}^2\rmA P_{\neq}\phi_{\rmN}\right\|_2\|\rho\|_{\mathcal{G}^{s}}. 
\eeno
Now we focus on the case $|l|\geq \f{1}{10}|n|$, thus $|lt|\gtrsim nt\approx |\xi|$ which gives us that
\beno
\left\langle\f{\xi}{lt}\right\rangle^{-1}\approx 1. 
\eeno
We are in a position to apply  Lemma \ref{Lem:compare}.  

Case 2.1 $k=n$: Since $t\notin \rmI_{l,\xi}$ we have the following two cases:
\beno
|l|\geq \rmE(|\xi|^{\f13})+1\ \text{ and } \ |l|\leq \rmE(|\xi|^{\f13}),\quad \text{ {with}}\quad\ \Big|t-\f{\xi}{l}\Big|\geq\f{\xi}{2l^3}.
\eeno

For the first one, if $t\leq \f{2|\xi|}{2\rmE(|\xi|^{\f13})+1}$, then by the fact that $t\in \rmI_{k,\eta}\cap \bar{\rmI}_{k,\xi}$, {then $t\approx |\eta|^{\f23}$, hence  }
we get $|k|\approx |\eta|^{\f13}$; and 
if $t\geq \f{2|\xi|}{2\rmE(|\xi|^{\f13})+1}$ then $|k|=|n|\leq \rmE(|\xi|^{\f13})$, $\left|t-\f{\xi}{l}\right|\geq \f{\xi}{l^2}\geq |\xi|^{\f13}$. Therefore in both sub-cases, we get that
\begin{equation}
\begin{aligned}
\frac{|\xi|}{|l|^3(1+(\frac{\xi}{l}-  t)^2)^2}\frac{|\eta|}{|k|^3(1+|t-\frac{|\eta|}{|k|}|)}\lesssim &\,\frac{1}{1+|t-\frac{\xi}{n}|}\frac{1}{1+|t-\f{\eta}{k}|}\\
\lesssim  &\, \sqrt{\frac{\partial_t \rmg (t,\xi)}{\rmg(t,\xi)}}\sqrt{\frac{\partial_t \rmg (t,\eta)}{\rmg(t,\eta)}}. 
\end{aligned}
\end{equation}

For the second one, 
and since $t\notin \rmI_{l,\xi}$, then  if $|l|=|n|$ which means that $t\in \bar{\rmI}_{n,\xi}\setminus \rmI_{n,\xi}$, recall that $|k|=|n|$, then we get $\left|t-\f{\xi}{l}\right|\geq\f{\xi}{2k^3}$; and if $|l|\neq |n|$ thus $\left|t-\f{\xi}{l}\right|\gtrsim \f{\xi}{l^2}$. Therefore in both sub-cases, we get that
\begin{equation}
\begin{aligned}
\frac{|\xi|}{|l|^3(1+(\frac{\xi}{l}-  t)^2)^2}\frac{|\eta|}{|k|^3(1+|t-\frac{\eta}{k}|)} \lesssim &\,
\frac{1}{1+|t-\frac{\xi}{n}|}\frac{1}{1+|t-\f{\eta}{k}|}
\\
\lesssim  &\, \sqrt{\frac{\partial_t \rmg (t,\xi)}{\rmg(t,\xi)}}\sqrt{\frac{\partial_t \rmg (t,\eta)}{\rmg(t,\eta)}}.
\end{aligned}
\end{equation}

Case 2.2 $|t-\f{\xi}{n}|\gtrsim \f{\xi}{n^2}$ and $|t-\f{\eta}{k}|\gtrsim \f{\xi}{k^2}$: We have by using the inequality $|t-\f{\xi}{n}|\lesssim |t-\f{\xi}{l}|$, 
\begin{equation}
\begin{aligned}
\frac{|\xi|}{|l|^3(1+(\frac{\xi}{l}-  t)^2)^2}\frac{|\eta|}{|k|^3(1+|t-\frac{\eta}{k}|)}\lesssim  &\,
\frac{1}{1+|t-\frac{\xi}{n}|}\frac{1}{1+|t-\f{\eta}{n}|}\\
\lesssim  &\, \sqrt{\frac{\partial_t \rmg (t,\xi)}{\rmg(t,\xi)}}\sqrt{\frac{\partial_t \rmg (t,\eta)}{\rmg(t,\eta)}}
\end{aligned}
\end{equation}

Case 2.3 $|\xi-\eta|\gtrsim \f{\xi}{n}\gtrsim |\xi|^{\f23}\gtrsim \langle t\rangle^{\f23}$, 
where we have used the fact that $|k|\approx |n|\leq |\xi|^{1/3}$ and $t\leq 2|\xi|$ in the above time regime. Hence, 
 we have
\begin{equation}
\frac{|\xi|}{|l|^3(1+(\frac{\xi}{l}- t)^2)^2}\frac{|\eta|}{|k|^3(1+|t-\frac{\eta}{k}|)}
\lesssim \langle \eta-\xi \rangle^{2}\sqrt{\frac{\partial_t \rmg (t,\xi)}{\rmg(t,\xi)}}\sqrt{\frac{\partial_t \rmg (t,\eta)}{\rmg(t,\eta)}},
\end{equation}
Consequently, in all the above cases, we have the following estimate:
\begin{equation}\label{Second_Estimate_R_NR}
|\rmR_{1;\rmN;\rmR,\rmNR}^{1}|\lesssim \left\|\sqrt{\f{\pa_t\rmg}{\rmg}}\tilde{\tilde{\rmA}} \rho_{\sim \rmN}\right\|_2\left\|\sqrt{\f{\pa_t\rmg}{\rmg}}\left\langle \f{\pa_v}{t\pa_z}\right\rangle^{-1}\pa_z^{-1}1_{\rmNR}\pa_z^{-1}\Delta_{L}^2\tilde{\tilde{\rmA}} P_{\neq}\phi_{\rmN}\right\|_2\|\rho\|_{\mathcal{G}^{s}}
\end{equation}

\no{\bf Treatment of $\rmR_{1;\rmN;\rmNR,\rmR}^{1}$}\\
By Lemma \ref{lem:g/g} and Lemma \ref{lem:3.5}, it holds that $1\leq |l|\leq \rmE(|\xi|^{\f13})$ and 
\begin{align*}
|\rmR_{1;\rmN;\rmNR,\rmR}^{1}|\lesssim &\,\sum_{k,l\neq 0}\int_{\eta,\xi} \mathbf{1}_{t\notin \rmI_{k,\eta},t\in \rmI_{l,\xi}}|\rmA \overline{\widehat{\rho}}_{k}(\eta)\rmA_{k}(\eta)||\xi||\hat{\phi}_{l}(\xi)_{N}\widehat{\na\rho}_{k-l}(\eta-\xi)_{<N/8}|d\eta d\xi\\
\lesssim &\sum_{k,l\neq 0}\int_{\eta,\xi} \mathbf{1}_{t\notin \rmI_{k,\eta},t\in \rmI_{l,\xi}}|\rmA \overline{\widehat{\rho}}_{k}(\eta)\f{\rmJ(t,\eta)}{\rmJ(t,\xi)}\f{\cM(t,\eta)}{\cM(t,\xi)}\rmA_{l}(\xi)||\xi||\hat{\phi}_{l}(\xi)_{N}\widehat{\na\rho}_{k-l}(\eta-\xi)_{<N/8}|d\eta d\xi\\
\lesssim &\sum_{k,l\neq 0}\int_{\eta,\xi} \mathbf{1}_{t\notin \rmI_{k,\eta},t\in \rmI_{l,\xi}}|\rmA \overline{\widehat{\rho}}_{k}(\eta)\f{|l|^3}{|\xi|}(1+|t-\f{\xi}{l}|)\rmA_{l}(\xi)||\xi||\hat{\phi}_{l}(\xi)_{N}\\
&\qquad\qquad\times e^{C\d_{\rmL}|k-l,\xi-\eta|^{\f13}}\widehat{\na\rho}_{k-l}(\eta-\xi)_{<N/8}|d\eta d\xi.
\end{align*}
By the fact that \eqref{eq: support}, we have $t\geq \f{\eta}{2\rmE(|\eta|^{\f23})+1}$. Suppose $\f{3}{2}|\xi|\geq t> 2|\eta|$, then 
\beno
|\xi-\eta|\geq |\xi|-\f{3}{4}|\xi|\geq \f14|\xi|\approx t,
\eeno 
which implies that 
\beno
|\rmR_{1;\rmN;\rmNR,\rmR}^{1}|\lesssim \f{1}{\langle t\rangle^2}\| |\na|^{\f s 2}\rmA\rho_{\sim \rmN}\|_2
\left\|\left\langle\f{\pa_v}{t\pa_z}\right\rangle^{-1}|\na|^{\f s 2}\pa_z^{-1}\Delta_{L}^2\rmA P_{\neq}\phi_{\rmN}\right\|_2\|\rho\|_{\mathcal{G}^s}.
\eeno
Note that in this case it holds that 
\beno
\left\langle\f{\xi}{lt}\right\rangle^{-1}\approx 1
\eeno
Now let focus on the case $t\leq 2|\eta|$. 
Let $n$ be such that $t\in \bar{\rmI}_{n,\eta}\cap \rmI_{l,\xi}\subset \bar{\rmI}_{n,\eta}\cap \bar{\rmI}_{l,\xi}$, we have the following cases:

Case $l=n$: We have $(1+|t-\f{\eta}{n}|)\lesssim (1+|t-\f{\xi}{l}|)\langle \xi-\eta\rangle$.  

Case $|t-\f{\eta}{n}|\gtrsim \f{\eta}{n^2}$ and $|t-\f{\xi}{l}|\gtrsim \f{\xi}{l^2}$: Then by the fact $t\in \rmI_{l,\xi}$ we have $|n|\approx |l|\approx 1$. Thus it still holds that 
$(1+|t-\f{\eta}{n}|)\lesssim (1+|t-\f{\xi}{l}|)\langle \xi-\eta\rangle$.

Therefore, we obtain that
\begin{align*}
|\rmR_{1;\rmN;\rmNR,\rmR}^{1}|
\lesssim &\sum_{k,l\neq 0}\int_{\eta,\xi} \mathbf{1}_{t\notin \rmI_{k,\eta},t\in \rmI_{l,\xi}}\f{|\rmA \overline{\widehat{\rho}}_{k}(\eta)|}{1+|t-\f{\eta}{n}|}\f{|l|^3(1+|t-\f{\xi}{l}|)^3\rmA_{l}(\xi)|\hat{\phi}_{l}(\xi)_{N}|}{1+|t-\f{\xi}{l}|}\\
&\qquad\qquad\times e^{c\la|k-l,\xi-\eta|^{s}}\widehat{\na^2\rho}_{k-l}(\eta-\xi)_{<N/8}|d\eta d\xi\\
&\lesssim \left\|\sqrt{\f{\pa_t\rmg}{\rmg}}\tilde{\tilde{\rmA}}\rho_{\sim\rmN}\right\|_2\left\|\left\langle \f{\pa_v}{t\pa_z}\right\rangle^{-1}\sqrt{\f{\pa_t\Theta}{\Theta}}1_{\rmR}\tilde{\rmA} \pa_z^{-1}\Delta_{L}^2P_{\neq}\phi_{\rmN}\right\|_2\|\rho\|_{\mathcal{G}^s}.
\end{align*}

Case $|\xi-\eta|\gtrsim \f{\xi}{l}$: We have $|\xi-\eta|\gtrsim |\xi|^{\f23}\gtrsim \langle t\rangle^{\f23}$ and thus 
\beno
|\rmR_{1;\rmN;\rmNR,\rmR}^{1}|\lesssim \f{1}{\langle t\rangle^2}\||\na|^{\f s 2}\rmA\rho_{\sim \rmN}\|_2\left\|1_{\rmR}\left\langle \f{\pa_v}{t\pa_z}\right\rangle^{-1}|\na|^{\f s 2}\pa_z^{-1}\Delta_{L}^2\rmA P_{\neq}\phi_{\rmN}\right\|_2\|\rho\|_{\mathcal{G}^s}.
\eeno

\no{\bf Treatment of $\rmR_{1;\rmN;\rmR,\rmR}^{1}$}\\
By Lemma \ref{Lem:compare}, we consider the following three cases:

Case 1: $k=l$, then 
\begin{align*}
|\rmR_{1;\rmN;\rmR,\rmR}^{1}|&=\left|\sum_{l\neq 0}\int_{\eta,\xi}\mathbf{1}_{t\in \rmI_{l,\eta},t\in \rmI_{l,\xi}}
\rmA \overline{\widehat{\rho}}_{k}(\eta)\rmA_{k}(\eta)(\eta l-\xi l)\hat{\phi}_{l}(\xi)_{N}\widehat{\rho}_{0}(\eta-\xi)_{<N/8}d\eta d\xi\right|\\
&\lesssim \sum_{l\neq 0}\int_{\eta,\xi}\mathbf{1}_{t\in \rmI_{l,\eta},t\in \rmI_{l,\xi}}
\rmA |\overline{\widehat{\rho}}_{k}(\eta)|\f{\rmJ(t,\eta)}{\rmJ(t,\xi)}\f{\cM(t,\eta)}{\cM(t,\xi)}\rmA_{l}(\xi)|l||\hat{\phi}_{l}(\xi)_{N}||\widehat{\na \rho}_{0}(\eta-\xi)_{<N/8}|d\eta d\xi
\end{align*}
By Lemma \ref{lem:g/g} and the fact that 
\begin{align}\label{eq:J_R/J_R}\nonumber
\f{\rmJ(t,\eta)}{\rmJ(t,\xi)}&\lesssim \f{\Theta_{\rmNR}(t,\xi)}{\Theta_{\rmNR}(t,\eta)}\f{\Theta_{\rmR}(t,\xi)}{\Theta_{\rmNR}(t,\xi)}\f{\Theta_{\rmNR}(t,\eta)}{\Theta_{\rmR}(t,\eta)}e^{\mu|\eta-\xi|^{\f13}}\\
&\lesssim e^{C\mu|\eta-\xi|^{\f13}}\f{1+|t-\f{\xi}{l}|}{1+|t-\f{\eta}{l}|}\lesssim e^{C\mu|\eta-\xi|^{\f13}}
\end{align}
and that
\ben
\f{1}{1+|t-\f{\xi}{l}|}\lesssim \f{\langle \xi-\eta\rangle}{1+|t-\f{\eta}{l}|},
\een
we obtain that 
\begin{align*}
|\rmR_{1;\rmN;\rmR,\rmR}^{1}|
\lesssim \left\|\sqrt{\f{\pa_t\rmg}{\rmg}}\tilde{\tilde{\rmA}} \rho_{\sim \rmN}\right\|_2\left\|\left\langle \f{\pa_v}{t\pa_z}\right\rangle^{-1}\sqrt{\f{\pa_t\rmg}{\rmg}}1_{\rmR}\pa_z^{-1}\Delta_{L}^2\tilde{\tilde{\rmA}} P_{\neq}\phi_{\rmN}\right\|_2\|\rho\|_{\mathcal{G}^{s}}.
\end{align*}

Case 2: $|t-\f{\xi}{l}|\gtrsim \f{\xi}{l^3}$ and $|t-\f{\eta}{k}|\gtrsim \f{\eta}{k^3}$: By Lemma \ref{lem:g/g} and \eqref{eq:J_R/J_R} and the fact that 
\beno
\f{\xi}{l^3(1+|t-\f{\xi}{l}|^4)}\lesssim \f{1}{(1+|t-\f{\xi}{l}|^3)}\lesssim \f{{l^3}/{\xi}}{(1+|t-\f{\xi}{l}|)} \f{{\eta}/{k^3}}{(1+|t-\f{\eta}{k}|)}\lesssim \sqrt{\f{\pa_t\rmg(t,\xi)}{\rmg(t,\xi)}}\sqrt{\f{\pa_t\rmg(t,\eta)}{\rmg(t,\eta)}}.
\eeno
we obtain that 
\begin{align*}
&|\rmR_{1;\rmN;\rmR,\rmR}^1|
\lesssim \left\|\sqrt{\f{\pa_t\rmg}{\rmg}}\tilde{\tilde{\rmA}} \rho_{\sim \rmN}\right\|_2\left\|\sqrt{\f{\pa_t\rmg}{\rmg}}\left\langle \f{\pa_v}{t\pa_z}\right\rangle^{-1}1_{\rmR}\pa_z^{-1}\Delta_{L}^2\tilde{\tilde{\rmA}} P_{\neq}\phi_{\rmN}\right\|_2\|\rho\|_{\mathcal{G}^{s}}
\end{align*}

Case 3: $|\xi-\eta|\gtrsim \f{|\xi|}{l^2}$: We have $|l|\lesssim |\xi|^{\f13}$, $|\xi|^{\f23}\lesssim \langle t\rangle \lesssim |\xi|$ which gives us that  
\beno
|\xi-\eta|\gtrsim \langle t\rangle^{\f13}. 
\eeno
Thus we get that
\begin{align*}
|\rmR_{1;\rmN;\rmR,\rmR}^{1}|
&\lesssim \sum_{k,l\neq 0}\int_{\eta,\xi} \mathbf{1}_{t\in \rmI_{k,\eta},t\in \rmI_{l,\xi}}|\rmA \overline{\widehat{\rho}}_{k}(\eta)\rmA_{k}(\eta)||\eta l-\xi k||\hat{\phi}_{l}(\xi)_{N}\widehat{\rho}_{k-l}(\eta-\xi)_{<N/8}|d\eta d\xi\\
&\lesssim \sum_{k,l\neq 0}\int_{\eta,\xi} \mathbf{1}_{t\in \rmI_{k,\eta},t\in \rmI_{l,\xi}}|\rmA \overline{\widehat{\rho}}_{k}(\eta)|e^{C\d_{\rmL}|k-l,\eta-\xi|^{\f13}}\rmA_{l}(\xi)|\xi||\hat{\phi}_{l}(\xi)_{N}\widehat{\na\rho}_{k-l}(\eta-\xi)_{<N/8}|d\eta d\xi\\
&\lesssim \f{1}{\langle t\rangle^{2}} \sum_{k,l\neq 0}\int_{\eta,\xi} |\rmA \overline{\widehat{\rho}}_{k}(\eta)|e^{C\d_{\rmL}|k-l,\eta-\xi|^{\f13}}\rmA_{l}(\xi)|\hat{\phi}_{l}(\xi)_{N}\langle\xi-\eta\rangle^{11}\widehat{\na\rho}_{k-l}(\eta-\xi)_{<N/8}|d\eta d\xi\\
&\lesssim \f{1}{\langle t\rangle^2}\|\rmA\rho_{\sim \rmN}\|_2\left\|\left\langle \f{\pa_v}{t\pa_z}\right\rangle^{-1}1_{\rmR}\rmA P_{\neq}\phi_{\rmN}\right\|_2\|\rho\|_{\mathcal{G}^s}. 
\end{align*}

We conclude this subsection by introduce the following lemma. 
\begin{lemma}
Under the bootstrap hypotheses, it holds that,
\begin{align*}
\sum_{\rmN\geq 8}\rmR^1_{1;\rmN}&\lesssim 
\ep\rmCK_{\la,\rho}+\ep\rmCK_{\Theta,\rho}+\ep\rmCK_{M,\rho}\\
&\quad+\ep\left\|\left\langle\f{\pa_v}{t\pa_z}\right\rangle^{-1}\left(\f{|\na|^{\f{s}{2}}}{\langle t\rangle^{\f{3s}{2}}}\rmA+\sqrt{\f{\pa_t\rmg}{\rmg}}\tilde{\tilde{\rmA}}+\sqrt{\f{\pa_t\Theta}{\Theta}}\tilde{\rmA}\right)\pa_z^{-1}\Delta_{L}^2 P_{\neq}\phi\right\|_2^2.
\end{align*}
\end{lemma}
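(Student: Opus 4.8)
The plan is to collect the estimates obtained in Section \ref{Sec_Main_Cont} for each piece of the resonant/non-resonant decomposition $\rmR_{1;\rmN}^1 = \rmR_{1;\rmN;\rmNR,\rmNR}^{1} + \rmR_{1;\rmN;\rmNR,\rmR}^{1} + \rmR_{1;\rmN;\rmR,\rmNR}^{1} + \rmR_{1;\rmN;\rmR,\rmR}^{1}$, sum them over $\rmN\geq 8$, and identify the right-hand side with the claimed combination of $\rmCK$ terms and the elliptic quantity $\|\langle\pa_v/(t\pa_z)\rangle^{-1}(\cdots)\pa_z^{-1}\Delta_L^2 P_{\neq}\phi\|_2^2$. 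First I would observe that in every case treated above the bound has one of two shapes: either $\f{1}{\langle t\rangle^2}\|\rmA\rho_{\sim\rmN}\|_2 \cdot \|1_{*}\langle\pa_v/(t\pa_z)\rangle^{-1}\pa_z^{-1}\Delta_L^2\rmA P_{\neq}\phi_{\rmN}\|_2 \cdot \|\rho\|_{\cG^s}$ (the off-resonant, large-time, or $|\xi-\eta|$-large cases), or a bilinear product of one of the ``$\rmCK$ square roots'' $|\na|^{s/2}\langle t\rangle^{-3s/2}\rmA$, $\sqrt{\pa_t\rmg/\rmg}\,\tilde{\tilde\rmA}$, $\sqrt{\pa_t\Theta/\Theta}\,\tilde\rmA$ applied to $\rho_{\sim\rmN}$ against the same weight applied to $\pa_z^{-1}\Delta_L^2 P_{\neq}\phi_{\rmN}$, times $\|\rho\|_{\cG^s}$.

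Next I would apply Cauchy–Schwarz in the $\rmN$-sum together with the almost-orthogonality of the Littlewood–Paley pieces (the support condition \eqref{eq: support} forces $|\rmN'-\rmN|\lesssim 1$ wherever both $\rho_{\sim\rmN}$-factors are nonzero), so that $\sum_\rmN \|W\rho_{\sim\rmN}\|_2 \|W'\phi_\rmN\|_2 \lesssim (\sum_\rmN\|W\rho_{\sim\rmN}\|_2^2)^{1/2}(\sum_\rmN\|W'\phi_\rmN\|_2^2)^{1/2} \lesssim \|W\rho\|_2\|W'\phi\|_2$ for the relevant weights $W,W'$. For the first shape this produces $\f{1}{\langle t\rangle^2}\|\rmA\rho\|_2\|\langle\pa_v/(t\pa_z)\rangle^{-1}\pa_z^{-1}\Delta_L^2\rmA P_{\neq}\phi\|_2\|\rho\|_{\cG^s}$; using the bootstrap hypothesis $\|\rmA\rho\|_2\lesssim\ep$, $\|\rho\|_{\cG^s}\lesssim\ep$ and then Young's inequality $ab\lesssim \f{\ep}{\langle t\rangle^2}a^2 + \f{\ep}{\langle t\rangle^2}b^2$ (absorbing the small factor into either side), together with Proposition \ref{prop: elliptic-1} or the lossy estimate Lemma \ref{Lemma_Lossy} to bound $\|\langle\pa_v/(t\pa_z)\rangle^{-1}\pa_z^{-1}\Delta_L^2\rmA P_{\neq}\phi\|_2 \lesssim \ep$, one gets a contribution $\lesssim \ep^3/\langle t\rangle^2$, or alternatively $\ep\,\|\langle\pa_v/(t\pa_z)\rangle^{-1}(|\na|^{s/2}\langle t\rangle^{-3s/2}\rmA)\pa_z^{-1}\Delta_L^2 P_{\neq}\phi\|_2^2$ which is exactly one of the terms on the stated right-hand side. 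For the second shape, Cauchy–Schwarz plus Young gives (after using $\|\rho\|_{\cG^s}\lesssim\ep$ as the small constant) a bound $\lesssim \ep(\rmCK_{\la,\rho}+\rmCK_{\Theta,\rho}+\rmCK_{M,\rho}) + \ep\|\langle\pa_v/(t\pa_z)\rangle^{-1}(|\na|^{s/2}\langle t\rangle^{-3s/2}\rmA+\sqrt{\pa_t\rmg/\rmg}\,\tilde{\tilde\rmA}+\sqrt{\pa_t\Theta/\Theta}\,\tilde\rmA)\pa_z^{-1}\Delta_L^2 P_{\neq}\phi\|_2^2$, because $\|(|\na|^{s/2}\langle t\rangle^{-3s/2}\rmA)\rho\|_2^2$ is $\rmCK_{\la,\rho}$ up to the $\langle t\rangle$-weight bookkeeping, $\|\sqrt{\pa_t\Theta/\Theta}\,\tilde\rmA\rho\|_2^2 = \rmCK_{\Theta,\rho}$ and $\|\sqrt{\pa_t\rmg/\rmg}\,\tilde{\tilde\rmA}\rho\|_2^2 = \rmCK_{M,\rho}$ by definition \eqref{Ck_Terms_rho}, using $|k|\le|\eta|\Rightarrow \rmA\lesssim\tilde\rmA,\tilde{\tilde\rmA}$ as noted after \eqref{A_tildetilde}. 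Finally I would note the $1_{\rmNR}$ and $1_{\rmR}$ factors combine since $1_{\rmNR}+1_{\rmR}=1$, so the indicator-restricted elliptic terms can be dropped in favor of the full ones on the right-hand side.

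The main obstacle I anticipate is purely organizational rather than conceptual: one must verify that in \emph{every} one of the roughly dozen sub-cases of Section \ref{Sec_Main_Cont} (Cases 1.1–1.4, 2, and the resonant variants, including the sub-sub-cases 1.3.1–1.3.3 and 2.1–2.3) the extracted weight on the $\phi$-factor is genuinely dominated by one of the three admissible symbols $|\na|^{s/2}\langle t\rangle^{-3s/2}$, $\sqrt{\pa_t\rmg/\rmg}$, $\sqrt{\pa_t\Theta/\Theta}$ — this is where the delicate symbol inequalities \eqref{Inequality_1_0}, \eqref{Inequality_Case_1_2}, Lemma \ref{Lem:compare} and Lemma \ref{Lemma_Growth_1} are used, and the matching of the weight on the $\rho$-factor must be consistent (so that Young's inequality lands in the same $\rmCK$ term). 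Since all of these pointwise bounds were already carried out in the body of Section \ref{Sec_Main_Cont}, the lemma follows by simply summing; I would therefore present the proof as: ``Collecting the estimates \eqref{eq:R-1}, \eqref{Estimate_R_NR_NR_Case 1.1}, \eqref{R_R_NR_First_Estimate}, \eqref{Second_Estimate_R_NR} and the remaining case bounds obtained above, applying Cauchy–Schwarz in $\rmN$ (with almost-orthogonality from \eqref{eq: support}), the bootstrap bounds $\|\rho\|_{\cG^s}+\|\rmA\rho\|_2\lesssim\ep$, Proposition \ref{prop: elliptic-1} and Young's inequality with a small parameter proportional to $\ep$, yields the claim; the $\langle t\rangle^2\|\rmA\pa_v g\|_2^2$ term does not appear here since $\rmR_{1;\rmN}^1$ contains no $g$-contribution.'' I would keep the writeup to a few lines and defer to the already-established case analysis.
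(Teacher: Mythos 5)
Your proposal is correct and essentially matches the paper's approach: the lemma is presented as a summary of the case-by-case pointwise estimates established in the preceding pages of Section~\ref{Sec_Main_Cont}, which are then summed over $\rmN$ via Cauchy--Schwarz and Littlewood--Paley almost orthogonality, with $\|\rho\|_{\cG^s}\lesssim\ep$ serving as the small parameter in Young's inequality. You correctly identify the two shapes of bound and their correspondence with the $\rmCK$ quantities and the elliptic term, and you correctly note that $\tilde{\rmA}\leq\rmA$ (resp.\ $\tilde{\tilde\rmA}\leq\rmA$) converts $\|\sqrt{\pa_t\Theta/\Theta}\,\tilde\rmA\rho\|_2^2$ (resp.\ $\|\sqrt{\pa_t\rmg/\rmg}\,\tilde{\tilde\rmA}\rho\|_2^2$) into $\rmCK_{\Theta,\rho}$ (resp.\ $\rmCK_{M,\rho}$).

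One small caution: the $\langle t\rangle^{-2}$-prefactored bounds (e.g.\ the $l\xi<0$ case, Case~1.3) must \emph{not} be converted to $\ep^3/\langle t\rangle^2$ via Proposition~\ref{prop: elliptic-1}, since that term is absent from this lemma's right-hand side. Instead, one distributes $\langle t\rangle^{-2}$ unequally in Young's inequality as $\langle t\rangle^{-2\tilde q}\cdot\langle t\rangle^{-(2-2\tilde q)}$: the first factor is dominated by $-\dot\la(t)$ (giving $\ep\rmCK_{\la,\rho}$), and the second satisfies $2-2\tilde q\geq 3s$ for $\tilde q\leq 2-3s/2$ (giving the $|\na|^{s/2}\langle t\rangle^{-3s/2}$ elliptic term) — this is exactly what the paper's display after the $l\xi<0$ case records. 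Your ``alternatively'' clause identifies the right path; just commit to it.
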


\subsubsection{The term $
\rmR_{1;\rmN}^{\ep,1}$}\label{Section_Term_R_1_epsilon}
In this section, we treat the term $
\rmR_{1;\rmN}^{\ep,1}$. We also use a paraproduct in $v$ to linearize the high frequencies around the low frequencies of the product $h\nabla^\perp \phi_l$. Hence, we have 
\begin{equation}\label{R_1_epsilon_Main}
\begin{aligned}
\rmR_{1;\rmN}^{\ep,1}=&\,\f{1}{2\pi}\sum_{\rmM\geq 8}\sum_{k,l\neq 0}\int_{\eta,\xi,\xi^\prime}\rmA\overline{\widehat{\rho}}_{k}(\eta)\rmA_{k}(\eta)\big((\eta-\xi)l-\xi^\prime (k-l)\big)\hat{h}(\xi-\xi^\prime)_{<\rmM/8}\\
&\times\varphi_{\rmN}(l,\xi)\hat{\phi}_l(\xi^\prime)_{\rmM}\hat{ \rho}_{k-l}(\eta-\xi)_{<\rmN/8}d\eta d\xi d\xi^\prime\\
&+\f{1}{2\pi}\sum_{\rmM\geq 8}\sum_{k,l\neq 0}\int_{\eta,\xi,\xi^\prime}\rmA\overline{\widehat{\rho}}_{k}(\eta)\rmA_{k}(\eta)\big((\eta-\xi)l-\xi^\prime (k-l)\big)\hat{h}(\xi-\xi^\prime)_{\rmM}\\
&\times\varphi_{\rmN}(l,\xi)\hat{\phi}_l(\xi^\prime)_{<\rmM/8}\hat{ \rho}_{k-l}(\eta-\xi)_{<\rmN/8}d\eta d\xi d\xi^\prime\\
&+\f{1}{2\pi}\sum_{\rmM\in \mathbf{D}}\sum_{\f18\rmM\leq \rmM^\prime\leq 8\rmM}\sum_{k,l\neq 0}\int_{\eta,\xi,\xi^\prime}\rmA\overline{\widehat{\rho}}_{k}(\eta)\rmA_{k}(\eta)\big((\eta-  \xi)l-\xi^\prime (k-l)\big)\hat{h}(\xi-\xi^\prime)_{\rmM^\prime}\\
&\times\varphi_{\rmN}(l,\xi)\hat{\phi}  _l(\xi^\prime)_{\rmM }\hat{ \rho}_{k-l}(\eta-\xi)_{<\rmN/8}d\eta d\xi d\xi^\prime\\
=&\, \rmR_{1;\rmN;\rmL\rmH}^{\ep,1}+\rmR_{1;\rmN;\rmH\rmL}^{\ep,1}+\rmR_{1;\rmN;\rmH\rmH}^{\ep,1}, 
\end{aligned}
\end{equation}
where $\varphi_\rmN$ denotes the cut-off associated to the N-th dyadic shell in $\Z\times\R$.

Let us first treat the term $\rmR_{1;\rmN;\rmL\rmH}^{\ep,1}$. 
Since $h$ is in low frequency, then it is natural to expect that $\rmR_{1;\rmN;\rmL\rmH}^{\ep,1}$ behaves somehow like $\rmR_{1;\rmN}^{\ep,1}$, since $\hat{h}_{<\rm M/8}$ provides only a modulation of the term $(\hat{\phi}_l)_{\rmM}$ for large frequencies. 
 
On the support of the integrand we have (see \eqref{eq: support}). 
\begin{equation}\label{Support_eq_2}
\begin{aligned}
\left||l,\xi|-|k,\eta|\right|\leq |k-l,\eta-\xi|\leq \f{6}{32}|l,\xi|\\
\left||l,\xi^\prime|-|l,\xi|\right|\leq |\xi-\xi^\prime|\leq \f{6}{32}|l,\xi^\prime|
\end{aligned}
\end{equation}
This implies $|k,\eta|\approx |l,\xi|\approx |l,\xi^\prime|$. As in \eqref{eq: support}, it holds that 
\begin{equation}\label{Ineq_loss_Deriva}
\big|(\eta-  \xi)l-\xi^\prime (k-l)\big|\lesssim |l,\xi^\prime||k-l,\eta-\xi|. 
\end{equation}
Hence, the estimates goes the same as in the one of $\rmR_{1;\rmN}^1$ where $(l,\xi^\prime)$ will play the role of $(l,\xi)$.  Therefore, when we switch from $\rmA_k(t,\eta)$ to $\rmA_l(t,\xi^\prime)$, we will pay a Gevrey-3 regularity for $h$ as shown by the following inequality: 
\begin{equation}\label{Exponential_Ineq_2}
\begin{aligned}
 e^{\la |k,\eta|^s}\lesssim e^{\la |l,\xi^\prime|^s+c\lambda |k-l,\eta-\xi|^s+c\la |\xi-\xi^\prime|^s}
\end{aligned}
\end{equation}
with $0<c<1$ which will help to absorb the Sobolev exponent  in the estimates. Hence, with the estimate \eqref{Exponential_Ineq_2} in hand, we may rewrite  $\rmR_{1;\rmN;\rmL\rmH}^{\ep,1}$ as 
\begin{equation}\label{R_1_epsilon_1}
\begin{aligned}
|\rmR_{1;\rmN;\rmL\rmH}^{\ep,1}|\lesssim&\, \sum_{\rmM\geq 8}\sum_{k,l\neq 0}\int_{\eta,\xi,\xi^\prime}\rmA|  \overline{\widehat{\rho}}_{k}(\eta)|\rmJ_{k}(\eta)\cM_{k}(\eta)\langle l,\xi^\prime\rangle^\sigma\big|(\eta-\xi)l-\xi^\prime (k-l)\big|\varphi_\rmN(l,\xi)\\    
&\times e^{c\lambda |\xi-\xi^\prime|^s}|\hat{h}(\xi-\xi^\prime)_{<\rmM/8}|
 e^{\la |l,\xi^\prime|^s}\hat{\phi}_l(\xi^\prime)_{\rmM}e^{c\lambda |k-l,\eta-\xi|^s}\hat{ \rho}_{k-l}(\eta-\xi)_{<\rmN/8}d\eta d\xi d\xi^\prime. 
\end{aligned}
\end{equation}

Thus we have
\begin{align*}
|\rmR_{1;\rmN;\rmL\rmH}^{\ep,1}|
&\lesssim \ep^2\left\|\f{|\na|^{\f{s}{2}}}{\langle t\rangle^{\f{3s}{2}}}\rmA \rho_{\sim \rmN}\right\|_2
\left\|\f{|\na|^{\f{s}{2}}}{\langle t\rangle^{\f{3s}{2}}}\left\langle\f{\pa_v}{t\pa_z}\right\rangle^{-1}\pa_z^{-1}\Delta_{L}^2\rmA P_{\neq}\phi_{\rmN}\right\|_2\\
&\quad+\ep^2\left\|\sqrt{\f{\pa_t\rmg}{\rmg}}\tilde{\tilde{\rmA}} \rho_{\sim \rmN}\right\|_2
\left\|\sqrt{\f{\pa_t\rmg}{\rmg}}\left\langle\f{\pa_v}{t\pa_z}\right\rangle^{-1}\pa_z^{-1}\Delta_{L}^2\tilde{\tilde{\rmA}}P_{\neq}\phi_{\rmN}\right\|_2\\
&\quad+\ep^2\left\|\sqrt{\f{\pa_t\rmg}{\rmg}}\tilde{\tilde{\rmA}}\rho_{\sim\rmN}\right\|_2\left\|\left\langle \f{\pa_v}{t\pa_z}\right\rangle^{-1}\sqrt{\f{\pa_t\Theta}{\Theta}}1_{\rmR}\tilde{\rmA} \pa_z^{-1}\Delta_{L}^2P_{\neq}\phi_{\rmN}\right\|_2
\end{align*}

Let us now turn to the term $\rmR_{1;\rmN;\rmH\rmL}^{\ep,1}$. We use the cut-off $1=\mathbf{1}_{16|l|\geq |\xi|}+\mathbf{1}_{16|l|\leq |\xi|}$ and split the term $\rmR_{1;\rmN;\rmH\rmL}^{\ep,1}$ according to this cut-off and write 
\begin{equation}
\begin{aligned}
\rmR_{1;\rmN;\rmH\rmL}^{\ep,1}=&\,\f{1}{2\pi}\sum_{\rmM\geq 8}\sum_{k,l\neq 0}\int_{\eta,\xi,\xi^\prime}\rmA\overline{\widehat{\rho}}_{k}(\eta)\rmA_{k}(\eta)\big((\eta-\xi)l-\xi^\prime (k-l)\big)\hat{h}(\xi-\xi^\prime)_{\rmM}\\
&\times\varphi_{\rmN}(l,\xi)\hat{\phi}_l(\xi^\prime)_{<\rmM/8}\hat{ \rho}_{k-l}(\eta-\xi)_{<\rmN/8}(\mathbf{1}_{16|l|\geq |\xi|}+\mathbf{1}_{16|l|\leq |\xi|})d\eta d\xi d\xi^\prime\\
=&\,\rmR_{1;\rmN;\rmH\rmL}^{\ep,1;z}+\rmR_{1;\rmN;\rmH\rmL}^{\ep,1;v}. 
\end{aligned}
\end{equation}
To estimate $\rmR_{1;\rmN;\rmH\rmL}^{\ep,1;z}$, we have first form \eqref{Support_eq_2} and from the fact that 
\begin{equation*}
\f\rmM 2\leq |\xi-\xi^\prime|\leq \f{3\rmM}{2}\quad \text{and}\quad \f{|\xi^\prime|}{\rmM/8}\leq \f{3}{4}
\end{equation*}
together with 
$|\xi|\leq 16|l|$, we have 
\begin{subequations}\label{Support}
\begin{equation}\label{Support_eq_3}
\left||l,\xi|-|k,\eta|\right|\leq |k-l,\eta-\xi|\leq \f{6}{32}|l,\xi|\\
\end{equation}
\begin{equation}\label{Support_4}
\left||l,\xi^\prime|-|l,\xi|\right|\leq |\xi-\xi^\prime|\leq \f{16}{13}|\xi|\lesssim |l|.   
\end{equation}
\end{subequations}  
We discuss two cases: $|l|\geq 16|\xi|$ and $\f{1}{16}|\xi|\leq |l|\leq 16 |\xi|$. For the case $|l|\geq 16|\xi|$ First, if $|l|\geq 16|\xi|$, then it holds that $\f{16}{13}|\xi|\leq\f{1}{13}|l|\leq \f{1}{13}|l,\xi|$, hence \eqref{Exponential_Ineq_2} holds. 
 
 Now, for $\f{1}{16}|\xi|\leq |l|\leq 16 |\xi|$, then, we have $|\xi-\xi^\prime|\approx |l,\xi|$ and hence, 
 we can obtain 
 \begin{equation}
e^{\lambda|l,\xi|^s}\leq e^{c(\lambda |l,\xi^\prime|+|\xi-\xi^\prime|)} \leq e^{\lambda|l,\xi^\prime|^s}e^{c\lambda|\xi-\xi^\prime|^s}
\end{equation}
Hence, in both cases\eqref{Exponential_Ineq_2}, holds.  Hence, using the fact that $\langle k,\eta\rangle\approx \langle l,\xi^\prime \rangle\leq \langle l\rangle $, we obtain as in \eqref{R_1_epsilon_1}, 
\begin{equation}\label{R_1_epsilon_1_z}
\begin{aligned}
|\rmR_{1;\rmN;\rmH\rmL}^{\ep,1;z}|\lesssim&\, \sum_{\rmM\geq 8}\sum_{k,l\neq 0}\int_{\eta,\xi,\xi^\prime}\mathbf{1}_{16|l|\geq |\xi|}\rmA|  \overline{\widehat{\rho}}_{k}(\eta)|\rmJ_{k}(\eta)\cM_{k}(\eta)\langle l\rangle^\sigma\big|(\eta-\xi)l-\xi^\prime (k-l)\big|\varphi_\rmN(l,\xi)\\    
&\times e^{c\lambda |\xi-\xi^\prime|^s}|\hat{h}(\xi-\xi^\prime)_{\rmM}|
 e^{\la |l,\xi^\prime|^s}\hat{\phi}_l(\xi^\prime)_{<\rmM/8}e^{c\lambda |k-l,\eta-\xi|^s}\hat{ \rho}_{k-l}(\eta-\xi)_{<\rmN/8}d\eta d\xi d\xi^\prime. 
\end{aligned}
\end{equation}
Now, in the above integral, we will take advantages for $|l|$ being large to exclude the resonant interval. For this reason, we fix $\rmM_0$ large enough and split the above term into two terms:  
 \begin{equation}\label{R_1_epsilon_2_z}
\begin{aligned}
|\rmR_{1;\rmN;\rmH\rmL}^{\ep,1;z}|\lesssim&\,\Big( \sum_{\rmM\geq \rmM_0}+\sum_{\rmM\leq \rmM_0}\Big)
\sum_{k,l\neq 0}\int_{\eta,\xi,\xi^\prime}\rmA|  \overline{\widehat{\rho}}_{k}(\eta)|\rmJ_{k}(\eta)\cM_{k}(\eta)\langle l\rangle^\sigma\big|(\eta-\xi)l-\xi^\prime (k-l)\big|\varphi_\rmN(l,\xi)\\    
&\times \mathbf{1}_{16|l|\geq |\xi|}e^{c\lambda |\xi-\xi^\prime|^s}|\hat{h}(\xi-\xi^\prime)_{\rmM}|
 e^{\la |l,\xi^\prime|^s}\hat{\phi}_l(\xi^\prime)_{<\rmM/8}e^{c\lambda |k-l,\eta-\xi|^s}\hat{ \rho}_{k-l}(\eta-\xi)_{<\rmN/8}d\eta d\xi d\xi^\prime\\
 =& \rmR_{1;\rmN;\rmH\rmL;\rmH}^{\ep,1;z}+\rmR_{1;\rmN;\rmH\rmL;\rmL}^{\ep,1;z}. 
\end{aligned}
\end{equation}
  For the term $\rmR_{1;\rmN;\rmH\rmL;\rmH}^{\ep,1;z}$, we have $|l|$ is large compared to $|\xi^\prime|$ and since $|k,\eta|\approx |l,\xi^\prime|$, then both $(k,\eta)$ and $(l,\xi)$ are both non-resonant. Then, it holds that 
\begin{equation}\label{J/J_M/M}
\f{\rmJ_{k}(\eta)\cM_{k}(\eta)}{\rmJ_{l}(\xi^\prime)\cM_{l}(\xi^\prime)}\lesssim e^{C|k-l,\eta-\xi^\prime|^{\f13}}\lesssim e^{C|k-l,\eta-\xi|^{\f13}+C|\xi-\xi^\prime|^{\f13}}. 
\end{equation}
Hence, it holds that by applying \eqref{Ineq_loss_Deriva}  
\begin{equation}
\begin{aligned}
|\rmR_{1;\rmN;\rmH\rmL;\rmH}^{\ep,1;z}| \lesssim&\, \sum_{\rmM\geq \rmM_0}\sum_{k,l\neq 0}\int_{\eta,\xi,\xi^\prime}\rmA |k,\eta|^{\f s2}|  \overline{\widehat{\rho}}_{k}(\eta)|\rmJ_{l}(\xi^\prime)\cM_{l}(\xi^\prime) \f{|l,\xi'|}{l^3(1+|t-\f{\xi'}{l}|^2)^2}\\    
&\times \langle l\rangle^\sigma\varphi_\rmN(l,\xi)\mathbf{1}_{16|l|\geq |\xi|}e^{c\lambda |\xi-\xi^\prime|^s}|\hat{h}(\xi-\xi^\prime)_{\rmM}|\mathbf{1}_{\rmN\rmR}\\
&\times
 e^{\la |l,\xi^\prime|^s}\widehat{\pa_z^{-1}\Delta_{L}^2\phi}_l(\xi^\prime)_{<\rmM/8}|k-l,\eta-\xi|e^{c\lambda |k-l,\eta-\xi|^s}\hat{ \rho}_{k-l}(\eta-\xi)_{<\rmN/8}d\eta d\xi d\xi^\prime\\    
 \end{aligned}
\end{equation}
On the support of the integrand, we have 
 \begin{equation}\label{Decay_phi}
 \begin{aligned}
\f{|l,\xi'|}{l^3(1+|t-\f{\xi'}{l}|^2)^2}\lesssim \f{1}{l^2+|lt-\xi'|^2}\lesssim \f{1}{l^2\langle t\rangle ^2}\left\langle\f{\xi}{lt}\right\rangle^{-1}
\end{aligned}
\end{equation}
Consequently, it holds from above that   
\begin{align*}
|\rmR_{1;\rmN;\rmH\rmL;\rmH}^{\ep,1;z}|
 \lesssim &\f{1}{\langle t\rangle ^2}\left\|\rmA \rho_{\sim \rmN}\right\|_2\left\|\left\langle\f{\pa_v}{t\pa_z}\right\rangle^{-1}\mathbf{1}_{\rmN\rmR}\pa_z^{-1}\Delta_{L}^2\rmA P_{\neq}\phi_{\rmN}\right\|_2\|\rho_{<\rmN/8}\|_{\mathcal{G}^{s}}\sum_{\rmM\geq \rmM_0}\f{1}{\rmM}\Vert h _{\rmM}\Vert_{\mathcal{G}^{s}}\\
 \lesssim&\,\f{\ep^2}{\langle t\rangle ^2}\left\|\rmA \rho_{\sim \rmN}\right\|_2\left\|\left\langle\f{\pa_v}{t\pa_z}\right\rangle^{-1}\mathbf{1}_{\rmN\rmR}\pa_z^{-1}\Delta_{L}^2\rmA P_{\neq}\phi_{\rmN}\right\|_2.
\end{align*}
The treatment of $\rmR_{1;\rmN;\rmH\rmL;\rmL}^{\ep,1;z}$ is easy since $|\xi-\xi'|+|\xi'|+|l|\lesssim 2^{\rmM_0}$. 
Thus we have
\begin{align*}
\sum_{\rmN\geq 8}|\rmR_{1;\rmN;\rmH\rmL;\rmL}^{\ep,1;z}|
\lesssim \ep^2\rmCK_{\la,\rho}+\ep^2\left\|\f{|\na|^{\f s2}}{\langle t\rangle^{\f{3s}{2}}}\left\langle\f{\pa_v}{t\pa_z}\right\rangle^{-1}\mathbf{1}_{\rmN\rmR}\pa_z^{-1}\Delta_{L}^2\rmA P_{\neq}\phi\right\|_2.
\end{align*}

Now let us treat $\rmR_{1;\rmN;\rmH\rmL}^{\ep,1;v}$. 
On the support of the integrand, \eqref{Support_eq_3} holds. In addition, we have 
\begin{subequations}\label{support_v}
\begin{equation}\label{Support_eq_3_0}
\left||l,\xi|-|k,\eta|\right|\leq |k-l,\eta-\xi|\leq \f{6}{32}|l,\xi|\\
\end{equation}
\begin{equation}\label{Support_eq_3_1_0}
\big||\xi-\xi^\prime|-|l,\xi|\big|\leq |l,\xi^\prime|\leq \f{67}{256}|\xi-\xi^\prime|. 
\end{equation}
\end{subequations}
Hence, it holds that  
\begin{equation}\label{Exponential_v_part}
e^{\la |k,\eta|^s}\leq e^{\lambda |\xi-\xi^\prime|^s+c\lambda |l,\xi^\prime|^s+c\lambda |k-l,\eta-\xi|^s}. 
\end{equation}
Also it holds that from \eqref{support_v} that $\f{189}{256}|\xi-\xi^\prime|\leq |l,\xi|\leq  \f{323}{256}|\xi-\xi^\prime|$. Hence, it holds that, by using \eqref{Ineq_loss_Deriva} and the fact that $\langle k,\eta\rangle\approx \langle l,\xi\rangle $    
\begin{align*}
|\rmR_{1;\rmN;\rmH\rmL}^{\ep,1;v}|\lesssim&\,\sum_{\rmM\geq 8}\sum_{k,l\neq 0}\int_{\eta,\xi,\xi^\prime}\rmA\overline{\widehat{\rho}}_{k}(\eta)\rmJ_{k}(\eta)\cM_{k}(\eta)\langle \xi-\xi^\prime\rangle^\sigma e^{\lambda |\xi-\xi^\prime|^s|}|\hat{h}(\xi-\xi^\prime)_{\rmM}|\\
&\times\varphi_{\rmN}(l,\xi)e^{c\lambda |l,\xi^\prime|^s}\hat{\phi}_l(\xi^\prime)_{<\rmM/8}|l,\xi^\prime||k-l,\eta-\xi|\\
&\times e^{c\lambda |k-l,\eta-\xi|^s}\hat{ \rho}_{k-l}(\eta-\xi)_{<\rmN/8}\mathbf{1}_{16|l|\leq |\xi|}d\eta d\xi d\xi^\prime
\end{align*}
Now, we use the estimate 
\begin{align*}
\f{\rmJ_k(\eta)}{\rmJ_0(\xi-\xi^\prime)}
&\lesssim \langle \xi-\xi^\prime\rangle e^{C|k,\eta-\xi-\xi^\prime|^{\f13}}\lesssim  \langle \xi-\xi^\prime\rangle e^{C|l,\xi^\prime|^{\f13}+C|k-l,\eta-\xi|^{\f13} } \\
\f{\cM_{k}(\eta)}{\cM_{0}(\xi-\xi^\prime)}
&\lesssim e^{C|k,\eta-\xi-\xi^\prime|^{\f13}}\lesssim e^{C|l,\xi^\prime|^{\f13}+C|k-l,\eta-\xi|^{\f13} },
\end{align*}
which gives us that
\begin{align*}
|\rmR_{1;\rmN;\rmH\rmL}^{\ep,1;v}|
\lesssim\left\|\rmA \rho_{\sim \rmN}\right\|_2\left\| P_{\neq}\phi\right\|_{\mathcal{G}^{\la,\b;s}}\Vert \rmA_0\pa_vh_{\sim \rmN} \Vert_{2}\|\rho\|_{\mathcal{G}^{\la,\b;s}}.  
\end{align*}

Now, we estimate the term $\rmR_{1;\rmN;\rmH\rmH}^{\ep,1}$. We have 
\begin{equation}
\begin{aligned}
\rmR_{1;\rmN;\rmH\rmH}^{\ep,1}=&\,\f{1}{2\pi}\sum_{\rmM\in \mathbf{D}}\sum_{\f18\rmM\leq \rmM^\prime\leq 8\rmM}\sum_{k,l\neq 0}\int_{\eta,\xi,\xi^\prime}\rmA\overline{\widehat{\rho}}_{k}(\eta)\rmA_{k}(\eta)\big((\eta-  \xi)l-\xi^\prime (k-l)\big)\hat{h}(\xi-\xi^\prime)_{\rmM^\prime}\\
&\times\varphi_{\rmN}(l,\xi)\hat{\phi}  _l(\xi^\prime)_{\rmM }\hat{ \rho}_{k-l}(\eta-\xi)_{<\rmN/8}d\eta (\mathbf{1}_{|l|\geq 100|\xi^\prime|}+\mathbf{1}_{|l|\leq 100|\xi^\prime|}) d\xi d\xi^\prime\\
=&\, \rmR_{1;\rmN;\rmH\rmH}^{\ep,1;z}+\rmR_{1;\rmN;\rmH\rmH}^{\ep,1;v}. 
\end{aligned}  
\end{equation}
As above, we always have
\begin{subequations}  
\begin{equation}\label{Support_eq3_1}
\left||l,\xi|-|k,\eta|\right|\leq |k-l,\eta-\xi|\leq \f{6}{32}|l,\xi|. \\
\end{equation}
Also on the support of the integrand, we have 
\begin{equation}\label{Support_M_M_prime}
\f{\rmM^\prime} {2}\leq |\xi-\xi^\prime|\leq \f{3\rmM^\prime}{2}\qquad \text{and}\qquad \f{\rmM} {2}\leq |\xi^\prime|\leq \f{3\rmM}{2}
\end{equation}
 This together with $\f18\rmM\leq \rmM^\prime\leq 8\rmM$ implies 
 \begin{equation} \label{Support_Integrand_Rem}
\big| |l,\xi|-|l,\xi^\prime|\big|\leq |\xi-\xi^\prime|\leq 24 |\xi^\prime|\leq \f{24}{100} |l,\xi^\prime|. 
\end{equation}
\end{subequations}
Thus it holds that
\begin{equation*}
e^{\la |k,\eta|^s}\leq e^{\lambda|l,\xi|^s}e^{c\lambda |k-l,\eta-\xi|}\leq e^{\lambda |l,\xi^\prime|^s+c\lambda |\xi-\xi^\prime|^s+c\lambda |k-l,\eta-\xi|^s}. 
\end{equation*}
Hence, applying \eqref{Decay_phi}, we get  
\begin{equation}
\begin{aligned}
|\rmR_{1;\rmN;\rmH\rmH}^{\ep,1;z}|
\lesssim&\f{1}{\langle t\rangle ^2}
\left\|\rmA \rho_{\sim \rmN}\right\|_2
\left\|\left\langle\f{\pa_v}{t\pa_z}\right\rangle^{-1}\mathbf{1}_{\rmN\rmR}\pa_z^{-1}\Delta_{L}^2\rmA P_{\neq}\phi_{\rmN}\right\|_2\|\rho_{<\rmN/8}\|_{\mathcal{G}^{\la,\b;s}}\\
&\times\sum_{\rmM^\prime\in \mathbf{D}}\f{1}{\rmM'}\Vert \hat{h}_{ \rmM^\prime}\Vert_{\mathcal{G}^{\la,\b;s}}.
\end{aligned}
\end{equation}
The term $\rmR_{1;\rmN;\rmH\rmH}^{\ep,1;v}$ is easy to deal with, we have by using the fact that $2\b\geq \s$
\begin{align*}
|\rmR_{1;\rmN;\rmH\rmH}^{\ep,1;v}|\lesssim&\,\sum_{\rmM\in \mathbf{D}}\sum_{\rmM\approx \rmM^\prime}\sum_{k,l\neq 0}\int_{\eta,\xi,\xi^\prime}\rmA|\overline{\widehat{\rho}}_{k}(\eta)|e^{\lambda |\xi-\xi^\prime|^s}|\hat{h}(\xi-\xi^\prime)_{\rmM^\prime}|\\
&\times\varphi_{\rmN}(l,\xi)e^{\lambda|l,\xi^\prime|^s}|\hat{\phi}  _l(\xi^\prime)_{\rmM }||k-l,\eta-\xi|e^{c\lambda |k-l,\eta-\xi|^s}|\hat{ \rho}_{k-l}(\eta-\xi)_{<\rmN/8}|\\
&\times\mathbf{1}_{|l|\leq 100|\xi^\prime|}d\eta  d\xi d\xi^\prime\\
\lesssim& \sum_{\rmM\in \mathbf{D}}\sum_{\rmM\approx \rmM^\prime}\f{1}{\rmN}\left\|\rmA \rho_{\sim \rmN}\right\|_2\left\| P_{\neq}\phi_{\rmM}\right\|_{\mathcal{G}^{\la,\b;s}}\Vert h_{\rmM^\prime} \Vert_{\mathcal{G}^{\la,\b;s}}\|\rho\|_{\mathcal{G}^{\la,\b;s}}\\
\lesssim&  \f{1}{\rmN} \left\|\rmA \rho_{\sim \rmN}\right\|_2\left\| P_{\neq}\phi\right\|_{\mathcal{G}^{\la,\b;s}}\Vert h \Vert_{\mathcal{G}^{\la,\b;s}}\|\rho\|_{\mathcal{G}^{\la,\b;s}}. 
\end{align*}

We conclude this subsection by introduce the following lemma. 
\begin{lemma}
Under the bootstrap hypotheses, it holds that,
\begin{align*}
\sum_{\rmN\geq 8}\rmR^{\ep,1}_{1;\rmN}&\lesssim \f{\ep^{4}}{\langle t\rangle^4}+
\ep^2\rmCK_{\la,\rho}+\ep^2\rmCK_{\Theta,\rho}+\ep^2\rmCK_{M,\rho}\\
&\quad+\ep^2\left\|\left\langle\f{\pa_v}{t\pa_z}\right\rangle^{-1}\left(\f{|\na|^{\f{s}{2}}}{\langle t\rangle^{\f{3s}{2}}}\rmA+\sqrt{\f{\pa_t\rmg}{\rmg}}\tilde{\tilde{\rmA}}+\sqrt{\f{\pa_t\Theta}{\Theta}}\tilde{\rmA}\right)\pa_z^{-1}\Delta_{L}^2 P_{\neq}\phi\right\|_2^2.
\end{align*}
\end{lemma}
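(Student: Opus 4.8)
The plan is to derive the stated bound by collecting the five per-shell estimates on $\rmR^{\ep,1;\ast}_{1;\rmN}$ obtained in Section~\ref{Section_Term_R_1_epsilon} and summing them over the dyadic index $\rmN$. Recall the paraproduct splitting \eqref{R_1_epsilon_Main}, $\rmR^{\ep,1}_{1;\rmN}=\rmR^{\ep,1}_{1;\rmN;\rmL\rmH}+\rmR^{\ep,1}_{1;\rmN;\rmH\rmL}+\rmR^{\ep,1}_{1;\rmN;\rmH\rmH}$, together with the $z/v$ splittings $\rmR^{\ep,1}_{1;\rmN;\rmH\rmL}=\rmR^{\ep,1;z}_{1;\rmN;\rmH\rmL}+\rmR^{\ep,1;v}_{1;\rmN;\rmH\rmL}$ and $\rmR^{\ep,1}_{1;\rmN;\rmH\rmH}=\rmR^{\ep,1;z}_{1;\rmN;\rmH\rmH}+\rmR^{\ep,1;v}_{1;\rmN;\rmH\rmH}$, and the further split of $\rmR^{\ep,1;z}_{1;\rmN;\rmH\rmL}$ into its $\rmM\ge\rmM_0$ and $\rmM\le\rmM_0$ parts. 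For each of these pieces the computation above produces a pointwise bound in which the shell-localized factors $\rho_{\sim\rmN}$ and $\pa_z^{-1}\Delta_L^2 P_{\neq}\phi_{\rmN}$ (or $\pa_v h_{\sim\rmN}$) enter linearly, so I would first apply Cauchy--Schwarz in $\rmN$ and use the bounded overlap of the localizations to replace, in each case, the sum $\sum_{\rmN\ge8}\rmR^{\ep,1;\ast}_{1;\rmN}$ by the product of the corresponding unlocalized norms.

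Next I would keep track of the three regularity-bookkeeping ingredients that already appear in Section~\ref{Section_Term_R_1_epsilon}: (i) the modulation inequality \eqref{Exponential_Ineq_2} and its $z$- and $v$-variants, which move the weight $e^{\la|k,\eta|^s}\langle k,\eta\rangle^{\s}$ from $(k,\eta)$ to $(l,\xi')$ at the price of a Gevrey-$\tfrac1s$ loss on $h$ absorbed by $e^{c\la|\xi-\xi'|^s}$ (and, for the $v$-pieces, a first-derivative loss absorbed into $\|\rmA_0\langle\pa_v\rangle h_{\sim\rmN}\|_2$); (ii) the commutator bounds \eqref{Mult_J_Ineq_0}, \eqref{M_Multp_Ineq_0}, \eqref{J/J_M/M} and $\rmJ_k(\eta)/\rmJ_0(\xi-\xi')\lesssim\langle\xi-\xi'\rangle e^{C|l,\xi'|^{1/3}+C|k-l,\eta-\xi|^{1/3}}$; and (iii) the two forms of the symbol decay, namely $\tfrac{|l,\xi'|}{|l|^3(1+|t-\xi'/l|^2)^2}\lesssim\tfrac{1}{|l|^2\langle t\rangle^2}\langle\tfrac{\xi'}{lt}\rangle^{-1}$ on the non-resonant set and its replacement by $\tfrac{|\na|^{s/2}}{\langle t\rangle^{3s/2}}\langle\tfrac{\pa_v}{t\pa_z}\rangle^{-1}$, $\sqrt{\pa_t\rmg/\rmg}$, or $\sqrt{\pa_t\Theta/\Theta}$ on the resonant set, exactly as in the treatment of $\rmR^1_{1;\rmN}$ in Section~\ref{Sec_Main_Cont}. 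With these in hand, summation shows that the low--high contribution is $\lesssim\ep^2(\rmCK_{\la,\rho}+\rmCK_{\Theta,\rho}+\rmCK_{M,\rho})+\ep^2$ times the $\phi$-elliptic quantity on the right of \eqref{Main_elliptic_Estimate}; the $z$-pieces reproduce this structure together with a fast-decaying remainder (from $\|\rmA\rho\|_2\lesssim\ep$, $\sum_{\rmM\ge\rmM_0}\rmM^{-1}\|h_\rmM\|_{\cG^{\la,\b;s}}\lesssim\ep$, and either $\|P_{\neq}\phi\|_{\cG^{\la,0;s}}\lesssim\ep\langle t\rangle^{-4}$ or, after a Young inequality, the $\langle t\rangle^{-2}$ decay of the symbol together with Proposition~\ref{prop: elliptic-1}); and the $v$-pieces are bounded by $\|\rmA\rho\|_2\,\|P_{\neq}\phi\|_{\cG^{\la,\b;s}}\,\|\rmA_0\langle\pa_v\rangle h\|_2\,\|\rho\|_{\cG^{\la,\b;s}}$ and its $\ell^1$-in-$\rmN$ analogue, which by Lemma~\ref{Lemma_Lossy} and the bootstrap is again $\lesssim\ep^4\langle t\rangle^{-4}$.

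Finally I would assemble these, using Propositions~\ref{prop: elliptic-1} and~\ref{prop: elliptic-2} and Lemma~\ref{Lemma_Lossy} to control the $\phi$-factors and retaining the $\rmCK$-type terms as they stand, to arrive at the asserted inequality with all prefactors $\ep^2$ (or better). I expect the main obstacle to be the $v$-part $\rmR^{\ep,1;v}_{1;\rmN;\rmH\rmL}$: there $h$ lies at the top frequency, so the derivative loss from the symbol $(\eta-\xi)l-\xi'(k-l)$ and the $\langle t\rangle$-loss incurred in replacing $\rmJ_k(\eta)$ by $\rmJ_0(\xi-\xi')$ must both be compensated; this is precisely why one carries the factor $\langle\tfrac{\pa_v}{t\pa_z}\rangle^{-1}=\langle\tfrac{\xi}{lt}\rangle^{-1}$ throughout and trades it for powers of $\langle t\rangle$, and why the final estimate is phrased in terms of $\|\rmA_0\langle\pa_v\rangle h_{\sim\rmN}\|_2$, for which the bootstrap hypothesis on $h$ provides just enough decay to close.
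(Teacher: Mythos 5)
Your proposal reproduces the paper's argument: the same paraproduct split \eqref{R_1_epsilon_Main} into $\rmL\rmH$, $\rmH\rmL$, $\rmH\rmH$, the same cut-off $\mathbf{1}_{16|l|\gtrless|\xi|}$ inside $\rmH\rmL$ and $\rmH\rmH$, the same $\rmM\gtrless\rmM_0$ split inside the $z$-piece, the same three bookkeeping ingredients (Gevrey modulation moving $\rmA$ to $(l,\xi')$ at a small Gevrey cost on $h$, the $\rmJ/\cM$ commutator bounds, and the resonant/non-resonant symbol dichotomy giving either $\langle t\rangle^{-2}$ decay or the $\rmCK$ weights), and the same conclusion that $\rmL\rmH$ mirrors $\rmR^1_{1;\rmN}$ with an extra $\ep$, while the $\rmH\rmL$ and $\rmH\rmH$ remainders close via the lossy elliptic bound on $\phi$ and the bootstrap control on $h$.

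One small inaccuracy in your last paragraph: for $\rmR^{\ep,1;v}_{1;\rmN;\rmH\rmL}$ the paper does not carry $\langle\pa_v/(t\pa_z)\rangle^{-1}$ nor pay a $\langle t\rangle$-loss in the $\rmJ$-ratio; it instead uses the $\rmJ_k(\eta)/\rmJ_0(\xi-\xi')\lesssim\langle\xi-\xi'\rangle e^{C|\cdot|^{1/3}}$ variant, absorbs the extra $\langle\xi-\xi'\rangle$ into $\rmA_0\pa_v h_{\sim\rmN}$, and lets the decay come entirely from $\|P_{\neq}\phi\|_{\cG^{\la,\b;s}}\lesssim\ep\langle t\rangle^{-4}$. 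Your route (paying $\langle t\rangle$ via \eqref{J_J_0_Estimate} and recovering it from the stronger $\phi$-decay) would also work here, so the discrepancy is one of mechanism rather than a gap, and the final $\ep^4\langle t\rangle^{-4}$ bound you quote is correct.
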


\subsubsection{The term $\rmR_{1;\rmN}^{2}$}
On the support  of the integrand it holds that 
\begin{equation}
\f{|k|+|\eta-\xi|}{\rmN/8}\leq \f34\qquad \text{and}\qquad \f{\rmN}{2}\leq |\xi|\leq \f{3\rmN}{2} 
\end{equation}
 which implies that 
 \begin{equation}\label{support_k_Inq}
|k|+|\eta-\xi|\leq \f{3}{16}|\xi| 
\end{equation}   
and $|k|\leq |\xi|\approx |\eta|$.

We obtain on the support of the integrand for $0<c<1$
 \begin{equation}\label{Mult_0_xi}
\f{ e^{\la|k,\eta|^s}}{e^{\la|\xi|^s}}\lesssim e^{c\la|k,\eta-\xi|^s},\quad 
\f{\rmJ_k(\eta)}{\rmJ_0(\xi)}\lesssim \langle \xi\rangle e^{C\mu|k,\xi-\eta|^{\f13}},\quad \f{\cM_k(\eta)}{\cM_0(\xi)}\lesssim e^{C\d_{\rmL}^{-1}|k,\xi-\eta|^{\f13}},
\end{equation}
Thus we get that
\beno
|\rmR_{1;\rmN}^{2}|\lesssim \f{1}{\langle t\rangle}\|\rmA \rho_{\sim \rmN}\|_2\langle t\rangle\|\rmA\pa_vg_{\rmN}\|_{2}\|\rho\|_{\cG^{\la,\b;s}}\lesssim \f{\ep}{\langle t\rangle^2}\|\rmA \rho_{\sim \rmN}\|_2^2+\ep\langle t\rangle^2\|\rmA\pa_v^3g_{\rmN}\|_{2}^2.
\eeno

\subsubsection{The term $\rmR_{1;\rmN}^{3}$}
Now, we estimate the term $  \rmR_{1;\rmN}^{3}$. We have by using the fact that $|k-l,\eta-\xi|\lesssim |l,\xi| $ (see \eqref{eq: support})
\begin{equation}
\begin{aligned}  
\rmR_{1;\rmN}^{3}\lesssim&\,\sum_{k,l}\int_{\eta,\xi}\rmA_k(t,\eta)|\overline{\hat{\rho}}_{k}(\eta)||k-l,\eta-\xi||\widehat{{\bf u}}_{l}(\xi)_{N}| \rmA_{k-l}(\eta-\xi)|\hat{\rho}_{k-l}(\eta-\xi)_{<N/8}|d\eta d\xi\\
\lesssim&\, \Vert \rmA \rho_{\sim \rmN}\Vert_{2}\Vert {\bf u}_{\rmN} \Vert_{H^4}\Vert \rmA \rho\Vert_{2}
\lesssim \ep\Vert \rmA \rho_{\sim \rmN}\Vert_{2}(\Vert \pa_v^3g_{\rmN} \Vert_{H^4}+\|v'P_{\neq}\phi_{\rmN}\|_{H^4}).
\end{aligned}
\end{equation}

\subsection{Transport term $\rmT_{1;\rmN}$}\label{Section_Transport}
In this section, we deal with the transport term and prove the following proposition:
\begin{proposition}
Under the bootstrap hypotheses, it holds that,
\begin{align*}
\sum_{\rmN\geq 8}\rmT_{1;\rmN}&\lesssim 
\sqrt{\ep}\rmCK_{\la,\rho}.
\end{align*}
\end{proposition}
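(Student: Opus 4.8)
The goal is to control $\sum_{\rmN\ge 8}\rmT_{1;\rmN}$, where each $\rmT_{1;\rmN}$ is the commutator-type quantity built from $\rmA\rho$ paired against $\rmA(\mathbf{u}_{<\rmN/8}\cdot\na_{z,v}\rho_{\rmN})-\mathbf{u}_{<\rmN/8}\cdot\na_{z,v}\rmA\rho_{\rmN}$. The first step is to split $\mathbf{u}=(0,g)^T+h\na^{\bot}_{z,v}P_{\ne}\phi+\na^{\bot}_{z,v}P_{\ne}\phi$ as in \eqref{u_formula}, so that $\rmT_{1;\rmN}$ decomposes into a ``$g$''-contribution, a small ``$h\na^{\bot}\phi$''-contribution and a main ``$\na^{\bot}\phi$''-contribution; on the Fourier side the commutator kernel in each piece is $\rmA_k(\eta)-\rmA_k(\eta-\xi)$ (times the transport symbol $\eta l-\xi k$ for the $\phi$-pieces, or $\xi\cdot(\text{derivative})$ for the $g$-piece), where $(l,\xi)$ is the frequency of $\mathbf{u}_{<\rmN/8}$ and $(k-l,\eta-\xi)$ that of $\rho_{\rmN}$, with $(k-l,\eta-\xi)$ the \emph{large} frequency. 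The frequency localization $|l,\xi|<\frac{\rmN}{8}$ and $|k-l,\eta-\xi|\approx\rmN$ gives $|l,\xi|\le\frac18|k-l,\eta-\xi|$, hence $|k,\eta|\approx|k-l,\eta-\xi|$, which is exactly the regime in which the commutator gains. The key mechanism is the standard $\cG^{\la,\s;s}$ commutator estimate: using $|\,|k,\eta|^s-|k-l,\eta-\xi|^s\,|\lesssim |l,\xi|^s\langle k-l,\eta-\xi\rangle^{s-1}$ together with the multiplier comparison lemmas (Lemma~\ref{lem:3.5}, Lemma~\ref{lem:g/g}, Lemma~\ref{Lemma_Cmmutator_J}, Lemma~\ref{lem: g/g-1-1}), one writes $\rmA_k(\eta)-\rmA_k(\eta-\xi)$ as a symbol bounded by (roughly) $|\na|^{s/2}$ acting on the large factor times $\langle l,\xi\rangle^{?}e^{c\la|l,\xi|^s}$ acting on the low factor, with an extra $|l,\xi|^{s/2}$ available to feed the Gevrey-$\tfrac1s$ ``$\rmCK_\la$'' term.

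\textbf{Main contribution.} For the principal piece I would bound
\[
|\rmT_{1;\rmN}|\lesssim \big\||\na|^{s/2}\rmA\rho_{\sim\rmN}\big\|_2\,
\big\||\na|^{s/2}\rmA\rho_{\sim\rmN}\big\|_2\,\|\nabla_{z,v}^{\bot}P_{\ne}\phi\|_{\cG^{\la,\b;s}},
\]
i.e.\ the commutator transfers one half-derivative $|\na|^{s/2}$ onto each copy of $\rho_{\rmN}$ (the loss $|l,\xi|$ from the transport symbol $\eta l-\xi k$ and any Sobolev discrepancy being absorbed by $e^{c\la|l,\xi|^s}$ in the Gevrey norm of $\phi$, since $c<1$ and $\phi$ has the full $\la$-regularity). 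Summing in $\rmN$ via almost-orthogonality ($\sum_\rmN\||\na|^{s/2}\rmA\rho_{\sim\rmN}\|_2^2\lesssim\||\na|^{s/2}\rmA\rho\|_2^2$) and invoking the lossy elliptic estimate (Lemma~\ref{Lemma_Lossy}, giving $\|\nabla^{\bot}P_{\ne}\phi\|_{\cG^{\la,\b;s}}\lesssim\|P_{\ne}\pa_z^{-1}\Delta_L\phi\|_{\cG^{\la,\s-2;s}}\lesssim\ep$ after dropping two derivatives against the slack in $\s$) yields
\[
\sum_{\rmN\ge8}|\rmT_{1;\rmN}|\lesssim \ep\,\||\na|^{s/2}\rmA\rho\|_2^2 \lesssim \ep\,\|\,|\na|^{s/2}\rmA\rho\|_2^2.
\]
By \eqref{lambda_Decay} we have $\rmCK_{\la,\rho}=-\dt\la(t)\||\na|^{s/2}\rmA\rho\|_2^2\approx \f{\delta_\la}{\langle t\rangle^{2\tilde q}}\||\na|^{s/2}\rmA\rho\|_2^2$, so the bound $\ep\||\na|^{s/2}\rmA\rho\|_2^2$ is \emph{not} directly $\lesssim\sqrt\ep\,\rmCK_{\la,\rho}$ pointwise in $t$; the correct statement is obtained after integrating in time against the bootstrap bound on $\int\rmCK_{\la,\rho}$. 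In practice one keeps the extra $|\na|^{s/2}$ explicitly and writes $|\rmT_{1;\rmN}|\lesssim\sqrt\ep\,\big(\text{local density of }\rmCK_{\la,\rho}\big)$ by absorbing the $\ep^{1/2}\langle t\rangle^{\tilde q}$ loss into one of the two half-derivative factors --- this is why the statement has $\sqrt\ep$ and not $\ep$. The $g$-piece of $\rmT_{1;\rmN}$ is even smaller: there $\mathbf{u}_0=(0,g)$ is the zero mode, $|\xi|\le\frac{\rmN}{8}$, and Proposition~\ref{prop: g,f_0,h} gives $\|g\|_{\cG_1^{\la,\b;s}}\lesssim\sqrt\ep\langle t\rangle^{-2+\ep_1/2}$, which beats the required decay with room to spare; the $h\na^\bot\phi$-piece carries an extra factor $\|h\|_{\cG^{\la,\b;s}}\lesssim\ep\langle t\rangle^{-1}$ from the bootstrap, so both fit inside $\sqrt\ep\,\rmCK_{\la,\rho}$ (indeed into $\ep^{3}\langle t\rangle^{-2}$, harmless).

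\textbf{Expected main obstacle.} The delicate point is \emph{not} the algebra of the commutator but the bookkeeping that turns the crude bound $\ep\||\na|^{s/2}\rmA\rho\|_2^2$ into a genuine $\sqrt\ep\,\rmCK_{\la,\rho}$: one must verify that the $\langle t\rangle^{2\tilde q}$ arising from $1\lesssim \langle t\rangle^{2\tilde q}(-\dt\la/\delta_\la)$ is absorbable, i.e.\ that enough spare Gevrey regularity (the gap $\la_0-\la'$, equivalently the $\rmCK_\la$ slack, together with $s>\tfrac13$ and the constraint on $\tilde q$) is available to pay for it without generating uncontrollable growth --- this is where the $\sqrt\ep$ rather than $\ep$ enters and where the choice of $\tilde q$ matters. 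A secondary technical nuisance is handling the Sobolev correction $\langle k,\eta\rangle^\s$ versus $\langle k-l,\eta-\xi\rangle^\s$ when the low frequency in $z$ (the $l$ index) is nonzero but the $\eta$-frequency is small, forcing a ``$z$ vs.\ $v$'' split of the high-low term exactly as in the elliptic estimates of Section~\ref{Section_Proof_elliptic-3}; but since $\mathbf{u}_{<\rmN/8}$ is genuinely low-frequency here (unlike in the reaction term) no resonance analysis is needed, and the multiplier ratios are all $\lesssim e^{C\mu|l,\xi|^{1/3}}$ by Lemmas~\ref{lem:3.5}–\ref{Lemma_Cmmutator_J}, which is swallowed by $e^{c\la|l,\xi|^s}$.
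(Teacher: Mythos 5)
Your overall structure is right — split $\mathbf{u}$ as in \eqref{u_formula}, exploit the commutator $\rmA_k(\eta)-\rmA_l(\xi)$, compare the multiplier factors via Lemmas~\ref{lem:3.5}, \ref{lem:g/g}, \ref{lem: g/g-1-1}, \ref{Lemma_Cmmutator_J}, and feed the resulting $|\na|^{s/2}$ factors into $\rmCK_{\la,\rho}$. But there is a genuine gap in how you plan to produce the stated \emph{pointwise-in-time} bound $\lesssim\sqrt\ep\,\rmCK_{\la,\rho}$. Your headline estimate $|\rmT_{1;\rmN}|\lesssim\||\na|^{s/2}\rmA\rho_{\sim\rmN}\|_2^2\,\|\nabla^{\bot}P_{\ne}\phi\|_{\cG^{\la,\b;s}}$ combined with $\|\nabla^{\bot}P_{\ne}\phi\|_{\cG^{\la,\b;s}}\lesssim\ep$ gives only $\sum_\rmN|\rmT_{1;\rmN}|\lesssim\ep\,\||\na|^{s/2}\rmA\rho\|_2^2$, which is off from $\rmCK_{\la,\rho}\approx\f{\delta_\la}{\langle t\rangle^{2\tilde q}}\||\na|^{s/2}\rmA\rho\|_2^2$ by a factor $\langle t\rangle^{2\tilde q}$ that grows in time. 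Your own fix --- ``absorbing the $\ep^{1/2}\langle t\rangle^{\tilde q}$ loss into one of the two half-derivative factors'' --- is not coherent: a half-derivative cannot purchase a power of $t$ pointwise, and the statement cannot be saved by integrating in time either, since Proposition~\ref{prop: NL} uses it as a pointwise inequality in the bootstrap. What actually closes the argument is the \emph{time decay} of the low-frequency velocity factor: the lossy elliptic estimate (Lemma~\ref{Lemma_Lossy}) gives $\|v'\nabla^{\bot}_{z,v}P_{\ne}\phi\|_{\cG^{\la,\b;s}}\lesssim\ep\langle t\rangle^{-4}$, and Proposition~\ref{prop: g,f_0,h} gives $\|g\|_{\cG_1^{\la,\b;s}}\lesssim\sqrt\ep\,\langle t\rangle^{-2+\ep_1/2}$; these coefficients are $\lesssim\sqrt\ep\,\langle t\rangle^{-2\tilde q}$, which is exactly what converts $\||\na|^{s/2}\rmA\rho\|_2^2$ into $\rmCK_{\la,\rho}$. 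You mention these decays in passing for the $g$- and $h$-pieces but drop them from the main $\nabla^\bot\phi$-piece, where they are not a bonus but the heart of the matter.

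A second, related gap: for the $\rmJ$- and $\cM$-commutator pieces you cite the lemmas giving $\bigl|\rmJ_k(\eta)/\rmJ_l(\xi)-1\bigr|$ and $\bigl|\cM_k(\eta)/\cM_l(\xi)-1\bigr|$ small, but those bounds only hold in the short-time regime $t\le\tfrac12\min\{|\eta|,|\xi|\}^{2/3}$ (resp.\ $t\le\tfrac12\min\{|\eta|,|\xi|\}^{1/3}$); in the complementary regime the ratio is merely bounded, not close to $1$, and the commutator gives no derivative gain. One must split each of $\rmT^2_{1;\rmN}$, $\rmT^3_{1;\rmN}$ with the cutoffs $\chi^\rmS,\chi^\rmL$ (resp.\ $\tilde\chi^\rmS,\tilde\chi^\rmL$), use the commutator lemma on the short-time piece, and on the long-time piece give up the commutator gain and instead trade regularity for time decay via $|\xi|\lesssim t^{3/2}$ (or $t^3$), a zero-mode/nonzero-mode split, and the crude ratio bound $\rmJ_k(\eta)/\rmJ_l(\xi)\lesssim\langle t\rangle\,e^{C|k-l,\eta-\xi|^{1/3}}$. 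The observation that ``the multiplier ratios are all $\lesssim e^{C\mu|l,\xi|^{1/3}}$, swallowed by $e^{c\la|l,\xi|^s}$'' establishes only boundedness of the ratios --- it does not deliver the commutator smallness the proposition requires.
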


Decompose the difference:
\begin{align*}
\rmA_k(\eta)-\rmA_{l}(\xi)&=\rmA_{l}(\xi)[e^{\la|k,\eta|^s-\la|l,\xi|^s}-1]\\
&\quad + \rmA_{l}(\xi)e^{\la|k,\eta|^s-\la|l,\xi|^s}\left[\f{\rmJ_k(\eta)}{\rmJ_l(\xi)}-1\right]\f{\cM_k(\eta)}{\cM_l(\xi)}\f{\langle k,\eta\rangle^{\s}}{\langle l,\xi\rangle^{\s}}\f{\rmB_k(\eta)}{\rmB_l(\xi)}\\
&\quad + \rmA_{l}(\xi)e^{\la|k,\eta|^s-\la|l,\xi|^s}\left[\f{\cM_k(\eta)}{\cM_l(\xi)}-1\right]\f{\langle k,\eta\rangle^{\s}}{\langle l,\xi\rangle^{\s}}\f{\rmB_k(\eta)}{\rmB_l(\xi)}\\
&\quad + \rmA_{l}(\xi)e^{\la|k,\eta|^s-\la|l,\xi|^s}\left[\f{\langle k,\eta\rangle^{\s}}{\langle l,\xi\rangle^{\s}}-1\right]\f{\rmB_k(\eta)}{\rmB_l(\xi)}\\
&\quad +\rmA_{l}(\xi)e^{\la|k,\eta|^s-\la|l,\xi|^s}\left[\f{\rmB_k(\eta)}{\rmB_l(\xi)}-1\right].
\end{align*}
Hence, we write accordingly  
\begin{align*}
\rmT_{1;\rmN}
=&i\sum_{k,l}\int_{\eta,\xi} \rmA_k(\eta)\bar{\hat{\rho}}(k,\eta)\hat{\bf{u}}(k-l,\eta-\xi)_{<\rmN/8}\cdot (l,\xi)(\rmA_{k}(\eta)-\rmA_{l}(\xi))\hat{\rho}_l(\xi)_{\rmN}d\xi d\eta\\
=&\rmT_{1;\rmN}^1+\rmT_{1;\rmN}^2+\rmT_{1;\rmN}^3+\rmT_{1;\rmN}^4+\rmT_{1;\rmN}^5,
\end{align*}
with  
\begin{equation*}
\begin{aligned}
\rmT_{1;\rmN}^1=&\,i\sum_{k,l}\int_{\eta,\xi} \rmA_k(\eta)\bar{\hat{\rho}}(k,\eta)\hat{\bf{u}}(k-l,\eta-\xi)_{<\rmN/8}\cdot (l,\xi)\rmA_{l}(\xi)\hat{\rho}_l(\xi)_{\rmN}\\
&\times[e^{\la|k,\eta|^s-\la|l,\xi|^s}-1]d\xi d\eta\\
\rmT_{1;\rmN}^2=&\,i\sum_{k,l}\int_{\eta,\xi} \rmA_k(\eta)\bar{\hat{\rho}}(k,\eta)\hat{\bf{u}}(k-l,\eta-\xi)_{<\rmN/8}\cdot (l,\xi)\rmA_{l}(\xi)\hat{\rho}_l(\xi)_{\rmN}e^{\la|k,\eta|^s-\la|l,\xi|^s}\\
&\times \left[\f{\rmJ_k(\eta)}{\rmJ_l(\xi)}-1\right]\f{\cM_k(\eta)}{\cM_l(\xi)}\f{\langle k,\eta\rangle^{\s}}{\langle l,\xi\rangle^{\s}} \f{\rmB_k(\eta)}{\rmB_l(\xi)}d\xi d\eta,
\end{aligned}
\end{equation*}
\begin{equation*}
\begin{aligned}
\rmT_{1;\rmN}^3=&\,i\sum_{k,l}\int_{\eta,\xi} \rmA_k(\eta)\bar{\hat{\rho}}(k,\eta)\hat{\bf{u}}(k-l,\eta-\xi)_{<\rmN/8}\cdot (l,\xi)\rmA_{l}(\xi)\hat{\rho}_l(\xi)_{\rmN}e^{\la|k,\eta|^s-\la|l,\xi|^s}\\
&\times \left[\f{\cM_k(\eta)}{\cM_l(\xi)}-1\right]\f{\langle k,\eta\rangle^{\s}}{\langle l,\xi\rangle^{\s}}\f{\rmB_k(\eta)}{\rmB_l(\xi)}d\xi d\eta
\end{aligned}
\end{equation*}
and 
\begin{align*}
\rmT_{1;\rmN}^4=&\,i\sum_{k,l}\int_{\eta,\xi} \rmA_k(\eta)\bar{\hat{\rho}}(k,\eta)\hat{\bf{u}}(k-l,\eta-\xi)_{<\rmN/8}\cdot (l,\xi)\rmA_{l}(\xi)\hat{\rho}_l(\xi)_{\rmN}e^{\la|k,\eta|^s-\la|l,\xi|^s}\\
&\times \left[\f{\langle k,\eta\rangle^{\s}}{\langle l,\xi\rangle^{\s}}-1\right]\f{\rmB_k(\eta)}{\rmB_l(\xi)}d\xi d\eta. \\
\rmT_{1;\rmN}^5=&\,i\sum_{k,l}\int_{\eta,\xi} \rmA_k(\eta)\bar{\hat{\rho}}(k,\eta)\hat{\bf{u}}(k-l,\eta-\xi)_{<\rmN/8}\cdot (l,\xi)\rmA_{l}(\xi)\hat{\rho}_l(\xi)_{\rmN}e^{\la|k,\eta|^s-\la|l,\xi|^s}\\
&\times \left[\f{\rmB_k(\eta)}{\rmB_l(\xi)}-1\right]d\xi d\eta.
\end{align*}     

\subsubsection{Term $\rmT_{1;\rmN}^1$}\label{Section_T_1}
We get that 
\begin{align*}
|\rmT_{1;\rmN}^1|\lesssim&\,\sum_{k,l}\int_{\eta,\xi} |\rmA_k(\eta)\bar{\hat{\rho}}(k,\eta)||\hat{\bf{u}}(k-l,\eta-\xi)_{<\rmN/8}||(l,\xi)||\rmA_{l}(\xi)\hat{\rho}_l(\xi)_{\rmN}|\\
&\times\la\Big||k,\eta|^s-|l,\xi|^s\Big|e^{\la|k,\eta|^s-\la|l,\xi|^s}d\xi d\eta\\ 
\lesssim&\,\lambda \sum_{k,l}\int_{\eta,\xi} |\rmA_k(\eta)\bar{\hat{\rho}}(k,\eta)||\hat{\bf{u}}(k-l,\eta-\xi)_{<\rmN/8}||l,\xi||\rmA_{l}(\xi)\hat{\rho}_l(\xi)_{\rmN}|\\
&\times \f{\Big||k,\eta|-|l,\xi|\Big|}{|k,\eta|^{1-s}+|l,\xi|^{1-s}}e^{\la|k,\eta|^s-\la|l,\xi|^s}d\xi d\eta 
\end{align*}   

On the support of the integrand, we have 
\begin{equation}\label{Support_Ineq}
\begin{aligned}
&\Big||k,\eta|-|l,\xi|\Big|\leq |k-l,\eta-\xi|\leq \f{6}{32}|l,\xi|\\
&\f{26}{32}|l,\xi|\leq |k,\eta|\leq \f{38}{32}|l,\xi|. 
\end{aligned}
\end{equation}
Consequently, we obtain 
\begin{align*}
|\rmT_{1;\rmN}^1|\lesssim&\,\lambda \sum_{k,l}\int_{\eta,\xi} |\rmA_k(\eta)\bar{\hat{\rho}}(k,\eta)||\hat{\bf{u}}(k-l,\eta-\xi)_{<\rmN/8}||k,\eta|^{\f s2}|l,\xi|^{\f s2}|\rmA_{l}(\xi)\hat{\rho}_l(\xi)_{\rmN}|\\
&\times e^{c\la|k-l,\eta-\xi|^s}d\xi d\eta\\
\lesssim&\, \lambda \Vert |\nabla|^{\f s2}\rmA \rho_{\sim\rmN} \Vert_2 \Vert |\nabla|^{\f s2}\rmA \rho_{\rmN} \Vert_2 
\Big(\Vert v'\na P_{\neq}\phi\Vert_{\mathcal{G}^{\la,\b;s}} 
+\|g\|_{\mathcal{G}^{\la,\b;s}_1}\Big)\\
\lesssim& \f{\sqrt{\ep}}{\langle t\rangle^{2-\ep_1/2}}\Vert |\nabla|^{\f s2}\rmA \rho_{\sim\rmN} \Vert_2 \Vert |\nabla|^{\f s2}\rmA \rho_{\rmN} \Vert_2.
\end{align*}    

\subsubsection{Term $\rmT_{1;\rmN}^2$}\label{Section_T_2}
Now, we treat the term $\rmT_{1;\rmN}^2$. Inspired by the estimate in Lemma \ref{Lemma_Cmmutator_J}, we split $\rmT_{1;\rmN}^2$ as  
\begin{equation}
\begin{aligned}
\rmT_{1;\rmN}^2=&\,i\sum_{k,l}\int_{\eta,\xi} \rmA_k(\eta)\bar{\hat{\rho}}(k,\eta)\hat{\bf{u}}(k-l,\eta-\xi)_{<\rmN/8}\cdot(l,\xi)\rmA_{l}(\xi)\hat{\rho}_l(\xi)_{\rmN}e^{\la|k,\eta|^s-\la|l,\xi|^s}\\
&\times \Big[\chi^\rmS+\chi^{\rmL}\Big]\left[\f{\rmJ_k(\eta)}{\rmJ_l(\xi)}-1\right]\f{\cM_k(\eta)}{\cM_l(\xi)}\f{\langle k,\eta\rangle^{\s}}{\langle l,\xi\rangle^{\s}}\f{\rmB_k(\eta)}{\rmB_l(\xi)} d\xi d\eta\\
=&\, \rmT_{1;\rmN}^{2;\rmS}+\rmT_{1;\rmN}^{2;\rmL},
\end{aligned}
\end{equation}
 where $\chi^\rmS=\mathbf{1}_{t\leq \f12\min\{|\xi|^{\f23},|\eta|^{\f23}\}}$ and $\chi^\rmL=1-\chi^\rmS$. 
 
 We now estimate $\rmT_{1;\rmN}^{2;\rmS}$, where Lemma \ref{Lemma_Cmmutator_J} plays a crucial rule in absorbing $\f23$-derivatives. Hence,  by \eqref{Support_Ineq} and Lemma \ref{lem:g/g} together with the fact that  $\langle k,\eta\rangle   \approx\langle l,\xi\rangle$ on the support of the integrand, we obtain 
\begin{align*}
|\rmT_{1;\rmN}^{2;\rmS}|\lesssim&\, \sum_{k,l}\int_{\eta,\xi} \chi^\rmS\rmA_k(\eta)|\bar{\hat{\rho}}(k,\eta)||\hat{\bf{u}}(k-l,\eta-\xi)_{<\rmN/8}||l,\xi|^{\f13}\rmA_{l}(\xi)|\hat{\rho}_l(\xi)_{\rmN}|e^{c\la|k-l,\eta-\xi|^s}\\
&\times \langle k-l,\xi-\eta\rangle e^{C|k-l,\xi-\eta|^{\f13}} d\xi d\eta\\
\lesssim&\, \sum_{k,l}\int_{\eta,\xi} \chi^\rmS\rmA_k(\eta)|\bar{\hat{\rho}}(k,\eta)||\hat{\bf{u}}(k-l,\eta-\xi)_{<\rmN/8}|(1+|l,\xi|^{\f s2}|k,\eta|^{\f s2})\\
&\times\rmA_{l}(\xi)|\hat{\rho}_l(\xi)_{\rmN}|e^{\la|k-l,\eta-\xi|^s}d\xi d\eta\\
\lesssim & \Vert |\nabla|^{\f s2}\rmA \rho_{\sim\rmN} \Vert_2 \Vert |\nabla|^{\f s2}\rmA \rho_{\rmN} \Vert_2 \Big(\Vert v'\na P_{\neq}\phi\Vert_{\mathcal{G}^{\la,\b;s}} 
+\|g\|_{\mathcal{G}^{\la,\b;s}_1}\Big)
\\
\lesssim &\f{\sqrt{\ep}}{\langle t\rangle^{2-\ep_1/2}} \||\nabla|^{\f s2}\rmA \rho_{\sim\rmN} \|_2 \| |\nabla|^{\f s2}\rmA \rho_{\rmN}\|_2.
\end{align*}
 Next, we estimate $\rmT_{1;\rmN}^{2;\rmL}$ which corresponds to $t>\f12\min\{|\xi|^{\f23},|\eta|^{\f23}\}$.  We rewrite $\rmT_{1;\rmN}^{2;\rmL}$ as 
\begin{align*}
\rmT_{1;\rmN}^{2;\rmL}=&\,i\sum_{k,l}\int_{\eta,\xi} \rmA_k(\eta)\bar{\hat{\rho}}(k,\eta)\hat{\bf{u}}(k-l,\eta-\xi)_{<\rmN/8}\cdot(l,\xi)\rmA_{l}(\xi)\hat{\rho}_{l}(\xi)_{\rmN}e^{\la|k,\eta|^s-\la|l,\xi|^s}\\
&\times \chi^{\rmL}(\mathbf{1}_{|l|\leq 100|\xi|}+\mathbf{1}_{|l|\geq 100|\xi|})\left[\f{\rmJ_k(\eta)}{\rmJ_l(\xi)}-1\right]\f{\cM_k(\eta)}{\cM_l(\xi)}\f{\langle k,\eta\rangle^{\s}}{\langle l,\xi\rangle^{\s}} \f{\rmB_k(\eta)}{\rmB_l(\xi)}d\xi d\eta\\
=&\, \rmT_{1;\rmN}^{2;\rmL;v}+\rmT_{1;\rmN}^{2;\rmL;z}. 
\end{align*}
Let us now estimate $\rmT_{1;\rmN}^{2;\rmL,z }$. Let us first see that on the support of the integrand, we  have
\begin{equation}
|\eta|\leq |\xi|+\f{6}{32}(|l|+|\xi|)\leq\f{319}{1600}|l| 
\end{equation}
Hence, we have 
 \begin{equation}\label{J_Estimate_z}
\left|\f{\rmJ_k(\eta)}{\rmJ_l(\xi)}-1\right|\lesssim \f{1}{l^{\f23}}+\f{|k-l|}{|k|^{\f23}+|l|^{\f23}}e^{C\mu|k-l|^{\f13}},
\end{equation}
Since $|l,\xi|\lesssim |l|$ and also from \eqref{Support_Ineq}, we have on the support of the integrand 
$\f{1297}{1600}|l|\leq|k|\leq \f{1903}{1600}|l|$.  
Hence, we obtain 
\begin{align*}
|\rmT_{1;\rmN}^{2;\rmL,z }|\lesssim &\sum_{k,l}\int_{\eta,\xi} \rmA_k(\eta)|\bar{\hat{\rho}}(k,\eta)||\hat{\mathbf{u}}(k-l,\eta-\xi)_{<\rmN/8}||l|^{\f13}\rmA_{l}(\xi)|\hat{\rho}_l(\xi)_{\rmN}|e^{\la|k,\eta|^s-\la|l,\xi|^s}\\
&\times \chi^{\rmL}\mathbf{1}_{|l|\geq 100|\xi|}\langle k-l\rangle e^{C|k-l|^{\f13}} e^{C|k-l,\xi-\eta|^{\f13}} d\xi d\eta\\
\lesssim &\sum_{k,l}\int_{\eta,\xi} \rmA_k(\eta)|k|^{\f s2}|\bar{\hat{\rho}}(k,\eta)||\hat{\mathbf{u}}(k-l,\eta-\xi)_{<\rmN/8}|\chi^{\rmL}\mathbf{1}_{|l|\geq 100|\xi|}\\
&\times |l|^{\f s2}\rmA_{l}(\xi)|\hat{\rho}_l(\xi)_{\rmN}|e^{\la|k-l,\eta-\xi|^s}
 d\xi d\eta\\
 \lesssim & \Vert |\nabla|^{\f s2}\rmA \rho_{\sim\rmN} \Vert_2 \Vert |\nabla|^{\f s2}\rmA \rho_{\rmN} \Vert_2 \Big(\Vert v'\na P_{\neq}\phi\Vert_{\mathcal{G}^{\la,\b;s}} 
+\|g\|_{\mathcal{G}^{\la,\b;s}_1}\Big)\\
 \lesssim & \f{\sqrt{\ep}}{\langle t\rangle^{2-\ep_1/2}}\Vert |\nabla|^{\f s2}\rmA \rho_{\sim\rmN} \Vert_2 \Vert |\nabla|^{\f s2}\rmA \rho_{\rmN} \Vert_2. 
\end{align*}

Now to estimate $\rmT_{1;\rmN}^{2;\rmL,v }$ and in order to use the decay rate of the nonzero mode, we separate the zero mode and the nonzero mode as follows:
\begin{align*}
\rmT_{1;\rmN}^{2;\rmL,v }=&\,i\sum_{k\neq l}\int_{\eta,\xi} \rmA_k(\eta)|\bar{\hat{\rho}}(k,\eta)||\widehat{v^\prime\nabla _{z,v }^{\perp }P_{\neq }\phi}(k-l,\eta-\xi)_{<\rmN/8}||l,\xi|\rmA_{l}(\xi)|\hat{\rho}_{\rmN}(\xi)|e^{\la|k,\eta|^s-\la|l,\xi|^s}\\
&\times \chi^{\rmL}\mathbf{1}_{|l|\leq 100|\xi|}\left[\f{\rmJ_k(\eta)}{\rmJ_l(\xi)}-1\right]\f{\cM_k(\eta)}{\cM_l(\xi)}\f{\langle k,\eta\rangle^{\s}}{\langle l,\xi\rangle^{\s}}\f{\rmB_k(\eta)}{\rmB_l(\xi)} d\xi d\eta\\
&+i\sum_{k}\int_{\eta,\xi} \rmA_k(\eta)|\bar{\hat{\rho}}(k,\eta)||g(\eta-\xi)_{<\rmN/8}||\xi|\rmA_{k}(\xi)|\hat{\rho}_k(\xi)_{\rmN}|e^{\la|k,\eta|^s-\la|l,\xi|^s}\\
&\times \chi^{\rmL}\mathbf{1}_{|k|\leq 100|\xi|}\left[\f{\rmJ_k(\eta)}{\rmJ_k(\xi)}-1\right]\f{\cM_k(\eta)}{\cM_l(\xi)}\f{\langle k,\eta\rangle^{\s}}{\langle k,\xi\rangle^{\s}}\f{\rmB_k(\eta)}{\rmB_k(\xi)} d\xi d\eta\\
=&\,\rmT_{1;\rmN}^{2;\rmL,v,\neq }+\rmT_{1;\rmN}^{2;\rmL,v,0 }.
\end{align*}

Now, we estimate $\rmT_{1;\rmN}^{2;\rmL,v,\neq }$. By the argument in the proof of the reaction term, we have 
\begin{equation}\label{Estimate_J_J}
\f{\rmJ_k(\eta)}{\rmJ_l(\xi)}\lesssim |\xi|e^{C|k-l,\xi-\eta|^{\f13}}.
\end{equation}
We need to pay decay  in time in order to gain regularity.  We have on the support of the integrand  $|l,\xi|\lesssim |\xi|\lesssim t^{\f32}$ which implies, by using  \eqref{Support_Ineq} that 
\begin{equation}\label{Ineq_t_xi}
|l,\xi|^2\lesssim |l,\xi|^{\f s2}|k,\eta|^{\f s2}t^{\f32(2-s)}. 
\end{equation}
Hence, 
we have by making use of  \eqref{Eq_M/M}, \eqref{Support_Ineq} \eqref{Estimate_J_J} and \eqref{Ineq_t_xi},
\begin{equation}    
\begin{aligned}  
|\rmT_{1;\rmN}^{2;\rmL;v,\neq }|\lesssim&\,\sum_{k,l}\int_{\eta,\xi} \rmA_k(\eta)|\bar{\hat{\rho}}(k,\eta)||\widehat{v^\prime\nabla _{z,v }^{\perp }P_{\neq }\phi}(k-l,\eta-\xi)_{<\rmN/8}||l,\xi|\rmA_{l}(\xi)|\hat{\rho}_l(\xi)_{\rmN}|\\
&\times |\xi| \chi^{\rmL}\mathbf{1}_{|l|\leq 100|\xi|}\mathbf{1}_{k\neq l}\langle k-l,\xi-\eta\rangle e^{C\d_{\rmL}^{-1}|k-l,\xi-\eta|^{\f13}}e^{c\la|k-l,\eta-\xi|^s} d\xi d\eta\\
\lesssim&\,
\sum_{k,l}\int_{\eta,\xi}t^{3-\f32 s} |k,\eta|^{\f s2}\rmA_k(\eta)|\bar{\hat{\rho}}(k,\eta)||\widehat{v^\prime\nabla _{z,v }^{\perp }P_{\neq }\phi}(k-l,\eta-\xi)_{<\rmN/8}|\\
&\times|l,\xi|^{\f s2}\rmA_{l}(\xi)|\hat{\rho}_{\rmN}(\xi)|e^{\la|k-l,\eta-\xi|^s} d\xi d\eta\\
\lesssim &\, t^{3-\f32 s}\Vert v^\prime\nabla _{z,v }^{\perp }P_{\neq }\phi\Vert_{\mathcal{G}^{\lambda,\b;s}}\Vert |\nabla|^{\f s2}\rmA \rho_{\sim\rmN} \Vert_2 \Vert |\nabla|^{\f s2}\rmA \rho_{\rmN} \Vert_2\\
\lesssim &\, \f{\ep}{\langle t\rangle^{\f32s+1}}\Vert |\nabla|^{\f s2}\rmA \rho_{\sim\rmN} \Vert_2 \Vert |\nabla|^{\f s2}\rmA \rho_{\rmN} \Vert_2. 
\end{aligned}
\end{equation} 
Our next goal is to estimate $\rmT_{1;\rmN}^{2;\rmL;v,0}$. 
By the argument in the reaction term, we have for $t\notin \rmI_{k,\eta}$ or $t\in \rmI_{k,\eta}\cap \rmI_{k,\xi}$,
\begin{equation}\label{Ineq_Mult_Zero_mode}
\f{\rmJ_k(\eta)}{\rmJ_k(\xi)}\lesssim e^{C\mu|\xi-\eta|^{\f13}},
\end{equation}
and for $t\in \rmI_{k,\eta}$ and $t\notin \rmI_{k,\xi}$, 
\begin{equation}
\f{\rmJ_k(\eta)}{\rmJ_k(\xi)}\lesssim \f{|\eta|}{k^3(1+|t-\f{\eta}{k}|)}e^{C\mu|\xi-\eta|^{\f13}}\lesssim \f{\xi\langle\xi-\eta\rangle}{k^3(1+|t-\f{\xi}{k}|)} e^{C\mu|\xi-\eta|^{\f13}}\lesssim \langle\xi-\eta\rangle e^{C\mu|\xi-\eta|^{\f13}}.
\end{equation}

By making use of  \eqref{Support_Ineq}, \eqref{Eq_M/M}, we obtain by using the fact that on the support of the integrand, we have $|\xi|\lesssim t^{\f32}$ and hence, $|\xi|\leq |\xi|^{\f s2}|\eta|^{\f s2}t^{\f32(1-s)}$ and 
\begin{align*}
|\rmT_{1;\rmN}^{2;\rmL;v,0}|\lesssim&\,\sum_{k}\int_{\eta,\xi} \rmA_k(\eta)|\bar{\hat{\rho}}(k,\eta)||\hat{g}(\eta-\xi)_{<\rmN/8}||\xi|\rmA_{k}(\xi)|\hat{\rho}_{k}(\xi)_{\rmN}|e^{\la|k,\eta|^s-\la|k,\xi|^s}\\
&\times \chi^{\rmL}\mathbf{1}_{|k|\leq 100|\xi|}\left|\f{\rmJ_k(\eta)}{\rmJ_k(\xi)}-1\right|\f{\cM_k(\eta)}{\cM_k(\xi)}\f{\langle k,\eta\rangle^{\s}}{\langle k,\xi\rangle^{\s}} d\xi d\eta\\
\lesssim&\,\sum_{k}\int_{\eta,\xi} t^{\f32(1-s)}\rmA_k(\eta)|k,\eta|^{\f s2}|\bar{\hat{\rho}}(k,\eta)||\hat{g}(\eta-\xi)_{<\rmN/8}||\xi|^{\f s2}\rmA_{k}(\xi)|\hat{\rho}_k(\xi)_{\rmN}|e^{\la|\eta-\xi|^s}\chi^{\rmL} d\xi d\eta\\
 \lesssim &\, t^{\f32-\f32 s}\Vert g\Vert_{\mathcal{G}_1^{\la,\b;s}}\Vert |\nabla|^{\f s2}\rmA \rho_{\sim\rmN} \Vert_2 \Vert |\nabla|^{\f s2}\rmA \rho_{\rmN} \Vert_2
 \lesssim \f{\sqrt{\ep}}{\langle t\rangle^{\f32s+\f12-\f{\ep_1}{2}}}
 \Vert |\nabla|^{\f s2}\rmA \rho_{\sim\rmN} \Vert_2 \Vert |\nabla|^{\f s2}\rmA \rho_{\rmN} \Vert_2. 
\end{align*}

\subsubsection{Term $\rmT_{1;\rmN}^3$} \label{T_3}
 Now, we turn to the term $\rmT_{1;\rmN}^3$. We have 
\begin{equation*}
\begin{aligned}
\rmT_{1;\rmN}^3=&\,i\sum_{k,l}\int_{\eta,\xi} \rmA_k(\eta)\bar{\hat{\rho}}_k(\eta)\hat{\bf{u}}(k-l,\eta-\xi)_{<\rmN/8}\cdot (l,\xi)\rmA_{l}(\xi)\hat{\rho}_{l}(\xi)_{\rmN}e^{\la|k,\eta|^s-\la|l,\xi|^s}\\
&\times (\tilde{\chi}^\rmS+\tilde{\chi}^\rmL)\left[\f{\cM_k(\eta)}{\cM_l(\xi)}-1\right]\f{\langle k,\eta\rangle^{\s}}{\langle l,\xi\rangle^{\s}}\f{\rmB_k(\eta)}{\rmB_l(\xi)}d\xi d\eta\\
=&\, \rmT_{1;\rmN}^{3;\rmS}+\rmT_{1;\rmN}^{3;\rmL},
\end{aligned}
\end{equation*}
where $\tilde{\chi}^\rmS=\mathbf{1}_{t\leq \f12\min\{|\xi|^{\f13},|\eta|^{\f13}\}}$ and $\tilde{\chi}^\rmL=1-\tilde{\chi}^\rmS$.

Following the same steps as in the estimate of the term $\rmT_{1;\rmN}^{2;\rmS}$, we use Lemma \ref{lem: g/g-1-1} to gain $\f23$-derivatives. Indeed, we have by using Lemma \ref{lem: g/g-1-1} and    \eqref{Support_Ineq},  
\begin{align*}
|\rmT_{1;\rmN}^{3;\rmS}|\lesssim&\, \sum_{k,l}\int_{\eta,\xi} \rmA_k(\eta)|\bar{\hat{\rho}}(k,\eta)||\hat{\mathbf{u}}(k-l,\eta-\xi)_{<\rmN/8}| |l,\xi|^{\f13}\rmA_{l}(\xi)|\hat{\rho}_{\rmN}(l,\xi)|\\
&\times e^{\la|k,\eta|^s-\la|l,\xi|^s}\tilde{\chi}^\rmS \langle k-l,\xi-\eta\rangle e^{C\d_{L}^{-1}|k-l,\xi-\eta|^{\f13}}d\xi d\eta\\
\lesssim&\,\sum_{k,l}\int_{\eta,\xi} \rmA_k(\eta)|k,\eta|^{\f s2}|\bar{\hat{\rho}}(k,\eta)||\hat{\mathbf{u}}(k-l,\eta-\xi)_{<\rmN/8}| |l,\xi|^{\f s2}\rmA_{l}(\xi)|\hat{\rho}_{\rmN}(l,\xi)|\\
&\times e^{\la|k-l,\eta-\xi|^s}\tilde{\chi}^\rmS d\xi d\eta\\
\lesssim &\,  \Vert |\nabla|^{\f s2}\rmA \rho_{\sim\rmN} \Vert_2 \Vert |\nabla|^{\f s2}\rmA \rho_{\rmN} \Vert_2 \Big(\Vert v'\na P_{\neq}\phi\Vert_{\mathcal{G}^{\la,\b;s}} 
+\|g\|_{\mathcal{G}^{\la,\b;s}_1}\Big)\\
\lesssim & \f{\sqrt{\ep}}{\langle t\rangle^{2-\ep_1/2}}\Vert |\nabla|^{\f s2}\rmA \rho_{\sim\rmN} \Vert_2 \Vert |\nabla|^{\f s2}\rmA \rho_{\rmN} \Vert_2. 
\end{align*}  

Next, we estimate $\rmT_{1;\rmN}^{3;\rmL}$, we have as in the estimate of  $\rmT_{1;\rmN}^{2;\rmL}$ 
 \begin{equation}
\begin{aligned}
\rmT_{1;\rmN}^{3;\rmL}=&\,i\sum_{k,l}\int_{\eta,\xi} \rmA_k(\eta)\bar{\hat{\rho}}_k(\eta)\hat{\mathbf{u}}(k-l,\eta-\xi)_{<\rmN/8}\cdot (l,\xi)\rmA_{l}(\xi)\hat{\rho}_l(\xi)_{\rmN}\\
&\times e^{\la|k,\eta|^s-\la|l,\xi|^s}\tilde{\chi}^\rmL(\mathbf{1}_{k\neq l}+\mathbf{1}_{k=l})\left[\f{\cM_k(\eta)}{\cM_l(\xi)}-1\right]\f{\langle k,\eta\rangle^{\s}}{\langle l,\xi\rangle^{\s}}\f{\rmB_k(\eta)}{\rmB_l(\xi)}d\xi d\eta\\
=&\rmT_{1;\rmN}^{3;\rmL,\neq}+\rmT_{1;\rmN}^{3;\rmL,0}. 
\end{aligned}
\end{equation}

Let us start by estimating $\rmT_{1;\rmN}^{3;\rmL,\neq}$. We have 
\begin{equation}
\begin{aligned}
\rmT_{1;\rmN}^{3;\rmL,\neq}=&\,i\sum_{k,l}\int_{\eta,\xi} \rmA_k(\eta)\bar{\hat{\rho}}_k(\eta)\widehat{v^\prime\nabla _{z,v }^{\perp }P_{\neq }\phi}(k-l,\eta-\xi)_{<\rmN/8}\cdot (l,\xi)\rmA_{l}(\xi)\hat{\rho}_l(\xi)_{\rmN}\\
&\times e^{\la|k,\eta|^s-\la|l,\xi|^s}\tilde{\chi}^\rmL(\mathbf{1}_{|l|\leq 100|\xi|}+\mathbf{1}_{|l|\geq 100|\xi|})\left[\f{\cM_k(\eta)}{\cM_l(\xi)}-1\right]\f{\langle k,\eta\rangle^{\s}}{\langle l,\xi\rangle^{\s}}\f{\rmB_k(\eta)}{\rmB_l(\xi)}d\xi d\eta\\
=&\rmT_{1;\rmN}^{3;\rmL,\neq,v}+\rmT_{1;\rmN}^{3;\rmL,\neq,z}. 
\end{aligned}
\end{equation}

Apllying \eqref{Eq_M/M}, together with the fact that on the support of the integrand, it holds that $|l,\xi|\lesssim |\xi|\lesssim t^{3}$
which implies 
$|l,\xi|\leq |l,\xi|^{\f s2}|k,\eta|^{\f s2}t^{3(1-s)}$ we get as in the above estimates 
 \begin{equation}
\begin{aligned}
|\rmT_{1;\rmN}^{3;\rmL,\neq,v}|\lesssim &\, t^{3-3 s}\Vert v^\prime\nabla _{z,v }^{\perp }P_{\neq }\phi\Vert_{\mathcal{G}^{\la,\b;s}}\Vert |\nabla|^{\f s2}\rmA \rho_{\sim\rmN} \Vert_2 \Vert |\nabla|^{\f s2}\rmA \rho_{\rmN} \Vert_2\\
\lesssim & \f{\ep}{\langle t\rangle^{3s+1}}\Vert |\nabla|^{\f s2}\rmA \rho_{\sim\rmN} \Vert_2 \Vert |\nabla|^{\f s2}\rmA \rho_{\rmN} \Vert_2.
\end{aligned}
\end{equation}

Next we estimate $\rmT_{1;\rmN}^{3;\rmL,\neq,z}$. We have 
\beno
\left|\f{\cM_k(\eta)}{\cM_l(\xi)}-1\right|\lesssim \f{1}{|l|^{\f23}}+\f{\langle k-l,\eta-\xi\rangle}{|k|^{\f23}} e^{C\d_{\rmL}^{-1}|k-l,\eta-\xi|^{\f13}}. 
\eeno  
Hence, we obtain 
\begin{equation*}
\rmT_{1;\rmN}^{3,\neq;\rmL,z}\lesssim  \Vert |\nabla|^{\f s2}\rmA \rho_{\sim\rmN} \Vert_2 \Vert |\nabla|^{\f s2}\rmA \rho_{\rmN} \Vert_2 \Vert v^\prime\nabla _{z,v }^{\perp }P_{\neq }\phi\Vert_{\mathcal{G}^{\la,\b;s}}\lesssim \f{\ep}{\langle t\rangle^4}\Vert |\nabla|^{\f s2}\rmA \rho_{\sim\rmN} \Vert_2 \Vert |\nabla|^{\f s2}\rmA \rho_{\rmN} \Vert_2. 
\end{equation*}

Let us now turn to the zero mode term $\rmT_{1;\rmN}^{3;\rmL,0}$. We write
We write
\begin{align*}
\rmT_{1;\rmN}^{3;\rmL,0}=&\,i\sum_{k}\int_{\eta,\xi} \rmA_k(\eta)\bar{\hat{\rho}}_k(\eta)\hat{g}(\eta-\xi)_{<\rmN/8} \xi\rmA_{l}(\xi)\hat{\rho}_k(\xi)_{\rmN}e^{\la|k,\eta|^s-\la|k,\xi|^s}\\
&\times (\chi_0^{\rmM}+\chi_0^{\ast})\left[\f{\cM_k(\eta)}{\cM_k(\xi)}-1\right]\f{\langle k,\eta\rangle^{\s}}{\langle k,  \xi\rangle^{\s}} \f{\rmB_k(\eta)}{\rmB_k(\xi)}d\xi d\eta\\
=&\,\rmT_{1;\rmN}^{3;\rmL,0;\rmM}+\rmT_{1;\rmN}^{3,\rmL,0;\ast},
\end{align*}
where 
\begin{equation*}
\chi_0^\rmM=\tilde{\chi}^\rmL \chi^\rmS=\mathbf{1}_{\f12\min\{|\xi|^{\f13},|\eta|^{\f13}\}\leq t\leq \f12\min\{|\xi|^{\f23},|\eta|^{\f23}\}}\quad \text{ and }\quad \chi_0^\ast=\tilde{\chi}^\rmL  \chi^\rmL=\mathbf{1}_{t\geq \f12\min\{|\xi|^{\f23},|\eta|^{\f23}\}}
\end{equation*}

We first estimate the term $\rmT_{1;\rmN}^{3;\rmL,0;\rmM}$. 
  On the support of the integrand, we have  
 \begin{equation}
|\xi|^{\f23}\lesssim  |\xi|^{\f s2}|\eta|^{\f s2} |\xi|^{\f23-s}\lesssim |\xi|^{\f s2}|\eta|^{\f s2}t^{2-3s}.  
\end{equation}
Hence, we have by using Lemma \ref{Lemma_Comm_M_0} and  \eqref{Support_Ineq}, 

 \begin{equation*}
\begin{aligned}
|\rmT_{1;\rmN}^{3;\rmL,0;\rmM}|\lesssim &\,\sum_{k}\int_{\eta,\xi} t^{2-3s} \rmA_k(\eta)|\eta|^{\f s2}|\bar{\hat{\rho}}_k(\eta)||\hat{g}(\eta-\xi)_{<\rmN/8}| |\xi|^{\f s2}\rmA_{l}(\xi)|\hat{\rho}_{k}(\xi)_{\rmN}|e^{\la|\eta-\xi|^{s}}
 \chi_0^{\rmM}d\xi d\eta\\
 \lesssim &\, t^{2-3s}\Vert g\Vert_{\mathcal{G}_1^{\la,s;\b}}\Vert |\nabla|^{\f s2}\rmA \rho_{\sim\rmN} \Vert_2 \Vert |\nabla|^{\f s2}\rmA \rho_{\rmN} \Vert_2
 \lesssim \f{\sqrt{\ep}}{\langle t\rangle^{3s-\f{\ep_1}{2}}}\Vert |\nabla|^{\f s2}\rmA \rho_{\sim\rmN} \Vert_2 \Vert |\nabla|^{\f s2}\rmA \rho_{\rmN} \Vert_2.
\end{aligned}
\end{equation*}
Now, for  the term $\rmT_{1;\rmN}^{3,\rmL,0;\ast}$ on the support of the integrand, we have 
$
|\xi|\lesssim |\xi|^{\f s2}|\eta|^{\f s2} t^{\f32(1-s)}$. Hence, as we did above, we get the following estimate
\beno
\rmT_{1;\rmN}^{3,\rmL,0;\ast}\lesssim t^{\f32-\f32s}\Vert g\Vert_{\mathcal{G}_1^{\la,s;\b}}\Vert |\nabla|^{\f s2}\rmA \rho_{\sim\rmN} \Vert_2 \Vert |\nabla|^{\f s2}\rmA \rho_{\rmN} \Vert_2\lesssim \f{\sqrt{\ep}}{\langle t\rangle^{\f12+\f32s-\f{\ep_1}{2}}}\Vert |\nabla|^{\f s2}\rmA \rho_{\sim\rmN} \Vert_2 \Vert |\nabla|^{\f s2}\rmA \rho_{\rmN} \Vert_2. 
\eeno
Note here we need $\f12+\f32s-\f{\ep_1}{2}\geq \tilde{q}$. 

 \subsubsection{Term $\rmT_{1;\rmN}^4$} \label{Section_T_4}
To estimate $\rmT_{1;\rmN}^4$, we get by \eqref{Support_Ineq} that
\begin{equation*}
\Big|\f{\langle k,\eta\rangle^{\s}}{\langle l,\xi\rangle^{\s}}-1\Big|\lesssim \f{\Big||k,\eta|-|l,\xi|\Big|}{\langle l,\xi \rangle }\lesssim \f{|k-l,\eta-\xi|}{\langle l,\xi \rangle}.
\end{equation*}

 Consequently, using the above estimate, we obtain as above 
\begin{align*}
|\rmT_{1;\rmN}^4|\lesssim&\,\sum_{k,l}\int_{\eta,\xi} |\rmA_k(\eta)||\bar{\hat{\rho}}_k(\eta)||\hat{\bf{u}}(k-l,\eta-\xi)_{<\rmN/8}| |l,\xi||\rmA_{l}(\xi)\hat{\rho}_{l}(\xi)_{\rmN}|\\&\times \f{|k-l,\eta-\xi|}{\langle l,\xi \rangle}e^{\la|k,\eta|^s-\la|l,\xi|^s}d\xi d\eta\\
\lesssim&\,  \Vert \rmA \rho_{\sim\rmN} \Vert_2 \Vert \rmA \rho_{\rmN} \Vert_2 \Vert \na\mathbf{u}\Vert_{\mathcal{G}^{\la,\b;s}}\lesssim \f{\ep}{\langle t\rangle^2}\Vert \rmA \rho_{\sim\rmN} \Vert_2 \Vert \rmA \rho_{\rmN} \Vert_2. 
\end{align*} 
\subsubsection{Trem $\rmT_{1;\rmN}^5$} 
We get that 
\begin{align*}
\rmT_{1;\rmN}^5 \lesssim
&\sum_{k\neq l}\bigg|\int_{\eta,\xi} \rmA_k(\eta)\bar{\hat{\rho}}(k,\eta)\hat{\bf{u}}(k-l,\eta-\xi)_{<\rmN/8}\cdot (l,\xi)\rmA_{l}(\xi)\hat{\rho}_l(\xi)_{\rmN}e^{\la|k,\eta|^s-\la|l,\xi|^s}\\
&\quad \times \left[\f{\rmB_k(\eta)}{\rmB_l(\xi)}-1\right]d\xi d\eta\bigg|\\
&+\sum_{k}\bigg|\int_{\eta,\xi} \rmA_k(\eta)\bar{\hat{\rho}}(k,\eta)\hat{g}(\eta-\xi)_{<\rmN/8}\cdot \xi\rmA_{k}(\xi)\hat{\rho}_k(\xi)_{\rmN}e^{\la|k,\eta|^s-\la|k,\xi|^s}\\
&\quad \times \left[\f{\rmB_k(\eta)}{\rmB_k(\xi)}-1\right]d\xi d\eta\bigg|\\
=&\rmT_{1;\rmN,\neq}^5+\rmT_{1;\rmN,0}^5. 
\end{align*}
Let us first treat the term $\rmT_{1;\rmN,\neq}^5$. 
On the support of the integrand, we have either $|\eta|\geq 3|k|$ or $|\xi|\geq 3|l|$ which implies that $|k,l|\lesssim |\eta,\xi|$, $\min\{|\xi|^{\f13},|\eta|^{\f13}\}\lesssim t$ and $|\eta|\approx |\xi|$. 

Thus we have $|l,\xi|\lesssim |\xi|^{s}|\xi|^{1-s}\lesssim |\xi|^{\f s2}|\eta|^{\f s2}t^{3-3s}$, and then
\begin{align*}
|\rmT_{1;\rmN,\neq}^5|
&\lesssim t^{3-3s}\||\na|^{\f s2}\rmA\rho_{\sim\rmN}\|_2\|v'P_{\neq}\phi\|_{\cG^{\la,0;s}}\||\na|^{\f s2}\rmA\rho_{\rmN}\|_2\\
&\lesssim \f{\ep}{\langle t\rangle^{3s+1}}\||\na|^{\f s2}\rmA\rho_{\sim\rmN}\|_2\||\na|^{\f s2}\rmA\rho_{\rmN}\|_2.
\end{align*}

Next we focus on $\rmT_{1;\rmN,0}^5$, we get that on the support of the integrand, $t\gtrsim |\eta|^{\f13}\approx |\xi|^{\f13}$, $|k|\gtrsim |\eta|^{\f13}$ and 
\begin{align*}
\bigg|\f{\rmB_k(t,\eta)}{\rmB_k(t,\xi)}-1\bigg|
\lesssim \int_0^t\bigg|\f{b(s,{k},\eta)}{1+(s-\f{\eta}{k})^2}-\f{b(s,{k},\xi)}{1+(s-\f{\xi}{k})^2}\bigg|ds\lesssim \f{|\xi-\eta|}{|\eta|^{\f13}}.
\end{align*}
Thus we get that 
\beno
\bigg|\f{\rmB_k(t,\eta)}{\rmB_k(t,\xi)}-1\bigg||\xi|\lesssim 
|\xi|^{\f23}|\eta-\xi|\lesssim |\xi|^{\f s 2}|\eta|^{\f s 2}t^{2-3s}|\eta-\xi|
\eeno
and thus
\beno
|\rmT_{1;\rmN,0}^5|\lesssim t^{2-3s}\|\pa_vg\|_{\cG^{\la,0;s}}\|\na^{\f s 2}\rmA \rho_{\sim \rmN}\|_2\|\na^{\f s 2}\rmA \rho_{\rmN}\|_2
\lesssim \f{\ep}{\langle t\rangle^{3s}}\|\na^{\f s 2}\rmA \rho_{\sim \rmN}\|_2\|\na^{\f s 2}\rmA \rho_{\rmN}\|_2. 
\eeno

\subsection{Remainder}
The remainder is easy. We give the result and omit the proof. 
\ben
\mathcal{R}\lesssim \f{\ep^3}{\langle t\rangle^2}. 
\een

\section{Estimate of $\rmE_d(t)$}
In this section, we deal with the highest energy of the coordinate system $\rmE_d(t)$. We have
\begin{align*}  
\f{1}{2}\f{d}{dt}t\Vert \rmA \pa_{vv}  h \Vert_2^2 =&\,
\f12\Vert \rmA \pa_{vv}  h \Vert_2^2
+t\int \rmA \pa_{vv}\pa_t h \rmA\pa_{vv} hdv\\
&-\rmCK_{\la, h}-\rmCK_{\Theta,h}-\rmCK_{M,h}\\
 =&\,-t\int \rmA \pa_{vv}(g\pa_v h) \rmA\pa_{vv} hdv-\int \rmA \pa_{vv}f_0 \rmA\pa_{vv} hdv\\
 &-\rmCK_{\la,h}-\rmCK_{\Theta, h}-\rmCK_{M,h}
 -\f12\Vert \rmA \pa_{vv}  h \Vert_2^2. 
\end{align*}
Using integration by parts, we have 
\begin{equation}\label{g_h_Comm_1}
\begin{aligned}
\int \rmA\pa_{vv}(g\pa_v h) \rmA_0\pa_{vv} hdv=&\,-\f12 \int \pa_v g |\rmA \pa_{vv} h|^2dv\\
&+\int \rmA\pa_{vv} h \Big[\rmA\pa_{vv} (g\pa_v h)-g\pa_v\rmA\pa_{vv} h\Big]dv
\end{aligned}
\end{equation}
The first term in \eqref{g_h_Comm_1} can be estimated as follows:
\begin{equation}\label{Estimate_infty}
\begin{aligned}
t\left|\int \pa_v g |\rmA \pa_{vv} h|^2dv\right|\lesssim \Vert  \pa_vg\Vert_{H^2}\rmE_d(t). 
\end{aligned}
\end{equation}
 We write by using the paraproduct with respect to $v$:
\begin{equation}
\begin{aligned}
\int \rmA\pa_{vv} h \Big[\rmA\pa_{vv} (g\pa_v h)-g\pa_v\rmA\pa_{vv} h\Big]dv=\f{1}{2\pi}\sum_{\rmM\geq 8}\rmT_{\rmM}^v
+\f{1}{2\pi}\sum_{\rmM\geq 8}\rmR_{\rmM}^v
+\f{1}{2\pi}\cR^v
\end{aligned}
\end{equation}
where
\begin{align*}
\rmT_{\rmM}^v&=2\pi\int \rmA\pa_{vv}h \big[\rmA\pa_{vv}\big(g_{<\rmM/8}\pa_vh_{\rmM}\big)-g_{<\rmM/8}\pa_v\rmA\pa_{vv}h_{\rmM}\big]dv\\
\rmR_{\rmM}^v&=2\pi\int \rmA\pa_{vv}h \big[\rmA\pa_{vv}\big(g_{\rmM}\pa_vh_{<\rmM/8}\big)-g_{\rmM}\pa_v\rmA\pa_{vv}h_{<\rmM/8}\big]dv\quad \\
\cR^v&=2\pi\sum_{\rmM\in \bbD}\sum_{\f18\rmM\leq \rmM'\leq 8\rmM}\int \rmA\pa_{vv}h \big[\rmA\pa_{vv}\big(g_{\rmM}\pa_vh_{\rmM'}\big)-g_{\rmM}\pa_v\rmA\pa_{vv}h_{\rmM'}\big]dv. 
\end{align*}  

One may easily follow the argument in section 5 and get that
\ben
\begin{aligned}
&\left| t\int \rmA\pa_{vv} h \Big[\rmA\pa_{vv} (g\pa_v h)-g\pa_v\rmA\pa_{vv} h\Big]dv\right|\\
&\lesssim \sqrt{\ep} (\rmCK_{\la,h}+\rmCK_{w, h}+\rmCK_{M,h})
+\f{\ep^3}{\langle t\rangle^2}+\ep \langle t\rangle^2\|\rmA \pa_{vvv}g\|_2^2
\end{aligned}
\een

By \eqref{eq:K_0L^2}, \eqref{eq: f_0-1} and \eqref{eq: K_0-1}, we get that
\ben
\begin{aligned}
&\left|\int \rmA \pa_{vv}f_0 \rmA\pa_{vv} hdv\right|
\leq C_1\|\pa_v\rmA K_0\|_2^2+C\f{\ep^2}{\langle t\rangle^2}+\f{1}{16}\|\rmA\pa_{vv}h\|_2^2,
\end{aligned}
\een
with $C_1\geq 1$ independent of $\rmK_{d}$.  

Thus we obtain that 
\ben
\begin{aligned}
\rmE_d(t)+&\f{1}{4}\int_1^t (\rmCK_{\la,h}+\rmCK_{\Theta, h}+\rmCK_{M,h}+\|\rmA\pa_{vv}h\|_2^2)(s)ds \\
&\leq \rmE_d(1)+C\ep^2+C_1\int_1^t\|\pa_v\rmA K_0\|_2^2(s)ds.
\end{aligned}
\een
Note that by choosing $\rmK_{d}\geq 10C_1$, the last term on the right hand side will be absorbed by the dissipation term $\|\na_{L}\rmA K\|_2^2$.   

\section{Estimate of $\rmN\rmL_K^2$}
\label{Section_NL_K_2}  
In this section, we treat $\rmN\rmL_K^2$  and prove Proposition \ref{prop: NL_2}. 

\subsection{Treatment of $\bfK_1=\g^2\int\rmA K\rmA\big(\gamma^2 v^{\prime}\nabla^{\perp}_{z,v}\partial_z P_{\neq }\phi\cdot \nabla_{z,v} \rho\big)dzdv$}
We have
\begin{equation}
\begin{aligned}  
\mathbf{K}_{1}=&\,\gamma^2\int \rmA K \rmA \nabla^{\perp}_{z,v}\partial_z P_{\neq }\phi\cdot \nabla_{z,v} \rho dzdv\\
&\gamma^2\int \rmA K \rmA \big(h\nabla^{\perp}_{z,v}\partial_z P_{\neq }\phi\cdot \nabla_{z,v} \rho\big) dzdv\\
=&\,\mathbf{K}^1_{1}+\mathbf{K}_{1}^\ep.  
\end{aligned}
\end{equation}

As before, we use the paraproduct in $(z,v)$ and write   
\begin{equation*}
\begin{aligned}
\mathbf{K}_{1}^1=&\,\f{1}{2\pi}\sum_{\rmN\geq 8} \mathbf{K}_{1;\rmH\rmL}^{1;\rmN}
+\f{1}{2\pi}\sum_{\rmN\geq 8}\mathbf{K}_{1;\rmL\rmH}^{1;\rmN}
+\f{1}{2\pi}\sum_{\rmN\in \mathbf{D}}\mathbf{K}_{1;\rmH\rmH}^{1;\rmN}
\end{aligned}
\end{equation*}
where
\begin{align*}
\mathbf{K}_{1;\rmH\rmL}^{1;\rmN}=&\,-\sum_{k,l\neq 0}i\int_{\eta,\xi}\rmA \bar{\hat{K}}_{k}(\eta)\rmA_{k}(\eta)l(\eta l-\xi k)\hat{\phi}_{l}(\xi)_{\rmN}\widehat{\rho}_{k-l}(\eta-\xi)_{<\rmN/8}d\eta d\xi\\
\mathbf{K}_{1;\rmL\rmH}^{1;\rmN}=&\,-\sum_{k,l\neq 0}\int_{\eta,\xi}\rmA \bar{\hat{K}}_{k}(\eta)\rmA_{k}(\eta)(k-l)\widehat{\na^{\bot}\phi}_{k-l}(\eta-\xi)_{<\rmN/8}(l,\xi)\widehat{\rho}_{l}(\xi)_{\rmN}d\eta d\xi\\
\mathbf{K}_{1;\rmH\rmH}^{1;\rmN}=&\,
-\sum_{\f18\rmN\leq \rmN'\leq 8\rmN}\sum_{k,l\neq 0}\int_{\eta,\xi}\rmA \bar{\hat{K}}_{k}(\eta)\rmA_{k}(\eta)l\widehat{\na^{\bot}\phi}_{l}(\xi)_{\rmN'}(k-l,\eta-\xi)\widehat{\rho}_{k-l}(\eta-\xi)_{\rmN}d\eta d\xi.
\end{align*}
The high-low interaction similar to the reaction term in section 5. The main difference is there is one more derivative in $z$ direction acting on $\phi$. Luckily, we can take advantage of the dissipation term $\|\na_{L}\rmA K\|_2$. That is 
\beno
|l(\eta l-\xi k)|\leq (|k|+|k-l|)|l,\xi||k-l,\eta-\xi|. 
\eeno
Thus by following the argument of reaction term in section 5, we get that
\begin{align*}
|\mathbf{K}_{1;\rmH\rmL}^{1;\rmN}|
&\lesssim \ep \|\pa_z \rmA K_{\sim \rmN}\|_2
\left\|\left\langle\f{\pa_v}{t\pa_z}\right\rangle^{-1}\left(\f{|\na|^{\f{s}{2}}}{\langle t\rangle^{\f{3s}{2}}}\rmA+\sqrt{\f{\pa_t\rmg}{\rmg}}\tilde{\tilde{\rmA}}+\sqrt{\f{\pa_t\Theta}{\Theta}}\tilde{\rmA}\right)\pa_z^{-1}\Delta_{L}^2 P_{\neq}\phi_{\rmN}\right\|_2\\
&\quad +\ep \|\rmA_0 K_{\sim \rmN}\|_2\left\|\left\langle\f{\pa_v}{t\pa_z}\right\rangle^{-1}\left(\f{|\na|^{\f{s}{2}}}{\langle t\rangle^{\f{3s}{2}}}\rmA+\sqrt{\f{\pa_t\rmg}{\rmg}}\tilde{\tilde{\rmA}}+\sqrt{\f{\pa_t\Theta}{\Theta}}\tilde{\rmA}\right)\pa_z^{-1}\Delta_{L}^2 P_{\neq}\phi_{\rmN}\right\|_2,
\end{align*}
which implies that
\begin{align*}
\left|\sum_{\rmN\geq 8}\mathbf{K}_{1;\rmH\rmL}^{1;\rmN}\right|
&\lesssim \ep \|\na_{L} \rmA K\|_2^2+\f{\ep^3}{\langle t\rangle^{\f92}}\\
&\quad+\ep\left\|\left\langle\f{\pa_v}{t\pa_z}\right\rangle^{-1}\left(\f{|\na|^{\f{s}{2}}}{\langle t\rangle^{\f{3s}{2}}}\rmA+\sqrt{\f{\pa_t\rmg}{\rmg}}\tilde{\tilde{\rmA}}+\sqrt{\f{\pa_t\Theta}{\Theta}}\tilde{\rmA}\right)\pa_z^{-1}\Delta_{L}^2 P_{\neq}\phi\right\|_2^2.
\end{align*}

The low-high inaction is different from the transport term in section 5. Again we will take advantage of the dissipation term $\|\na_{L}\rmA K\|_2$. That is 
\beno
|(k-l)|l,\xi||\lesssim |k-l||k,\eta|\lesssim |k-l||k,\eta-kt|\langle t\rangle
\eeno
Thus we get that 
\begin{align*}
|\mathbf{K}_{1;\rmL\rmH}^{1;\rmN}|
\lesssim \langle t\rangle^2\|\phi_{\neq}\|_{\cG^{\la,0;s}}\|\rmA \rho_{\rmN}\|_2\|\na_{L}\rmA K_{\sim \rmN}\|_2.
\end{align*}
Here we use a rough estimate that 
\beno
\f{\rmJ_k(t,\eta)}{\rmJ_l(t,\xi)}\lesssim \langle t\rangle e^{C|k-l,\eta-\xi|^{\f13}}. 
\eeno
Thus we get that
\beno
\left|\sum_{\rmN\geq 8}\mathbf{K}_{1;\rmL\rmH}^{1;\rmN}\right|\lesssim \ep\|\na_{L} \rmA K_{\sim \rmN}\|_2^2
+\f{\ep^3}{\langle t\rangle^4}. 
\eeno

The high-high interaction is easy to deal with, we get that
\beno
\left|\sum_{\rmN\in \mathbf{D}}\mathbf{K}_{1;\rmH\rmH}^{1;\rmN}\right|\lesssim \f{\ep^3}{\langle t\rangle^4}.
\eeno

The treatment of $\bfK_1^{\ep}$ is similar. We use the paraproduct twice and write   
\begin{equation*}
\begin{aligned}
\mathbf{K}_{1}^\ep=&\,\f{1}{2\pi}\sum_{\rmN\geq 8} \mathbf{K}_{1;\rmH\rmL}^{\ep;\rmN}
+\f{1}{2\pi}\sum_{\rmN\geq 8}\mathbf{K}_{\ep;\rmL\rmH}^{1;\rmN}
+\f{1}{2\pi}\sum_{\rmN\in \mathbf{D}}\mathbf{K}_{1;\rmH\rmH}^{\ep;\rmN}
\end{aligned}
\end{equation*}
where $\mathbf{K}_{1;\rmH\rmL}^{\ep;\rmN}=
\mathbf{K}_{1;\rmH\rmL,\rmH\rmL}^{\ep;\rmN}
+\mathbf{K}_{1;\rmH\rmL,\rmL\rmH}^{\ep;\rmN}
+\mathbf{K}_{1;\rmH\rmL,\rmH\rmH}^{\ep;\rmN}$ and
\begin{align*}
\mathbf{K}_{1;\rmH\rmL,\rmH\rmL}^{\ep;\rmN}=&\,
-\sum_{\rmM\geq 8}\sum_{k,l\neq 0}i\int_{\eta,\xi}\rmA \bar{\hat{K}}_{k}(\eta)\rmA_{k}(\eta)\hat{h}(\xi-\xi')_{\rmM}\hat{\phi}_{l}(\xi')_{<\rmM/8}\\
&\qquad\qquad\qquad\qquad
\times l(\eta l-\xi k)\widehat{\rho}_{k-l}(\eta-\xi)_{<\rmN/8}\varphi_{\rmN}(l,\xi)d\eta d\xi' d\xi\\
\mathbf{K}_{1;\rmH\rmL,\rmL\rmH}^{\ep;\rmN}=&\,
-\sum_{\rmM\geq 8}\sum_{k,l\neq 0}i\int_{\eta,\xi}\rmA \bar{\hat{K}}_{k}(\eta)\rmA_{k}(\eta)\hat{h}(\xi-\xi')_{<\rmM/8}\hat{\phi}_{l}(\xi')_{\rmM}\\
&\qquad\qquad\qquad\qquad
\times l(\eta l-\xi k)\widehat{\rho}_{k-l}(\eta-\xi)_{<\rmN/8}\varphi_{\rmN}(l,\xi)d\eta d\xi' d\xi\\
\mathbf{K}_{1;\rmH\rmL,\rmH\rmH}^{\ep;\rmN}=&\,
-\sum_{\rmM\in \mathbf{D}}\sum_{\f18\rmM\leq \rmM'\leq 8\rmM}\sum_{k,l\neq 0}i\int_{\eta,\xi}\rmA \bar{\hat{K}}_{k}(\eta)\rmA_{k}(\eta)\hat{h}(\xi-\xi')_{\rmM}\hat{\phi}_{l}(\xi')_{\rmM'}\\
&\qquad\qquad\qquad\qquad\qquad
\times l(\eta l-\xi k)\widehat{\rho}_{k-l}(\eta-\xi)_{<\rmN/8}\varphi_{\rmN}(l,\xi)d\eta d\xi' d\xi\\
\mathbf{K}_{1;\rmL\rmH}^{\ep;\rmN}=&\,
-\sum_{k,l\neq 0}\int_{\eta,\xi}\rmA \bar{\hat{K}}_{k}(\eta)\rmA_{k}(\eta)(k-l)\widehat{h\na^{\bot}\phi}_{k-l}(\eta-\xi)_{<\rmN/8}(l,\xi)\widehat{\rho}_{l}(\xi)_{\rmN}d\eta d\xi\\
\mathbf{K}_{1;\rmH\rmH}^{\ep;\rmN}=&\,
-\sum_{\f18\rmN\leq \rmN'\leq 8\rmN}\sum_{k,l\neq 0}\int_{\eta,\xi}\rmA \bar{\hat{K}}_{k}(\eta)\rmA_{k}(\eta)l\widehat{h\na^{\bot}\phi}_{l}(\xi)_{\rmN'}(k-l,\eta-\xi)\widehat{\rho}_{k-l}(\eta-\xi)_{\rmN}d\eta d\xi.
\end{align*}

The treatment of $\mathbf{K}_{1;\rmL\rmH}^{\ep;\rmN}$ and $\mathbf{K}_{1;\rmH\rmH}^{\ep;\rmN}$ are same as $\mathbf{K}_{1;\rmL\rmH}^{1;\rmN}$ and $\mathbf{K}_{1;\rmH\rmH}^{1;\rmN}$ and we get that
\begin{align*}
\left|\sum_{\rmN\geq 8}\mathbf{K}_{1;\rmL\rmH}^{\ep;\rmN}\right|+\left|\sum_{\rmN\in \mathbf{D}}\mathbf{K}_{1;\rmH\rmH}^{\ep;\rmN}\right|\lesssim \ep^2\|\na_{L} \rmA K_{\sim \rmN}\|_2^2
+\f{\ep^4}{\langle t\rangle^4}.
\end{align*}

The treatment of $\mathbf{K}_{1;\rmH\rmL,\rmL\rmH}^{\ep;\rmN}$ is same as $\bfK_{1;\rmH\rmL}^{1,\rmN}$ and we get that
\begin{align*}
\left|\sum_{\rmN\geq 8}\mathbf{K}_{1;\rmH\rmL,\rmL\rmH}^{\ep;\rmN}\right|
&\lesssim \ep^2 \|\na_{L} \rmA K_{\sim \rmN}\|_2^2+\f{\ep^4}{\langle t\rangle^{\f92}}\\
&\quad+\ep^2\left\|\left\langle\f{\pa_v}{t\pa_z}\right\rangle^{-1}\left(\f{|\na|^{\f{s}{2}}}{\langle t\rangle^{\f{3s}{2}}}\rmA+\sqrt{\f{\pa_t\rmg}{\rmg}}\tilde{\tilde{\rmA}}+\sqrt{\f{\pa_t\Theta}{\Theta}}\tilde{\rmA}\right)\pa_z^{-1}\Delta_{L}^2 P_{\neq}\phi\right\|_2^2.
\end{align*}

One can follow the estimate of $\rmR^{\ep,1}_{1;\rmN;\rmH\rmL}$ and $\rmR^{\ep,1}_{1;\rmN;\rmH\rmH}$ in section 5 and get that
\begin{align*}
&\left|\sum_{\rmN\geq 8}\mathbf{K}_{1;\rmH\rmL,\rmH\rmL}^{\ep;\rmN}\right|+\left|\sum_{\rmN\geq 8}\mathbf{K}_{1;\rmH\rmL,\rmH\rmH}^{\ep;\rmN}\right|\\
&\lesssim \ep^2 \|\na_{L} \rmA K\|_2^2+\f{\ep^4}{\langle t\rangle^{4}}
+\ep^2\left\|\left\langle\f{\pa_v}{t\pa_z}\right\rangle^{-1}\f{|\na|^{\f{s}{2}}}{\langle t\rangle^{\f{3s}{2}}}\rmA\pa_z^{-1}\Delta_{L}^2 P_{\neq}\phi\right\|_2^2.
\end{align*}
\subsection{Treatment of $\bfK_2=-2\int\rmA K\rmA\big(h(\pa_v-t\pa_z)\partial_z f\big)dzdv$}
We have
\begin{align*}
\bfK_2=&\,\f{1}{2\pi}\sum_{\rmN\geq 8} \mathbf{K}_{2;\rmH\rmL}^{\rmN}
+\f{1}{2\pi}\sum_{\rmN\geq 8}\mathbf{K}_{2;\rmL\rmH}^{\rmN}
+\f{1}{2\pi}\sum_{\rmN\in \mathbf{D}}\mathbf{K}_{2;\rmH\rmH}^{\rmN},
\end{align*}
with 
\begin{align*}
&\mathbf{K}_{2;\rmH\rmL}^{\rmN}
=-2\sum_{k\neq 0}\int \rmA_k(t,\eta)\overline{\hat{K}}_k(t,\eta)\rmA_k(t,\eta) \hat{h}(\xi)_{\rmN}\widehat{\na_{L}\pa_zf}_{k}(\eta-\xi)_{<\rmN/8}d\xi d\eta,\\
&\mathbf{K}_{2;\rmL\rmH}^{\rmN}
=2\sum_{k\neq 0}\int \rmA_k(t,\eta)\overline{\hat{K}}_k(t,\eta)\rmA_k(t,\eta) \hat{h}(\eta-\xi)_{<\rmN/8}(\xi-kt)k\widehat{f}_k(\xi)_{\rmN}d\xi d\eta,\\
&\mathbf{K}_{2;\rmH\rmH}^{\rmN}
=2\sum_{\f18\rmN\leq \rmN'\leq 8\rmN}\sum_{k\neq 0}\int \rmA_k(t,\eta)\overline{\hat{K}}_k(t,\eta)\rmA_k(t,\eta) \hat{h}(\eta-\xi)_{\rmN'}(\xi-kt)k\widehat{f}_k(\xi)_{\rmN}d\xi d\eta.
\end{align*}
We first treat $\mathbf{K}_{2;\rmH\rmL}^{\rmN}$. We have
\beno
\f{\rmJ_k(t,\eta)}{\rmJ_0(t,\xi)}\lesssim \langle \xi\rangle e^{C|k,\eta-\xi|^{\f13}}
\eeno
Thus we get that
\begin{align*}
\left|\sum_{\rmN\geq 8}\mathbf{K}_{2;\rmH\rmL}^{\rmN}\right|
&\lesssim \sum_{\rmN\geq 8}\|\rmA P_{\neq}\rmK_{\sim\rmN}\|_2\|\langle \pa_v\rangle \rmA h_{\rmN}\|_2\|\na_{L}P_{\neq}f\|_{\cG^{\la,0;s}}\\
&\lesssim \f{\ep^3}{\langle t\rangle^2}+\ep\|\na_{L}\rmA P_{\neq}\rmK\|_2. 
\end{align*}
By using the fact that $\xi-kt\lesssim |\eta-kt|+|\eta-\xi|$, and 
\beno
\f{k^2}{k^2+(\xi-kt)^2}\lesssim \left\langle \f{\xi}{kt}\right\rangle^{-1}, 
\quad \f{\rmA_k(\eta)}{\rmA_k(\xi)}\lesssim e^{C|\eta-\xi|^{\f13}}
\eeno
we have that
\begin{align*}
\left|\sum_{\rmN\geq 8}\mathbf{K}_{2;\rmL\rmH}^{\rmN}\right|
&\lesssim \sum_{\rmN\geq 8}\|\na_{L}\rmA P_{\neq}\rmK_{\sim\rmN}\|_2\|\langle \pa_v\rangle h\|_{\cG^{\la,0;s}}\left\|\left\langle\f{\pa_v}{t\pa_z}\right\rangle^{-1}\rmA\pa_z^{-1}\Delta_{L}P_{\neq}f_{\rmN}\right\|_{2}\\
&\lesssim \ep\|\na_{L}\rmA P_{\neq}\rmK\|_2^2
+\f{\ep}{\langle t\rangle^2}\left\|\left\langle\f{\pa_v}{t\pa_z}\right\rangle^{-1}\rmA\pa_z^{-1}\Delta_{L}P_{\neq}f\right\|_{2}^2. 
\end{align*}

Similarly using the fact that $\xi-kt\lesssim |\eta-kt|+|\eta-\xi|$, we have
\begin{align*}
\left|\sum_{\rmN\in \mathbf{D}}\mathbf{K}_{2;\rmH\rmH}^{\rmN}\right|
\lesssim \ep\|\na_LP_{\neq }\rmA K\|_{2}^2+\f{\ep^3}{\langle t\rangle^2}. 
\end{align*}

\subsection{Treatment of $\bfK_3=2\int\rmA K\rmA\big(f_0v'(\pa_v-t\pa_z)\partial_z f\big)dzdv$}
The treatment of $\bfK_3$ is similar to $\bfK_2$. Here we show the result and omit the proof. We have
\ben
\begin{aligned}
|\bfK_3|\lesssim \ep\|\na_{L}\rmA P_{\neq}\rmK\|_2^2
+\f{\ep}{\langle t\rangle^2}\left\|\left\langle\f{\pa_v}{t\pa_z}\right\rangle^{-1}\rmA\pa_z^{-1}\Delta_{L}P_{\neq}f\right\|_{2}^2+\f{\ep^3}{\langle t\rangle^2}. 
\end{aligned}
\een

\subsection{Treatment of $\bfK_4=-2\int \rmA K\rmA\big((v')^3(\pa_v-t\pa_z)\na_L^{\bot}\phi_{\neq}\cdot\na_{L}(\pa_v-t\pa_z)f\big)dzdv$ and $\bfK_5=-2\int \rmA K\rmA\big(v'\pa_z\na_L^{\bot}\phi_{\neq}\cdot\na_{L}\pa_zf\big)dzdv$}

The treatment of $\bfK_4$ and $\bfK_5$ are similar. We will give the estimate of $\bfK_4$ and only show the result of $\bfK_5$. 
We have  
\begin{align*}
\bfK_4=&\,\f{1}{2\pi}\sum_{\rmN\geq 8} 
\mathbf{K}_{4;\rmH\rmL}^{1,\rmN}
+\f{1}{2\pi}\sum_{\rmN\geq 8} \mathbf{K}_{4;\rmH\rmL}^{\ep,\rmN}
+\f{1}{2\pi}\sum_{\rmN\geq 8}\mathbf{K}_{4;\rmL\rmH}^{\rmN}
+\f{1}{2\pi}\sum_{\rmN\in \mathbf{D}}\mathbf{K}_{4;\rmH\rmH}^{\rmN},
\end{align*}
with $\mathbf{K}_{4;\rmH\rmL}^{\ep,\rmN}
=\mathbf{K}_{4;\rmH\rmL,\rmH\rmL}^{\ep,\rmN}
+\mathbf{K}_{4;\rmH\rmL,\rmL\rmH}^{\ep,\rmN}
+\mathbf{K}_{4;\rmH\rmL,\rmH\rmH}^{\ep,\rmN}$
and
\begin{align*}
\mathbf{K}_{4;\rmH\rmL}^{1,\rmN}
&=i\sum_{k,l\neq 0}\int \rmA_k(\eta)\overline{\hat{K}}_k(\eta)\rmA_k(\eta)(\xi-lt)(l\eta-k\xi)\widehat{\phi}_l(\xi)_{\rmN}\mathcal{F}\big[(\pa_v-t\pa_z)f\big]_{k-l}(\eta-\xi)_{<\rmN/8}d\xi d\eta\\
\mathbf{K}_{4;\rmH\rmL,\rmH\rmL}^{\ep,\rmN}
&=\sum_{\rmM\geq 8}i\sum_{k,l\neq 0}\int \rmA_k(\eta)\overline{\hat{K}}_k(\eta)\rmA_k(\eta)G_3(\xi')_{\rmM}\widehat{\phi}_l(\xi-\xi')_{<\rmM/8}\varphi_{\rmN}(l,\xi)\\
&\qquad\qquad\qquad\times (\xi-\xi'-lt)(l\eta-k(\xi-\xi'))\mathcal{F}\big[(\pa_v-t\pa_z)f\big]_{k-l}(\eta-\xi)_{<\rmN/8}d\xi' d\xi d\eta\\
\mathbf{K}_{4;\rmH\rmL,\rmL\rmH}^{\ep,\rmN}
&=\sum_{\rmM\geq 8}i\sum_{k,l\neq 0}\int \rmA_k(\eta)\overline{\hat{K}}_k(\eta)\rmA_k(\eta)G_3(\xi-\xi')_{<\rmM/8}
\widehat{\phi}_l(\xi')_{\rmM}\varphi_{\rmN}(l,\xi)\\
&\qquad\qquad\qquad\times (\xi'-lt)(l\eta-k\xi')\mathcal{F}\big[(\pa_v-t\pa_z)f\big]_{k-l}(\eta-\xi)_{<\rmN/8}d\xi' d\xi d\eta\\
\mathbf{K}_{4;\rmH\rmL,\rmH\rmH}^{\ep,\rmN}
&=\sum_{\rmM\in \mathbf{D}}\sum_{\f18\rmM\leq \rmM'\leq 8\rmM}i\sum_{k,l\neq 0}\int \rmA_k(\eta)\overline{\hat{K}}_k(\eta)\rmA_k(\eta)G_3(\xi-\xi')_{\rmM'}
\widehat{\phi}_l(\xi')_{\rmM}\varphi_{\rmN}(l,\xi)\\
&\qquad\qquad\qquad\qquad\qquad\times (\xi'-lt)(l\eta-k\xi')\mathcal{F}\big[(\pa_v-t\pa_z)f\big]_{k-l}(\eta-\xi)_{<\rmN/8}d\xi' d\xi d\eta\\
\mathbf{K}_{4;\rmL\rmH}^{\rmN}
&=\sum_{k\neq l}\int \overline{\widehat{\rmA K}}_k(\eta)\rmA_k(\eta)\mathcal{F}\big[G_3(\pa_v-t\pa_z)\na_{L}^{\bot}\phi\big]_{k-l}(\eta-\xi)_{<\rmN/8}\cdot (l,\xi-lt)(\xi-lt)\widehat{f}_{l}(\xi)_{\rmN}d\xi d\eta\\
\mathbf{K}_{4;\rmH\rmH}^{\rmN}
&=\sum_{\f18\rmN\leq\rmN'\leq 8\rmN}\sum_{k\neq l }\int \rmA_k(\eta)\overline{\hat{K}}_k(\eta)\rmA_k(\eta)\mathcal{F}\big[G_3(\pa_v-t\pa_z)\na_{L}^{\bot}\phi\big]_{k-l}(\eta-\xi)_{\rmN'}\\
&\qquad\qquad \qquad\qquad  \cdot (l,\xi-lt)(\xi-lt)\widehat{f}_{l}(\xi)_{\rmN}d\xi d\eta.
\end{align*}
The key idea of estimate of $\mathbf{K}_{4;\rmH\rmL}^{1,\rmN}$ is to move the derivative $\pa_v-t\pa_z$ on $\phi_{\neq}$ to $K$ and using the dissipation $\|\na_{L}\rmA K\|_2$. 

Indeed we have on the support of the integrand, $|\xi-lt|\leq |\eta-kt|+|\eta-\xi-(k-l)t|\lesssim |k,\eta-kt|\langle \eta-\xi-(k-l)t\rangle$ which gives us that 
\begin{align*}
|\mathbf{K}_{4;\rmH\rmL}^{1,\rmN}|
&=\sum_{k,l\neq 0}\int \rmA_k(\eta)
|\overline{\widehat{\na_{L}K}}_k(\eta)|\rmA_k(\eta)|l\eta-k\xi||\widehat{\phi}_l(\xi)_{\rmN}||\mathcal{F}\big[\Delta_{L}f\big]_{k-l}(\eta-\xi)_{<\rmN/8}|d\xi d\eta.
\end{align*}
Thus by following the argument of reaction term in section 5, we get that
\begin{align*}
|\mathbf{K}_{4;\rmH\rmL}^{1;\rmN}|
&\lesssim \ep \|\na_L \rmA K_{\sim \rmN}\|_2
\left\|\left\langle\f{\pa_v}{t\pa_z}\right\rangle^{-1}\left(\f{|\na|^{\f{s}{2}}}{\langle t\rangle^{\f{3s}{2}}}\rmA+\sqrt{\f{\pa_t\rmg}{\rmg}}\tilde{\tilde{\rmA}}+\sqrt{\f{\pa_t\Theta}{\Theta}}\tilde{\rmA}\right)\pa_z^{-1}\Delta_{L}^2 P_{\neq}\phi_{\rmN}\right\|_2,
\end{align*}
where we use the fact that 
$
\|\Delta_L f\|_{\cG^{\la,\s-4;s}}\lesssim \ep. 
$

Similarly, by using the fact that $|\xi'-lt|\leq |\eta-kt|+|\eta-\xi-(k-l)t|+|\xi-\xi'|$, we can still move the derivative $\pa_v-t\pa_z$ on $\phi_{\neq}$ to $K$. Thus we get that 
\begin{align*}
|\mathbf{K}_{4;\rmH\rmL,\rmL\rmH}^{\ep;\rmN}|
&\lesssim \ep^2 \|\na_L \rmA K_{\sim \rmN}\|_2
\left\|\left\langle\f{\pa_v}{t\pa_z}\right\rangle^{-1}\left(\f{|\na|^{\f{s}{2}}}{\langle t\rangle^{\f{3s}{2}}}\rmA+\sqrt{\f{\pa_t\rmg}{\rmg}}\tilde{\tilde{\rmA}}+\sqrt{\f{\pa_t\Theta}{\Theta}}\tilde{\rmA}\right)\pa_z^{-1}\Delta_{L}^2 P_{\neq}\phi_{\rmN}\right\|_2,
\end{align*}

The treatment of $\mathbf{K}_{4;\rmH\rmL,\rmH\rmL}^{\ep;\rmN}$ and $\mathbf{K}_{4;\rmH\rmL,\rmH\rmH}^{\ep;\rmN}$ is similar to the $\rmR_{1;\rmN;\rmH\rmL}^{\ep,1}$ and $\rmR_{1;\rmN;\rmH\rmH}^{\ep,1}$. We get that
\begin{align*}
\sum_{\rmN\geq 8}|\mathbf{K}_{4;\rmH\rmL,\rmH\rmL}^{\ep;\rmN}|+|\mathbf{K}_{4;\rmH\rmL,\rmH\rmH}^{\ep;\rmN}|\lesssim \ep^2 \|\na_{L} \rmA K\|_2^2+\f{\ep^4}{\langle t\rangle^{4}}
+\ep^2\left\|\left\langle\f{\pa_v}{t\pa_z}\right\rangle^{-1}\f{|\na|^{\f{s}{2}}}{\langle t\rangle^{\f{3s}{2}}}\rmA\pa_z^{-1}\Delta_{L}^2 P_{\neq}\phi\right\|_2^2. 
\end{align*}

For the low-high interaction, by the fact that $|\xi-lt|\leq |\eta-kt|+|k-l,\eta-\xi|t$, we have
\begin{align*}
|\mathbf{K}_{4;\rmL\rmH}^{\rmN}|
&\leq \sum_{k\neq l,\, l\neq 0}\int 
|\overline{\widehat{\rmA K}}_k(\eta)|\rmA_k(\eta)\big|\mathcal{F}\big[G_3(\pa_v-t\pa_z)\na_{L}^{\bot}\phi\big]_{k-l}(\eta-\xi)_{<\rmN/8}\big||l||\widehat{\pa_z^{-1}\Delta_{L}f}_{l}(\xi)_{\rmN}|d\xi d\eta\\
&\quad+\sum_{k\neq 0}\int |\overline{\widehat{\rmA K}}_k(\eta)|\rmA_k(\eta)|\mathcal{F}\big[G_3(\pa_v-t\pa_z)\na_{L}^{\bot}\phi\big]_{k}(\eta-\xi)_{<\rmN/8}| 
|\widehat{\pa_{vv}f}_{0}(\xi)_{\rmN}|d\xi d\eta\\
&=\mathbf{K}_{4;\rmL\rmH,1}^{\rmN}
+\mathbf{K}_{4;\rmL\rmH,2}^{\rmN}.
\end{align*}
By using the fact that on the support of integrand, 
\beno
\f{|l|}{|l|+|\xi-lt|}\lesssim \left\langle\f{\xi}{lt}\right\rangle^{-1},
\eeno
and

By using the fact that on the support of integrand, $|l|\leq |k|+|k-l|$ and
\beno
\f{\rmJ_k(\eta)}{\rmJ_l(\xi)}\lesssim \langle t\rangle e^{C|k,\eta-\xi|^{\f13}},
\eeno
we have by \eqref{eq: f_0-1} that, 
\begin{align*}
\sum_{\rmN\geq 8}|\mathbf{K}_{4;\rmL\rmH,1}^{\rmN}|&\lesssim \sum_{\rmN\geq 8}\f{\ep}{\langle t\rangle}\big(\|\pa_z\rmA K_{\sim \rmN}\|_2+\|\rmA P_0K_{\sim\rmN}\|_2)\|\rmA P_{\neq}\pa_z^{-1}\Delta_{L}f_{\rmN}\|_2\\
&\lesssim \sum_{\rmN\geq 8}\f{\ep}{\langle t\rangle}\big(\|\pa_z\rmA K_{\sim \rmN}\|_2+\|\pa_v\rmA P_0K_{\sim\rmN}\|_2+\|P_0K_{\sim \rmN}\|_{L^2})\|\rmA P_{\neq}\pa_z^{-1}\Delta_{L}f_{\rmN}\|_2\\
&\lesssim \ep\|\rmA \na_{L}K\|_2^2+\f{\ep}{\langle t\rangle^2}\|\rmA P_{\neq}\pa_z^{-1}\Delta_{L}f\|_2^2.
\end{align*}

The treatment of $\mathbf{K}_{4;\rmH\rmH}^{\rmN}$ is easy. We have 
\begin{align*}
\sum_{\rmN\in\mathbf{D}}|\mathbf{K}_{4;\rmH\rmH}^{\rmN}|\lesssim \f{\ep^3}{\langle t\rangle^4}+\ep \|\na_{L} \rmA K\|_2^2
\end{align*}

We conclude that
\begin{align*}
|\mathbf{K}_4|+|\mathbf{K}_5|
&\lesssim \f{\ep^3}{\langle t\rangle^4}+\ep \|\na_{L} \rmA K\|_2^2+\f{\ep}{\langle t\rangle^2}\|\rmA P_{\neq}\pa_z^{-1}\Delta_{L}f\|_2^2\\
&\quad+\ep\left\|\left\langle\f{\pa_v}{t\pa_z}\right\rangle^{-1}\left(\f{|\na|^{\f{s}{2}}}{\langle t\rangle^{\f{3s}{2}}}\rmA+\sqrt{\f{\pa_t\rmg}{\rmg}}\tilde{\tilde{\rmA}}+\sqrt{\f{\pa_t\Theta}{\Theta}}\tilde{\rmA}\right)\pa_z^{-1}\Delta_{L}^2 P_{\neq}\phi\right\|_2^2. 
\end{align*}

\subsection{Estimate of $\mathbf{K}_6$}
The treatment of $\mathbf{K}_6$ is similar. Actually we have
\begin{align*}
\mathbf{K}_6
&=\int \pa_z\rmA K \rmA\Big(v''v'(\pa_v-t\pa_z)\phi_{\neq}(\pa_v-t\pa_z)f\Big)dzdv\\
&=\f{1}{2\pi} \sum_{\rmN\geq 8}\mathbf{K}_{6,\rmH\rmL}^{\rmN}+
\f{1}{2\pi} \sum_{\rmN\geq 8}\mathbf{K}_{6,\rmL\rmH}^{\rmN}
+\f{1}{2\pi} \sum_{\rmN\in\mathbf{D}}\mathbf{K}_{6,\rmH\rmH}^{\rmN}
\end{align*}
with $\mathbf{K}_{6,\rmH\rmL}^{\rmN}=\mathbf{K}_{6,\rmH\rmL,\rmL\rmH}^{\rmN}+\mathbf{K}_{6,\rmH\rmL,\rmH\rmL}^{\rmN}+\mathbf{K}_{6,\rmH\rmL,\rmH\rmH}^{\rmN}$, where
\begin{align*}
\mathbf{K}_{6,\rmH\rmL,\rmL\rmH}^{\rmN}
&=-\sum_{\rmM\geq 8}\sum_{l\neq 0,\, k\neq 0 }\int \overline{\widehat{\pa_z\rmA K}}_k(\eta)\rmA_k(\eta)
\widehat{(v''v')}(\xi-\xi')_{<\rmM/8}(\xi'-tl)\widehat{\phi}_l(\xi')_{\rmM}\\
&\quad\qquad\qquad\qquad\times \varphi_{\rmN}(l,\xi) (\eta-\xi-(k-l)t)\hat{f}_{k-l}(\eta-\xi)_{<\rmN/8}d\xi' d\xi d\eta\\
\mathbf{K}_{6,\rmH\rmL,\rmH\rmL}^{\rmN}
&=-\sum_{\rmM\geq 8}\sum_{l\neq 0,\, k\neq 0 }\int \overline{\widehat{\pa_z\rmA K}}_k(\eta)\rmA_k(\eta)
\widehat{(v''v')}(\xi-\xi')_{\rmM}(\xi'-tl)\widehat{\phi}_l(\xi')_{<\rmM/8}\\
&\quad\qquad\qquad\qquad\times \varphi_{\rmN}(l,\xi) (\eta-\xi-(k-l)t)\hat{f}_{k-l}(\eta-\xi)_{<\rmN/8}d\xi' d\xi d\eta\\
\mathbf{K}_{6,\rmH\rmL,\rmH\rmH}^{\rmN}
&=-\sum_{\rmM\in\mathbf{D}}\sum_{\f18\rmM\leq \rmM'\leq 8\rmM}\sum_{l\neq 0,\, k\neq 0 }\int \overline{\widehat{\pa_z\rmA K}}_k(\eta)\rmA_k(\eta)
\widehat{(v''v')}(\xi-\xi')_{\rmM}(\xi'-tl)\widehat{\phi}_l(\xi')_{\rmM'}\\
&\quad\qquad\qquad\qquad\times \varphi_{\rmN}(l,\xi) (\eta-\xi-(k-l)t)\hat{f}_{k-l}(\eta-\xi)_{<\rmN/8}d\xi' d\xi d\eta\\
\mathbf{K}_{6,\rmL\rmH}^{\rmN}
&=i\sum_{l\neq 0,\, k\neq 0 }\int \overline{\widehat{\pa_z\rmA K}}_k(\eta)\rmA_k(\eta)
\mathcal{F}\Big((v''v')(\pa_v-t\pa_z)\phi\Big)_{k-l}(\eta-\xi)_{<\rmN/8} (\xi-lt)\hat{f}_{l}(\xi)_{\rmN}d\xi d\eta\\
\mathbf{K}_{6,\rmH\rmH}^{\rmN}
&=i\sum_{\f18\rmN\leq \rmN'\leq 8\rmN}\sum_{l\neq 0,\, k\neq 0 }\int \overline{\widehat{\pa_z\rmA K}}_k(\eta)\rmA_k(\eta)
\mathcal{F}\Big((v''v')(\pa_v-t\pa_z)\phi\Big)_{k-l}(\eta-\xi)_{<\rmN/8} (\xi-lt)\hat{f}_{l}(\xi)_{\rmN}d\xi d\eta. 
\end{align*}
By using the fact that $\|v''v'\|_{\cG^{\la,\b;s}}\lesssim \f{\ep}{\langle t\rangle}$, 
\beno
\|\na_Lf\|_{\cG^{\la,\b;s}}\lesssim \|\pa_v f_0\|_{L^2}+\|\pa_{vv}f_0\|_{\cG^{\la,\b;s}}+\|\na_{L}P_{\neq}f\|_{\cG^{\la,\b;s}}\lesssim \f{\ep}{\langle t\rangle}, 
\eeno 
and $(\xi'-lt)\lesssim |l,\xi'|\langle t\rangle$, and by following the same argument of $\rmK_{1;\rmH\rmL,\rmL\rmH}^{\ep;\rmN}$, $\rmK_{1;\rmH\rmL,\rmH\rmL}^{\ep;\rmN}$ and $\rmK_{1;\rmH\rmL,\rmH\rmH}^{\ep;\rmN}$, we get that 
\begin{align*}
\left|\sum_{\rmN\geq 8}\mathbf{K}_{6;\rmH\rmL}^{\rmN}\right|
&\lesssim \ep^2 \|\na_{L} \rmA K\|_2^2+\f{\ep^4}{\langle t\rangle^{2}}\\
&\quad+\ep^2\left\|\left\langle\f{\pa_v}{t\pa_z}\right\rangle^{-1}\left(\f{|\na|^{\f{s}{2}}}{\langle t\rangle^{\f{3s}{2}}}\rmA+\sqrt{\f{\pa_t\rmg}{\rmg}}\tilde{\tilde{\rmA}}+\sqrt{\f{\pa_t\Theta}{\Theta}}\tilde{\rmA}\right)\pa_z^{-1}\Delta_{L}^2 P_{\neq}\phi\right\|_2^2.
\end{align*}

The estimates of $\mathbf{K}_{6,\rmL\rmH}^{\rmN}$ and $\mathbf{K}_{6,\rmH\rmH}^{\rmN}$ are obvious. We have
\begin{align*}
\sum_{\rmN\geq 8}|\mathbf{K}_{6,\rmL\rmH}^{\rmN}|
+\sum_{\rmN\in \mathbf{D}}|\mathbf{K}_{6,\rmH\rmH}^{\rmN}|
\lesssim \ep^2 \|\na_{L} \rmA K\|_2^2+\f{\ep^2}{\langle t\rangle^2}\|\pa_z^{-1}\Delta_{L}P_{\neq} f\|_2^2+\f{\ep^4}{\langle t\rangle^2}.
\end{align*}

\section{Linear terms}
\label{Section_Linear_Terms}
In this section, we deal with the linear terms $\Pi_{\rho}$, $\rmE$, $\Pi_K^1$ and $\Pi_K^2$. 
\subsection{Estimate of $\Pi_{\rho}$}
We have 
\begin{align*}
\Pi_{\rho}
&=\f{i}{2\pi}\sum_{k\neq 0}\int \rmA_k(\eta)\overline{\hat{\rho}}_k(\eta) \rmA_k(\eta)k \hat{\phi}_k(\eta)d\eta\\
&=-\f{1}{2\pi}\sum_{k\neq 0}\int \rmA_k(\eta)\overline{\hat{\rho}}_k(\eta) \rmA_k(\eta)\f{k^2}{(k^2+(\eta-kt)^2)^2} \pa_z^{-1}\Delta_{L}^2\hat{\phi}_k(\eta)d\eta.
\end{align*}
For $\eta>\f32kt$, we get that
\beno
\f{k^2}{(k^2+(\eta-kt)^2)^2}\lesssim \f{k^2}{\eta^4}\lesssim \left\langle \f{\eta}{kt}\right\rangle^{-1}\f{1}{t^4}.
\eeno
For $\eta<\f12kt$, we have $\left\langle \f{\eta}{kt}\right\rangle^{-1}\approx 1$ and
\beno
\f{k^2}{(k^2+(\eta-kt)^2)^2}\lesssim \f{1}{k^2t^4}\lesssim \left\langle \f{\eta}{kt}\right\rangle^{-1}\f{1}{t^4}.
\eeno
Thus in both cases we have
\beno
|\Pi_{\rho}|\lesssim \f{1}{\langle t\rangle^4}\|\rmA\rho\|_2\left\|\left\langle\f{\pa_v}{t\pa_z}\right\rangle^{-1}\pa_z^{-1}\Delta_{L}^2\rmA P_{\neq}\phi\right\|_2.
\eeno
Next we focus on the case $\f12kt\leq \eta\leq \f32kt$, in which case, it holds that $\f13\leq \left\langle \f{\eta}{kt}\right\rangle^{-1}\leq 1$. 

If $t\in \bar{\rmI}_{k,\eta}$, then 
\beno
\f{k^2}{(k^2+(\eta-kt)^2)^2}
\leq C\left\langle \f{\eta}{kt}\right\rangle^{-1}\f{1}{k^2(1+|\f{\eta}{k}-t|)^2}\leq C\d_{\rmL}\left\langle \f{\eta}{kt}\right\rangle^{-1}\f{\pa_t\rmg}{\rmg}
\eeno
which together with the fact that $\rmA\leq 2\tilde{\tilde{\rmA}}$ if $|k|\leq |\eta|$, gives us that
\beno
|\Pi_{\rho}|\leq C\d_{\rmL}\left\|\sqrt{\f{\pa_t\rmg}{\rmg}}\rmA\rho\right\|_2\left\|\left\langle\f{\pa_v}{t\pa_z}\right\rangle^{-1}\sqrt{\f{\pa_t\rmg}{\rmg}}\pa_z^{-1}\Delta_{L}^2\tilde{\tilde{\rmA}} P_{\neq}\phi\right\|_2.
\eeno

If $t\notin \bar{\rmI}_{k,\eta}$, then we consider the following two cases:
1. $t\leq t_{\rmE(|\eta|^{\f23}),\eta}$; 2. $t\in [t_{\rmE(|\eta|^{\f23}),\eta},2\eta]$ with $|t-\f{\eta}{k}|\gtrsim \f{\eta}{k^2}$. For the first case, using the fact that $kt\approx \eta$, we get that $k\gtrsim |\eta|^{\f23}\gtrsim t^2$ and then 
\beno
\f{k^2}{(k^2+(\eta-kt)^2)^2}\lesssim \left\langle \f{\eta}{kt}\right\rangle^{-1}\f{1}{t^4}.
\eeno
For the second case, we have $|\eta-kt|\gtrsim \f{\eta}{k}\approx t$ and then
\beno
\f{k^2}{(k^2+(\eta-kt)^2)^2}\lesssim \left\langle \f{\eta}{kt}\right\rangle^{-1}\f{k^2}{(k^2+t^2)^2}\lesssim \left\langle \f{\eta}{kt}\right\rangle^{-1}\f{1}{t^2}.
\eeno
Thus we conclude that
\beq
\begin{aligned}
|\Pi_{\rho}|
&\leq C\d_{\rmL}\rmCK_{M,\rho}
+\d_{\rmL}\left\|\left\langle\f{\pa_v}{t\pa_z}\right\rangle^{-1}\sqrt{\f{\pa_t\rmg}{\rmg}}\pa_z^{-1}\Delta_{L}^2\tilde{\tilde{\rmA}} P_{\neq}\phi\right\|_2^2\\
&\quad+\f{C}{\langle t\rangle^2}\|\rmA\rho\|_2\left\|\left\langle\f{\pa_v}{t\pa_z}\right\rangle^{-1}\pa_z^{-1}\Delta_{L}^2\rmA P_{\neq}\phi\right\|_2.
\end{aligned}
\eeq

\subsection{The diffusion term $\rmE$}
In this section, we study the diffusion term $\rmE$. We have
\begin{align*}
\rmE
&=\int \rmA K \rmA(\Delta_L K)  dzdv
+\int \rmA K \rmA(((v')^2-1)(\pa_v-t\pa_z)^2K)dzdv\\
&\quad+\int \rmA K \rmA(v''(\pa_v-t\pa_z)K)dzdv\\
&=-\|\na_L\rmA K\|_2^2+\rmE_1+\rmE_2.
\end{align*}
Let $\hat{G}_1(t,\eta)=\widehat{((v')^2-1)}(t,\eta)$, then by the fact that $|\xi-kt|\leq |\eta-kt|+|\xi-\eta|$, we have
\begin{align*}
|\rmE_1|
&\lesssim \sum_{k\neq 0}\int_{\xi,\eta}\rmA_k(\eta)|\hat{K}_k(\eta)|\rmA_k(\eta)|\widehat{\pa_vG}_1(t,\eta-\xi)||\xi-kt||\overline{\hat{K}_k(\xi)}|d\xi d\eta\\
&\quad+\sum_{k\neq 0}\int_{\xi,\eta}|\eta-kt|\rmA_k(\eta)|\hat{K}_k(\eta)|\rmA_k(\eta)|\hat{G}_1(t,\eta-\xi)||\xi-kt||\overline{\hat{K}_k(\xi)}|d\xi d\eta\\
&\quad+\int_{\xi,\eta}\rmA_0(\eta)|\hat{K}_0(\eta)|\rmA_0(\eta)|\widehat{\pa_vG}_1(t,\eta-\xi)||\xi||\overline{\hat{K}_0(\xi)}|d\xi d\eta\\
&\quad+\int_{\xi,\eta}|\eta|\rmA_0(\eta)|\hat{K}_0(\eta)|\rmA_0(\eta)|\hat{G}_1(t,\eta-\xi)||\xi||\overline{\hat{K}_k(\xi)}|d\xi d\eta\\
&\lesssim \rmE_{1,1}^{\neq}+\rmE_{1,2}^{\neq}+\rmE_{1,1}^{0}+\rmE_{1,2}^{0},
\end{align*}
and
\begin{align*}
|\rmE_2|
&\lesssim \sum_{k\neq 0}\int_{\xi,\eta}\rmA_k(\eta)|\hat{K}_k(\eta)|\rmA_k(\eta)|\widehat{\pa_vG}_1(t,\eta-\xi)||\xi-kt||\overline{\hat{K}_k(\xi)}|d\xi d\eta\\
&\quad+\int_{\xi,\eta}\rmA_0(\eta)|\hat{K}_0(\eta)|\rmA_0(\eta)|\widehat{\pa_vG}_1(t,\eta-\xi)||\xi||\overline{\hat{K}_0(\xi)}|d\xi d\eta
\lesssim \rmE_{1,1}^{\neq}+\rmE_{1,1}^{0}.
\end{align*}
We also have $v''=\f12\pa_v[(v')^2-1]=\f12\pa_vG_1$, which gives us that $|\rmE_2|\lesssim \rmE_{1,1}$. 
For $k=0$, we use the fact that the norm defined by $\rmA$ is an algebra when restricted to the zero mode and obtain that
\begin{align*}
|\rmE_{1,1}^{0}|+|\rmE_{1,2}^{0}|
&\lesssim \|\rmA\pa_vG_1\|_2\|\rmA\pa_vK_0\|_2\|\rmA K_0\|_2+\|\rmA G_1\|_2\|\rmA\pa_vK_0\|_2^2\\
&\lesssim \|\rmA\langle\pa_v\rangle G_1\|_2\Big(\|\rmA\pa_vK_0\|_2^2+\|K_0\|_{L^2}^2\Big)\lesssim \ep\|\pa_v\rmA K_0\|_2^2+\f{\ep^3}{\langle t\rangle^2}
\end{align*}
By using the fact that
\beno
\rmA_k(\eta)\lesssim \langle \eta-\xi\rangle\rmA_0(\eta-\xi)\rmA_k(\xi),
\eeno
we get that
\beno
|\rmE_{1,1}^{\neq}|+|\rmE_{1,2}^{\neq}|\lesssim \|\na_{L}\rmA P_{\neq}K\|_2\|\rmA\langle\pa_v\rangle^2G_1\|_2\|\na_{L}\rmA P_{\neq}K\|_2\lesssim \ep\|\na_{L}\rmA P_{\neq}K\|_2^2.
\eeno
Thus we get by taking $\ep$ small enough that
\ben
\rmE\leq -\f78\|\na_L\rmA K\|_2^2+\f{C\ep^3}{\langle t\rangle^2}. 
\een

\subsection{Estimate of $\Pi_K^1$}
One can easily follow the argument of $\Pi_{\rho}$ and get that
\begin{align*}
|\Pi_K^1|
&\leq \f{C}{\langle t\rangle^2}\|\pa_z\rmA K\|_{2}
\left\|\left\langle\f{\pa_v}{t\pa_z}\right\rangle^{-1}\pa_z^{-1}\Delta_{L}^2\rmA P_{\neq}\phi\right\|_2\\
&\quad+C\d_{\rmL}\left\|\pa_z\rmA K\right\|_2\left\|\left\langle\f{\pa_v}{t\pa_z}\right\rangle^{-1}\sqrt{\f{\pa_t\rmg}{\rmg}}\pa_z^{-1}\Delta_{L}^2\tilde{\tilde{\rmA}} P_{\neq}\phi\right\|_2\\
&\leq \f{1}{16}\|\pa_z\rmA K\|_{2}^2+\f{C}{\langle t\rangle^4}\left\|\left\langle\f{\pa_v}{t\pa_z}\right\rangle^{-1}\pa_z^{-1}\Delta_{L}^2\rmA P_{\neq}\phi\right\|_2^2\\
&\quad+C\d_{\rmL}\left\|\left\langle\f{\pa_v}{t\pa_z}\right\rangle^{-1}\sqrt{\f{\pa_t\rmg}{\rmg}}\pa_z^{-1}\Delta_{L}^2\tilde{\tilde{\rmA}} P_{\neq}\phi\right\|_2^2.
\end{align*}

\subsection{Estimate of $\Pi_K^2$}\label{section: pi_k^2}
We get that
\begin{align*}
\Pi_K^2
&=2\int \na_{L}\rmA K \rmA\pa_z^2\Delta_{L}^{-1}\pa_z^{-1}\Delta_{L}P_{\neq}fdzdv\\
&=\f{1}{\pi}\sum_{k\neq 0}\int \overline{\widehat{\na_{L}\rmA K}}_k(t,\eta) \rmA_k(t,\eta)\f{k^2}{k^2+(\eta-kt)^2}\widehat{\pa_z^{-1}\Delta_{L}f}_k(\eta)d\eta. 
\end{align*}
We get for $\eta>\f32 kt$, 
\beno
\f{k^2}{k^2+(\eta-kt)^2}\lesssim \f{k^2}{\eta^2}\lesssim \left\langle\f{\eta}{kt}\right\rangle^{-1}\f{1}{t^2},
\eeno 
and for $\eta<\f12 kt$, it holds that $\left\langle\f{\eta}{kt}\right\rangle^{-1}\approx 1$ and
\beno
\f{k^2}{k^2+(\eta-kt)^2}\lesssim \f{k^2}{k^2+k^2t^2}\lesssim \left\langle\f{\eta}{kt}\right\rangle^{-1}\f{1}{t^2}.
\eeno 
Next we focus on the case $\f12kt\leq \eta\leq \f32 kt$. 

Let $M_0\geq 10$ be large enough. For $t\leq M_0$, we have
\beno
\f{k^2}{k^2+(\eta-kt)^2}\lesssim_{M_0} \left\langle\f{\eta}{kt}\right\rangle^{-1}\f{1}{t^2}.
\eeno 

For $t\in [t_{\rmE(|\eta|^{\f13}),\eta},2\eta]$, it holds that
\beno
\f{k^2}{k^2+(\eta-kt)^2}\leq C\sqrt{\d_{\rmL} }\left\langle\f{\eta}{kt}\right\rangle^{-1}\sqrt{\f{\pa_t\rmg}{\rmg}}.
\eeno

For $t\in [M_0,t_{\rmE(|\eta|^{\f23}),\eta}]$, we get that $t\leq 2|\eta|^{\f13}$ and then 
\beno
\f{k^2}{k^2+(\eta-kt)^2}\lesssim 1\lesssim \left\langle\f{\eta}{kt}\right\rangle^{-1}\f{|\eta|^{\f s2}}{t^{\f{3s}{2}}}1_{t\geq M_0}.
\eeno
Note that here we obtain the small parameter from the fact that for $2\tilde{q}<3s$, 
\beno
t^{-2\tilde{q}}\leq M_0^{2\tilde{q}-3s}t^{-3s}. 
\eeno
Another way to obtain the small parameter is by assuming $\la_0,\la'$ large enough while here we hope our results have no restriction on $\la_0,\la'$. 

Now we turn to the case $t\in [t_{\rmE(|\eta|^{\f23}),\eta},t_{\rmE(|\eta|^{\f13}),\eta}]\cap [20,+\infty]$ with $\f12kt\leq \eta\leq \f32 kt$. 

Thus we have 
\beno
\f{k^2}{k^2+(\eta-kt)^2}\lesssim \sqrt{\d_{\rmB}}\left\langle\f{\eta}{kt}\right\rangle^{-1}\f{1}{1+|t-\f{\eta}{k}|}\lesssim  \sqrt{\d_{\rmB}}\sqrt{\f{b(t,k,\eta)}{1+|t-\f{\eta}{k}|^2}}\left\langle\f{\eta}{kt}\right\rangle^{-1}.
\eeno

Thus we conclude that 
\beq
\begin{aligned}
|\Pi_K^2|
&\leq \f{1}{16}\|\na_L\rmA K\|_2^2
+C({M_0})\f{1}{\langle t\rangle^4}\left\|\left\langle \f{\pa_v}{t\pa_z}\right\rangle^{-1}\rmA\pa_z^{-1}\Delta_{L}P_{\neq }f\right\|_2^2\\
&\quad +C\left\|1_{t\geq M_0}\f{|\na|^{\f s 2}}{\langle t\rangle^{\f{3s}{2}}}\left\langle \f{\pa_v}{t\pa_z}\right\rangle^{-1}\rmA\pa_z^{-1}\Delta_{L}P_{\neq }f\right\|_2^2\\
&\quad +C\d_{\rmL}\left\|\sqrt{\f{\pa_t\rmg}{\rmg}}\left\langle \f{\pa_v}{t\pa_z}\right\rangle^{-1}\tilde{\tilde{\rmA}}\pa_z^{-1}\Delta_{L}P_{\neq }f\right\|_2^2\\
&\quad +C\d_{\rmB}\left\|\sqrt{\f{b(t,\na)\pa_{zz}}{\Delta_{L}}}\left\langle \f{\pa_v}{t\pa_z}\right\rangle^{-1}\rmA\pa_z^{-1}\Delta_{L}P_{\neq }f\right\|_2^2. 
\end{aligned}
\eeq

{
\appendix  

\section{Paraproduct tools} \label{F_Tools}            
In this section, we introduce the tools and notations that we should use in the Fourier analysis and paraproduct. We first introduce the dyadic partition of unity that we should use throughout the paper. Let $\varkappa(\xi)$ be a real radial  bump function supported on $\{\xi\in \R\} $  which $\varkappa(\xi)=1$ for $|\xi|\leq 1/2$ and $\varkappa(\xi)=0$ for $|\xi|\geq 3/4$. We define $\varphi(\xi)=\varkappa(\xi/2)-\varkappa(\xi)$ supported on the annulus $\{\f12\leq |\xi|\leq \f32\}$. By construction, we have the following partition of unity: 
\begin{equation*}
1=\varkappa(\xi)+\sum_{k\in\Z}\varphi (\xi/2^k)=\varkappa(\xi)+\sum_{\rmM\in 2^{\mathbb{N}}} \varphi (\rmM^{-1}\xi)
\end{equation*}
and we define the cut-off $\varphi_\rmM=\varphi(\rmM^{-1}\xi)$, each supported in in the annulus  $\f{\rmM}{2}\leq |\xi|\leq \f{3\rmM}{2}$ 

For $\rm{f}\in L^2(\R)$, we define 
\begin{equation}
\begin{aligned}
\rm{f}_\rmM=&\varphi_\rmM(|\partial_v|)\rm{f},\\
\rm{f}_{\f12}=&\varkappa (|\partial_v|)\rm{f},\\
 \rm{f}_{<\rmM}=&\rm{f}_{\f12}+\sum_{\rmK\in 2^\mathbb{N}: \rmK<\rmM}\rm{f}_{<\rmK}
\end{aligned}
\end{equation}
 Hence, we have the decomposition 
 \begin{equation*}
\rm{f}=\rm{f}_{\f12}+\sum_{\rmM\in 2^\mathbb{N}}\rm{f}_{\rmM}. 
\end{equation*}
We also have the almost orthogonality property
\begin{subequations}\label{A_1}
\begin{equation}\label{Norm_Dyadic} 
\Vert \rm{f}\Vert_2^2\approx \sum_{\rmM\in \mathbf{D}}\Vert \rm{f_\rmM}\Vert_2^2
\end{equation}
and the approximate projection property 
\begin{equation}
\Vert \rm{f}_{\rmM}\Vert_2\approx \Vert (\rm{f}_{\rmM})_\rmM\Vert_2. 
\end{equation}
\end{subequations}
More generally if $\rm{f} ={\sum_{k}\rm{D}_{k}}$ for any $\rm{D}_k$ with $\f{1}{\rmC}2^{\rm{k}}\subset \rm{supp } \,\rmD_k\subset \rmC 2^k$ it follows that 
\begin{equation}\label{A_2}
\Vert \rm{f}\Vert_2^2\approx_c \sum_k\Vert \rmD_k\Vert_2^2
\end{equation}

During much of the proof we are also working with Littlewood-Paley decompositions defined in the $(z,v)$ variables, with the notation conventions being analogous. Our convention is to use $\rmN$ to denote Littlewood-Paley projections in $(z,v)$ and $\rmM$ to denote projections only in the $v$ direction.

For any $1\leq p\leq q\leq \infty$, there exists a constant independent of $\rmM$ such that 
\begin{equation}\label{Bren_Inequallity}
{\Vert\pa_x^\alpha \rm f_\rmM\Vert} _q+{\Vert\pa_x^\alpha\rm{f}_{<\rmM/8} \Vert}_q\leq C\rmM^{d(1/p-1/q)+|\alpha|}{\Vert \rm f\Vert}_p. 
\end{equation}

\section{Important inequalies}

If $|x-y|\leq |x|/\rmK$, then it holds that 
\begin{equation}\label{A_7}
|x^s-y^s|\leq \f{s}{(\rmK-1)^{s-1}}|x-y|^s,\qquad 0<s<1.     
\end{equation}  

In many occasions, we use the following inequality, which is a result of \eqref{A_7}:
\begin{equation}\label{A_7_1}
e^{\lambda|x|^s}\leq e^{\lambda |y|^s}e^{c\lambda|x-y|^s}
\end{equation}
for $|x-y|<|x|/\rmK$, with $\rmK>1$,  $s\in (0,1)$ and $c=c(s)\in(0,1)$.

If $|y|\leq |x|\leq \rmK |y|$, then it holds that 
\begin{equation}\label{A_8}
|x+y|^s\leq \Big(\f{\rmK}{\rmK+1}\Big)^{1-s}(x^s+y^s). 
\end{equation}

\begin{lemma}[$L^p-L^q$ decay of the heat kernel]\label{Lemma_Heat_Kernel}

Let $u$ be the solution of the heat equation 
\begin{equation}
u_t-\Delta u=0,\quad u(t=0)=\varphi(x),\quad x\in \R^d,\, t\geq 0. 
\end{equation}
Let $S(t)=e^{t\Delta }$ being the heat operator.   Then it holds that for any $1\leq q\leq p\leq \infty$ 
\begin{equation}
\Big\Vert \pa_t^j\pa_x^\alpha u\Big\Vert_{p}=\Big\Vert \pa_t^j\pa_x^\alpha S(t)\varphi\Big\Vert_{p}\leq C t^{-\f d2(\f{1}{q}-\f{1}{p})+j+\f{|\alpha|}{2}} \Vert \varphi\Vert_{L^q}
\end{equation}
where $j$ is a positive integer and $\alpha=(\alpha_1, \dots,\alpha_d)$, where $\alpha_i, \, 1\leq i\leq d$ is a positive integer and $|\alpha|=\alpha_1+\dots+\alpha_d$ and $\pa_x^\alpha=\pa_{x_1}^{\alpha_1}\pa_{x_2}^{\alpha_2}\dots \pa_{x_d}^{\alpha_d}$.   
\end{lemma}
We define the Gevrey in the physical space as (see e. g. \cite{Levermore_Oliver_1997})
\begin{subequations}\label{Gevrey_Ph}
\begin{equation}\label{L^2_Gevrey_Ph}
\Vert f\Vert_{\cG^{\lambda;s}}\approx \Big[\sum_{n=0}^\infty \Big(\f{\lambda^n}{(n!)^{\f1 s}}\|\rmD^n f\|_2\Big) ^2\Big]^{\f12}
\end{equation}
We can also extend \eqref{L^2_Gevrey_Ph} define the  more general $\ell^qL^p$ based spaces (see \cite{Mouhot_Villani_2011})  as     
 \begin{equation}\label{L_p_Bsed_Space}
\Vert f\Vert_{\ell^q\rmL^p;\la}\approx \Big[\sum_{n=0}^\infty \Big(\f{\lambda^n}{(n!)^{\f1 s}}\|\rmD^n f\|_p\Big) ^q\Big]^{\f1q}
\end{equation}
\end{subequations}
Then it holds that (see \cite[Appendix A.]{Bedrossian_2015}) for $\la>\la'$ and for $p,\,q\in[1,\infty]$, we have 
\begin{equation}\label{Estimate_Generalized_Gevrey}
\begin{aligned}
\Vert f\Vert_{\ell^p\rmL^q;\la'}\leq&\, \Vert f\Vert_{\ell^1\rmL^q;\la}\lesssim_{\la-\la'}\Vert f\Vert_{\ell^p\rmL^q;\la}\\
\Vert f\Vert_{\ell^2\rmL^\infty;\la'}\lesssim&\, \Vert f\Vert_{\ell^2\rmL^2;\la}. 
\end{aligned}
\end{equation}
Next, we show the following lemma, which useful to prove the scattering result in Section \ref{Section_Main_proofs}.  
\begin{lemma}\label{Product_Gevrey_Lemma}
The following inequality  holds true: 
\begin{equation}
\begin{aligned}
\Vert fg\Vert_{\ell^1\rmL^{2};\la}
 \lesssim &\, \Vert f\Vert_{\ell^1\rmL^2;\la}\Vert g\Vert_{\ell^1\rmL^\infty;\la}
\end{aligned}
\end{equation}
\end{lemma}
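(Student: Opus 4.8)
The plan is to prove the bilinear estimate
$\Vert fg\Vert_{\ell^1\rmL^2;\la}\lesssim \Vert f\Vert_{\ell^1\rmL^2;\la}\Vert g\Vert_{\ell^1\rmL^\infty;\la}$
by expanding both norms according to their defining series in \eqref{L_p_Bsed_Space} and applying the Leibniz rule for $\rmD^n(fg)$ together with the Hölder inequality $\|\rmD^j f \cdot \rmD^{n-j} g\|_2 \le \|\rmD^j f\|_2 \|\rmD^{n-j} g\|_\infty$. First I would write
\[
\Vert fg\Vert_{\ell^1\rmL^2;\la}\approx \sum_{n=0}^\infty \f{\la^n}{(n!)^{1/s}}\|\rmD^n(fg)\|_2
\le \sum_{n=0}^\infty \f{\la^n}{(n!)^{1/s}}\sum_{j=0}^n \binom{n}{j}\|\rmD^j f\|_2\,\|\rmD^{n-j} g\|_\infty,
\]
and then reorganize the double sum by setting $j$ and $m=n-j$ as free indices, so that the right-hand side becomes
\[
\sum_{j,m\ge 0}\f{\la^{j+m}}{((j+m)!)^{1/s}}\binom{j+m}{j}\|\rmD^j f\|_2\,\|\rmD^m g\|_\infty.
\]

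The crux is then the combinatorial inequality
\[
\f{\la^{j+m}}{((j+m)!)^{1/s}}\binom{j+m}{j}\lesssim \f{\la^j}{(j!)^{1/s}}\cdot\f{\la^m}{(m!)^{1/s}},
\]
which, after cancelling $\la^{j+m}$, reduces to showing that $\binom{j+m}{j}^{1/s}\lesssim \binom{j+m}{j}$ up to a constant — equivalently $\binom{j+m}{j}^{1/s-1}$ is bounded, which is false for $s<1$. So the correct route is instead to observe that one needs the \emph{subadditivity of the Gevrey weight}: since $(j+m)! \ge j!\,m!$ one has $((j+m)!)^{-1/s}\le (j!)^{-1/s}(m!)^{-1/s}$, and the binomial coefficient $\binom{j+m}{j} = \f{(j+m)!}{j!\,m!}$ combines with this to give exactly
\[
\f{1}{((j+m)!)^{1/s}}\binom{j+m}{j} = \f{(j+m)!^{1-1/s}}{j!\,m!} \le \f{1}{(j!)^{1/s}(m!)^{1/s}}\cdot \f{(j!\,m!)^{1/s}\,(j+m)!^{1-1/s}}{j!\,m!},
\]
and since $1-1/s \le 0$ and $(j+m)! \ge j!\, m!$, the last fraction is $\le 1$. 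This is the standard fact that the Gevrey-$1/s$ scale is an algebra under this type of convolution structure, and it is the one real computation; I expect this bookkeeping with the factorials and binomial coefficient to be the main (though modest) obstacle, together with keeping track of the $\approx$ versus $\lesssim$ constants coming from \eqref{L^2_Gevrey_Ph} and \eqref{L_p_Bsed_Space}.

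Once the pointwise bound on the coefficients is in hand, the proof closes immediately: the double sum factors as
\[
\sum_{j,m\ge 0}\f{\la^j}{(j!)^{1/s}}\|\rmD^j f\|_2\cdot \f{\la^m}{(m!)^{1/s}}\|\rmD^m g\|_\infty
= \Bigl(\sum_{j\ge 0}\f{\la^j}{(j!)^{1/s}}\|\rmD^j f\|_2\Bigr)\Bigl(\sum_{m\ge 0}\f{\la^m}{(m!)^{1/s}}\|\rmD^m g\|_\infty\Bigr)
\approx \Vert f\Vert_{\ell^1\rmL^2;\la}\Vert g\Vert_{\ell^1\rmL^\infty;\la}.
\]
An alternative, perhaps cleaner, presentation would be to pass to the Fourier side: the $\ell^1\rmL^2$ and $\ell^1\rmL^\infty$ norms correspond (up to equivalences recorded in \cite[Appendix A]{Bedrossian_2015}) to weighted sums of Littlewood-Paley pieces with weight $e^{\la|\xi|^s}$, then decompose $fg$ via paraproduct into $\sum_\rmM (f_{<\rmM/8} g_\rmM + f_\rmM g_{<\rmM/8} + \text{h.h.})$, use \eqref{A_7_1}/\eqref{A_8} to split the exponential weight $e^{\la|\xi|^s}\le e^{\la|\xi-\zeta|^s}e^{c\la|\zeta|^s}$ across the two factors, and bound each piece by Bernstein \eqref{Bren_Inequallity} and Hölder; this is the form actually used elsewhere in the paper (e.g. Section \ref{Section_Main_Nonlinear_terms}) and requires no new combinatorics. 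I would present the physical-space Leibniz argument as the primary proof since it is shortest, and remark that the Fourier-side version follows the same paraproduct scheme used throughout.
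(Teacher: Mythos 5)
The proof is correct and follows essentially the same scheme as the paper: expand $\Vert fg\Vert_{\ell^1\rmL^2;\la}$ via the defining series, apply Leibniz and H\"older, and factor the resulting double sum. However, there is a logical wobble at the advertised ``crux.'' You claim that after cancelling $\la^{j+m}$ the combinatorial estimate ``reduces to showing that $\binom{j+m}{j}^{1/s}\lesssim \binom{j+m}{j}$... which is false for $s<1$.'' That reduction is backwards. Cross-multiplying the target inequality $\binom{j+m}{j}/((j+m)!)^{1/s}\le 1/(j!\,m!)^{1/s}$ gives $\binom{j+m}{j}\le\binom{j+m}{j}^{1/s}$, which is \emph{trivially true} for $s\le 1$ because $1/s\ge 1$ and binomial coefficients are $\ge 1$. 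This is exactly the inequality the paper uses (in the form $\binom{n}{k}\le\binom{n}{k}^{1/s}$, absorbed into $\big(\tfrac{n!}{k!(n-k)!}\big)^{1/s}\tfrac{1}{(n!)^{1/s}}=\tfrac{1}{(k!(n-k)!)^{1/s}}$). Your subsequent ``correct route'' via $(j+m)!\ge j!\,m!$ and $1-1/s\le 0$ is not actually a different argument: after simplification the factor you estimate is $\binom{j+m}{j}^{1-1/s}\le 1$, i.e.\ $\binom{j+m}{j}\le\binom{j+m}{j}^{1/s}$, the very fact you just dismissed. So the final chain of inequalities is correct and matches the paper's proof, but the framing of ``two routes'' reflects a sign error in the first attempt rather than a genuine alternative; you should delete the ``which is false for $s<1$'' claim, since that derivation simply inverted the inequality. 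The Fourier-side paraproduct variant you mention is a legitimate alternative but is not needed here and is not what the paper does for this lemma.
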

\begin{proof}
We have by using \eqref{L_p_Bsed_Space} together with Leibniz rule 
\begin{equation}
\begin{aligned}
\Vert fg\Vert_{\ell^1\rmL^{2};\la}=&\,\sum_{n=0}^\infty \f{\la^n}{(n!)^{1/s}}\Vert D^n(fg)\Vert_{\rmL^2} \\
\lesssim &\,\sum_{n=0}^\infty \sum_{k=0}^n \Big(\f{n!}{k!(n-k)!}\Big)^{\f 1s} \f{\la^k \lambda^{n-k}}{(n!)^{1/s}} \Vert D^{k} f\Vert_{\rmL^2}\Vert D^{n-k} g\Vert_{\rmL^\infty}\\
 \lesssim &\, \Vert f\Vert_{\ell^1\rmL^2;\la}\Vert g\Vert_{\ell^1\rmL^\infty;\la},
\end{aligned}
\end{equation}
which gives the lemma.
\end{proof}

%\bibliographystyle{plain}
%\bibliography{Bib_file}
\end{document}